\newcommand{\diagnode}[1]{\fill #1 circle (.1);}
\newcommand{\distorbit}[1]{\draw #1 circle (.2);}
\newcommand{\distlongorbit}[1]{\draw (#1 - .2, .5) -- (#1 - .2, -.5) arc (-180:0:.2) -- (#1 + .2, -.5) -- (#1 + .2, .501) arc (0:180:.2) -- cycle;}
\newtheorem{theorem}{Theorem}[section]
\newtheorem{lemma}[theorem]{Lemma}
\newtheorem{proposition}[theorem]{Proposition}
\newtheorem{corollary}[theorem]{Corollary}
\newtheorem{step}{Step}
\theoremstyle{definition}
\newtheorem{remark}[theorem]{Remark}
\newtheorem*{remark*}{Remark}
\newtheorem{definition}[theorem]{Definition}
\newtheorem{notation}[theorem]{Notation}
\newtheorem{construction}[theorem]{Construction}
\newtheorem{example}[theorem]{Example}
\newtheorem*{construction*}{Construction}
\numberwithin{section}{chapter}
\numberwithin{equation}{chapter}
\newcommand{\GG}{\mathbf{G}}   
\newcommand{\GH}{\mathbf{H}}   
\newcommand{\GX}{\mathbf{X}}   
\newcommand{\G}{\mathfrak{g}}   
\newcommand{\GS}{\mathbf{S}}   
\newcommand{\GT}{\mathbf{T}}   
\newcommand{\GP}{\mathbf{P}}   
\newcommand{\radu}{\mathcal{R}_u}   
\newcommand{\PSL}{\mathsf{PSL}}
\newcommand{\PSU}{\mathsf{PSU}}
\newcommand{\SO}{\mathsf{SO}}
\newcommand{\SU}{\mathsf{SU}}
\newcommand{\GL}{\mathsf{GL}}
\newcommand{\mouf}{\mathbb{M}} 
\newcommand{\Hh}{\mathbb{H}}
\newcommand{\A}{\mathcal{A}} 
\newcommand{\B}{\mathcal{B}} 
\newcommand{\Herm}{\mathcal{H}} 
\newcommand{\Ss}{\mathcal{S}}
\newcommand{\LL}{\mathcal{L}}
\newcommand{\U}{\mathcal{U}}
\newcommand{\bt}{\bullet}
\newcommand{\al}{\alpha}
\newcommand{\be}{\beta}
\newcommand{\barop}{\overline{\phantom{x}}}
\newcommand\half{{\tfrac{1}{2}}}
\newcommand{\subhalf}{{\leavevmode \raise.5ex\hbox{\the\scriptfont0 1}\kern-.13em /\kern-.07em\lower.25ex\hbox{\the\scriptfont0 2}}}
\newcommand{\ga}{\gamma}
\newcommand{\si}{\sigma}
\newcommand{\eps}{\epsilon}
\newcommand{\is}{^{\langle \hat{x}\rangle}}
\newcommand{\hx}{{\hat{x}}}
\newcommand{\dash}{\nobreakdash-\hspace{0pt}}
\newcommand{\id}{\mathrm{id}}
\newlength{\dhatheight}
\newcommand{\doublehat}[1]{%
    \settoheight{\dhatheight}{\ensuremath{\hat{#1}}}%
    \addtolength{\dhatheight}{-0.27ex}%
    \hat{\vphantom{\rule{1pt}{\dhatheight}}%
    \smash{\hat{#1}}}}
\DeclareMathOperator{\Aut}{Aut}
\DeclareMathOperator{\CD}{CD}
\DeclareMathOperator{\Char}{char}
\DeclareMathOperator{\Gal}{Gal}
\DeclareMathOperator{\End}{End}
\DeclareMathOperator{\Hom}{Hom}
\DeclareMathOperator{\Fix}{Fix}
\DeclareMathOperator{\Rad}{Rad}
\DeclareMathOperator{\Spec}{Spec}
\DeclareMathOperator{\Span}{Span}
\DeclareMathOperator{\N}{N}
\DeclareMathOperator{\cha}{char}
\DeclareMathOperator{\Stab}{Stab}
\DeclareMathOperator{\Strl}{Strl}
\DeclareMathOperator{\Instrl}{Instrl}
\DeclareMathOperator{\ad}{ad}
\DeclareMathOperator{\Sym}{Sym}
\DeclareMathOperator{\Alt}{Alt}
\DeclareMathOperator{\Cyc}{Cyc}
\DeclareMathOperator{\Dih}{Dih}
\DeclareMathOperator{\Lie}{Lie}
\DeclareMathOperator{\Dist}{Dist}
\newcommand{\ZZ}{\mathbb Z}
\newcommand{\Gm}{{\mathbb G}_\mathrm{m}}
\DeclareMathOperator{\Cent}{Cent}
\DeclareMathOperator{\Norm}{Norm}
\DeclareMathOperator{\Der}{Der}
\newcommand\newl{\widetilde}
\newcommand{\GU}{\mathbf{U}}   
\newcommand{\GQ}{\mathbf{Q}}
\newcommand{\GLe}{\mathbf{L}}
\newcommand{\GZ}{\mathbf{Z}}
\DeclareMathOperator{\Ad}{Ad}
\newcommand{\PGL}{\mathsf{PGL}}
\newcommand{\PGU}{\mathsf{PGU}}
\newcommand{\SL}{\mathsf{SL}}
\newcommand{\FF}{\mathbb F}
\newcommand{\lK}{\bm{\{}}
\newcommand{\rK}{\bm{\}}}
\newcommand{\VK}{\mathbf{V}}
\newcommand{\KK}{\mathbf{K}}
\newcommand{\TT}{\mathcal{T}}
\newcommand{\DD}{\mathcal{D}}
\newcommand{\CC}{\mathcal{C}}
\newcounter{M}
\newcommand{\Mitem}[1][]{%
        \ifthenelse{\equal{#1}{}}
		{\item[(${\bf M}_{\refstepcounter{M}\theM}$)]}
		{\item[(${\bf M}_{\refstepcounter{M}\label{qitem:#1}\theM}$)]}
}
\newcommand{\Mref}[1]{(${\bf M}_{\ref{qitem:#1}}$)}
\begin{document}

\frontmatter

\title{Moufang sets and structurable division algebras}

\author{Lien Boelaert}
\address{Ghent University, Department of Mathematics, Krijgslaan 281-S22, 9000 Gent, Belgium}
\email{lboelaer@cage.UGent.be}

\author{Tom De Medts}
\address{Ghent University, Department of Mathematics, Krijgslaan 281-S22, 9000 Gent, Belgium}
\email{Tom.DeMedts@UGent.be}

\author{Anastasia Stavrova}
\address{Chebyshev Laboratory, St. Petersburg State University,
14th Line V.O. 29B, 197342 Saint Petersburg, Russia}
\email{anastasia.stavrova@gmail.com}

\date{\today}

\subjclass[2010]{16W10, 20E42, 17A35, 17B60, 17B45, 17Cxx, 20G15, 20G41}

\keywords{structurable algebra, Jordan algebra, Moufang set, root group, simple algebraic group, 5-graded Lie algebra}

\maketitle

\tableofcontents

\begin{abstract}
    A {\em Moufang set} is essentially a doubly transitive permutation group
    such that each point stabilizer contains a normal subgroup which is regular on the remaining vertices;
    these regular normal subgroups are called the root groups, and they are assumed to be conjugate and to generate the whole group.

    It has been known for some time that every Jordan division algebra gives rise to a Moufang set with abelian root groups.
    We extend this result by showing that every structurable division algebra gives rise to a Moufang set,
    and conversely, we show that every Moufang set arising from a simple linear algebraic group of relative rank one
    over an arbitrary field $k$ of characteristic different from $2$ and $3$ arises from a structurable division algebra.

    We also obtain explicit formulas for the root groups, the $\tau$-map and the Hua maps of these Moufang sets.
    This is particularly useful for the Moufang sets arising from exceptional linear algebraic groups.
\end{abstract}

%
%
\chapter*{Introduction}\label{se:intro}

A {\em Moufang set} is essentially a doubly transitive permutation group
such that each point stabilizer contains a normal subgroup which is regular on the remaining vertices;
these regular normal subgroups are called the root groups, and they are assumed to be conjugate and to generate the whole group.
A Moufang set is called {\em proper} if it is not sharply doubly transitive.

Moufang sets were introduced by Jacques Tits in 1990 \cite{Durham} in the context of twin buildings,
and they provide an important tool to study absolutely simple linear algebraic groups of relative rank one,
in the same spirit as spherical buildings play a crucial role in the understanding of isotropic algebraic groups of higher relative rank.
There is, in fact, an ongoing attempt to understand the classification of absolutely simple algebraic groups over an arbitrary field
purely in terms of the theory of spherical buildings, using a theory of Galois descent in buildings \cite{descent-book,descent-2}.

In \cite{DW}, it is shown that every Jordan division algebra gives rise to a Moufang set with abelian root groups,
and in fact, {\em every known} proper Moufang set with abelian root groups arises from a Jordan division algebra.
The description of the known proper Moufang sets with non-abelian root groups, on the other hand, was done case by case.

In the current paper, we show that every {\em structurable} division algebra gives rise to a Moufang set.
It turns out that {\em every known} proper Moufang set arises from a structurable division algebra,
provided that the root groups do not contain elements of order $2$ or $3$.
(The known examples where this condition does not hold all arise by twisting certain algebraic structures over fields of characteristic $2$ or $3$,
and include the Suzuki groups and the rank one Ree groups over fields admitting a Tits endomorphism.)

Structurable algebras have been introduced by Bruce Allison in \cite{A78} in order to construct isotropic simple Lie algebras
using a generalization of the Tits--Kantor--Koecher construction
(see also section~\ref{se:constr lie alg} below).
Because of the connections between simple linear algebraic groups and simple Lie algebras, it is not surprising that there is also
a deep connection between structurable algebras and linear algebraic groups.
In case of Jordan algebras, this connection has been thoroughly investigated and
uniformized by Tonny Springer~\cite{Springer} and Ottmar Loos~\cite{Loos-homog,Loos-pairs} (in a more
general setting of Jordan pairs). Namely, one obtains a classification-free natural
correspondence between semisimple algebraic groups having a parabolic subgroup with abelian unipotent radical,
and separable Jordan pairs. The respective correspondence for arbitrary structurable algebras
is not yet available in the literature in full generality, although a lot of results
have been established on a case-by-case basis, most notably for simple structurable algebras of
skew-dimension one~\cite{Brown,Gar-stralg,Krut}.

In the current paper, we focus on the Moufang sets corresponding to absolutely simple linear algebraic groups of relative rank one.
More precisely, we show that, over fields $k$ with $\Char(k) \neq 2,3$, every structurable division algebra gives rise to a Moufang set,
and conversely, that every Moufang set arising from a simple linear algebraic group of $k$\dash rank one over a field $k$
with $\Char(k) \neq 2,3$ arises from a structurable division algebra in this fashion.
This necessarily includes an investigation of the aforementioned connection
between structurable (division) algebras and linear algebraic groups (of relative rank one).
Our treatment here is uniform and free from case-by-case considerations.

Our new construction of Moufang sets from structurable division algebras gives, in particular,
an explicit description of the Moufang sets arising from exceptional algebraic groups,
including the permutation ``$\tau$'' of the Moufang sets,
and also including a surprisingly easy description of the ``Hua maps''.
This effectively allows to perform computations, which in turn help to understand the structure of these groups in a different way.
The explicit description of the Moufang sets arising from exceptional groups has been considered to be one of the major challenges in the theory of Moufang sets.

At this moment, such a description was known for some particular cases, namely for groups of type $F_{4,1}^{21}$ (see \cite{DV2}),
for groups of type $^2\!E_{6,1}^{29}$ (see \cite{CD}) and $E_{7,1}^{78}$ (for which we could not find an explicit proof in the literature).
The structure of the root groups for groups of type $\prescript{3,6}{}D_{4,1}^{9}$ and for groups of type $^2\!E_{6,1}^{35}$ was obtained in \cite{CHM},
but this description is in terms of coordinates (and hence gives less insight).
For the other exceptional groups of relative rank one, even the structure of the root groups was, to the best of our knowledge, unknown.

We were inspired by the examples of Moufang sets arising from Jordan algebras and by the known examples of Moufang sets with non-abelian root groups,
in particular those of type $F_{4,1}^{21}$ and those arising as residues of exceptional Moufang quadrangles (implicit in \cite[(33.14)]{TW},
and explicitly described in \cite{DMnota}).
It turns out that each of these examples arises from a specific type of structurable algebra.
Indeed, every Jordan division algebra is also a structurable algebra (with trivial involution);
the Moufang sets of type $F_{4,1}^{21}$ arise from an octonion division algebra with standard involution;
and the residues of exceptional Moufang quadrangles arise from structurable algebras as described in~\cite{BD1}.

As a consequence of our main result, we can rely on the classification of structurable division algebras
(over a field $k$ of characteristic $\neq 2,3$)
to get a complete,
and in most cases very explicit description for all Moufang sets arising from exceptional algebraic groups of
$k$\dash rank one.
The classification of structurable division algebras is deduced from the classification of all central simple
structurable algebras carried out by Allison and Smirnov~\cite{A78,S} when $\Char k\neq 2,3,5$;
in the present paper, we show in addition that
the classification by Allison and Smirnov remains valid
over fields of characteristic $5$, i.e., no new classes of
central simple structurable algebras arise.

\medskip

\section*{Organization of the paper}

In Chapter~\ref{se:moufsets}, we introduce the notions from the theory of Moufang sets that we will need later.
In particular, we give some details about how to associate a Moufang set to each absolutely simple linear algebraic group of $k$\dash rank $1$.
We then recall the construction of Moufang sets from Jordan division algebras and the construction of Moufang sets from skew-hermitian forms,
and in particular, we point out that these two classes together already describe all Moufang sets arising from {\em classical} linear algebraic groups
of $k$\dash rank $1$; see Remark~\ref{rem:ab-rootgroups} and Corollary~\ref{cor:mouf-classical}.
(The Moufang sets arising from Jordan division algebras also include one case of exceptional Moufang sets, namely those of type $E_{7,1}^{78}$.)

In Chapter~\ref{se:structalg}, we recall some facts from the theory of structurable algebras.
We then focus on structurable division algebras, and in section~\ref{se:struct-ex},
we spend some time going over each of the different classes and pointing out when these algebras are division algebras.
This is a non-trivial question, which, in fact, is open in some specific cases; see Remark~\ref{rem:skewdim1} and Remark~\ref{rem:product-div} below.
We then recall the construction of Lie algebras from structurable algebras and we collect some facts about isotopies.

The real work then starts in Chapter~\ref{se:one-inv}.
We build upon earlier work by Bruce Allison and John Faulkner, and study {\em one-invertibility} for structurable algebras $\A$,
which is a property for pairs of elements in $\A \times \Ss$, where $\Ss$ is the subspace of skew elements of $\A$.
Our main result of that section is Theorem~\ref{th:formula-oneinverse} and its Corollary~\ref{cor:oneinverse}, showing that for structurable division algebras,
all non-zero pairs are one-invertible, and providing an explicit formula for the left and right inverse of such a pair.
The notion of one-invertibility turns out to be a crucial ingredient for the construction of the corresponding Moufang sets.
One issue that comes up here is that, in order to include the case $\cha(k) = 5$, we need an additional condition on the structurable algebras,
which we call {\em algebraicity}; see section~\ref{se:alg-gr}.
This condition also plays important role in Chapter~\ref{se:SSA and SAG}.

Chapter~\ref{se:SSA and SAG} deals with the connection between structurable algebras and simple linear algebraic groups.
We do the construction of simple algebraic groups from simple structurable algebras in full generality
(section~\ref{ss:SAG-SSA}),
but for the converse, we restrict to the case of $k$\dash rank one (section~\ref{ss:SSA-SAG}).
Although this connection is believed to be well known, the situation is, in fact, rather delicate.
One issue is that in characteristic $5$, this requires the algebraicity condition mentioned above;
another issue is that there are several different but closely related Lie algebras in the picture, and
it is crucial to distinguish them
carefully and to understand what the connection is between these different Lie algebras.
See, in particular, Construction~\ref{constr:newL}, Corollary~\ref{cor:newl-K} and most notably Lemma~\ref{lem:Gkantor}, which in turn
depends on the recognition Lemma~\ref{lem:H1H2}, the proof of which is rather involved.
Our main results in Chapter~\ref{se:SSA and SAG} are Theorem~\ref{thm:AGadjoint} and Theorem~\ref{thm:AG1}.
We also show that structurable {\em division} algebras are always algebraic, even if $\cha(k) = 5$; see Theorem~\ref{thm:div-alg}.

The main goal of the paper is reached in Chapter~\ref{se:MS-SDA}.
In that section, we show that every structurable division algebra over a field $k$ of characteristic different from $2$ and $3$ gives rise
to a Moufang set, and conversely, every Moufang set arising from an absolutely simple linear algebraic $k$\dash group of $k$\dash rank $1$
arises from a structurable division algebra;
see Theorem~\ref{mainth:moufset} and Theorem~\ref{mainth:algmoufset}.
In Corollary~\ref{co:iso}, we show that the resulting Moufang sets are isomorphic if and only if the structurable division algebras are isotopic.
Theorem~\ref{mainth:moufset} also gives an explicit description of the Moufang set (including its $\tau$-map), directly in terms of the structurable division algebra.
In addition, we get a surprisingly simple expression for the so-called {\em Hua maps} of the Moufang set;
these maps are usually very hard to compute directly from their definition, and it is an unexpected side result of our approach
that we get a description of these maps without any effort in Theorem~\ref{th:hua}.

In the final Chapter~\ref{se:examples}, we illustrate our results by presenting explicit descriptions for all Moufang sets
arising from linear algebraic groups of $k$\dash rank~$1$;
of course, the most interesting results arise for the exceptional linear algebraic groups.
We recover all of the earlier known results, and in addition, we get new descriptions for some Moufang sets that had not been described
earlier in the literature.
See, in particular, Theorem~\ref{th:mouf-D4}, Theorem~\ref{th:mouf-E} and Corollary~\ref{co:tensor}, which together cover all Moufang sets
with non-abelian root groups that arise from exceptional linear algebraic groups of $k$\dash rank $1$.
In section~\ref{se:classchar5}, finally, we extend the classification of structurable division algebras to fields of characteristic $5$.

\section*{Acknowledgments}

Parts of this work appeared, in a different form, in the PhD theses of the first and last author \cite{Bo-thes, St-thes},
and it is our pleasure to thank our PhD supervisors Hendrik Van Maldeghem and Nikolai Vavilov
for their support. The last author's thesis also benefited a lot from conversations and collaboration with Victor Petrov.

We are grateful to Skip Garibaldi for several interesting discussions about the connection between structurable algebras, Lie algebras and linear algebraic groups.

The results in section~\ref{se:divtooneinv}, which were an important step in the project,
arose from discussions with John Faulkner during his visit at Ghent University in October 2012.
We thank him for providing the key idea.

We also express our gratitude to T.N.~Venkataramana for sharing his insight into the root groups of unitary groups,
which was important for our proof of Theorem~\ref{th:mouf-herm},
and to Ottmar Loos, for pointing out to us that some parts of \cite{AF99} require the missing assumption $\cha(k) \neq 5$ (see Remark~\ref{rem:loos}).

The research is supported by the Russian Science Foundation grant 14-21-00035. The intermediate results
in~section~\ref{se:SSA and SAG} were obtained while the last author was
a postdoctoral fellow of the program
6.50.22.2014 ``Structure theory, representation theory and geometry of algebraic groups''
at St. Petersburg State University.

\mainmatter

%
%
\chapter{Moufang sets}\label{se:moufsets}

We repeat the most important notions from the theory of Moufang sets which we will need.
For a general introduction to Moufang sets, we refer the reader to \cite{DS}.

\section{Definitions and basic properties}

As is common in the theory of Moufang sets, we will always denote group actions {\em on the right}.
Also, when $A$ is a group, we will often use the notation $A^*$ to denote $A$ without its identity element.
We denote conjugation in a group by $g^h := h^{-1} g h$.

\begin{definition}\label{def:MS}
    Let $X$ be a set (with $|X|\geq 3$) and $\{U_x \mid x\in X\}$ be a collection of subgroups of $\Sym(X)$.
    The data $\bigl( X,\{U_x\}_{x\in X} \bigr)$ is a {\em Moufang set} if the following two properties are satisfied:
    \begin{compactitem}\itemindent-.8ex
        \Mitem[reg]
        For each $x\in X$, $U_x$ fixes $x$ and acts regularly (i.e.\@ sharply transitively) on $X\setminus\{x\}$.
        \Mitem[conj]
        For each $g \in G^+:=\langle U_x \mid x\in X\rangle$ and each $y \in X$, we have $U_y^g = U_{y.g}$.
    \end{compactitem}
    The group $G^+$ is called the {\em little projective group} of the Moufang set, and the groups $U_x$ are called the {\em root groups}.

    Note that \Mref{reg} implies that $G^+$ acts doubly transitively on $X$.
    The Moufang set is called {\em proper} if the action is not sharply doubly transitive.
\end{definition}

Moufang sets are essentially equivalent to groups with a saturated split BN-pair of rank one, and to abstract rank one groups, a notion introduced by Franz Timmesfeld.
This point of view will, in fact, be suitable for our purposes.
\begin{definition}[{\cite{Timm}}]\label{def:rankonegroup}
An {\em abstract rank one group} (with {\em unipotent subgroups} $A$ and $B$) is a group $G$ together with a pair of
distinct nilpotent subgroups $A$ and $B$ such that $G=\langle A,B\rangle$, and such that
\[ \text{for each } a\in A^*,  \text{there is a (unique) } b(a) \in B^* \text{ such that } A^{b(a)} = B^a , \]
and similarly with the roles of $A$ and $B$ interchanged.
\end{definition}

The following lemma shows that the two previous definitions are essentially equivalent.
\begin{lemma}\label{le:MSandR1G}
\begin{compactenum}[\rm (i)]
    \item
    	Let $ (X,\{U_x\}_{x\in X})$ be a Moufang set with nilpotent root groups, and choose two arbitrary elements $0,\infty \in X$.
	Then  $G^+=\langle U_0,U_\infty \rangle$ is an abstract rank one group.
    \item
	Let $G=\langle A,B\rangle$ be an abstract rank one group. Define
	\begin{align*}
		Y &:= \{ A^g\mid g\in G \} = \{ A^b\mid b\in B \} \cup \{B\} \\
		&\phantom{:}=\{B^g\mid g\in G\}=\{B^a\mid a\in A\}\cup \{A\}.
	\end{align*}
	For each $x\in Y$, define $U_x:=x\leq G$, and consider the action of $G$ on $Y$ by conjugation.
	Then $ (Y,\{U_x\}_{x\in Y})$ is a Moufang set with $G^+ \cong G / Z(G)$.
\end{compactenum}
\end{lemma}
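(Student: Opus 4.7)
My plan is to handle both directions in parallel, since both rest on exploiting the exchange property that links $U_0,U_\infty$ (respectively $A,B$) and governs the action on the remaining points.

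For (i), I would first show $G^+ = \langle U_0,U_\infty\rangle$: regularity of $U_\infty$ on $X\setminus\{\infty\}$ writes every $x\neq\infty$ as $0.u$ for a unique $u\in U_\infty$, and then \Mref{conj} gives $U_x = U_0^u$, placing every root group inside $\langle U_0,U_\infty\rangle$. For the exchange condition, given $a\in U_0^*$, I would set $y := \infty.a$; since $a$ fixes only $0$ (by regularity) and $\infty\ne 0$, we get $y \in X\setminus\{0,\infty\}$, so by regularity of $U_\infty$ there is a unique $b\in U_\infty^*$ with $0.b = y$. Two applications of \Mref{conj} yield $U_0^b = U_{0.b} = U_{\infty.a} = U_\infty^a$, so $b(a) := b$ does the job, with uniqueness built in. The symmetric statement is obtained by swapping the roles of $0$ and $\infty$, and nilpotency of $U_0,U_\infty$ is part of the hypothesis.

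For (ii), I would first verify the two descriptions of $Y$. Using the exchange axiom $A^{b(a)} = B^a$ for $a\in A^*$ and its mirror $B^{a(b)} = A^b$ for $b\in B^*$, one matches $\{A^b \mid b\in B^*\}$ with $\{B^a \mid a\in A^*\}$, and this gives $Y_0 := \{A^b\mid b\in B\}\cup\{B\} = \{B^a \mid a\in A\}\cup\{A\}$. Either description makes closure of $Y_0$ under conjugation by $A$ and by $B$ immediate, and since $G = \langle A,B\rangle$, $Y_0$ is then a single $G$-orbit, hence equals the conjugacy class of $A$. Axiom \Mref{conj} holds by the very definition of the action. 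For \Mref{reg} at $A$, note $U_A = A$ fixes $A$, and $Y\setminus\{A\} = \{B^a\mid a\in A\}$ (if $A = B^a$ with $a\in A$, then $B = aAa^{-1} = A$, contradicting $A\neq B$); on this set $A$ acts by right multiplication in the parameter. Sharp transitivity reduces to $A \cap N_G(B) = 1$, which the exchange condition delivers: if $a \in A^* \cap N_G(B)$, then $A^{b(a)} = B^a = B$, and since $b(a)\in B$ this forces $A = b(a)Bb(a)^{-1} = B$, a contradiction. The remaining cases of \Mref{reg} follow by transport along $G$-conjugation.

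To finish (ii) by identifying $G^+$ with $G/Z(G)$, I would observe that the manipulations above exhibit $B$ as a $G$-conjugate of $A$, so $\langle A^g\mid g\in G\rangle \supseteq \langle A,B\rangle = G$ and $G^+$ is the image of $G$ in $\Sym(Y)$ under the conjugation action. The inclusion $Z(G)\subseteq\ker$ is trivial; conversely, regularity of $A$ on $Y\setminus\{A\}$ forces $A\cap\ker = 1$, and for $z\in\ker$ and $a\in A$ the commutator $[z,a]$ lies in $A$ (since $z\in N_G(A)$) and in $\ker$ (normality of $\ker$), hence $[z,a] = 1$; by the symmetric argument $z$ also centralizes $B$, so $z \in Z(\langle A,B\rangle) = Z(G)$. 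The step I anticipate as the main obstacle is the sharp-transitivity verification $A \cap N_G(B) = 1$: the whole proof of (ii) rests on this one-line consequence of the exchange condition, and one must be careful to apply the correct version (the exchange axiom versus its mirror) at each stage.
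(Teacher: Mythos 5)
Your proposal is correct, and it fills in the standard argument: the paper itself gives no proof of Lemma~\ref{le:MSandR1G} beyond the citation to \cite[Section 2.2]{DS}, and your reasoning (deriving the exchange condition from \Mref{reg} and \Mref{conj} via $U_0^{b}=U_{0.b}=U_{\infty.a}=U_\infty^a$ in one direction, and reducing sharp transitivity to $A\cap N_G(B)=1$ plus the commutator argument $[z,a]\in A\cap\ker=1$ for the kernel computation in the other) is exactly the argument found there. No gaps; the only points left implicit (that $U_0\neq U_\infty$, that distinct points have distinct root groups, and that $A,B\neq 1$ so $|Y|\geq 3$) are immediate from regularity.
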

\begin{proof}
	See \cite[Section 2.2]{DS}.
\end{proof}
\begin{remark}
	For many purposes, the additional requirement on the root groups to be nilpotent is not needed, and in fact,
	it is an open problem whether there exist proper Moufang sets with root groups that are not nilpotent.
	Even more, all known examples of proper Moufang sets have root groups of nilpotency class at most three.
\end{remark}

In the following construction we show how to describe a Moufang set using only one group $U$ and a permutation of $U^*$.
The advantage of this description is that the only required data are a group and a permutation.
This should be compared with Definition \ref{def:MS}, where we need a set $X$ and a collection of subgroups of $\Sym(X)$,
and with Definition \ref{def:rankonegroup}, where we need to describe the ambient group $G$ (and two of its subgroups).

\begin{construction}[{\cite[Section 3]{DW}}]\label{constr:MS}
Let $\mouf=(X,(U_x)_{x\in X})$ be an arbitrary Moufang set, and let $0\neq \infty\in X$ be two arbitrary elements.

Define the set $U:=X\setminus\{\infty\}$.
For each $a\in U$, we define $\alpha_a$ as the unique element in $U_\infty$ mapping $0$ to $a$.
By \Mref{reg}, $U_\infty=\{\alpha_a\mid a\in U\}$.

We now make $U$ into a group $(U, +)$ by defining $a+b := a.\alpha_b = 0.\alpha_a\alpha_b$ for all $a,b\in U$.
It is clear that this is a (not necessarily commutative) group, with neutral element $\alpha_0$, and with $\alpha_a^{-1} = \alpha_{-a}$;
moreover, $U \cong U_\infty$.
(The action of $U_\infty$ on $X$ is essentially the right regular representation of the group $(U,+)$.)

By \cite[Proposition 4.1.1]{DS}, there is,
for each $a\in U\setminus\{0\}$, a unique permutation
\begin{equation}\label{eq:mu}
	\mu_a \in U_0^* \alpha_a U_0^* \subseteq G^+
\end{equation}
interchanging $0$ and $\infty$.

Now fix an element $e\in U\setminus\{0\}$ and define $\tau:=\mu_e$; then $\tau$ induces a permutation of $X\setminus\{0,\infty\}=U^*$ (which we also denote by $\tau$).
Since $\tau$ and $\alpha_a$ are both in $G^+$ for all $a\in U$, we have $U_\infty^\tau=U_{\infty.\tau}=U_0$ and $U_0^{\alpha_a}=U_{0.\alpha_a}=U_a$.

It follows that in order to describe the Moufang set $\mouf=(X,(U_x)_{x\in X})$, it is sufficient to know the group $U$ and the permutation $\tau \in \Sym(U^*)$.
Therefore we denote the Moufang set $\mouf=(X,(U_x)_{x\in X})$ by $\mouf(U,\tau)$.
\end{construction}

\begin{remark}
	A slight disadvantage of this description is that the permutation $\tau$ is not uniquely determined by the Moufang set.
	However, it is shown in \cite[Lemma 4.1.2]{DS} that $\mouf(U,\tau)=\mouf(U,\mu_a)$ for all $a\in U^*$,
	and in fact, the data $\bigl( U, (\mu_a)_{a \in U^*} \bigr)$ is uniquely determined by the Moufang set.
    On the other hand, see~\cite{Loos-divpairs} for a different approach to Moufang sets that avoids this issue.
\end{remark}

\begin{remark}\label{rem:constrMSUtau}
	Starting from an arbitrary group $U$ together with a permutation $\tau$ of $U^*$,
    the previous construction can be applied backwards.
	However, the result is not always a Moufang set.
	We refer to \cite[Section 3]{DS} for the details of this construction, and for the condition needed on $U$ and $\tau$ in order to get a Moufang set.
\end{remark}

We will need the following explicit description of the $\mu$-maps.

\begin{lemma}\label{compute tau}
	Let $\mouf(U, \tau)$ be a Moufang set.
	For each $a\in U^*$, we have
	\[ \mu_a = \alpha^\tau_{(-a).\tau^{-1}}\alpha_a \alpha^\tau_{-(a.\tau^{-1})} . \]
\end{lemma}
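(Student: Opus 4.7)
The plan is to invoke the uniqueness statement in~\eqref{eq:mu}: $\mu_a$ is characterized as the \emph{unique} element of $U_0^*\alpha_a U_0^*$ that interchanges $0$ and $\infty$. So it suffices to verify that the right-hand side lies in $U_0^*\,\alpha_a\,U_0^*$ and swaps $0$ with $\infty$.

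First I would set $c := (-a).\tau^{-1}$ and $d := -(a.\tau^{-1})$, and note that since $U_\infty^\tau = U_0$, every $\alpha_b^\tau = \tau^{-1}\alpha_b\tau$ with $b \in U^*$ lies in $U_0^*$. Because $\tau$ (and hence $\tau^{-1}$) swaps $0$ and $\infty$ and permutes $U^*$, the fact that $a \in U^*$ implies $-a,\ a.\tau^{-1} \in U^*$, hence $c, d \in U^*$. Therefore $\alpha_c^\tau, \alpha_d^\tau \in U_0^*$, which places the proposed product in $U_0^*\,\alpha_a\,U_0^*$.

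Second I would compute the action on $0$ and $\infty$ using the identity $x.\alpha_b^\tau = ((x.\tau^{-1}).\alpha_b).\tau$, together with the conventions $0.\tau^{-1} = \infty$, $\infty.\tau^{-1} = 0$, and $y.\alpha_b = y + b$ for $y \in U$. Tracking $0$: it is fixed by $\alpha_c^\tau \in U_0$, then sent to $a$ by $\alpha_a$, and finally $a.\alpha_d^\tau = (a.\tau^{-1} + d).\tau = 0.\tau = \infty$ by the defining choice of $d$. Tracking $\infty$: we get $\infty.\alpha_c^\tau = (0 + c).\tau = ((-a).\tau^{-1}).\tau = -a$ by the choice of $c$, then $(-a).\alpha_a = 0$, and finally $0$ is fixed by $\alpha_d^\tau \in U_0$. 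So the product swaps $0$ and $\infty$, and by uniqueness it must equal $\mu_a$.

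There is no real obstacle here; the only point that needs attention is to verify $c, d \neq 0$ (otherwise the outer factors would be the identity and the product would only lie in $U_0\,\alpha_a\,U_0$, not in $U_0^*\,\alpha_a\,U_0^*$, and the uniqueness clause of \eqref{eq:mu} would not apply), and to remember that $\tau^{-1}$ swaps $0$ and $\infty$ when expanding the conjugate $\alpha_b^\tau$.
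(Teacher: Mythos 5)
Your verification is correct: the paper itself gives no proof of this lemma beyond citing \cite[Proposition 4.1.1]{DS}, and your argument is precisely the standard direct check underlying that reference --- exhibit the product as an element of $U_0^*\,\alpha_a\,U_0^*$ that swaps $0$ and $\infty$, then invoke the uniqueness clause of~\eqref{eq:mu}. The attention you pay to the outer factors being nontrivial (so that the product genuinely lies in $U_0^*\,\alpha_a\,U_0^*$ rather than merely $U_0\,\alpha_a\,U_0$) is exactly the point needed for that uniqueness to apply.
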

\begin{proof}
	See \cite[Proposition 4.1.1]{DS}.
\end{proof}

The following concept plays an important role in the theory of Moufang sets.
\begin{definition}\label{def:hua}
	Let $\mouf(U,\tau)$ be a Moufang set.
	Then for each $a\in U^*$, we define a corresponding {\em Hua map} $h_a := \tau \mu_a\in \Sym(X)$.
	It is clear that each $h_a$ fixes $0$ and $\infty$, and it is not hard to verify that $(b+c).h_a = b.h_a + c.h_a$ for all $b,c\in U$,
	i.e.\@ the Hua maps induce automorphisms of the group $U$.
	We also define the {\em Hua group}
	\[ H=\langle h_a\mid a\in U^*\rangle=\langle \mu_a \mu_b\mid a,b\in U^*\rangle\leq \Aut(U) ; \]
	by \cite[Theorem 3.1(ii)]{DW}, the Hua group is precisely the two point stabilizer $\Stab_G(0, \infty)$.
	See also \cite[Lemma 4.2.2]{DS}.
	In particular, a Moufang set is proper if and only if $H\neq 1$.
\end{definition}

\begin{example}\label{ex:PSL2}
	In order to illustrate the various definitions we have introduced, we present the prototypical example of a Moufang set,
	which is the projective line $\mathbb{P}^1(D)$, acted upon by $G = \PSL_2(D)$, where $D$ is an arbitrary field or skew field.
	In this case, the root groups are isomorphic to the additive group of $D$.
	More precisely, if we let $U = (D, +)$, and $\tau \colon D^\times \to D^\times \colon x \mapsto -x^{-1}$,
	then $\mouf(U, \tau)$ is a Moufang set with little projective group $G$; we denote it by $\mouf(D)$.

	For any $a \in D^\times$, the corresponding $\mu$-map $\mu_a$ is the map that takes any $x \in X = D \cup \{ \infty \}$ to $-a x^{-1} a$.
	Notice that $\tau = \mu_1$.
	The corresponding Hua map $h_a$ maps any $x \in X$ to $axa$.
\end{example}

Now we introduce what it means for Moufang sets to be isomorphic.
\begin{definition}\label{def:misom}
    Let $\mouf=(X,\{U_x\}_{x\in X})$ and $\mouf'=(X',\{U_y\}_{y\in X'})$ be two Moufang sets.
    We say that $\mouf$ and $\mouf'$ are {\em isomorphic} if there exists a bijection $\varphi \colon X\to X'$ such that
    the induced map $\Sym(X)\to \Sym(X') \colon g\mapsto \varphi^{-1} g\varphi$ maps each root group $U_x$ isomorphically onto the corresponding root group $U_{x.\varphi}$.

    We call $\varphi$ an {\em isomorphism} from $\mouf$ to $\mouf'$.
\end{definition}
Next we translate the definition of isomorphic Moufang sets into several more useful criteria.
\begin{lemma}\label{le:MSisom}
\begin{compactenum}[\rm (i)]
    \item
        Let $\mouf = \mouf(U,\tau)$ and $\mouf'=\mouf(U',\tau')$ be two Moufang sets.
        Then $\mouf$ and $\mouf'$ are isomorphic if and only if there exists a group isomorphism $\varphi \colon U\to U'$ such that
        $\mouf(U', \tau') = \mouf(U',  \varphi^{-1} \tau \varphi)$.
    \item
        Let $\mouf = \mouf(U,\tau)$ and $\mouf'=\mouf(U',\tau')$ be two Moufang sets,
        and assume that $\tau$ and $\tau'$ are contained in the little projective group $G$ of $\mouf$ and $G'$ of $\mouf'$, respectively.
        Then $\mouf$ and $\mouf'$ are isomorphic if and only if there exists a group isomorphism $\varphi \colon U\to U'$ such that
        $\tau' h' = \varphi^{-1} \tau \varphi$ for some element $h' \in H'$, the Hua group of $\mouf'$.
    \item
        Let $\mouf$ and $\mouf'$ be two Moufang sets with corresponding abstract rank one groups $G=\langle A,B\rangle$ and $G'=\langle A',B'\rangle$.
        Then $\mouf$ and $\mouf'$ are isomorphic if and only if there exists a group isomorphism $\varphi \colon G\to G'$ mapping $A$ to $A'$ and $B$ to $B'$.
\end{compactenum}
\end{lemma}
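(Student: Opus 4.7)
The plan is to prove the three parts in the order (iii), (i), (ii): part (iii) is the most conceptual, (i) is its coordinate refinement, and (ii) is a specialization of (i). The key maneuver in every case is that the little projective group acts $2$-transitively on its point set, so any isomorphism of Moufang sets can be adjusted by an element of the target little projective group so that the distinguished basepoints $(0,\infty)$ are sent to $(0',\infty')$; once this normalization is in place, the structure can be read off from the behavior of $\varphi$ on $U = X \setminus \{\infty\}$.

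For (iii), in the forward direction I would start with an isomorphism $\varphi \colon X \to X'$ of Moufang sets and observe that, by Definition~\ref{def:misom}, conjugation by $\varphi$ sends each $U_x$ to $U_{x.\varphi}$, hence restricts to an isomorphism $G \to G'$ that carries the family of root groups of $\mouf$ onto that of $\mouf'$. After composing $\varphi$ with a suitable element of $G'$ we may arrange $(0,\infty) \mapsto (0',\infty')$, so that $A \mapsto A'$ and $B \mapsto B'$. Conversely, given an isomorphism $\psi \colon G \to G'$ of abstract rank one groups with $A \mapsto A'$ and $B \mapsto B'$, the description $Y = \{A^g : g \in G\}$ from Lemma~\ref{le:MSandR1G}(ii) yields a $\psi$-equivariant bijection $Y \to Y'$, and this bijection conjugates each root group $U_x = x$ onto $U_{x.\psi} = \psi(x)$ as required.

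For (i) I would proceed analogously: after the normalization, $\varphi|_U$ is a bijection $U \to U'$, and the fact that conjugation by $\varphi$ carries $U_\infty$ onto $U'_{\infty'}$ forces $\varphi|_U$ to be a group isomorphism (since addition on $U$ is defined by the regular action of $U_\infty$ on itself via $a + b = a.\alpha_b$). The conjugate permutation $\varphi^{-1}\tau\varphi$ then plays the role of a valid ``$\tau$'' for the Moufang set $\mouf'$, giving $\mouf(U', \tau') = \mouf(U', \varphi^{-1}\tau\varphi)$. Part (ii) will then drop out as a corollary: the extra assumption $\tau \in G$ places $\varphi^{-1}\tau\varphi$ in $G'$, so that both it and $\tau'$ are elements of $G'$ interchanging $0'$ and $\infty'$, and any two such elements differ on either side by an element of $\Stab_{G'}(0',\infty')$, which by Definition~\ref{def:hua} is exactly the Hua group $H'$. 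Conversely, right-multiplication of $\tau'$ by a Hua element does not change the underlying Moufang set (as recorded after Construction~\ref{constr:MS}), so $(U', \varphi^{-1}\tau\varphi)$ again defines $\mouf'$ and (i) produces the isomorphism.

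The step I expect to be the main obstacle is certifying carefully that the equality $\mouf(U', \tau_1) = \mouf(U', \tau_2)$ of Moufang sets is captured precisely by $\tau_1 \tau_2^{-1}$ belonging to $H'$ when both $\tau_i$ lie in the little projective group: one direction (Hua modification preserves the Moufang set) is essentially built into the construction, but the other direction requires exploiting the uniqueness in \Mref{reg} together with equation~\eqref{eq:mu}, which pins down each $\mu_a$ as the unique element of $U_0^* \alpha_a U_0^*$ interchanging $0$ and $\infty$. Once this dictionary between the various descriptions is in place, the remaining verifications in (i) and (ii) reduce to routine bookkeeping.
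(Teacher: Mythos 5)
Your plan is correct and matches the paper's proof in all essentials: part (i) rests on the same two observations (conjugation by the isomorphism carries each $\alpha_b$ to $\alpha_{b.\varphi}$, which makes $\varphi|_U$ a group isomorphism, and the computation $U_{\infty'}^{\tau'} = U_{0'} = \varphi^{-1}U_\infty^\tau\varphi = U_{\infty'}^{\varphi^{-1}\tau\varphi}$), part (ii) is deduced from (i) exactly as you describe via the identification of $H'$ with $\Stab_{G'}(0',\infty')$, and part (iii) is read off from Lemma~\ref{le:MSandR1G}. The only cosmetic differences are the order of the parts, your explicit normalization of basepoints by an element of $G'$ (the paper instead simply takes $0'=0.\varphi$ and $\infty'=\infty.\varphi$ as its basepoints in $X'$), and the fact that you leave the converse of (i) implicit --- extending $\varphi$ to $X\to X'$ by $\infty\mapsto\infty'$ and rerunning the same conjugation computation --- which is routine but should be recorded, since (ii) relies on it.
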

\begin{proof}
\begin{compactenum}[(i)]
    \item
        First assume that $\mouf=(X,\{U_x\}_{x\in X})=\mouf(U,\tau)$ and $\mouf'=(X',\{U_y\}_{y\in X'})=\mouf(U',\tau')$ are isomorphic with isomorphism $\varphi \colon X\to X'$.
        We follow Construction \ref{constr:MS} with two arbitrary elements $0,\infty\in X$ and with $0'=0.\varphi, \infty'=\infty.\varphi \in X'$.

        It is clear that for all $a\in U^*$, $\alpha_a^\varphi=\varphi^{-1}\alpha_a\varphi=\alpha_{a.\varphi}\in U_{\infty'}$; therefore $\mu_a^\varphi=\mu_{a.\varphi}$.
        Now the restriction $\varphi \colon U=X\setminus\{\infty\}\to U'=X'\setminus\{\infty'\}$ is a group isomorphism, since for all $a,b\in U$
        \[ (a+b).\varphi = (a\alpha_b).\varphi = a.\varphi \alpha_b^\varphi = a.\varphi \alpha_{b.\varphi} = a.\varphi+b.\varphi . \]


        Moreover, by Definition~\ref{def:misom}, $\varphi^{-1} U_0 \varphi = U_{0'}$, and hence
        \begin{equation}\label{eq:misom}
            U_{\infty'}^{\tau'} = U_{0'} = \varphi^{-1} U_0 \varphi = \varphi^{-1} U_\infty^\tau \varphi
            = {\varphi^{-1} \tau^{-1} \varphi}U_{\infty'} {\varphi^{-1} \tau \varphi} = U_{\infty'}^{\varphi^{-1} \tau \varphi} ,
        \end{equation}
        and hence the two Moufang sets $\mouf(U', \tau')$ and $\mouf(U',  \varphi^{-1} \tau \varphi)$ coincide.

        \smallskip

        Conversely, assume that there exists a group isomorphism $\varphi \colon U\to U'$ such that $\mouf(U', \tau') = \mouf(U',  \varphi^{-1} \tau \varphi)$.
        We extend $\varphi \colon X\to X'$ to a bijection of sets by defining $\infty.\varphi=\infty'$.
        Following Construction \ref{constr:MS},
        it is clear that the induced map $\overline{\varphi} \colon \Sym(X) \to \Sym(X')$ maps $U_\infty$ to~$U_{\infty'}$.
        The same computation as in equation~\eqref{eq:misom}, but now using the assumption that $\mouf(U', \tau') = \mouf(U',  \varphi^{-1} \tau \varphi)$,
        yields that $\overline{\varphi}$ maps $U_0$ to~$U_{0'}$.
        The result now follows.
    \item
        This follows immediately from (i), since two elements of $G'$ that swap $0'$ and $\infty'$ differ by an element of $H'$;
        see Definition~\ref{def:hua}.
    \item
        This is clear from Lemma \ref{le:MSandR1G}.
    \qedhere
\end{compactenum}
\end{proof}

\section{Moufang sets from linear algebraic groups}

One of the main motivations for studying Moufang sets is their connection with (semi)simple linear algebraic groups of relative rank one.
We now explain this connection in some detail.

Let $\GG$ be a semisimple linear algebraic group over an arbitrary field $k$, and assume that $\GG$ has $k$\dash rank $1$.
Let $X$ be the set of all proper parabolic $k$\dash subgroups of $\GG$;
note that the assumption on the $k$\dash rank implies that every proper parabolic $k$\dash subgroup is in fact a minimal parabolic $k$\dash subgroup.
For each $\GP \in X$, we let $U_\GP := \radu(\GP)(k)$, i.e.\@ the set of $k$\dash rational points of the unipotent radical of $\GP$.
\begin{theorem}\label{th:M(G)}
	Let $\GG$ be a semisimple linear algebraic group over an arbitrary field $k$ of $k$\dash rank $1$,
	and let $X$ and the groups $U_\GP$ be as above.
	Then $\mouf(\GG) := \bigl( X, (U_\GP)_{\GP \in X} \bigr)$ is a Moufang set.
\end{theorem}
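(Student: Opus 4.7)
The plan is to use the structure theory of reductive groups to verify the two Moufang axioms directly. Fix $\GP_0 \in X$, let $\GS \subset \GP_0$ be a maximal $k$\dash split torus (which is one-dimensional since $\GG$ has $k$\dash rank $1$), write $\GLe = \GZ_\GG(\GS)$ for the associated Levi factor, and let $\GP_\infty$ denote the parabolic $k$\dash subgroup opposite to $\GP_0$ across $\GLe$. Each $U_\GP = \radu(\GP)(k)$ is regarded as a subgroup of $\Sym(X)$ via the natural conjugation action of $\GG(k)$ on $X$; this identification is faithful since $\radu(\GP) \cap \GZ(\GG) = 1$ (unipotent vs.\@ reductive), so the kernel of the action is trivial on each $\radu(\GP)(k)$.

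The crucial input, and what I expect to be the main obstacle, is the relative Bruhat decomposition for the rank-one group $\GG$. The relative Weyl group $\N_\GG(\GS)(k)/\GLe(k)$ has order two; let $n_w$ be a representative of its non-trivial element. Then $\GG(k) = \GP_0(k) \sqcup \GP_0(k)\, n_w\, \GP_0(k)$. Given any $\GQ \in X \setminus \{\GP_0\}$, the transitive action of $\GG(k)$ on $X$ yields $g \in \GG(k)$ with $\GQ = \GP_0^g$; the element $g$ cannot lie in the first Bruhat cell (else $\GQ = \GP_0$), so $g = p_1 n_w p_2$ with $p_i \in \GP_0(k)$, whence $\GQ = \GP_\infty^{p_2}$. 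In particular, $\GP_0 \cap \GQ$ is a $\GP_0(k)$\dash conjugate of $\GLe$, so $\GP_0$ and $\GQ$ are opposite, and $\GP_0(k)$ acts transitively on $X \setminus \{\GP_0\}$.

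To deduce \Mref{reg} for $\GP_0$, observe that the stabilizer of $\GP_\infty$ inside $\GP_0(k)$ is $(\GP_0 \cap \GP_\infty)(k) = \GLe(k)$. The Levi decomposition $\GP_0 = \GLe \ltimes \radu(\GP_0)$ then gives a $\GP_0(k)$\dash equivariant identification $\GP_0(k)/\GLe(k) \cong U_{\GP_0}$, which together with the previous step shows that $U_{\GP_0}$ acts transitively on $X \setminus \{\GP_0\}$. The action is in fact regular because $\radu(\GP_0) \cap \GP_\infty = 1$ as a scheme (this follows from the big-cell factorization $\radu(\GP_0) \cdot \GLe \cdot \radu(\GP_\infty) \hookrightarrow \GG$ being an open immersion together with $\GP_\infty = \GLe \ltimes \radu(\GP_\infty)$), so no non-trivial element of $U_{\GP_0}$ fixes $\GP_\infty$. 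Conjugation by $\GG(k)$ transports the regularity statement to every $\GP \in X$, establishing \Mref{reg}.

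Finally, axiom \Mref{conj} is automatic from the set-up: every element of $G^+ = \langle U_\GP \mid \GP \in X \rangle \leq \Sym(X)$ arises as conjugation by some $\tilde g \in \GG(k)$, and $\tilde g$\dash conjugation on $\GG$ carries $\radu(\GP)$ to $\radu(\GP^{\tilde g})$; taking $k$\dash points yields $U_\GP^g = U_{\GP. g}$ for all $\GP \in X$ and $g \in G^+$. Steps 2 and 3 are thus formal consequences of the Levi decomposition and the definition of the action once Step 1 (the Bruhat dichotomy) is in hand.
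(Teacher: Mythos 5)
Your proof is correct, but it reaches the two Moufang axioms by a different route than the paper. The paper's proof first establishes the purely incidence-geometric claim that \emph{any} two distinct elements of $X$ are opposite to each other (via \cite[4.18]{BorelTits} to find a common maximal $k$\dash split torus, \cite[5.9]{BorelTits} to see that only two minimal $k$\dash parabolics contain it, and conjugacy of maximal split tori); it then gets sharp transitivity of $U_\GP$ on $X\setminus\{\GP\}$ in one stroke from the statement in \cite[4.8]{BorelTits} that the opposites of $\GP$ are permuted simply transitively by $\radu(\GP)(k)$. You instead invoke the relative Bruhat decomposition $\GG(k)=\GP_0(k)\sqcup\GP_0(k)\,n_w\,\GP_0(k)$ together with $\GG(k)$\dash conjugacy of minimal parabolic $k$\dash subgroups to get transitivity of $\GP_0(k)$ on $X\setminus\{\GP_0\}$, and then extract regularity of $U_{\GP_0}$ from the Levi decomposition plus $\radu(\GP_0)\cap\GP_\infty=1$. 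Both arguments rest on Borel--Tits; yours is essentially the classical observation that a rank-one Bruhat decomposition is a split BN-pair of rank one, and it makes the opposition of all pairs a by-product rather than the engine of the proof, while the paper's version packages transitivity and freeness into a single cited uniqueness statement. One cosmetic remark: your justification that each $\radu(\GP)(k)$ injects into $\Sym(X)$ via the condition $\radu(\GP)\cap Z(\GG)=1$ presumes that the kernel of the $\GG(k)$\dash action is central, which you have not shown at that point; the cleaner observation is that any $u\in\radu(\GP_0)(k)$ acting trivially on $X$ fixes $\GP_\infty$ and hence lies in $\radu(\GP_0)(k)\cap\GP_\infty(k)=1$ --- which is exactly your later regularity computation, so nothing is actually missing.
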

\begin{proof}
	We decided to include an explicit proof of this (known) fact, which seems hard to find in the literature.
	Recall that two $k$\dash parabolics $\GP$ and $\GP'$ in $X$ are called {\em opposite} if their intersection is a Levi subgroup of both,
	or equivalently, if $\GP \cap \radu(\GP') = \GP' \cap \radu(\GP) = 1$; see \cite[4.8 and 4.10]{BorelTits}.
	By \cite[4.8]{BorelTits} again, each $k$\dash parabolic $\GP$ has at least one opposite $k$\dash parabolic $\GP'$,
	and any two such opposites $\GP'$ and $\GP''$ are conjugate by a unique element of $\radu(\GP)(k)$.

	We claim that, in our situation where the $k$\dash rank is $1$, any two elements $\GP, \GP'$ of $X$ are, in fact, opposite to each other.
	Indeed, let $\GT$ be a maximal $k$\dash split torus for which $C_G(\GT) \leq \GP \cap \GP'$;
	such a torus exists by \cite[4.18]{BorelTits}.
	Also, because $\GG$ has $k$\dash rank $1$ and hence a relative Weyl group of order $2$, \cite[5.9]{BorelTits} implies that $\GP$ and $\GP'$
	are the only two minimal $k$\dash parabolics containing $\GT$.

	Now $\GP$ is opposite to some minimal $k$\dash parabolic $\GP''$, and again there is some maximal $k$\dash split torus $\GS$ such that
	$C_G(\GS) \leq \GP \cap \GP''$.
	Since all maximal $k$\dash split tori are $k$\dash conjugate, it follows that $\GP \cap \GP''$ is $k$\dash conjugate to $\GP \cap \GP'$.
	Using our previous observation that $\GP$ and $\GP'$ are the only two minimal $k$\dash parabolics containing $\GT$,
	we conclude that $\GP$ and $\GP'$ are also opposite to each other, and this proves the claim.

	It follows that for any $\GP \in X$, the group $U_\GP = \radu(\GP)(k)$ acts sharply transitively on the remaining elements of $X$.
	It is also clear that the groups $U_\GP$ are permuted by conjugation by elements of $\GG(k)$,
	and this shows that $\mouf(\GG) := \bigl( X, (U_\GP)_{\GP \in X} \bigr)$ is indeed a Moufang set.
\end{proof}

\begin{example}
	Let $k$ be a commutative field, and let $D$ be a division algebra over $k$ of degree $d$.
	Then the group $\PSL_2(D)$ is (the group of $k$\dash rational points of) an algebraic group $\GG$ over $k$ with Tits index
	\[ A_{2d-1,1}^{(d)} \quad
	\begin{tikzpicture}[line width=1pt, scale=.8, baseline={(0,-.1)}]
	    \draw (0,0) -- (1.3,0);
	    \draw[dotted] (1.3,0) -- (2.7,0);
	    \draw (2.7,0) -- (5.3,0);
	    \draw[dotted] (5.3,0) -- (6.7,0);
	    \draw (6.7,0) -- (8,0);
	    \diagnode{(0,0)}
	    \diagnode{(1,0)}
	    \diagnode{(3,0)}
	    \diagnode{(4,0)}
	    \diagnode{(5,0)}
	    \diagnode{(7,0)}
	    \diagnode{(8,0)}
	    \distorbit{(4,0)}
	\end{tikzpicture} \ , \]
	and every group $\GG$ over $k$ with this Tits index is obtained in this fashion; see \cite[p.\@ 55]{Boulder}.
	The root groups of the Moufang set $\mouf(\GG)$ are isomorphic to the additive group of $D$, and since $\GG(k) = \PSL_2(D)$ is simple,
	it coincides with the little projective group of $\mouf(\GG)$.
	Hence we have recovered the Moufang sets from Example~\ref{ex:PSL2} for skew fields which are finite-dimensional over their center.
	(Clearly, if the skew field is infinite-dimensional over its center, then the corresponding Moufang set cannot arise from an algebraic group in this fashion.)
\end{example}

\begin{definition}[{\cite[0.7]{BorelTits}}]
    A semisimple $k$\dash group $\GG$ is called \emph{$k$\dash simple} (respectively,
    \emph{almost $k$\dash simple}), if every proper normal closed $k$\dash subgroup of $\GG$ is trivial (respectively, finite).
    A semisimple $k$\dash group $\GG$ is called \emph{absolutely simple}, or just \emph{simple}, if $\GG_{\bar k}$ is $\bar k$-simple, where
    $\bar k$ is an algebraic closure of $k$, or, equivalently, if $\GG$ is adjoint and the
    root system of $\GG_{\bar k}$ is irreducible.
\end{definition}

Note that a $k$\dash simple algebraic $k$\dash group $\GG$ is automatically adjoint, since $\Cent(\GG)$ is trivial.
Conversely, an adjoint almost $k$\dash simple group is $k$\dash simple.

\begin{lemma}\label{lem:M(G)-E}
    Let $k$ be a field such that $|k|\ge 4$.
    Let $\GG$ be an adjoint semisimple algebraic $k$\dash group of $k$\dash rank 1. For any two opposite proper parabolic
    subgroups $\GP_+$, $\GP_-$ of $\GG$, the subgroup $\GG(k)^+=\left<\radu(\GP_+)(k),\radu(\GP_-)(k)\right>$
    of $\GG(k)$ is isomorphic to the little projective group of the Moufang set $\mouf(\GG)$.
\end{lemma}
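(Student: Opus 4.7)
The plan is to show that the conjugation homomorphism $\pi \colon \GG(k) \to \Sym(X)$ restricts to an isomorphism from $\GG(k)^+$ onto the little projective group $G^+$ of $\mouf(\GG)$, recalling that by construction $U_\GP = \pi(\radu(\GP)(k))$ for every $\GP \in X$.

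For surjectivity, I would leverage the opposition argument from the proof of Theorem~\ref{th:M(G)}: any two parabolics in $X$ are opposite, so every $\GP \in X \setminus \{\GP_+\}$ has the form $\GP_-^a$ for a unique $a \in \radu(\GP_+)(k)$, and hence $U_\GP = U_{\GP_-}^{\pi(a)} \in \langle U_{\GP_+}, U_{\GP_-}\rangle$. Therefore $G^+ = \langle U_\GP \mid \GP \in X\rangle = \langle U_{\GP_+}, U_{\GP_-}\rangle$, and the image $\pi(\GG(k)^+)$ equals exactly this group.

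For injectivity, let $N := \ker(\pi|_{\GG(k)^+})$; since $k$-parabolics are self-normalizing, $N \subseteq \bigcap_{\GP \in X}\GP(k)$. Writing the adjoint semisimple group $\GG$ as $\GG^{\mathrm{iso}} \times \GG^{\mathrm{an}}$ with $\GG^{\mathrm{iso}}$ almost $k$-simple of $k$-rank $1$ and $\GG^{\mathrm{an}}$ $k$-anisotropic, every $\GP \in X$ has the shape $\GP' \times \GG^{\mathrm{an}}$ for a proper $k$-parabolic $\GP'$ of $\GG^{\mathrm{iso}}$, and $\radu(\GP)(k) = \radu(\GP')(k) \subseteq \GG^{\mathrm{iso}}(k)$; hence $\GG(k)^+ \subseteq \GG^{\mathrm{iso}}(k)$. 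It therefore suffices to prove that the intersection of all proper $k$-parabolics of $\GG^{\mathrm{iso}}$ is trivial. This intersection, as an algebraic subgroup, is preserved by the (transitive) conjugation action of $\GG^{\mathrm{iso}}(k)$ on this parabolic set, and by Borel--Tits Zariski density of $\GG^{\mathrm{iso}}(k)$ in the isotropic group $\GG^{\mathrm{iso}}$, is a normal algebraic subgroup of $\GG^{\mathrm{iso}}$. Since it is also contained in the Levi $\GP_+ \cap \GP_-$, almost-simplicity of $\GG^{\mathrm{iso}}$ forces it to lie in $Z(\GG^{\mathrm{iso}})$, and adjointness yields triviality. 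Consequently $N = 1$.

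The main obstacle is the final step, which combines density of $k$-rational points in isotropic groups with the triviality of the center of an adjoint almost-simple group in order to rule out non-trivial elements fixing every parabolic. The assumption $|k| \ge 4$ is used in the background to guarantee that $\GG(k)^+$ genuinely carries the abstract rank one group structure of Lemma~\ref{le:MSandR1G}(i), but it does not enter the computations above directly.
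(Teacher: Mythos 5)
Your argument takes a genuinely different route from the paper's. The paper identifies the little projective group with $\GG(k)^+/Z(\GG(k)^+)$ via Lemma~\ref{le:MSandR1G}, and then kills the centre by invoking Tits' simplicity theorem (for $|k|\ge 4$, any subgroup of $\GG(k)$ normalized by $\GG(k)^+$ is either central in $\GG$ or contains $\GG(k)^+$), together with adjointness. Your surjectivity step (every $U_\GP$ is a $U_{\GP_+}$-conjugate of $U_{\GP_-}$, by the opposition argument from Theorem~\ref{th:M(G)}) and your reduction of the kernel to $\bigcap_{\GP\in X}\GP(k)$ inside $\GG^{\mathrm{iso}}(k)$ are fine.

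There is, however, a genuine gap in the injectivity step. You deduce that $\bigcap_{\GP}\GP$ is normal in $\GG^{\mathrm{iso}}$ from Zariski density of $\GG^{\mathrm{iso}}(k)$, but the lemma is stated for every field with $|k|\ge 4$, including finite fields such as $\FF_4$ and $\FF_5$, where $\GG^{\mathrm{iso}}(k)$ is a finite set of points and certainly not Zariski dense. Over a finite field there are only finitely many $k$\dash parabolics, so their intersection is a priori larger than the generic one and need not be normal for any soft reason; showing that the kernel of the action of $\GG(k)^+$ on $X$ is trivial in that case is essentially the statement that the kernel of the action on the building is central, which is exactly what the BN-pair/Tits simplicity machinery the paper invokes is designed to give. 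Relatedly, your closing remark misplaces the hypothesis $|k|\ge 4$: Lemma~\ref{le:MSandR1G}(i) needs no such assumption; the hypothesis is there precisely to make Tits' theorem applicable. The fact that your proof never uses it is a warning sign --- your density step has silently replaced it with the stronger assumption that $k$ is infinite, under which your argument does go through.
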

\begin{proof}
    By Lemma~\ref{le:MSandR1G} the group $\GG(k)^+$ is an abstract rank one group corresponding to $\mouf(\GG)$,
    and the little projective group $\mouf(\GG)^+$ is isomorphic to $\GG(k)^+/Z(\GG(k)^+)$. Thus, it is enough
    to show that $Z(\GG(k)^+)=1$. By~\cite[Proposition 6.2]{BoTi-hom} $\GG(k)^+$ coincides with the subgroup of $\GG(k)$ generated
    by the $k$\dash points of all unipotent radicals of all parabolic $k$\dash subgroups of $\GG$.
    Since $|k|\ge 4$, by the main theorem of~\cite{Tits64}, any subgroup
    of $\GG(k)$ normalized by $\GG(k)^+$ is central in $\GG$ or contains $\GG(k)^+$. Clearly, $\GG(k)^+$
    is non-abelian (for example, since it is perfect by~\cite[Corollaire 6.4]{BoTi-hom}). Therefore,
    $Z(\GG(k)^+)\subseteq \Cent(\GG)(k)$. Since $\GG$ is adjoint, $\Cent(\GG)$ is trivial, and hence $Z(\GG(k)^+)$
    is trivial.
\end{proof}

\begin{lemma}\label{lem:M(G)-isog}
    Let $k$ be an arbitrary field, and let $\GG$ be a semisimple algebraic $k$\dash group of $k$\dash rank 1. Let
    $\GP$ be a minimal parabolic $k$\dash subgroup of $\GG$.
    \begin{compactenum}[\rm (i)]
        \item\label{M(G)-isog:isog}
            Let $\GG'$ be a semisimple $k$\dash group and $f \colon \GG\to\GG'$ be a central $k$\dash isogeny.
            Then $\GG'$ has $k$\dash rank 1 and the assignment $\GP\mapsto f(\GP)$ induces an isomorphism of Moufang sets $\mouf(\GG)\cong\mouf(\GG')$.
        \item
            There is a unique minimal closed normal semisimple $k$\dash subgroup $\GG_0$ of $\GG$ such that the $k$\dash rank of $\GG_0$ is 1.
            This $k$\dash subgroup $\GG_0$ is almost $k$\dash simple, and
            the assignment $\GP\mapsto \GP\cap\GG_0$ induces an isomorphism $\mouf(\GG_0)\cong\mouf(\GG)$.
            If $\GG$ is adjoint, then $\GG_0$ is adjoint.
        \item
            If $\GG$ is $k$\dash simple, then there is a finite separable field extension $l/k$ and an
            absolutely simple algebraic $l$-group $\GH$ of $l$-rank 1, such that $\GG$ is $k$\dash isomorphic to the Weil restriction $R_{l/k}(\GH)$.
            The assignment $\GQ\mapsto  R_{l/k}(\GQ)$, where $\GQ$ is a minimal parabolic $l$-subgroup of $\GH$, induces an isomorphism $\mouf(\GH)\cong\mouf(\GG)$.
    \end{compactenum}
\end{lemma}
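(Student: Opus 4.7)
My plan is to treat the three parts separately, in each case reducing the required isomorphism of Moufang sets to a bijection between the proper parabolic $k$\dash subgroups on either side that identifies the $k$\dash points of their unipotent radicals; Lemma~\ref{le:MSisom}(iii), combined with Lemma~\ref{lem:M(G)-E} applied to the associated abstract rank-one groups, will then yield the asserted isomorphism of Moufang sets.

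For (i), I would exploit that a central $k$\dash isogeny $f\colon\GG\to\GG'$ has a finite central kernel consisting of semisimple elements, which therefore meets every unipotent $k$\dash subgroup trivially; consequently $f$ restricts to a $k$\dash isomorphism on the unipotent radicals of corresponding parabolics. Since $f$ also sends maximal $k$\dash split tori onto maximal $k$\dash split tori, $\GG'$ has $k$\dash rank $1$, and $\GP\mapsto f(\GP)$ is a bijection between the proper parabolic $k$\dash subgroups of $\GG$ and those of $\GG'$ that carries $\radu(\GP)(k)$ isomorphically onto $\radu(f(\GP))(k)$; Lemma~\ref{le:MSisom}(iii) finishes this part.

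For (ii), I would use that $\GG$ is an almost direct product $\GG_1\cdots\GG_n$ of its minimal closed normal almost $k$\dash simple subgroups, with the $k$\dash rank of $\GG$ equal to the sum of the $k$\dash ranks of the factors (see \cite[6.21]{BorelTits}). The hypothesis of $k$\dash rank $1$ then forces exactly one factor, call it $\GG_0$, to be isotropic, the others being anisotropic and hence containing no proper parabolic $k$\dash subgroups. It follows that any proper parabolic $k$\dash subgroup of $\GG$ has the form $\GP_0\cdot\prod_{i\neq 0}\GG_i$ with $\GP_0$ a proper parabolic of $\GG_0$, and that $\radu(\GP)=\radu(\GP_0)$; thus $\GP\mapsto\GP\cap\GG_0=\GP_0$ is a bijection that identifies root groups, giving $\mouf(\GG_0)\cong\mouf(\GG)$. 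The uniqueness of $\GG_0$ as a minimal closed normal semisimple $k$\dash subgroup of $k$\dash rank $1$ is then immediate from the decomposition, and if $\GG$ is adjoint then it is a genuine direct product of its adjoint almost $k$\dash simple factors, so $\GG_0$ is adjoint.

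For (iii), I would invoke the standard structure theorem that every almost $k$\dash simple $k$\dash group is $k$\dash isomorphic to a Weil restriction $R_{l/k}(\GH)$ with $l/k$ a finite separable extension and $\GH$ absolutely almost simple over $l$ (see \cite[6.21(ii)]{BorelTits}); since the $k$\dash rank of $R_{l/k}(\GH)$ equals the $l$\dash rank of $\GH$, the group $\GH$ has $l$\dash rank $1$. The compatibility of Weil restriction with parabolic subgroups and unipotent radicals yields a bijection $\GQ\mapsto R_{l/k}(\GQ)$ between proper parabolic $l$\dash subgroups of $\GH$ and proper parabolic $k$\dash subgroups of $\GG$ satisfying
\[
    \radu\bigl(R_{l/k}(\GQ)\bigr)(k) \;=\; R_{l/k}\bigl(\radu(\GQ)\bigr)(k) \;=\; \radu(\GQ)(l),
\]
which identifies the root groups, so another application of Lemma~\ref{le:MSisom}(iii) produces the required isomorphism $\mouf(\GH)\cong\mouf(\GG)$.

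The main technical point common to all three parts is the careful identification of root groups with $k$\dash points of unipotent radicals under the maps in play: in (i) it rests on the kernel of a central isogeny being of multiplicative type and hence disjoint from unipotent subgroups, in (ii) on the anisotropic factors admitting no proper parabolic $k$\dash subgroups, and in (iii) on the compatibility of unipotent radicals with Weil restriction together with the standard identity $R_{l/k}(\GX)(k)=\GX(l)$ for affine $l$\dash schemes $\GX$. None of these steps is deep; the only substantial result invoked from the literature is the Weil-restriction structure theorem for almost $k$\dash simple groups used in~(iii).
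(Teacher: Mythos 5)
Your proof follows essentially the same route as the paper's in all three parts: for (i), the key point is that the kernel of a central isogeny, being central in a semisimple group and hence of multiplicative type, meets the unipotent radicals trivially (the paper phrases this as $\ker f$ being contained in every Levi subgroup of every minimal parabolic); for (ii), the additivity of $k$\dash rank over the decomposition into minimal normal almost $k$\dash simple factors, with the anisotropic factors absorbed into every parabolic so that $\radu(\GP)=\radu(\GP\cap\GG_0)$; for (iii), the Weil restriction structure theorem together with $\radu\bigl(R_{l/k}(\GQ)\bigr)(k)=\radu(\GQ)(l)$. One correction is needed in your framing: you should not route the conclusion through Lemma~\ref{lem:M(G)-E}, whose hypotheses ($|k|\ge 4$ and $\GG$ adjoint) are not assumed here --- the present lemma is asserted for arbitrary fields and arbitrary semisimple groups of $k$\dash rank $1$. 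The detour is also unnecessary: $\mouf(\GG)$ is by definition the set of proper parabolic $k$\dash subgroups together with the groups $\radu(\GP)(k)$ acting by conjugation, so the bijections on parabolics that you construct, which carry the $k$\dash points of the unipotent radicals isomorphically onto one another compatibly with the conjugation action, yield isomorphisms of Moufang sets directly from Definition~\ref{def:misom}, exactly as in the paper. A second minor point in (iii): the structure theorem you cite produces $\GH$ absolutely \emph{almost} simple; to obtain $\GH$ absolutely simple as the statement requires, observe that $\GG$ is adjoint (being $k$\dash simple) and that Weil restriction preserves centres, so $\GH$ is adjoint as well.
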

\begin{proof}
    \begin{compactenum}[\rm (i)]
        \item
            By~\cite[Theorem 22.6]{Bo-book} parabolic subgroups of $\GG'$ are the images of parabolic subgroups
            of $\GG$, and parabolic subgroups of $\GG$ are preimages of parabolic subgroups of $\GG'$. Since $f$
            is central, $\ker f$ is contained in any Levi subgroup of any minimal parabolic subgroup $\GP$ of $\GG$.
            Therefore, $f$ induces a bijection beween the sets of minimal parabolic subgroups of $\GG$ and $\GG'$,
            and for any minimal parabolic subgroup $\GP$ of $\GG$ we have $\radu(\GP)\cong \radu(f(\GP))$.
            Therefore, $f$ induces an isomorphism $\mouf(\GG)\cong\mouf(\GG')$.
            (See also~\cite[Th\'eor\`eme 2.20]{BorelTits-compl}.)
        \item
            By~\cite[Theorem 22.10]{Bo-book} there is a central $k$\dash isogeny
            $f \colon \prod\limits_{i=0}^n\GG_i\to \GG$, where
            $\GG_0,\GG_1,\ldots,\GG_n$ are all minimal closed connected normal $k$\dash subgroups of $\GG$ of positive dimension.
            Also, $\GG_0,\ldots,\GG_n$ are almost $k$\dash simple. By~\cite[Proposition 22.9]{Bo-book} the $k$\dash rank of $\GG$
            is equal to the sum of $k$\dash ranks of $\GG_i$, $0\le i\le n$. Therefore, there is a unique index $i$ such that
            the $k$\dash rank of $\GG_i$ is 1; we can assume that $i=0$ without loss of generality. Since, clearly, $\Cent(\GG_0)\subseteq\Cent(\GG)$,
            we conclude that $\GG_0$ is adjoint, once $\GG$ is adjoint.

            In order to show that the assignment $\GP\mapsto \GP\cap\GG_0$ induces an isomorphism of Moufang sets,
            by~\eqref{M(G)-isog:isog} we can assume that $\prod\limits_{i=0}^n\GG_i=\GG$.
            Let $\GP$ be a minimal parabolic $k$\dash subgroup of $\GG$, and let $\GS$ be
            a maximal split $k$\dash subtorus $\GS$ of $\GG$ such that $\Cent_{\GG}(\GS)$ is a Levi subgroup of $\GP$.
            Clearly, $\GS$ is contained in $\GG_0$. Hence
            \begin{equation}\label{eq:Gi-Levi}
            \prod_{i=1}^n\GG_i\subseteq \Cent_{\GG}(\GS)\subseteq\GP.
            \end{equation}
            This shows that the map $\GP\mapsto\GP\cap\GG_0$ is an injection from the set of minimal parabolic subgroups
            of $\GG$ to the set of smooth closed $k$\dash subgroups of $\GG_0$.
            Furthermore, since $\GG_0/\GP\cap\GG_0\cong\GG/\GP$ is projective, we conclude that $\GP\cap\GG_0$ is parabolic.
            Conversely, for any proper parabolic $k$\dash subgroup $\GQ$ of $\GG_0$, the proper smooth
            $k$\dash subgroup $\GP=\GQ\times\prod\limits_{i=1}^n\GG_i$ of $\GG$ is parabolic.
            This shows that the minimal parabolic subgroups of $\GG$ and $\GG_0$ are in bijective correspondence.
            Moreover, by~\eqref{eq:Gi-Levi} we have
            \[
            \radu(\GP)=\radu(\GQ\times\prod_{i=1}^n\GG_i)=\radu(\GQ).
            \]
            Hence the map $\GP\mapsto\GP\cap\GG_0$ induces an isomorphism $\mouf(\GG)\cong\mouf(\GG_0)$.
        \item
            Since $\GG$ is $k$\dash simple, it is adjoint.
            By~\cite[Exp. XXIV Proposition 5.10]{SGA3} any adjoint semisimple
            $k$\dash group is isomorphic to a direct product of semisimple groups of the form $R_{A/k}(\GH)$, where $A$ is a finite-dimensional
            commutative \'etale $k$\dash algebra, and $\GH$ is an adjoint semisimple group scheme over $A$ such that
            $\GH_{\overline{k(s)}}$ is simple for any $s\in\Spec A$. Since $\GG$ is $k$\dash simple,
            there can be only one factor of this form, that is, $\GG\cong R_{A/k}(\GH)$. Further, such an algebra $A$
            over $k$ is isomorphic to a finite product $l_1\times l_2\times\ldots\times l_n$, where $l_i$, $1\le i\le n$, are
            finite separable field extensions of $k$. Since $R_{l_1\times l_2\times\ldots\times l_n/k}(\GH)\cong\prod_{i=1}^nR_{l_i/k}(\GH)$,
            we conclude that $n=1$, that is, $\GG\cong R_{l/k}(\GH)$, where $l$ is a finite separable field extension of $k$
            and $\GH$ is an adjoint absolutely simple $l$-group.
            (See also~\cite[6.21(ii)]{BorelTits} for a similar statement for semisimple almost $k$\dash simple groups.)

            By~\cite[Corollaire 6.19]{BorelTits} the functor $R_{l/k}$ provides a bijection between the sets of
            parabolic $l$-subgroups of $\GH$ and parabolic $k$\dash subgroups of $\GG$.
            Since the $k$\dash rank of $\GG$ is 1, every proper parabolic $k$\dash subgroup of $\GG$ is minimal, hence
            the same is true for $\GH$. This proves that $\GH$ has $l$-rank~1. Further, for any
            any minimal parabolic $l$-subgroup $\GQ$ of $\GH$, we have
            $\radu(R_{l/k}\bigl(\GQ)\bigr)(k)=R_{l/k}\bigl(\radu(\GQ)\bigr)(k)=\radu(\GQ)(l)$. Then, clearly,
            $R_{l/k}$ induces an isomorphism $\mouf(\GH)\cong\mouf(\GG)$.
        \qedhere
    \end{compactenum}
\end{proof}

\begin{remark}
	For later reference, we present the complete list of {\em exceptional} Tits indices of relative rank $1$ in Table~\ref{ta:exc};
    there are exactly nine families.
	In every case, each root group $U=U_{\GP}$ is either an abelian group (when the relative root system is of type~$A_1$),
	or $U$ is nilpotent of class two (when the relative root system is of type~$BC_1$;
	in either case, both $Z(U)$ and $U/Z(U)$ have the structure of a vector space over $k$,
	and their dimensions can be computed from the root systems.
	See, for example, \cite[p.\@ 44]{Boulder}.

	\begin{table}[ht!]
	\renewcommand{\arraystretch}{2.6}
	\hspace*{-4ex}
	\begin{center}
	\begin{tabular}{|c|c|c|c|}
		\hline
		{index} & {diagram} & $\dim(U/Z(U))$ & $\dim(Z(U))$ \\
		\hline
		\hline
		$\prescript{3,6}{}\!D_{4,1}^9$ &
		\begin{tikzpicture}[line width=1pt, scale=.7, baseline={(0,-.2)}]
		    \draw (0,0) arc (180:90:.5) -- (1,.5);
		    \draw (0,0) arc (180:270:.5) -- (1,-.5);
		    \draw (0,0) -- (1,0);
		    \diagnode{(0,0)}
		    \diagnode{(1,0)}
		    \diagnode{(1,.5)}
		    \diagnode{(1,-.5)}
		    \distorbit{(0,0)}
		\end{tikzpicture}
		& $8$ & $1$
		\\
		\hline
		$F_{4,1}^{21}$ &
		\begin{tikzpicture}[line width=1pt, scale=.7, baseline={(0,-.2)}]
			\draw (0,0) -- (1,0);
			\draw (1,.10) -- (2,.10);
			\draw (1,-.10) -- (2,-.10);
			\draw (2,0) -- (3,0);
			\draw[line width=.7pt] (1.7,.3) -- (1.4,0) -- (1.7,-.3);
			\diagnode{(0,0)}
			\diagnode{(1,0)}
			\diagnode{(2,0)}
			\diagnode{(3,0)}
			\distorbit{(0,0)}
		\end{tikzpicture}
		& $8$ & $7$
		\\
		\hline
		$^2\!E_{6,1}^{35}$ &
		\begin{tikzpicture}[line width=1pt, scale=.7, baseline={(0,-.2)}]
		    \draw (0,0) -- (1,0);
		    \draw (1,0) arc (180:90:.5) -- (3,.5);
		    \draw (1,0) arc (180:270:.5) -- (3,-.5);
		    \diagnode{(0,0)}
		    \diagnode{(1,0)}
		    \diagnode{(2,.5)} \diagnode{(2,-.5)}
		    \diagnode{(3,.5)} \diagnode{(3,-.5)}
		    \distorbit{(0,0)}
		\end{tikzpicture}
		& $20$ & $1$
		\\
		\hline
		$^2\!E_{6,1}^{29}$ &
		\begin{tikzpicture}[line width=1pt, scale=.7, baseline={(0,-.2)}]
		    \draw (0,0) -- (1,0);
		    \draw (1,0) arc (180:90:.5) -- (3,.5);
		    \draw (1,0) arc (180:270:.5) -- (3,-.5);
		    \diagnode{(0,0)}
		    \diagnode{(1,0)}
		    \diagnode{(2,.5)} \diagnode{(2,-.5)}
		    \diagnode{(3,.5)} \diagnode{(3,-.5)}
		    \distlongorbit{3}
		\end{tikzpicture}
		& $16$ & $8$
		\\
		\hline
		$E_{7,1}^{78}$ &
		\begin{tikzpicture}[line width=1pt, scale=.7, baseline={(0,.1)}]
		    \draw (0,0) -- (5,0);
		    \draw (2,0) -- (2,1);
		    \diagnode{(0,0)}
		    \diagnode{(1,0)}
		    \diagnode{(2,0)}
		    \diagnode{(3,0)}
		    \diagnode{(4,0)}
		    \diagnode{(5,0)}
		    \diagnode{(2,1)}
		    \distorbit{(5,0)}
		\end{tikzpicture}
		& $0$ & $27$
		\\
		\hline
		$E_{7,1}^{66}$ &
		\begin{tikzpicture}[line width=1pt, scale=.7, baseline={(0,.1)}]
		    \draw (0,0) -- (5,0);
		    \draw (2,0) -- (2,1);
		    \diagnode{(0,0)}
		    \diagnode{(1,0)}
		    \diagnode{(2,0)}
		    \diagnode{(3,0)}
		    \diagnode{(4,0)}
		    \diagnode{(5,0)}
		    \diagnode{(2,1)}
		    \distorbit{(0,0)}
		\end{tikzpicture}
		& $32$ & $1$
		\\
		\hline
		$E_{7,1}^{48}$ &
		\begin{tikzpicture}[line width=1pt, scale=.7, baseline={(0,.1)}]
		    \draw (0,0) -- (5,0);
		    \draw (2,0) -- (2,1);
		    \diagnode{(0,0)}
		    \diagnode{(1,0)}
		    \diagnode{(2,0)}
		    \diagnode{(3,0)}
		    \diagnode{(4,0)}
		    \diagnode{(5,0)}
		    \diagnode{(2,1)}
		    \distorbit{(4,0)}
		\end{tikzpicture}
		& $32$ & $10$
		\\
		\hline
		$E_{8,1}^{133}$ &
		\begin{tikzpicture}[line width=1pt, scale=.7, baseline={(0,.1)}]
		    \draw (0,0) -- (6,0);
		    \draw (2,0) -- (2,1);
		    \diagnode{(0,0)}
		    \diagnode{(1,0)}
		    \diagnode{(2,0)}
		    \diagnode{(3,0)}
		    \diagnode{(4,0)}
		    \diagnode{(5,0)}
		    \diagnode{(6,0)}
		    \diagnode{(2,1)}
		    \distorbit{(6,0)}
		\end{tikzpicture}
		& $56$ & $1$
		\\
		\hline
		$E_{8,1}^{91}$ &
		\begin{tikzpicture}[line width=1pt, scale=.7, baseline={(0,.1)}]
		    \draw (0,0) -- (6,0);
		    \draw (2,0) -- (2,1);
		    \diagnode{(0,0)}
		    \diagnode{(1,0)}
		    \diagnode{(2,0)}
		    \diagnode{(3,0)}
		    \diagnode{(4,0)}
		    \diagnode{(5,0)}
		    \diagnode{(6,0)}
		    \diagnode{(2,1)}
		    \distorbit{(0,0)}
		\end{tikzpicture}
		& $64$ & $14$
		\\
		\hline
	\end{tabular}
	\renewcommand{\arraystretch}{1}
    \newline
	\caption{Exceptional Tits indices of relative rank one}\label{ta:exc}
	\end{center}
	\vspace*{-2ex}
	\end{table}
\end{remark}

\section{Moufang sets from Jordan algebras}\label{MS:Jordan}

We first recall the definition of quadratic Jordan algebras, as introduced by K.~McCrimmon \cite{Mc1}.

\begin{definition}
        Let $k$ be an arbitrary commutative field,
        let $J$ be a vector space over $k$ of arbitrary dimension,
        and let $1\in J^*$ be a distinguished element.
        For each $x\in J$, let%
        \footnote{It is customary to denote these operators by $U_x$, and we hope that this will not cause any confusion with our similar notation for the root groups
        of a Moufang set.  In fact, this notation is so standard that these operators are often referred to as {\em the U-operators} of the Jordan algebra.}
        $U_x\in\End_k(J)$,
        and assume that the map
        $U \colon J \to \End(J) \colon x \mapsto U_x$ is quadratic, i.e.
        \begin{align*}
                &U_{tx} = t^2 U_x \text{ for all $t\in k$, and} \\
                &\text{the map }(x,y) \mapsto U_{x,y}\text{ is $k$\dash bilinear},
        \end{align*}
        where
        \[ U_{x,y} := U_{x+y}-U_x-U_y \]
        for all $x, y\in J$.
        Let
        \[  V_{x,y} z := \{ x \ y \ z \} := U_{x,z} y \]
        for all $x, y, z\in J$.
        Then the triple $(J, U, 1)$ is a \textit{quadratic Jordan algebra} if the identities
        \begin{compactitem}
           \item[(QJ$_1$)]  $U_1={\rm id}_J$\,;
           \item[(QJ$_2$)]  $U_xV_{y,x}=V_{x,y}U_x$\,;
           \item[(QJ$_3$)]  $U_{U_x y} = U_xU_yU_x$ \qquad [``the fundamental identity''] \label{page:QJ}
        \end{compactitem}
        hold \textit{strictly}, i.e.\@ if they continue to hold
        in all scalar extensions of $J$.  (It suffices for them to hold in the polynomial
        extension $J_{k[t]}$ and this is automatically true if the base field $k$ has at least $4$
        elements.)

        An element $x\in J$ is called \textit{invertible} if there exists $y\in J$
        such that
        \[ U_x y = x \quad \text{ and } \quad U_x U_y 1 = 1. \]
        In this case $y$ is called the \textit{inverse} of $x$ and is denoted $y=x^{-1}$.
        An element $x\in J$ is invertible if and only if $U_x$ is invertible, and we then have $U_x^{-1}=U_{x^{-1}}$.
        If all elements in $J^*$ are invertible, then $(J, U, 1)$ is called a quadratic Jordan \textit{division} algebra.

        We will often simply write $J$ in place of the triple $(J, U, 1)$.
\end{definition}

\begin{theorem}[\cite{DW}]\label{th:jordan}
	Let $J$ be a quadratic Jordan division algebra.
	Let $U$ be the additive group of $J$, and let $\tau \colon U^* \to U^* \colon x \mapsto -x^{-1}$.
	Then $\mouf(U,\tau)$ is a Moufang set, which we will denote by $\mouf(J)$.
	Moreover, for each $a \in U^*$, the Hua map $h_a$ coincides with the map $U_a$.
\end{theorem}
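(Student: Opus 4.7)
The plan is to apply the construction in Remark~\ref{rem:constrMSUtau} backwards: given the data $(U, \tau)$ with $U = (J, +)$ and $\tau(x) = -x^{-1}$, verify that this data satisfies the condition from \cite[Section 3]{DS} that guarantees it defines a Moufang set, and then derive the formula $h_a = U_a$ as a byproduct of the explicit form of~$\mu_a$. The first (easy) step is to check that $\tau$ is a well-defined involutive permutation of $U^* = J^*$: since $J$ is a Jordan \emph{division} algebra, every non-zero element has a Jordan inverse, and the identities $(-x)^{-1} = -(x^{-1})$ and $(x^{-1})^{-1} = x$ (both immediate from $U_{-x} = U_x$ and $U_x(x^{-1}) = x$) give $\tau^2 = \mathrm{id}_{U^*}$.

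The core of the proof is to compute the candidate $\mu$-map explicitly using Lemma~\ref{compute tau}. Using $\tau^{-1} = \tau$ together with $(-a).\tau = a^{-1} = -(a.\tau)$, that formula rewrites as
\[
\mu_a \;=\; \tau \, \alpha_{a^{-1}} \, \tau \, \alpha_a \, \tau \, \alpha_{a^{-1}} \, \tau,
\]
and the key claim is that, extended by $0 \leftrightarrow \infty$, its action on any $y \in J^*$ is $y.\mu_a = -U_a(y^{-1})$. Unwinding the composition step by step (each $\tau$ performs inversion-and-negation, each $\alpha_{a^{-1}}$ or $\alpha_a$ performs a translation) reduces this claim to a single Jordan-theoretic identity, equivalent to the classical Hua identity for quadratic Jordan algebras. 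This is the main technical obstacle: it must be derived strictly from (QJ$_1$)--(QJ$_3$) and their corollaries (such as $U_{x^{-1}} = U_x^{-1}$ and the symmetric McCrimmon identities), since associative manipulations are unavailable in the general Jordan setting. Once this identity is in hand, $\mu_a$ swaps $0$ and $\infty$ in $X = U \cup \{\infty\}$, and the relation $\tau \alpha_y \tau = \mu_a \alpha_{y'} \mu_a^{-1}$ for a suitable $y'$ follows, so the conditions of Remark~\ref{rem:constrMSUtau} are satisfied and $\mouf(U, \tau)$ is a Moufang set.

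With $\mu_a(y) = -U_a(y^{-1})$ established, the claim that $h_a = U_a$ follows by a one-line computation:
\[
y.h_a \;=\; y.\tau.\mu_a \;=\; -U_a\bigl((-y^{-1})^{-1}\bigr) \;=\; -U_a(-y) \;=\; U_a(y),
\]
where the penultimate step combines $(-z)^{-1} = -z^{-1}$ with $(y^{-1})^{-1} = y$, and the last step uses that $U_a$ is $k$-linear in its argument. In particular the additivity of $h_a$ demanded by Definition~\ref{def:hua} is immediate from the linearity of $U_a$, so no further verification is needed to conclude that $h_a$ induces an automorphism of $(U, +)$.
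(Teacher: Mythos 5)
Your overall strategy is the right one, and it is essentially the strategy of the cited source \cite{DW} (the paper itself gives no proof of Theorem~\ref{th:jordan}, deferring entirely to \cite{DW}): extend $\tau$ by $0\leftrightarrow\infty$, form the candidate $\mu$-maps via the formula of Lemma~\ref{compute tau}, show that $y.\mu_a=-U_a(y^{-1})$, and invoke the criterion of \cite[Section~3]{DS} by observing that $h_a=\tau\mu_a=U_a$ is an additive bijection of $U$. Your bookkeeping is correct: $(-a).\tau^{-1}=-(a.\tau^{-1})=a^{-1}$ does collapse the $\mu$-formula to $\tau\,\alpha_{a^{-1}}\,\tau\,\alpha_a\,\tau\,\alpha_{a^{-1}}\,\tau$, this visibly swaps $0$ and $\infty$, and the final computation $y.h_a=(-y^{-1}).\mu_a=U_a(y)$ is correct.

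The genuine gap is that the entire mathematical content of the theorem is concentrated in the one step you defer. Unwinding $\tau\,\alpha_{a^{-1}}\,\tau\,\alpha_a\,\tau\,\alpha_{a^{-1}}\,\tau$ gives
\[
y.\mu_a=-\Bigl(a^{-1}-\bigl(a-(a^{-1}-y^{-1})^{-1}\bigr)^{-1}\Bigr)^{-1},
\]
so the claim $y.\mu_a=-U_a(y^{-1})$ \emph{is} Hua's identity for quadratic Jordan algebras, and proving it from (QJ$_1$)--(QJ$_3$) is precisely the hard part of \cite{DW}; naming it as ``the main technical obstacle'' and then treating it as available is not a proof. (It is a known identity, but its derivation requires a nontrivial chain of consequences of the fundamental identity --- linearized $U$-identities, $U_{x^{-1}}=U_x^{-1}$, the relation $U_x V_{x^{-1},y}=V_{y,x^{-1}}U_x$, etc. --- and must be carried out or precisely cited.) A second, smaller omission: the step-by-step unwinding passes through intermediate expressions such as $(a^{-1}-y^{-1})^{-1}$ and $a-(a^{-1}-y^{-1})^{-1}$, which are $0$ or $\infty$ for certain $y$ (e.g.\ $y=a$, or $y$ with $a^{-1}-y^{-1}$ not invertible --- which cannot happen in a division algebra unless it is zero, but the zero case does occur). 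These exceptional points must be checked separately against the formula $-U_a(y^{-1})$; the verification is easy but cannot be skipped, since the identity of permutations of $X$ is being asserted pointwise.
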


\begin{remark}\label{rem:ab-rootgroups}
	All known examples of proper Moufang sets with abelian rout groups can be described in this fashion,
	but it seems a hard problem to determine whether there exist other examples.
\end{remark}

\begin{remark}
    Over fields $k$ with $\Char(k) \neq 2$, an equivalent (and in fact the more classical) definition is that $J$ is a Jordan algebra over $k$
    if it is a unital commutative non-associative%
    \footnote{It is customary to talk about non-commutative or non-associative algebras to mean
        ``not necessarily commutative'' and ``not necessarily associative'' algebras, respectively.}
    algebra such that
    \[ (x^2 \cdot y) \cdot x = x^2 \cdot (y \cdot x) \]
    for all $x,y \in J$.
    The correspondence with the definition of a quadratic Jordan algebra is given by
    $U_x y = 2x \cdot (x \cdot y) - x^2 \cdot y$ for all $x,y \in J$.
    In particular,
    \begin{equation}\label{eq:JordanV}
        V_{x,y} z = \{ x \ y \ z \} = 2 \bigl( (x \cdot y) \cdot z + (z \cdot y) \cdot x - (z \cdot x) \cdot y \bigr)
    \end{equation}
    for all $x,y,z \in J$.
\end{remark}

\begin{example}\label{ex:abmoufsets}
\begin{compactenum}[(i)]
    \item
        Let $D$ be an associative division algebra over $k$, and let $J=D^+$ be the quadratic Jordan algebra defined by $D$, i.e.\@ we define $U_a b := aba$
        for all $a,b \in D$.
        (When $\Char(k) \neq 2$, this means that we define a Jordan multiplication by the rule $a \cdot b := (ab + ba)/2$.)
        Then $\mouf(J) = \mouf(D)$, where $\mouf(D)$ is as in Example~\ref{ex:PSL2}.
    \item\label{it:Albert}
        Let $J$ be an exceptional quadratic Jordan division algebra (i.e.\@ an Albert division algebra).
        In this case, the corresponding Moufang set arises from a linear algebraic group of type $E_{7,1}^{78}$;
        this follows, for instance, from \cite[14.31]{Springer} (over fields of any characteristic).
\end{compactenum}
\end{example}

\section{Moufang sets from skew-hermitian forms}

A large class of Moufang sets with non-abelian root groups arises from skew-hermitian forms
(see Definition~\ref{def:mouf-skewherm} below).
In this section, we will show that, when the skew-hermitian form is finite-dimensional over a skew field which is finite-dimensional over its center,
then such a Moufang set arises from a classical linear algebraic group,
and conversely, every Moufang set arising from a classical linear algebraic group and with non-abelian root groups is isomorphic to
the Moufang set of a skew-hermitian form.

We will restrict to the case where the center of the underlying skew field has characteristic different from $2$,
although this result can be generalized to arbitrary characteristic.
However, this requires the more general notion of a pseudo-quadratic form, and the resulting unitary groups are not always reductive
but only pseudo-reductive in general.

\begin{definition}
    Let $D$ be a skew field with involution $\sigma$, and let $V$ be a non-trivial right $D$-module.
    Assume that the center $k := Z(D)$ has $\cha(k) \neq 2$.
    \begin{compactenum}[\rm (i)]
        \item
            A {\em skew-hermitian form} on $V$ is a bi-additive map $h \colon V \times V \to D$ such that
            $h(v,w)^\sigma = -h(w,v)$ and
            $h(va, wb) = a^\sigma h(v,w) b$ for all $v,w \in V$ and all $a,b \in D$.
        \item
            We will use the notation
            \[ D_\sigma := \{ a + a^\sigma \mid a \in D \} = \Fix_D(\sigma) , \]
            where the equality holds by our assumption that $\cha(k) \neq 2$.
        \item
            A skew-hermitian form $h$ on $V$ is {\em non-degenerate} if $h(v,V) = 0$ only for $v=0$,
            and is {\em anisotropic} if $h(v,v) = 0$ only for $v = 0$.
            In particular, if $h$ is anisotropic, then $h$ is non-degenerate.
            Notice that when $h$ is anisotropic, then in fact $h(v,v) \not\in D_\sigma$ whenever $v \neq 0$ because $h(v,v)^\sigma = -h(v,v)$ for all $v \in V$.
        \item
            Let $h \colon V \times V \to D$ be a skew-hermitian form on $V$.
            The {\em unitary group} $U(h)$ is defined as
            \[ U(h) := \{ \varphi \in \GL_D(V) \mid h(\varphi(v), \varphi(w)) = h(v,w) \text{ for all } v,w \in V \} . \]
    \end{compactenum}
\end{definition}

\begin{definition}\label{def:mouf-skewherm}
    Let $D$ be a skew field with involution $\sigma$, let $V$ be a non-trivial right $D$-module,
    and let $h \colon V \times V \to D$ be an anisotropic skew-hermitian form on $V$.
    Define a group
    \[ U := \{ (v, a) \in V \times D \mid h(v,v) = a - a^\sigma \} , \]
    with group operation
    \[ (v, a) + (w, b) := \bigl( v + w, a + b + h(w,v) \bigr) , \]
    and define a map $\tau \in \Sym(U^*)$ given by
    \[ (v, a).\tau := (-va^{-1}, a^{-1}) \]
    for all $(v,a) \in U^*$.
    Then $\mouf(U, \tau)$ is a Moufang set, which we call a {\em skew-hermitian Moufang set} or a {\em Moufang set of skew-hermitian type},
    and which we denote by $\mouf(D, \sigma, V, h)$, or simply by $\mouf(h)$.
\end{definition}

\begin{theorem}\label{th:mouf-herm}
    Let $D$ be a skew field with involution $\sigma$, let $V$ be a non-trivial right $D$-module,
    and let $h \colon V \times V \to D$ be an anisotropic skew-hermitian form on $V$.
    Assume that $D$ is finite\dash dimensional over its center $k=Z(D)$, and that $V$ is finite-dimensional over $D$.
    Let $\hat V := D \oplus V \oplus D$, and consider the skew-hermitian form
    \[ \hat h \colon \hat V \times \hat V \to D \colon \bigl((a, v, b), (c, w, d)\bigr) \mapsto a^\sigma d - b^\sigma c + h(v,w) \]
    of Witt index $1$.
    Then the unitary group $U(\hat h)$ is a semisimple linear algebraic group of $k$\dash rank $1$,
    and its Moufang set is isomorphic to $\mouf(h)$.
\end{theorem}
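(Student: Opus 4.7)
The plan is to verify the statement in three stages. First, that $U(\hat h)$ is a semisimple linear algebraic $k$\dash group of $k$\dash rank one; second, that the unipotent radicals of two opposite minimal parabolic $k$\dash subgroups, as abstract groups with their given actions, coincide with the group $U$ of Definition~\ref{def:mouf-skewherm}; third, that the resulting Moufang set $\mouf(U(\hat h))$ is isomorphic to $\mouf(h)$.

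For the first stage, since $h$ is anisotropic it is in particular non-degenerate, and an orthogonal sum with a hyperbolic plane preserves non-degeneracy, so $\hat h$ is non-degenerate. Because $D$ is finite-dimensional over its center $k$, the unitary group $U(\hat h)$ is a classical semisimple linear algebraic $k$\dash group of one of the standard types (viewed as a closed $k$\dash subgroup of a Weil restriction of $\GL_D(\hat V)$); this is part of the classical Weil--Tits structure theory of classical groups over skew fields, which we may invoke as a black box. The $k$\dash rank of a unitary group equals the Witt index of the form, so I would next show that $\hat h$ has Witt index exactly $1$: the hyperbolic plane $\langle e_+,e_-\rangle$ gives Witt index at least $1$, while any totally $\hat h$\dash isotropic subspace must project to a totally $h$\dash isotropic subspace of $V$, which is trivial by anisotropy of $h$.

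For the second stage, set $\ell_\pm := De_\pm$ and $\GP_\pm := \Stab_{U(\hat h)}(\ell_\pm)$. These are opposite minimal parabolic $k$\dash subgroups since $\ell_+ \oplus \ell_-$ is non-degenerate and its radical in $\hat V$ equals $V$. The key computation, and the main technical point, is to describe $\radu(\GP_+)(k)$ explicitly. Writing an element of $\radu(\GP_+)(k)$ in block form relative to the decomposition $\hat V = De_+ \oplus V \oplus De_-$ and imposing that it acts as the identity on the three graded pieces of the flag $\ell_+ \subset \ell_+^\perp \subset \hat V$, one sees that it is determined by its image of $e_-$, which must have the form $e_- \mapsto a e_+ + v + e_-$ for a pair $(v,a)\in V\times D$. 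Enforcing $\hat h$\dash isometry then forces the action on $w\in V$ to be $w \mapsto h(v,w)e_+ + w$, and reduces the equation $\hat h(\varphi(e_-),\varphi(e_-))=0$ to the single constraint $h(v,v) = a - a^\sigma$. Composing two such maps gives precisely $(v,a)(w,b) = (v+w,\, a+b+h(w,v))$, which matches the group law of $U$ in Definition~\ref{def:mouf-skewherm}; a symmetric computation handles $\radu(\GP_-)(k)$.

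For the third stage, I would define a Weyl-type element $w_0 \in U(\hat h)$ swapping $e_+$ and $-e_-$ (acting as the identity on $V$), verify that it lies in $U(\hat h)$, and observe that it interchanges $\ell_+$ with $\ell_-$. Fixing $\infty := \ell_+$ and $0 := \ell_-$ in Construction~\ref{constr:MS}, $w_0$ serves as a valid $\tau$. A short matrix calculation of $w_0^{-1}\alpha_{(v,a)}w_0$ (or, equivalently, of the image of the parabolic $\ell_- . \alpha_{(v,a)}$ under conjugation by $w_0$) gives $(v,a).\tau = (-va^{-1}, a^{-1})$, matching the $\tau$ of Definition~\ref{def:mouf-skewherm}. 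Applying Lemma~\ref{le:MSisom}(iii) together with the identification of root groups from stage two then yields the isomorphism of Moufang sets. The main obstacle is the bookkeeping in stage three: several sign conventions and the choice of $w_0$ interact, and one may need to absorb a sign into the Hua group using Lemma~\ref{le:MSisom}(ii); an alternative and cleaner route is to bypass the explicit $\tau$ entirely by checking directly that $\bigl\langle \radu(\GP_+)(k), \radu(\GP_-)(k)\bigr\rangle$ is an abstract rank one group whose unipotent subgroups are abstractly isomorphic to $U$, and then invoke Lemma~\ref{lem:M(G)-E} together with Lemma~\ref{le:MSisom}(iii).
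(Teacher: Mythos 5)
Your proposal is correct and follows essentially the same route as the paper: the semisimplicity and rank-one statement is taken as known, the unipotent radical of the stabilizer of the isotropic line is computed explicitly and identified with the group $U$ of Definition~\ref{def:mouf-skewherm}, and the root-group actions are matched. Your stage-three variant (exhibiting the Weyl element $w_0$ and reading off $\tau$ directly) is precisely what the paper carries out in the remark immediately following its proof, where $\mu_{0,1}$ is computed to be the map $(a,v,b)\mapsto(b,v,-a)$.
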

\begin{proof}
	The fact that $G = U(\hat h)$ is a semisimple linear algebraic group of $k$\dash rank~$1$ is well known.
    One possible reference, which also includes the description in characteristic $2$ using pseudo-quadratic forms,
    is \cite[Section~1]{BruhatTitsII}.


	Following the same ideas as in \cite[Example 6.6(3)]{Borel}, we see that the root groups can be described as follows.
    (See also \cite[(10.1.2)]{BruhatTits}.
    We are grateful to T.N.~Venkataramana for an enlightening discussion.)
    Let $w := (1, 0, 0) \in \hat V$, and consider the partial flag
    \[ 0 \leq \langle w \rangle \leq \langle w \rangle^\perp \leq \hat V . \]
    (Notice that $\langle w \rangle^\perp = D \oplus V \oplus 0 \leq \hat V$.)
    The set of elements of $U(\hat h)$ stabilizing this partial flag is a parabolic $k$\dash subgroup $P$ of $G$.
    Its unipotent radical consists of the elements of $P$ acting trivially on $\langle w \rangle$, on $\langle w \rangle^\perp /\langle w \rangle$
    and on $\hat V / \langle w \rangle^\perp$.

    For each element $(z,s) \in U$, consider the map
    \[ \alpha_{z,s} \colon \hat V \to \hat V \colon (a,v,b) \mapsto \bigl( a + h(z,v) + sb, v+zb, b \bigr) . \]
    We claim that the unipotent radical of $P$ consist precisely of these elements $\alpha_{z,s}$.
    First, the elements $\alpha_{z,s}$ are indeed contained in $U(\hat h)$;
    this is an easy computation, using the fact that $h(z,z) = s - s^\sigma$ because $(z,s) \in U$.
    Next, the elements $\alpha_{z,s}$ are contained in the unipotent radical of $P$ because they fix $w$, stabilize $w^\perp$,
    and act trivially on both $\langle w \rangle^\perp /\langle w \rangle$ and $\hat V / \langle w \rangle^\perp$.

    Conversely, any element of $P$ fixing $w$, stabilizing $w^\perp$
    and acting trivially on both $\langle w \rangle^\perp /\langle w \rangle$ and $\hat V / \langle w \rangle^\perp$
    takes the form
    \[ \alpha \colon (a, v, b) \mapsto (a + \varphi(v) + \psi(b),\, v + \chi(b),\, b) \]
    for some $k$\dash linear maps $\varphi \colon V \to D$, $\psi \colon D \to D$ and $\chi \colon D \to V$.
    Expressing that $\alpha$ preserves $\hat h$ then gives rise to the identities
    \begin{align*}
        h(\chi(b), v) &= b^\sigma \varphi(v), \\
        h(\chi(b), \chi(d)) &= b^\sigma \psi(d) - \psi(b)^\sigma d
    \end{align*}
    for all $v \in V$ and all $b,d \in D$.

    Let $z := \chi(1) \in V$ and let $s := \psi(1) \in D$.
    Taking $b=1$ in the first identity already gives $\varphi(v) = h(z, v)$ for all $v \in V$.
    This same identity now takes the form $h(\chi(b), v) = b^\sigma h(z,v) = h(zb, v)$ for all $v \in V$, and since $h$ is non-degenerate,
    this implies $\chi(b) = zb$ for all $b \in D$.
    The second identity now becomes $b^\sigma h(z,z) d = b^\sigma \psi(d) - \psi(b)^\sigma d$ for all $b,d \in D$.
    Substituting $b = 1$ gives $h(z,z) d = \psi(d) - s^\sigma d$ for all $d \in D$.
    Setting in addition $d = 1$ shows that $h(z,z) = s - s^\sigma$,
    and substituting this back in the previous equation then implies that $\psi(d) = sd$ for all $d \in D$.
    We conclude that $(z,s) \in U$ and $\alpha = \alpha_{z,s}$.

    The elements of the Moufang set of $G$, i.e.\@ the minimal $k$\dash parabolics of~$G$, correspond precisely to the $D$-equivalence classes of isotropic elements $w \in \hat V$
    (i.e.\@ those $w \in \hat V$ for which $\hat h(w,w) = 0$);
    so two elements of $\hat V$ are equivalent if and only if they can be obtained from each other by right multiplication by a non-zero element of $D$.
	Let $\infty$ be the element of $X$ represented by $(1, 0, 0) \in \hat V$,
	and for every $(v,a) \in U$, we will denote the corresponding element of $X$ by $[v,a]$,
	represented by
	\[ (a, v, 1) \in \hat V . \]
    (Observe that $\hat h\bigl((a, v, 1), (a, v, 1)\bigr) = a^\sigma - a + h(v,v) = 0$.)

	For every $(z,s) \in U$, we now define a permutation $\alpha_{z,s}$ of $X$ given by
    \begin{equation}\label{eq:alpha_z,s}
	    \alpha_{z,s}(a, v, b) := \bigl( a + h(z,v) + sb,\, v+zb,\, b \bigr) ,
    \end{equation}
	and similarly a permutation $\gamma_{z,s}$ of $X$ given by
    \begin{equation}\label{eq:gamma_z,s}
	    \gamma_{z,s}(a, v, b) := \bigl( a,\, v+za,\, b - h(z,v) - sa \bigr) .
    \end{equation}
	We now set
	\[ U_\infty := \{ \alpha_{z,s} \mid (z,s) \in U \},
		\quad U_0 := \{ \gamma_{z,s} \mid (z,s) \in U \} . \]

	We now show that $U_\infty$ and $U_0$ coincide with the root groups of $\mouf(h)$.
	Fix some $(z,s) \in U$.
	Let $(v,a) \in U$ be arbitrary; then $\alpha_{z,s}$ maps $[v,a]$ to
	\[ [v,a]\alpha_{z,s} = \alpha_{z,s}(a, v, 1) = \bigl( a + h(z,v) + s, v + z, 1 \bigr) = [(v,a)+(z,s)] , \]
    where the sum $(v,a)+(z,s)$ denotes the group operation of $U$.
	(We have slightly abused notation in passing from a class to a representative.)
	Similarly,
	\begin{align*}
		[v,a]\gamma_{z,s}
		&= \gamma_{z,s}(a, v, 1)
		= \bigl( a,\, v+za,\, 1 - h(z,v) - sa \bigr) \\
		&= \bigl[ (v + za)\bigl( 1 - h(z,v) - sa \bigr)^{-1} ,\, a\bigl( 1 - h(z,v) - sa \bigr)^{-1} \bigr] .
	\end{align*}
	On the other hand, the corresponding map in the Moufang set $\mouf(h)$ is given by
	\begin{align*}
		[v,a]\gamma_{z,s}
		&= [v,a] \tau^{-1} \alpha_{z,s} \tau \\
		&= [va^{-1}, -a^{-1}] \alpha_{z,s} \tau \\
		&= \bigl[ va^{-1} + z ,\, - a^{-1} + h(z,va^{-1}) + s \bigr] \tau \\
		&= \bigl[ (va^{-1} + z) a \bigl( 1 - h(z,v) - sa \bigr)^{-1} ,\, a \bigl( 1 - h(z,v) - sa \bigr)^{-1} \bigr] ,
	\end{align*}
    and we see that both formulas coincide.
\end{proof}
\begin{remark}
    Alternatively, we could have computed the $\mu$-map corresponding to the element $(0,1) \in U$ directly from~\eqref{eq:alpha_z,s} and~\eqref{eq:gamma_z,s}.
    (Notice that $(0,1) \in U$ because $1 \in D_\sigma$.)
    By the uniqueness of the $\mu$-maps, it suffices to find elements $(z,s)$ and $(y,t)$ in $U$ such that
    the composition $\gamma_{z,s} \alpha_{0,1} \gamma_{y,t}$ swaps $0$ (i.e., $(0,0,1)$) and $\infty$ (i.e., $(1,0,0)$).
    We get $(z,s) = (y,t) = (0,1)$, so
    \[ \mu_{0,1} = \gamma_{0,1} \alpha_{0,1} \gamma_{0,1} , \]
    and we see that $\mu_{0,1}$ maps an element $(a,v,b) \in \hat V$ to $(b,v,-a)$.
    Hence
    \[ [v,a]\mu_{0,1} = (a,v,1)\mu_{0,1} = (1,v,-a) \sim (-a^{-1}, -va^{-1}, 1) = [-va^{-1}, -a^{-1}] \]
    and we recover the expected formula for $\tau$.
\end{remark}
\begin{corollary}\label{cor:mouf-classical}
    Let $\mouf$ be a Moufang set arising from a classical linear algebraic group of $k$\dash rank one over some field $k$ with $\cha(k) \neq 2$,
    and assume that the root groups of $\mouf$ are non-abelian.
    Then there exists a skew field $D$ with involution $\sigma$, a non-trivial right $D$-module $V$,
    and an anisotropic skew-hermitian form $h \colon V \times V \to D$, such that $\mouf \cong \mouf(D, \sigma, V, h)$.
\end{corollary}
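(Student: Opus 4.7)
My plan is to reduce $\GG$ to an absolutely simple classical group of relative rank one via the isogeny and Weil\dash restriction results of Lemma~\ref{lem:M(G)-isog}, invoke the Tits classification of such groups whose relative root system is of type $BC_1$ as unitary groups of anisotropic skew-hermitian forms of Witt index one, and then recognize the resulting Moufang set via Theorem~\ref{th:mouf-herm}.

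First, Lemma~\ref{lem:M(G)-isog}(ii) allows us to replace $\GG$ by its unique almost $k$-simple normal subgroup of $k$-rank one without changing the Moufang set, and without leaving the class of classical groups. Passing to the adjoint quotient via a central isogeny and applying Lemma~\ref{lem:M(G)-isog}(i), we may further assume that $\GG$ is $k$-simple; then Lemma~\ref{lem:M(G)-isog}(iii) yields an absolutely simple adjoint classical $l$-group $\GH$ of $l$-rank one over a finite separable extension $l/k$ with $\mouf(\GH) \cong \mouf(\GG)$. Since $\cha(l) = \cha(k) \neq 2$, we may replace $(k, \GG)$ by $(l, \GH)$ and assume from the outset that $\GG$ is absolutely simple.

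Because the root groups of $\mouf(\GG)$ are non-abelian, the relative root system of $\GG$ is of type $BC_1$, which excludes the Tits indices $A_{n,1}^{(d)}$, $B_{n,1}$, $D_{n,1}^{(1)}$, and $^2D_{n,1}^{(1)}$ with $A_1$ relative root system. Inspecting the remaining absolutely simple classical Tits indices of relative rank one, namely $^2A_{n,1}^{(d)}$, $C_{n,1}^{(d)}$ with $d \geq 2$, and $^2D_{n,1}^{(d)}$ with $d \geq 2$, one sees that each such group arises, up to central isogeny, as the unitary group $U(\hat h)$ of a hermitian or skew-hermitian form $\hat h$ of Witt index one on a finite-dimensional right module $\hat V$ over a skew field $D$ with involution $\sigma$, finite-dimensional over its center. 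In the hermitian case, multiplication of $\hat h$ by a trace-zero element of $D^\times$ (which exists because the center of $D$ has characteristic different from $2$) converts it into a skew-hermitian form with the same unitary group. Decomposing $\hat V = D \oplus V \oplus D$ as a hyperbolic plane plus its anisotropic orthogonal complement $V$, the form $\hat h$ acquires the shape prescribed in Theorem~\ref{th:mouf-herm}; letting $h := \hat h|_{V \times V}$, that theorem produces the desired isomorphism $\mouf \cong \mouf(U(\hat h)) \cong \mouf(D, \sigma, V, h)$.

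The main obstacle in this plan is the classification step: one must verify type\dash by\dash type, through the Tits tables of classical groups of $k$-rank one, that each group with $BC_1$ relative root system is (up to central isogeny) the unitary group of a skew-hermitian form on a hyperbolic-plus-anisotropic module. While this is standard, it requires careful bookkeeping of the kind of involution (symplectic for type $C$, orthogonal for type $D$, of the second kind for type $^2A$) and of the Morita equivalence used to represent the underlying central simple algebra as endomorphisms of a module over a skew field.
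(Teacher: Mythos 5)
Your proposal is correct and its endgame is the same as the paper's: identify the group, up to the reductions of Lemma~\ref{lem:M(G)-isog}, with the unitary group of a skew-hermitian form of Witt index one, split off a hyperbolic plane, and invoke Theorem~\ref{th:mouf-herm}. The one real difference is how the classification input is sourced. Where you propose to work type by type through the Tits tables of classical indices with relative root system $BC_1$ (and flag this bookkeeping as the main obstacle), the paper short-circuits the entire case analysis with a single uniform citation: by \cite[Section~1.1]{BruhatTitsII}, every classical group over $k$ with $\cha(k)\neq 2$ is isogenous to $\SL_n(D)$ or to $\SU(h)$ for a non-degenerate trace-valued hermitian or skew-hermitian form, and by \cite[Remark~1.8(2)]{BruhatTitsII} one may take $h$ skew-hermitian whenever $D\neq k$ (the case $D=k$ being the orthogonal case, which, like $\SL_2(D)$, is excluded because those Moufang sets have abelian root groups). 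This replaces both your index-by-index verification and your explicit trace-zero-element conversion of hermitian forms to skew-hermitian ones; on that last point, be aware that multiplying $\hat h$ by a skew element $\lambda$ also forces you to replace $\sigma$ by the involution $x\mapsto\lambda x^\sigma\lambda^{-1}$, a detail your sketch glosses over. Your preliminary reduction to the absolutely simple case via Lemma~\ref{lem:M(G)-isog} is harmless and arguably makes the statement's hypotheses more precise than the paper does, but it is not needed once the Bruhat--Tits classification is quoted in the form above.
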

\begin{proof}
    By \cite[Section 1.1]{BruhatTitsII}, every classical linear algebraic group over some field $k$ with $\cha(k) \neq 2$
    is isogenous to either $\SL_n(D)$ for some finite-dimensional skew field $D$ with center $k$,
    or to $\SU(h)$ for some finite-dimensional skew field $D$ with involution $\sigma$, some non-trivial finite\dash dimensional right $D$\dash module~$V$, and some
    sesquilinear hermitian or skew-hermitian form $h$ which is non-degenerate and trace-valued.
    The condition to be trace-valued is always satisfied when $\cha(k) \neq 2$.
    Moreover, by \cite[Remark~1.8(2)]{BruhatTitsII}, we may assume that $h$ is skew-hermitian provided $D \neq k$.
    Notice that when $D = k$, we have $\sigma = 1$ and a hermitian form is just a symmetric bilinear form and hence $\SU(h)$
    is, in fact, an orthogonal group~$\SO(h)$.

    Assume now that $\mouf$ is a Moufang set arising from a classical linear algebraic group $\GG$ of $k$\dash rank one over some field $k$ with $\cha(k) \neq 2$,
    and assume that the root groups of $\mouf$ are non-abelian.
    Since $\SL_2(D)$ and the orthogonal groups have a Moufang set with abelian root groups,
    we may assume that $\GG = \SU(h)$ for some non-degenerate skew-hermitian form $h$ of Witt index $1$.
    Let $h_{\mathrm{an}}$ be the anisotropic part of $h$.
    It now follows from Theorem~\ref{th:mouf-herm} that $\mouf(\GG) \cong \mouf(h_{\mathrm{an}})$.
    Finally, notice that the underlying vector space $V_{\mathrm{an}}$ is non-trivial because the root groups are assumed to be non-abelian.
\end{proof}

%
%
\chapter{Structurable algebras}\label{se:structalg}

Structurable algebras have been introduced by Bruce Allison in 1978 \cite{A78}.
His main motivation was to construct exceptional Lie algebras, by generalizing the Tits--Kantor--Koecher construction, which constructs Lie algebras from Jordan algebras.

\section{Definitions and basic properties}\label{se:struct1}

Structurable algebras are algebras equipped with an involution and triple product.
This triple product is in some sense of more importance than the multiplication of the algebra, in the same spirit as the operators in Jordan algebras are
more important than the actual multiplication in their classical definition.
Structurable algebras can only be defined over fields of characteristic different from $2$ and $3$.
\begin{definition}
        A {\em structurable algebra} over a field $k$ of characteristic not~$2$ or $3$ is a finite-dimensional, unital $k$\dash algebra with involution%
        \footnote{In our context, an involution will always be a $k$\dash linear map of order at most two such that $\overline{xy}=\overline{y}\,\overline{x}$.
        Notice in particular that we do not consider involutions that do not fix the ground field $k$, i.e.\@ our involutions are always ``of the first kind''.}
        $(\A,\bar{\ })$ such that
        \begin{equation}\label{struct id}
                [V_{x,y}, V_{z,w}] = V_{\{x,y,z\},w} - V_{z,\{y,x,w\}}
        \end{equation}
        for $x,y,z,w \in \A$, where the left hand side denotes the Lie bracket of the two operators, and where
        \[ V_{x,y}z := \{x \ y \ z\} := (x\overline{y})z + (z\overline{y})x - (z\overline{x})y . \]
        (This definition of $V_{x,y}z$ should be compared with identity~\eqref{eq:JordanV}.)

        For all $x,y,z \in \A$, we write $U_{x,y}z:=V_{x,z}y$ and $ U_xy:=U_{x,x}y$.
        We will refer to the maps $V_{x,y} \in \End_k(\A)$ as {\em $V$-operators}, and to the maps $U_{x,y} \in \End_k(\A)$ and $U_x \in \End_k(\A)$
        as {\em $U$-operators}.
        The trilinear map $(x,y,z) \mapsto \{ x \ y \ z \}$ is called the {\em triple product} of the structurable algebra.
\end{definition}

Structurable algebras are not necessarily associative nor commutative; they are generalizations of both associative algebras with involution and Jordan algebras.
In particular, the structurable algebras with trivial involution are exactly the Jordan algebras; see section \ref{Jordan} below.

In \cite{A78} and \cite{A79}, a structurable algebra is defined as an algebra with involution such that
\begin{align}\label{eqdef}
        [T_z,V_{x,y}]=&V_{T_z x,y}-V_{x,T_{\overline{z}}y}
\end{align}
for all $x,y,z \in \A$ with $T_x:=V_{x,1}$.
The equivalence of \eqref{struct id} and \eqref{eqdef} follows from \cite[Corollary 5.(v)]{A79}.

\begin{definition}
        Let $(\A,\bar{\ })$ be a structurable algebra; then $\A=\mathcal{H}\oplus\Ss$ for
        \[ \mathcal{H}=\{h\in \mathcal{A}\mid \overline{h}=h\} \quad \text{and} \quad \Ss=\{s\in \mathcal{A}\mid \overline{s}=-s\}. \]
        The elements of $\mathcal{H}$ are called {\em hermitian elements},
        the elements of $\Ss$ are called {\em skew-hermitian elements} or briefly {\em skew elements}.
        The dimension of~$\Ss$ is called the {\em skew-dimension} of $\A$.
\end{definition}
Skew elements tend to behave `nicer' than arbitrary elements of the structurable algebra.

As usual, the commutator and the associator are defined as
\[ [x,y]=xy-yx,\quad [x,y,z]=(xy)z-x(yz),\]
for all $x,y,z\in \A$.
For each $s \in \Ss$, we define the operator $L_s \colon \A\to\A$ by
\[ L_s x := sx . \]
The following map is of crucial importance in the study of structurable algebras:
\[\psi \colon \A\times \A\to\Ss \colon (x,y)\mapsto x\overline{y}-y\overline{x}.\]

A structurable algebra $(\A,\bar{\ })$ is {\em skew-alternative}, i.e.,
\begin{equation}\label{sxy}
    [s,x,y]=-[x,s,y]=[x,y,s]
\end{equation}
for all $s\in\Ss$ and all $x,y\in\A$; see \cite[Proposition 1]{A78}.
This implies that
\begin{align}
&[s,s,x]=[x,s,s]=[s,x,s]=0,\\
&s[t,s,x]=-[s,ts,x],\quad [x,s,t]s=-[x,st,s].\label{struct mouf id}
\end{align}
for all $s,t\in\Ss$ and all $x\in\A$.
The identities \eqref{struct mouf id} are weak versions of two of the well-known Moufang identities for alternative division rings.

We list some more identities. For all $x,y\in \A$ and $s\in \Ss$, we have
\begin{align}
U_{x,y}-U_{y,x}&=L_{\psi(x,y)},\label{Uxy-Uyx}\\
V_{x,sy}-V_{y,sx}&=-L_{\psi(x,y)}L_s,\label{Vxsy-Vysx}\\
s\psi(x,y)s&=-\psi(sx,sy),\label{spsixys}\\
L_sU_{x,y}L_s&=-U_{sx,sy}.\label{LsUxyLs}
\end{align}

Identity \eqref{Uxy-Uyx} follows from the definition of the $U$-operator;
\eqref{Vxsy-Vysx} is shown in \cite[Lemma 2]{A79};
\eqref{spsixys} is \cite[Lemma 11.2]{AH81};
\eqref{LsUxyLs} is \cite[Proposition 11.3]{AH81}.

\begin{definition}
        An {\em ideal} of $\A$ is a two-sided ideal stabilized by $\barop$.
        A structurable algebra $(\A,\barop)$ is {\em simple} if its only ideals are $\{0\}$ and $\A$,
        and it is called {\em central} if its center
        \begin{multline*}
            Z(\A,\barop)=Z(\A)\cap \mathcal{H} \\
                =\{c\in\A\mid [c,\A]=[c,\A,\A]=[\A,c,\A]=[\A,\A,c]=0\}\cap \mathcal{H}
        \end{multline*}
        is equal to $k1$.
        The {\em radical} of $\A$ is the largest solvable%
        \footnote{An ideal $I$ is solvable if there exists a $k\in \N$ such that $I^{2^k}=1$.}
        ideal of $\A$.
        A structurable algebra is {\em semisimple} if its radical is zero.
\end{definition}
If $\cha(k)\neq2,3,5$, a semisimple structurable algebra is the direct sum of simple structurable algebras
(see \cite[Section 2]{Sc} for the characteristic zero case and \cite[Section 2]{S} for the general case).
If $\cha(k)=0$, \cite[Theorem 10]{Sc} states that if $\A$ is a structurable algebra with radical $R$, there exists a semisimple structurable subalgebra $\mathcal{B}\leq \A$ such that $\A=\mathcal{B}\oplus R$.

\section{Conjugate invertibility in structurable algebras}\label{se:struct2}

For structurable algebras there is a notion of invertibility that generalizes the notion of invertibility from Jordan algebras.

\begin{definition}
        Let $(\A,\barop)$ be a structurable algebra. An element $u\in\A$ is said to be {\em conjugate invertible} if there exists an element $\hat{u}\in\A$ such that
        \begin{equation}\label{Vuhatu}
            V_{u,\hat{u}} = \id, \text{ or equivalently, } V_{\hat{u},u} = \id.
        \end{equation}
        If $u$ is conjugate invertible, then the element $\hat{u}$ is uniquely determined, and is called the {\em conjugate inverse} of $u$.
        For long expressions we will also use the notation
        \[ u^\wedge := \hat{u} = \widehat{u} . \]
        If $u$ is conjugate invertible, the operator $U_u$ is invertible and we have
        \begin{equation}\label{hatu}
            \hat{u} = U_u^{-1} u ;
        \end{equation}
        see \cite[Section 6]{AH81}.

        If each element in $\A\setminus\{0\}$ is conjugate invertible, $\A$ is called a {\em conjugate division algebra} or simply a {\em structurable division algebra}.
        Clearly, every structurable division algebra is simple.
\end{definition}
If $\A$ is an associative algebra with involution and $u\in \A$ is invertible, then $\hat{u} = \overline{u}^{-1}$; this motivates the term ``conjugate inverse''.

For skew elements, a more elegant criterion for conjugate invertibility exists:
an element $s\in \Ss$ is conjugate invertible if and only if $L_s$ is invertible.
If $s\in \Ss$ is conjugate invertible, we have
\begin{align}
        \hat{s}&=-L_s^{-1}1\in \Ss;\label{hats}\\
        L_{\hat{s}}L_s&=L_sL_{\hat{s}}=-\id;\label{LhatsLs}
\end{align}
see \cite[Proposition 11.1]{AH81}.
Note that $s\hat{s}=\hat{s}s=-1$ for all $s \in \Ss$.

The following formula (see \cite[Proposition 2.6]{A86_2}) allows to determine the conjugate inverse of any invertible element,
provided that the conjugate inverse of any invertible skew element can be determined.
Let $u\in \A$ and $s\in \Ss$ be both conjugate invertible; then $\psi(u,U_u (su))$ is conjugate invertible and
\begin{equation}\label{hatuskew}
        \widehat{u} = 2\bigl(\psi(u,U_u (su))\bigr)^\wedge U_u (su).
\end{equation}

\section{Examples of structurable algebras}\label{se:struct-ex}

Central simple structurable algebras over fields of characteristic different from $2$, $3$ and $5$ are classified; they consist of six (non-disjoint) classes:
\begin{compactenum}[(1)]
\item central simple associative algebras with involution,
\item central simple Jordan algebras,
\item structurable algebras constructed from a non-degenerate hermitian form over a central simple associative algebra with involution,
\item simple structurable algebras of skew-dimension 1; these are forms of certain algebras constructed from a pair of Jordan algebras,
\item forms of the tensor product of two composition algebras,
\item an exceptional $35$-dimensional case, which can be constructed from an octonion algebra.
\end{compactenum}
For the classes (4) and (5), we know in addition that there always exists a field extension which is at
most quadratic such that
the algebra becomes of the required shape when the scalars are extended to this extension field.
See sections~\ref{ex:skew dim} and~\ref{ex:compalg} below.

When the characteristic is zero the classification was carried out by Allison in \cite{A78}, but in that paper class (6) was overlooked.
The classification was completed and generalized to fields of characteristic different from $2$, $3$ and $5$ by Smirnov in \cite{S}.

Below we describe classes (1)--(5) and give some specific properties of each class;
in particular, we will investigate in which cases we obtain structurable division algebras.
This turns out to be a challenging question, and in fact, for the classes (4) and (5), no complete answer is known.
Since there are no division algebras in class (6), we do not describe it explicitly but refer the interested reader to \cite{S2} and \cite{AF_35dim} instead.
Although we defined structurable algebras to be finite-dimensional algebras, the examples (1), (2), (3) can of course also be defined in the infinite-dimensional case.

\subsection{Associative algebras with involution}\label{ass}

Let $(\A,\barop)$ be an associative algebra with involution.
Then $\A$ is a structurable algebra; see \cite[Example 8.i]{A78} or \cite[p. 411]{Sc}.
An element $u \in \A$ is conjugate invertible if and only if it is invertible in the usual associative sense,
and in this case $\hat{u} = \overline{u}^{-1}$.
In particular, $\A$ is a structurable division algebra if and only if it is a division algebra in the usual sense.

\subsection{Jordan algebras}\label{Jordan}

Let $\A$ be a Jordan algebra and let $\barop=\id$.
In this case the $V$-operator of structurable algebras is equal to the $V$-operator of Jordan algebras; see~\eqref{eq:JordanV}.
The defining identity of structurable algebras is a known identity in the theory of Jordan algebras; see, for example, \cite[p. 202, (FFV)']{Mc}.

Moreover, each structurable algebra with trivial involution (equivalently, of skew-dimension zero) is indeed a Jordan algebra;
see \cite[p.135, Remark (ii)]{A78}).

It is now easy to check that an element $u \in \A$ is conjugate invertible if and only if it is invertible in the Jordan algebra,
and $\hat{u} = u^{-1}$.
In particular, $\A$ is a structurable division algebra if and only if it is a Jordan division algebra.

\subsection{Hermitian structurable algebras}\label{hermitian}

This class of structurable algebras is constructed from a hermitian space.
\begin{definition}\label{def:herm}
Let $(E ,\bar{\ })$ be a unital associative algebra over a field $k$ with a $k$\dash linear involution $\barop$.
Let $W$ be a unital left $E $-module, and let $h \colon W\times W\rightarrow E $ be a hermitian form,
i.e.\@ a $k$\dash bilinear form such that $h(\alpha v, \beta w) = \alpha h(v,w) \overline{\beta}$ and $\overline{h(v,w)} = h(w,v)$ for all $v,w \in W$
and all $\alpha,\beta \in E$.

It is shown in \cite[Section 8.(iii)]{A78} that $\A := E \oplus W$ is a structurable algebra with the following involution and multiplication:
\begin{align*}
	\overline{e+w} &= \overline{e}+w, \\
	(e_1+w_1)(e_2+w_2) &= (e_1e_2+h(w_2,w_1))+(e_2 w_1+\overline{e_1} w_2),
\end{align*}
for all $e,e_1,e_2 \in E $ and all $w,w_1,w_2\in W$.
\end{definition}
It is clear that $\Ss=\{e\in E\mid \overline{e}=-e\}$. After some calculations we find that
\begin{align}
    \psi(e_1+w_1,e_2+w_2)&=(e_1\overline{e_2}-e_2\overline{e_1})-(h(w_1,w_2)-\overline{h(w_1,w_2)}), \label{psi hermitian}\\
    V_{e_1+w_1,e_2+w_2}(e_3+w_3)&=\notag\\
    &(e_1\overline{e_2}+h(w_1,w_2))e_3+(e_1\overline{e_2}+h(w_1,w_2))w_3\notag\\
    + \hspace{.5ex}&(e_3\overline{e_2}+h(w_3,w_2))e_1+(e_3\overline{e_2}+h(w_3,w_2))w_1\notag\\
    + \hspace{.5ex}&(-e_3\overline{e_1}+h(w_3,w_1))e_2+(e_3\overline{e_1}-h(w_3,w_1))w_2 ,\label{V hermitian}
\end{align}
for all $e_1,e_2,e_3\in E$, $w_1,w_2,w_3\in W$.

By \cite[Proposition 4.1]{A86}, the structurable algebra $\A = E\oplus W$ is central simple if and only if $E$ is central simple,
$\dim(E)\dim(W)\neq 1$ and $h$ is non-degenerate;
moreover, $e+w\in E\oplus W$ is conjugate invertible if and only if $e\overline{e}-h(w,w)$ is invertible in $E$, and in this case the conjugate inverse is given by
\begin{equation}\label{inverse hermitian}
    \widehat{e+w} = (e\overline{e}-h(w,w))^{-1}(e-w) .
\end{equation}
In particular, if $e\in \Ss$ is conjugate invertible, then $\hat{e}=-e^{-1}$.
\begin{lemma}
    The hermitian structurable algebra $\A = E\oplus W$ is a structurable division algebra if and only if
    $E$ is a division algebra and for all $0\neq w\in W$ we have $h(w,w)\neq 0,1$.
\end{lemma}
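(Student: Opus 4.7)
The plan is to reduce the statement to the conjugate-invertibility criterion stated just above the lemma: $e+w \in E \oplus W$ is conjugate invertible if and only if $e\bar e - h(w,w)$ is invertible in $E$, with the explicit formula~\eqref{inverse hermitian}. Thus what needs to be shown is that the stated conditions on $E$ and $h$ are exactly what forces $e\bar e - h(w,w)$ to be invertible in $E$ for every nonzero $(e,w) \in E \oplus W$.

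For the forward direction, I would specialise the criterion to two obvious families of nonzero elements of $\A$. Taking $w = 0$ and $e \neq 0$ forces $e\bar e$ to be invertible in $E$ for every nonzero $e$; multiplying its inverse by $\bar e$ on either side produces both a right inverse and a left inverse for $e$ itself, and associativity of $E$ identifies them, so $E$ is a division algebra. Taking $e = 0$ and $w \neq 0$ then forces $-h(w,w)$ to be invertible in $E$, so $h(w,w) \neq 0$. Finally, if some nonzero $w$ satisfied $h(w,w) = 1$, then the nonzero element $1 + w$ would have $1 \cdot \bar 1 - h(w,w) = 0$, contradicting its conjugate invertibility.

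For the converse, assume $E$ is a division algebra and $h(w,w) \notin \{0,1\}$ for every $w \neq 0$. Since $E$ is division, I only need to show that $e\bar e - h(w,w) \neq 0$ for every nonzero $(e,w)$. The cases $w = 0$ (using that $\barop$ is a bijection and $E$ has no zero divisors) and $e = 0$ (using $h(w,w) \neq 0$) are immediate. If instead both $e \neq 0$ and $w \neq 0$ with $e\bar e = h(w,w)$, set $w' := e^{-1}w$, which is nonzero because $W$ is a unital module over the division algebra $E$. Sesquilinearity of $h$ then gives
\[ h(w',w') = e^{-1}\,h(w,w)\,\overline{e}^{-1} = e^{-1}(e\bar e)\bar e^{-1} = 1, \]
contradicting the hypothesis. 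The only real subtlety is spotting this scaling trick $w \mapsto e^{-1}w$, which explains why ruling out just the two values $h(w,w) \in \{0,1\}$ is enough to eliminate all ``mixed'' obstructions $e\bar e = h(w,w)$, and not merely the pure ones with $e \in k^\times \cdot 1$.
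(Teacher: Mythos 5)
Your proposal is correct and follows essentially the same route as the paper: the paper's proof is exactly the reduction to the criterion~\eqref{inverse hermitian} together with the observation that $h(w,w)=e\overline{e}$ if and only if $h(e^{-1}w,e^{-1}w)=1$ when $e\neq 0$, which is your scaling trick. One small repair in the forward direction: $u\overline{e}$ with $u=(e\overline{e})^{-1}$ is not visibly a left inverse of $e$ (since $\overline{e}e$ need not equal $e\overline{e}$); instead apply the criterion to the nonzero element $\overline{e}$ to get that $\overline{e}e$ is invertible, which yields a left inverse of $e$ and hence, together with the right inverse $\overline{e}u$, invertibility of $e$.
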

\begin{proof}
    This follows immediately from \eqref{inverse hermitian}, since $h(w,w) = e\overline{e}$ if and only if
    $h(e^{-1}w, e^{-1}w) = 1$ when $e \neq 0$.
\end{proof}

\subsection{Structurable algebras of skew-dimension one}\label{ex:skew dim}

Structurable algebras of skew-dimension one are, among the structurable algebras, closest to Jordan algebras,
and they have interesting connections with so-called Freudenthal triple systems.
In \cite{A90} and \cite{AF84}, this type of structurable algebras has been studied;
we will collect some of these results, which we will need later.

In this section $\A$ is a structurable algebra of skew-dimension one. We fix a non-zero element $s_0\in \Ss$, so $\Ss=ks_0$.
By \cite[Lemma 2.1]{AF84}, $s_0^2=\mu 1$ for some $\mu \in k^*$, therefore $s_0(s_0x)=(xs_0)s_0=\mu x$; it is also shown that a simple  structurable algebra of skew-dimension one is always central.

If $\A$ is simple, then the bilinear map $\psi$ is non-degenerate (see \cite[Lemma~2.2]{AF84}). Define the quartic form $\nu \colon \A\to k$ given by
\begin{align}
\nu(x)=\frac{1}{6\mu}\psi(x,U_{x}(s_0x))s_0,\label{eq:defmu}
\end{align}
by identifying $k1$ with $k$. It is easy to see that $\nu(1)=1$ and that $\nu$ is independent of the choice of $s_0$.
This map plays the role of the norm of $\A$,
in a way that can be made explicit; see \cite[Proposition~5.4]{AF92}.

By \cite[Proposition 2.11]{AF84}, an element $x\in \A$ is conjugate invertible if and only if $\nu(x)\neq0$.
When this is the case, we have
\begin{equation}\label{inv skew dim 1}
        \hat{x} = -\frac{1}{3\mu\nu(x)}s_0 U_x(s_0x).
\end{equation}
In particular, $\A$ is a structurable division algebra if and only if the quartic form $\nu$ is anisotropic.

We will now discuss an important class of structurable algebras of skew-dimension one, which we will call {\em structurable matrix algebras}. The multiplication of these algebras is similar to the multiplication of split octonions when represented by Zorn matrices.

\begin{definition}\label{def:structmatalg}
Let $J$ be a Jordan algebra over a field $k$, let $T \colon J\times J\to k$ be a symmetric bilinear form, let $\times \colon J\times J\to J$ be a symmetric bilinear map, and let $N \colon J\to k$ be a cubic form such that one of the following holds:
\begin{compactenum}[(i)]
    \item
        $J$ is a cubic Jordan algebra with a non-degenerate admissible form $N$, with basepoint $1$,
        trace form $T$, and Freudenthal cross product $\times$; see, for instance, \cite[\S 38]{KMRT}.
    \item $J$ is a Jordan algebra of a non-degenerate quadratic form $q$ with basepoint $1$, and $T$ is the linearization of $q$.
    In this case, $N$ and $\times$ are the zero maps.
    \item $J=0$, and the maps $N$, $T$ and $\times$ are the zero maps. In this case, $J$ is not unital.
\end{compactenum}
Fix a constant $\eta\in k$.
We now define the {\em structurable matrix algebra} $M(J, \eta)$ as follows.
Let
\[\mathcal{A}= \left\{ \begin{pmatrix}k_1&j_1\\j_2&k_2\end{pmatrix} \Bigm\vert k_1,k_2\in k, \, j_1, j_2 \in J\right\},\]
and define the involution and multiplication by the formulae
\[\overline{\begin{pmatrix}k_1&j_1\\j_2&k_2\end{pmatrix}}=\begin{pmatrix}k_2&j_1\\j_2&k_1\end{pmatrix},\]
\[\begin{pmatrix}
k_1&j_1\\
j_2&k_2
\end{pmatrix}
\begin{pmatrix}
k'_1&j'_1\\
j'_2&k'_2
\end{pmatrix}=
\begin{pmatrix}
k_1k'_1+\eta T(j_1,j'_2)&k_1j'_1+k'_2j_1+\eta(j_2\times j'_2)\\
k'_1j_2+k_2j'_2+j_1\times j'_1& k_2k'_2+\eta T(j_2,j'_1)\end{pmatrix},\]
for all $k_1,k_2,k'_1,k'_2\in k$, $j_1,j_2,j'_1,j'_2\in J$.
It is shown in \cite[Section 8.v]{A78} and \cite[Section 4]{AF84} that $M(J,\eta)$ is a simple structurable algebra.
\end{definition}
The following proposition explains the importance of structurable matrix algebras.
\begin{proposition}[{\cite[Prop.\@~4.5]{AF84}}]\label{prop:mat}
Let $\A$  be a structurable algebra of skew-dimension one with $s_0^2=\mu1$.
Then $\A$ is isomorphic to a structurable matrix algebra $M(J,\eta)$ if and only if $\mu$ is a square in $k$.
\end{proposition}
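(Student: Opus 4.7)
The easy direction is immediate: in $M(J,\eta)$, the element $s_0=\bigl(\begin{smallmatrix}1&0\\0&-1\end{smallmatrix}\bigr)$ is skew with $s_0^2=1$, so $\mu=1\in k^{\times 2}$; any other nonzero skew element is a scalar multiple of $s_0$, hence its square lies in $(k^\times)^2$ as well.

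For the converse, assume $\mu=\lambda^2$ and replace $s_0$ by $\lambda^{-1}s_0$ so that $s_0^2=1$. Skew-alternativity~\eqref{sxy} gives $[s_0,s_0,x]=0$, hence $L_{s_0}^2=L_{s_0^2}=\id$, and $L_{s_0}$ is a $k$-linear involution of $\A$. Set
$$ e_1 := \tfrac{1}{2}(1+s_0), \qquad e_2 := \tfrac{1}{2}(1-s_0). $$
Using $s_0^2=1$ one checks directly that $e_1,e_2$ are idempotents with $e_1e_2=e_2e_1=0$, $e_1+e_2=1$, and $\overline{e_i}=e_{3-i}$.

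My plan is to use the pair $(e_1,e_2)$ to produce a Peirce-type decomposition $\A = \A_{11}\oplus\A_{12}\oplus\A_{21}\oplus\A_{22}$, with $\A_{ij}$ defined through the left/right multiplication actions of $e_1,e_2$ (adjusted via the skew-alternative identities to cope with non-associativity). Since $\A$ has skew-dimension one and the involution interchanges $e_1$ and $e_2$, one argues that $\A_{11}=ke_1$ and $\A_{22}=ke_2$, while $\overline{\phantom{x}}$ restricts to a $k$-linear isomorphism $\A_{21}\cong\A_{12}$. Setting $J:=\A_{21}$, the restrictions of the multiplication of $\A$ to the various Peirce pieces then provide the data of a structurable matrix algebra: the scalar $\eta\in k$ and the bilinear form $T\colon J\times J\to k$ arise from the products $\A_{12}\cdot\A_{21}\subseteq\A_{11}$ and $\A_{21}\cdot\A_{12}\subseteq\A_{22}$, while the symmetric product $\times\colon J\times J\to J$ arises from $\A_{21}\cdot\A_{21}\subseteq\A_{12}$ (and symmetrically from $\A_{12}\cdot\A_{12}\subseteq\A_{21}$). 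The cubic norm $N$ on $J$ can be identified with the restriction to $J$ of the quartic form $\nu$ from~\eqref{eq:defmu}. The resulting candidate isomorphism is
$$ \A \longrightarrow M(J,\eta), \qquad \alpha_1e_1+\alpha_2e_2+j+\overline{j'} \longmapsto \begin{pmatrix}\alpha_1 & j'\\ j & \alpha_2\end{pmatrix}, $$
for $j,j'\in J=\A_{21}$ and $\alpha_1,\alpha_2\in k$.

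The main obstacle is to verify that the quadruple $(J,N,T,\times)$ so defined satisfies exactly the axioms of one of the three cases of Definition~\ref{def:structmatalg} — a cubic Jordan algebra with admissible form, a Jordan algebra of a non-degenerate quadratic form, or the zero algebra — and that the multiplication and involution of $\A$, rewritten in Peirce components, coincide with those of $M(J,\eta)$. The required identities on $(J,N,T,\times)$ must be extracted from the single structurable identity~\eqref{struct id} together with the skew-alternative consequences~\eqref{sxy}--\eqref{struct mouf id} and the $\psi$-identities~\eqref{Uxy-Uyx}--\eqref{LsUxyLs}, specialised to $s=s_0$. This unpacking is where the real work lies; once it is done, the displayed map is manifestly an isomorphism of algebras with involution.
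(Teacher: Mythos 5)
The skeleton you set up is the right one, and it is in fact the route of the cited source \cite[Prop.~4.5]{AF84} (the paper itself gives no proof, only the citation): the easy direction is correct, the normalisation $s_0^2=1$, the idempotents $e_{1,2}=\tfrac12(1\pm s_0)$, and the four-space Peirce decomposition (which exists because $L_{s_0}^2=R_{s_0}^2=\id$ and $[L_{s_0},R_{s_0}]=0$ by skew-alternativity) are all sound, as is $\A_{11}=ke_1$, $\A_{22}=ke_2$. But the proposal stops exactly where the proof begins. The verification that the data $(J,N,T,\times,\eta)$ read off from the Peirce components satisfies one of the three cases of Definition~\ref{def:structmatalg}, and that the multiplication of $\A$ then coincides with that of $M(J,\eta)$, \emph{is} the content of the hard direction, and you explicitly defer it. In particular you never produce the basepoint $1\in J$: the unit of $\A$ lies in $\A_{11}\oplus\A_{22}$, so $J=\A_{21}$ carries no distinguished element a priori, yet a basepoint with $N(1)=1$ is required in cases (i) and (ii) of Definition~\ref{def:structmatalg}. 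This is a genuine gap, not a routine verification.

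Two of the steps you do commit to are moreover false as stated. First, the involution does \emph{not} interchange $\A_{12}$ and $\A_{21}$. From $\overline{s_0x}=-\bar{x}s_0$ and $\overline{xs_0}=-s_0\bar{x}$ one sees that $\barop$ sends the joint $(\epsilon,\delta)$-eigenspace of $(L_{s_0},R_{s_0})$ to the $(-\delta,-\epsilon)$-eigenspace, so it preserves each off-diagonal Peirce space; and since $\Ss=ks_0\subseteq\A_{11}\oplus\A_{22}$, it acts as the identity on $\A_{12}\oplus\A_{21}\subseteq\Herm$. (This matches $M(J,\eta)$, whose off-diagonal entries are hermitian.) Consequently your displayed map sends $\overline{j'}=j'$ back into $\A_{21}$ and never reaches $\A_{12}$ at all; the identification of $\A_{12}$ with $\A_{21}$ as two copies of the same $J$ must be extracted from the multiplication, not from the involution. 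Second, $N$ cannot be ``the restriction of $\nu$ to $J$'': a restriction of a quartic form is quartic, not cubic, and in $M(J,\eta)$ the form $\nu$ vanishes identically on the off-diagonal Peirce spaces. The cubic norm has to be recovered from $T$ and $\times$ (for instance as $N(j)=\tfrac16\,T(j,j\times j)$) or as the degree-three coefficient of $t\mapsto\nu(e_1+tj)$.
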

\begin{corollary}\label{cor:mat}
        Let $\A$  be a structurable algebra of skew-dimension one.
        Then there exists a field extension $E/k$ of degree at most $2$ such that
        $\A \otimes_k E$ is isomorphic to a structurable matrix algebra over $E$.
\end{corollary}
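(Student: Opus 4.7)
The plan is to reduce immediately to Proposition~\ref{prop:mat} by a scalar extension. Fix a nonzero $s_0\in\Ss$, so that $\Ss=ks_0$ and $s_0^2=\mu\cdot 1$ for some $\mu\in k^*$ (this element $\mu$ exists and is nonzero by the discussion preceding Proposition~\ref{prop:mat}, since simple structurable algebras of skew-dimension one satisfy $s_0^2\in k^*\cdot 1$). The dichotomy is then the obvious one: either $\mu$ is already a square in $k$, or it is not.

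If $\mu$ is a square in $k$, set $E=k$. Then Proposition~\ref{prop:mat} directly yields $\A\cong M(J,\eta)$ for some Jordan algebra $J$ and some $\eta\in k$, and there is nothing further to do.

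If $\mu$ is not a square in $k$, let $E:=k(\sqrt{\mu})$, which is a field extension of $k$ of degree $2$. The scalar extension $\A_E:=\A\otimes_k E$ inherits an $E$-linear involution $\barop\otimes\id$, and the defining identity~\eqref{struct id} is multilinear over $k$, hence continues to hold in $\A_E$; thus $\A_E$ is a structurable $E$-algebra. Moreover, the skew part of $\A_E$ equals $\Ss\otimes_k E=E\cdot s_0$, which is one-dimensional over $E$, so $\A_E$ has skew-dimension one. Since $s_0^2=\mu\cdot 1$ is now a square in $E$, Proposition~\ref{prop:mat} applied over $E$ shows that $\A_E$ is isomorphic to a structurable matrix algebra $M(J',\eta')$ over $E$.

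The only point that requires a brief check is that scalar extension preserves the property of being a structurable algebra of skew-dimension one; this is the step I would take most care with, but it is routine because all the relevant identities are multilinear and the involution is $k$-linear, so both the defining identity and the splitting $\A=\Herm\oplus\Ss$ extend without change to $\A\otimes_k E$. There is no genuine obstacle beyond this bookkeeping, since Proposition~\ref{prop:mat} does all of the real work. \qedhere
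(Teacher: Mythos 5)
Your proof is correct and is exactly the argument the paper intends: the corollary is stated as an immediate consequence of Proposition~\ref{prop:mat}, obtained by passing to $E=k(\sqrt{\mu})$ when $\mu=s_0^2/1$ is not already a square, and your verification that scalar extension preserves both the structurable identity and the skew-dimension is the only bookkeeping required.
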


The algebras in classes (1) and (3) on page \pageref{se:struct-ex} have skew-dimension 1 if and only if the associative algebra with involution has skew-dimension 1.

\begin{remark}\label{rem:N=0}
	It follows by combining Proposition 4.4 and Theorem 4.11 from \cite{A90} that $\A$ is a form of a structurable matrix algebra with $N=0$
	if and only if $\A$ is isomorphic to a structurable algebra of a non-degenerate hermitian form over a $2$-dimensional composition algebra;
	in this case the conjugate degree of the algebra is equal to $2$.

	In all other cases, the conjugate degree of $\A$ is 4 and the quartic form $\nu$ defined in \eqref{eq:defmu} is the conjugate norm of $\A$.
\end{remark}

It is an open problem to determine explicitly all structurable algebras of skew-dimension one;
see also Remark~\ref{rem:skewdim1} below.
Examples of structurable algebras of skew-dimension one that are not isomorphic to structurable matrix algebras can be obtained by applying a Cayley--Dickson process
to a certain class of Jordan algebras.
We will now briefly explain this process, and we refer to~\cite{AF84} for more details.

\subsubsection*{The Cayley--Dickson process for structurable algebras}

In order to obtain a structurable algebra, we start from a Jordan algebra equipped with a Jordan norm of degree~$4$.

\begin{definition}
Let $J$ be a Jordan algebra over $k$.
A form $Q \colon J\rightarrow k$ of degree $4$ is a {\em Jordan norm of degree~$4$} if:
\begin{compactenum}[(i)]
\item $1\in J$ is a basepoint of $Q$, i.e. $Q(1)=1$;
\item $Q(U_j j')=Q(j)^2Q(j')$ for all $j,j'\in J\otimes_k K$ for all field extensions $K/k$;
\item The trace form
\[ T \colon J\times J\to k\colon (j,j')\mapsto Q(1;j)Q(1;j')-Q(1;j,j') \]
is a $k$\dash bilinear non-degenerate form.
\end{compactenum}
\end{definition}

The main examples of Jordan algebras with a Jordan norm of degree~$4$ are separable Jordan algebras of degree~$4$ with their generic norm and separable Jordan algebras of degree~$2$ with the square of their generic norm.
In \cite[Proposition 5.1]{A90} Jordan norms of degree~$4$ are classified.

If $J$ is a separable Jordan algebra of degree~$4$, it is shown in \cite[Theorem 5.4]{AF84} that the space $J_0=\{x\in J\mid T(x,1)=0\}$ can be given the structure of a separable Jordan algebra of degree~$3$ and thus of a cubic Jordan algebra.

\begin{definition}\label{def:CD}
 Let $J$ be a Jordan algebra with $Q$ a Jordan norm of degree~$4$ with trace $T$.
Consider the $k$\dash linear bijection $\theta$ on $J$ given by
\[ b^\theta=-b+\tfrac{1}{2}T(b,1)1,  \]
for all $b\in J$; observe that $\theta^2 = 1$.

Let $\mu \in k^*$, and define the algebra $\CD(J, Q,\mu) := J\oplus s_0 J$,
with multiplication and involution given by
\begin{align*}
    (j_1 + s_0j_2)(j_3 + s_0j_4) &= j_1j_3 + \mu{(j_2j_4^{\theta})}^{\theta} + s_0 \bigl( j_1^\theta j_4 + (j_2^\theta j_3^\theta)^\theta \bigr), \\
	\overline{j_1+s_0j_2} &= j_1-s_0j_2^\theta,
\end{align*}
for all $j_1,j_2,j_3,j_4\in J$. By \cite[Theorem 6.6]{AF84}, this is a simple structurable algebra with skew-dimension one and $\Ss=ks_0$.
\end{definition}
Since $(ts_0)^2=t^2\mu$, it follows from Proposition \ref{prop:mat} that the structurable algebra $\CD(J,Q,\mu)$ is isomorphic to a structurable matrix algebra if and only if $\mu$ is a square in $k$.

This procedure can be used to construct structurable division algebras of skew-dimension one.
\begin{lemma}[{\cite[Theorem 7.1]{AF84}}]\label{div skew dim one}Let $J$ be a Jordan division $k$\dash algebra with Jordan norm of degree $4$.
        Define the field $E=k(\xi)$ with $\xi$ transcendental over $k$, let $J'=J\otimes_k E$ and let $Q'$ be the extension of $Q$ to $J'$.
        Then $\CD(J',Q',\xi)$ is a central simple structurable division algebra over $k$.
\end{lemma}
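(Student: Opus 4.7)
The plan is to combine three ingredients: the general structure of $\CD(J,Q,\mu)$ from Definition~\ref{def:CD} (which, via \cite[Thm.~6.6]{AF84}, already guarantees a simple structurable algebra of skew-dimension one), the characterisation of conjugate invertibility through the quartic norm form $\nu$ given in \eqref{inv skew dim 1}, and a specialisation argument that exploits the transcendence of~$\xi$ over~$k$.

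First, I would verify that $\CD(J',Q',\xi)$ fits the hypotheses of Definition~\ref{def:CD}. Here $Q'$ remains a Jordan norm of degree four on $J'=J\otimes_k E$ (the three defining properties survive scalar extension), and $\xi\in E^*$, so Definition~\ref{def:CD} applies and produces a simple structurable algebra over~$E$ with skew-dimension one and $\Ss=E s_0$; it is central over its centroid~$E$ by the comments preceding Proposition~\ref{prop:mat}. The qualifier ``over~$k$'' in the statement is a mild abuse: simplicity and the division property are intrinsic to the algebra structure and are not affected by restriction of scalars.

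Second, the real content is the division property. By the criterion recorded just before \eqref{inv skew dim 1}, an element $x$ is conjugate invertible if and only if $\nu(x)\neq 0$, so it suffices to prove that $\nu$ is anisotropic on $\CD(J',Q',\xi)$. For this I would first derive an explicit expression for $\nu$ on a general $\CD(J,Q,\mu)$: expanding the defining formula~\eqref{eq:defmu} via the multiplication and involution of Definition~\ref{def:CD}, using $s_0^2=\mu\cdot 1$ and $\theta^2=\id$, and regrouping by powers of~$\mu$, yields an expression of the shape
\[ \nu(j_1+s_0j_2)=Q(j_1)+\mu^2 Q(j_2)+\mu\,\beta(j_1,j_2), \]
where $\beta$ is a $k$-polynomial of total degree four in $(j_1,j_2)$ built from~$T$ and the Jordan product of~$J$. (If a direct expansion is too painful, this shape can be read off from \cite[Prop.~5.4]{AF92} combined with Remark~\ref{rem:N=0}, which tells us that $\nu$ is the conjugate norm of degree four.) For the anisotropy argument only the identification of the $\mu^0$- and $\mu^2$-coefficients as $Q(j_1)$ and $Q(j_2)$ up to non-zero scalars is essential.

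Third, the specialisation. Let $x=j_1'+s_0 j_2'\in\CD(J',Q',\xi)$ be non-zero. Clearing denominators, write $j_i'=\sum_{n=0}^N \xi^n j_{i,n}$ with $j_{i,n}\in J$ and not all zero, and substitute $\mu=\xi$ in the formula above. Then $\nu(x)$ becomes a polynomial in~$\xi$ with coefficients in~$k$ (after inserting the expansions of $Q$ and $\beta$); by transcendence of~$\xi$ its vanishing would force each coefficient to vanish. The highest-degree coefficient is, up to a non-zero scalar, $Q(j_{2,N})$, and the lowest is $Q(j_{1,0})$, so anisotropy of~$Q$ on the Jordan division algebra~$J$ forces $j_{2,N}=j_{1,0}=0$, contradicting the minimality of the chosen expansions (or, proceeding inductively on the extremal indices, forcing every $j_{i,n}$ to vanish). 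Hence $\nu(x)\neq 0$, completing the proof.

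The main obstacle is pinning down the two extremal coefficients of $\nu(j_1+s_0j_2)$ as a polynomial in~$\mu$: once the $\mu^0$ and $\mu^2$ terms are identified with $Q(j_1)$ and~$Q(j_2)$ (up to harmless scalars), the rest is a one-variable polynomial degree argument in~$\xi$ that feeds entirely off the anisotropy of~$Q$ on~$J$.
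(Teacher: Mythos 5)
The paper offers no proof of this lemma --- it is imported verbatim from \cite[Theorem~7.1]{AF84} --- so there is nothing internal to compare against; but your strategy (reduce the division property to anisotropy of the quartic norm $\nu$ via \eqref{inv skew dim 1}, then run a degree/specialisation argument in the transcendental $\xi$) is exactly the strategy of the cited source. Your preliminary points are fine: $Q'$ remains a Jordan norm of degree $4$ after scalar extension, Definition~\ref{def:CD} applies, ``over $k$'' is loose phrasing for an algebra over $E=k(\xi)$, and $Q$ is indeed anisotropic on a Jordan division algebra (from $Q(U_jj')=Q(j)^2Q(j')$ and the invertibility of $U_j$, a point worth spelling out).

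There is, however, a concrete gap in the specialisation step. You claim that ``only the identification of the $\mu^0$- and $\mu^2$-coefficients as $Q(j_1)$ and $Q(j_2)$ \dots is essential,'' but this is not enough: you also need to know that the cross term $\beta$ is concentrated in bidegree $(2,2)$ in $(j_1,j_2)$. Without that, the extremal $\xi$-coefficients of $\nu(j_1'+s_0j_2')$ need not be values of $Q$. For instance, if $\beta$ had a bidegree-$(3,1)$ component and $\deg_\xi j_1'=\deg_\xi j_2'+1$, the top coefficient would be $Q(j_{1,N_1})$ \emph{plus} a contribution from $\xi\beta$, and anisotropy of $Q$ gives nothing; if $\beta$ had a bidegree-$(0,4)$ component and $j_1'=0$, the lowest nonzero coefficient would be a value of that unknown quartic form rather than of $Q$. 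The missing fact is true and provable: every bidegree-$(4-b,b)$ monomial of $\nu$ carries exactly $\mu^{b/2}$ (forcing $b$ even), which follows from the explicit norm formula in \cite[\S 6]{AF84}, or from the real/skew $\ZZ/2$-bookkeeping in the multiplication of Definition~\ref{def:CD} applied to \eqref{eq:defmu}, or from the isomorphism $\CD(J,Q,\mu)\cong\CD(J,Q,\lambda^2\mu)$ sending $j_1+s_0j_2$ to $j_1+s_0\lambda^{-1}j_2$. Once this is in place, note that $4\deg_\xi j_1'$ and $2+4\deg_\xi j_2'$ are never equal (they differ mod $4$) and their maximum strictly exceeds $1+2\deg_\xi j_1'+2\deg_\xi j_2'$, so the leading $\xi$-coefficient of $\nu(x)$ is exactly $Q$ of one of the two leading components --- your blanket claim that it is ``$Q(j_{2,N})$'' is not correct as stated, but the corrected version still yields the desired contradiction with the anisotropy of $Q$.
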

\begin{remark}\label{rem:skewdim1}
        It is a major open problem whether every structurable division algebra of skew-dimension one is either a hermitian structurable algebra or
        is obtained from a Cayley--Dickson process on a Jordan division algebra with a Jordan norm of degree $4$.
        To the best of our knowledge, this is not known, and we have been told by Skip Garibaldi
        that there seem to be strong indications that {\em not} all structurable division algebras of skew-dimension one are of this form.
        See also section~\ref{ss:skewdim1} for the relevance of this question to our results.
\end{remark}

\subsection{Forms of the tensor product of two composition algebras}\label{ex:compalg}

For the first part of this section we allow the characteristic of $k$ to be equal to $3$, but we still demand that it is different from $2$.

\begin{definition}
        Let $C_1$ and $C_2$ be two composition algebras over $k$ (possibly of different dimension) with involution $\sigma_1$ and $\sigma_2$, respectively.
        Let $m_1:=\dim_k(C_1)$ and $m_2:=\dim_k(C_2)$.
        Let $\A=C_1\otimes_k C_2$, equipped with the involution
        \[ \barop=\sigma:=\si_1\otimes \si_2. \]
        Then we call $(\A, \barop)$ an {\em $(m_1,m_2)$-product algebra}.
\end{definition}
If $\cha(k)\neq 2,3$, it is shown in \cite[Section 8.(iv)]{A78} that $(\A,\barop)$ is a structurable algebra.

\begin{definition}
        A $k$\dash algebra $\A$ is a {\em form of the tensor product of two composition algebras} if there exists a field extension $E/k$ and two composition algebras $C_1,C_2$ over $E$
        such that $\A \otimes_k E\cong C_1 \otimes_E C_2$.
\end{definition}

These algebras were first studied in \cite{A88} by Allison, under the assumption that the base field has characteristic zero.
As pointed out in \cite{AF92}, most results remain valid in general characteristic different from $2$ (although {\em loc.\@ cit.\@} still excludes characteristic $3$),
and in particular, there is a complete classification of the forms of the tensor products of two composition algebras;
see Proposition~\ref{pr:formtensor} below.
We first give the definition which is required to describe the structure of these forms.

\begin{definition}[{\cite[p. 671]{A88}}]
        Let $C$ be a composition algebra $C$ of dimension $m$ over a quadratic extension $E/k$.
        Let $\Gal(E/k)=\langle \si\rangle$ and define the composition algebra $^\si C$ over $E$ that has the same addition, multiplication and involution as $C$,
        but with scalar multiplication $t\cdot x:=\si(t)x$ for all $t\in E$ and $x\in \prescript{\si}{}C$.

        Let $\mathcal{B}$ be the $E$-algebra $\mathcal{B}=C\otimes_E \prescript{\si}{}C$, equipped with the usual involution $\barop := \barop \otimes \barop$.
        Now define an automorphism $\tau \colon \mathcal{B}\to\mathcal{B} \colon x\otimes y\mapsto y\otimes x$,
        and let $\A=\{x\in \mathcal{B}\mid \tau(x)=x\}$,
        equipped with the restriction of $\barop$ from~$\mathcal{B}$.
        Then $(\A, \barop)$ is a form of $(\mathcal{B}, \barop)$, and is called a {\em twisted $(m,m)$-product algebra}, denoted by $c_{E/k}(C,\barop)$.
        By \cite[p. 671]{A88}, such an algebra is indeed structurable (in characteristic different from $2$ and $3$).
\end{definition}

By \cite[Proposition 2.2]{A88}, the $(m_1,m_2)$-product algebras and the twisted $(m,m)$-product algebras are central simple, except for $(m_1,m_2)=(2,2)$ and $(m,m)=(2,2)$.
Each (twisted) $(m_1,m_2)$-product algebra for $m_1,m_2\leq4$ is associative and each $(8,1)$- or $(8,2)$-product algebra is alternative.
Notice that an $(8,2)$-product is essentially an octonion algebra over a quadratic extension $E/k$, but equipped with an involution that is not $E$-linear.

\begin{proposition}[{\cite[Th. 2.1]{A88} and \cite[Proposition 7.9]{AF92}}]\label{pr:formtensor}
	If a $k$\dash algebra $\A$ is a form of the tensor product of two composition algebras,
	then either $\A$ is isomorphic to the tensor product of two composition algebras over~$k$,
	or $\A$ is isomorphic to a twisted $(m,m)$-product algebra for some $m>1$,
	and in this case, there is a quadratic extension $E/k$ such that $\A \otimes_k E$ is isomorphic to
	the tensor product of two composition algebras over $E$.
\end{proposition}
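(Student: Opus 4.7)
The plan is to treat this as a classification of forms problem via Galois descent. Let me describe how I would approach it.

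First, I would pass to the separable closure $k^{\mathrm{sep}}$. By definition of being a form, there is some field extension $E/k$ over which $\A \otimes_k E$ becomes isomorphic to a tensor product of two composition algebras. Since composition algebras over a field $F$ are classified by quadratic étale data together with norm forms that become split after a separable extension, I would first argue that one may take $E \subseteq k^{\mathrm{sep}}$ and in fact take $E/k$ finite Galois. Thus forms of a fixed model $C_1^0 \otimes_{k^{\mathrm{sep}}} C_2^0$ are classified by the pointed set $H^1(\Gal(k^{\mathrm{sep}}/k), \AAut(C_1^0 \otimes C_2^0, \sigma_1 \otimes \sigma_2))$, where the automorphism group is that of an algebra with involution.

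The key computation is the structure of this automorphism group scheme. When $\dim C_1^0 \neq \dim C_2^0$, no involution-preserving automorphism can interchange the two tensor factors, and a straightforward argument (writing an automorphism on generators and using that the $\sigma_i$-fixed and anti-fixed parts are canonically attached to each factor) shows
\[
    \AAut(C_1^0 \otimes C_2^0, \sigma_1 \otimes \sigma_2) \;\cong\; \AAut(C_1^0, \sigma_1) \times \AAut(C_2^0, \sigma_2).
\]
Galois cohomology of a direct product splits, so in this case every form is itself a tensor product of two composition $k$-algebras, and the first alternative holds. When $\dim C_1^0 = \dim C_2^0 = m > 1$, there is in addition a swap involution $\tau(x \otimes y) = y \otimes x$, and one gets a split short exact sequence
\[
    1 \to \AAut(C_1^0,\sigma_1) \times \AAut(C_2^0,\sigma_2) \to \AAut(C_1^0 \otimes C_2^0, \sigma_1 \otimes \sigma_2) \to \mathbb{Z}/2\mathbb{Z} \to 1.
\]

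From the associated long exact sequence in non-abelian cohomology, a class in $H^1$ either maps to the trivial class in $H^1(k, \mathbb{Z}/2\mathbb{Z})$, in which case it comes from the kernel and the form is once again an honest tensor product of two composition $k$-algebras, or it maps to a nontrivial class, which corresponds to a quadratic étale $k$-algebra $E = k[\sqrt{d}]$. In the second case I would show, by unwinding the descent data, that the cocycle is equivalent to one describing the twist construction $c_{E/k}(C,\barop)$: the Galois generator $\si$ of $E/k$ acts on $C \otimes_E \prescript{\si}{}C$ by composing the swap with the semilinear action from $E$, and the $\si$-fixed points form precisely the algebra given in the definition of $c_{E/k}(C,\barop)$. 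Since this twisting becomes trivial after base change to $E$, the final assertion that $\A \otimes_k E$ is isomorphic to a tensor product of composition algebras over $E$ is automatic.

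The main obstacle is the explicit identification of the automorphism group scheme in the equal-dimension case, and in particular checking that all relevant automorphisms are accounted for (including the inner ones coming from each $\AAut(C_i^0, \sigma_i)$) and that no additional exceptional automorphisms appear; for the $8$-dimensional octonion case this uses triality-type rigidity for the involution. Once the group-scheme structure and the splitting of the short exact sequence are established, the descent computation identifying the two possibilities is essentially formal.
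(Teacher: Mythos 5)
First, a point of comparison: the paper does not prove this proposition at all — it is quoted from Allison \cite[Th.\@ 2.1]{A88} and Allison--Faulkner \cite[Prop.\@ 7.9]{AF92} — so there is no internal proof to measure you against. Your Galois-descent skeleton is the right framework and is, in substance, the argument underlying Allison's theorem: over $k^{\mathrm{sep}}$ all composition algebras of a fixed dimension are split and the dimensions $m_1,m_2$ are recovered from $\dim\A=m_1m_2$ and $\dim\Ss=m_1+m_2-2$; the two alternatives correspond to whether the image of the descent class in $H^1(k,\Z/2\Z)$ (present only when $m_1=m_2=m>1$) is trivial or cuts out a quadratic extension $E/k$; and twisting the kernel by such a class yields $R_{E/k}(\AAut(C))$, whose $H^1$ equals $H^1(E,\AAut(C))$ and produces exactly the algebras $c_{E/k}(C,\barop)$, which visibly split over $E$. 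All of that bookkeeping is correct.

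The gap is that essentially all of the mathematical content of the statement sits in the step you label ``the main obstacle'' and then assert: the identification of $\AAut(C_1^0\otimes C_2^0,\sigma_1\otimes\sigma_2)$ with $\AAut(C_1^0,\sigma_1)\times\AAut(C_2^0,\sigma_2)$, respectively its extension by the swap. The issue is not merely whether an automorphism can interchange the two factors, but whether it must preserve the unordered pair of subalgebras $\{C_1\otimes 1,\,1\otimes C_2\}$ at all; for the non-associative $(8,4)$ and $(8,8)$ cases nothing forces this a priori, and ``writing an automorphism on generators'' or ``triality-type rigidity'' does not substitute for an argument. The standard way to close this — and what Allison actually does — is to recover the pair intrinsically from the algebra with involution, for instance via the locus $\{s\in\Ss\mid s^2\in k1\}=(S_1\otimes 1)\cup(1\otimes S_2)$, which every involution-preserving automorphism preserves and which, being a union of two linear subspaces, determines each of them and hence the subalgebras they generate. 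Note also that you need this identification \emph{before} you may replace the arbitrary splitting field in the definition of a form by a finite separable one (smoothness of the automorphism group scheme is a consequence of knowing what the group is, not something to be assumed), and that the degenerate cases $m_1=1$ and $(m_1,m_2)=(2,2)$ deserve a separate sentence since there the tensor product is not central simple. With the automorphism-group lemma supplied, the rest of your argument is, as you say, formal.
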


Let $C_1$ and $C_2$ be two composition algebras with involution $\si_1$ and $\si_2$ and norm form $q_1$ and $q_2$, respectively.
Let $S_i$ be the set of skew elements in $C_i$, i.e.
\[ S_i = \{ x \in C_i \mid x^{\si_i} = -x \}. \]

Let $C_1 \otimes_k C_2$ be a $(m_1,m_2)$-product algebra equipped with the involution  $\barop=\si:=\sigma_1 \otimes \sigma_2 $.
It follows that the set of skew elements in $C_1 \otimes_k C_2$ is equal to
\[ \Ss = \{ x \in C_1 \otimes_k C_2 \mid \overline{x}:=x^\si = -x \} = (S_1 \otimes 1) \oplus (1 \otimes S_2); \]
observe that $\dim_k \Ss = \dim_k C_1 + \dim_k C_2 - 2$.

\begin{definition}\label{def:invsharp}
\begin{compactenum}[(i)]
    \item
	We will associate a quadratic form $q_A$ to $C_1 \otimes_k C_2$, called the {\em Albert form}, by setting
	\[ q_A \colon \Ss \to k \colon (s_1 \otimes 1) + (1 \otimes s_2) \mapsto q_1(s_1) - q_2(s_2) \]
	for all $s_1 \in S_1$ and $s_2 \in S_2$. We have $q_A\perp \Hh=q_1\perp (-1)q_2$
	and when we denote $q'_i=q_i|_{S_i}$ for the pure part of the Pfister form $q_i$, we have $q_A=q'_1\perp \langle-1\rangle q'_2$.
    \item
	For each $s=s_1\otimes 1+1\otimes s_2\in S$, we define
	\[(s_1\otimes 1+1\otimes s_2)^\natural=s_1\otimes 1-1\otimes s_2.\]
    \item\label{it:s-inv}
	Let $s\in \Ss$ such that $q_A(s)\neq 0$. Then we say that $s$ is {\em invertible} and define the {\em inverse} of $s$ by
	\[s^{-1}:=-\frac{1}{q_A(s)}s^\natural.\]
\end{compactenum}
\end{definition}
The Albert form was introduced by A.~A.~Albert in the case where $C_1$ and $C_2$ are both quaternion algebras.

Tensor products of two composition algebras are far from being associative or alternative, but (as usual) the skew elements behave more nicely than arbitrary elements:

\begin{lemma}\label{ident bioctonion}
	For all $x\in C_1\otimes_kC_2$ and all $s_1,s_2,s\in \Ss$, we have
	\begin{compactenum}[\rm (i)]
	\item $s_1(s_2s_1)=(s_1s_2)s_1$;
	\item $(s_1s_2s_1)x=s_1(s_2(s_1x))$;
	\item If $s$ is invertible then $s(s^{-1}x)=s^{-1}(sx)=x$.
	\end{compactenum}
\end{lemma}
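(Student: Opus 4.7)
My plan is to split into three parts, handling (i) and (ii) with the abstract machinery of skew elements in structurable algebras from section~\ref{se:struct1}, and treating (iii) by a direct coordinate calculation in the tensor product.

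For (i), the first step is to apply the skew-alternative identity \eqref{sxy} to move the middle $s_2$ to the outside, turning $[s_1, s_2, s_1]$ into $-[s_2, s_1, s_1]$. The latter associator vanishes since $[x, s, s] = 0$ for any skew $s$, so the associator $[s_1, s_2, s_1]$ is zero and the expression $s_1 s_2 s_1$ is unambiguous.

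For (ii), I would apply the Moufang-type relation $s[t, s, x] = -[s, ts, x]$ from \eqref{struct mouf id} with $s = s_1$ and $t = s_2$. Expanding both associators and using (i) to identify $s_1(s_2 s_1)$ with $s_1 s_2 s_1$, the term $s_1((s_2 s_1) x)$ appears on both sides and cancels, leaving precisely $s_1(s_2(s_1 x)) = (s_1 s_2 s_1) x$.

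For (iii), rather than trying to identify $s^{-1}$ with the structurable conjugate inverse of section~\ref{se:struct2}, I would compute $L_s L_{s^\natural}$ by hand. Decomposing $L_s = L_{s_1 \otimes 1} + L_{1 \otimes s_2}$ and $L_{s^\natural} = L_{s_1 \otimes 1} - L_{1 \otimes s_2}$, the point is that these two summands commute on $\A$ (a one-line check on elementary tensors $a \otimes b$, where both orders give $s_1 a \otimes s_2 b$) and each of $L_{s_1 \otimes 1}$ and $L_{1 \otimes s_2}$ squares to multiplication by a scalar, namely $-q_1(s_1)$ and $-q_2(s_2)$ respectively (by the alternative law $s_i(s_i a) = s_i^2 a = -q_i(s_i) a$ inside the composition algebra $C_i$). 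The standard difference-of-squares then collapses $L_s L_{s^\natural} = L_{s^\natural} L_s$ to the scalar $-q_A(s) \, \mathrm{id}$; dividing by $-q_A(s)$ and recalling $s^{-1} = -q_A(s)^{-1} s^\natural$ gives both halves of (iii) at once.

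The main obstacle I anticipate is the interaction between the two notions of inverse that are in play in (iii): the ``inverse'' $s^{-1}$ introduced in Definition~\ref{def:invsharp} is defined by an ad hoc Albert-form formula, and its link to the structurable conjugate inverse of section~\ref{se:struct2} (where \eqref{LhatsLs} lives) is not a priori clear. One could in principle verify $\widehat{s} = -s^{-1}$ and then invoke \eqref{LhatsLs}, but this already requires knowing that $L_s$ is invertible, which is essentially what (iii) asserts. The direct difference-of-squares calculation sidesteps this chicken-and-egg issue cleanly, which is why I would organise the proof this way.
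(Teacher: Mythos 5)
Your proposal is correct. The paper itself offers no written argument for this lemma --- its proof reads only ``these identities can be easily verified using the well-known properties of composition algebras'', i.e.\ it envisages a direct check on elementary tensors inside $C_1\otimes_k C_2$. Your part (iii) is exactly in that spirit: the commutation of $L_{s_1\otimes 1}$ with $L_{1\otimes s_2}$, the identities $L_{s_i}^2=-q_i(s_i)\,\id$ coming from alternativity and the trace-zero condition, and the difference-of-squares collapse to $-q_A(s)\,\id$ are all sound, and your observation that this sidesteps the circularity in trying to route (iii) through $\hat s=-s^{-1}$ and \eqref{LhatsLs} is a genuinely good point. For (i) and (ii) you depart from the paper's intended route by invoking the abstract structurable-algebra identities: \eqref{sxy} gives $[s_1,s_2,s_1]=-[s_2,s_1,s_1]=0$, and \eqref{struct mouf id} with $s=s_1$, $t=s_2$ expands to two associators whose common term $s_1((s_2s_1)x)$ cancels, leaving (ii) after substituting (i); both computations check out. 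What this buys you is brevity and generality (the argument for (i)--(ii) works in any structurable algebra), at the cost of one caveat: the surrounding text of section~\ref{ex:compalg} temporarily allows $\Char(k)=3$, where $C_1\otimes_k C_2$ is not asserted to be structurable, so \eqref{sxy} and \eqref{struct mouf id} are not available there; the paper's direct tensor computation, by contrast, uses only alternativity of the factors and is characteristic-free. If you care about that marginal case you should either note the restriction to $\Char(k)\neq 2,3$ or verify (i)--(ii) on elementary tensors as you did for (iii).
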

\begin{proof}
	These identities can be easily verified using the well-known properties of composition algebras.
\end{proof}

In the case that  both $C_1$ and $C_2$ are quaternion algebras, $C_1\otimes_k C_2$ is associative; Albert proved that $C_1\otimes_k C_2$ is a division algebra if and only if its Albert form is anisotropic (see \cite[Theorem  III.4.8]{L}.)
It is not obvious to generalize this result to arbitrary composition algebras.
The proof of the following theorem makes use of the Lie algebra associated to the structurable algebra constructed in section \ref{se:constr lie alg}.
\begin{theorem}[{\cite[Theorem 5.1]{A86} }]
   Let $\cha(k)=0$ and $\A$ a (twisted) $(8,m_2)$-product algebra. Then $\A$ is a structurable division algebra if and only if the Albert form on $\Ss$ is anisotropic.
\end{theorem}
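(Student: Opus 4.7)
\emph{Forward direction (easy).} Suppose $\A$ is a structurable division algebra. Then every nonzero $s\in\Ss$ is conjugate invertible, and since $s$ is skew, \eqref{hats} says this amounts to invertibility of the left multiplication operator $L_s$. Now for $s=s_1\otimes 1+1\otimes s_2\in\Ss$ and $s^\natural=s_1\otimes 1-1\otimes s_2$, a direct calculation in $C_1\otimes_k C_2$ using the identity $x^2=-q_i(x)\cdot 1$ for each $x\in S_i$ gives
\[
s\cdot s^\natural = s_1^2\otimes 1 - 1\otimes s_2^2 = \bigl(-q_1(s_1)+q_2(s_2)\bigr)\cdot 1 = -q_A(s)\cdot 1.
\]
If $q_A(s)=0$ for some $s\neq 0$, then $s^\natural$ is a nonzero element of $\ker L_s$ (it is nonzero because $s\neq 0$ forces at least one of $s_1,s_2$ to be nonzero), contradicting invertibility of $L_s$. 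Hence $q_A$ is anisotropic.

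\emph{Converse, skew elements.} Now assume $q_A$ is anisotropic. Then every $0\neq s\in\Ss$ has $q_A(s)\neq 0$, so Definition~\ref{def:invsharp}\eqref{it:s-inv} provides $s^{-1}=-q_A(s)^{-1}s^\natural$. By Lemma~\ref{ident bioctonion}(iii), $L_s\circ L_{s^{-1}}=L_{s^{-1}}\circ L_s=\id$, so $L_s$ is invertible, and hence $s$ is conjugate invertible (with $\hat s=-s^{-1}=q_A(s)^{-1}s^\natural$ by \eqref{hats}).

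\emph{Converse, general elements.} The remaining task is to bootstrap conjugate invertibility from $\Ss$ to the whole of $\A^\ast$. My plan is to use formula~\eqref{hatuskew}: given any nonzero $u\in\A$ and any conjugate invertible $s\in\Ss$, the element $\hat u$ can be expressed in terms of the conjugate inverse of the skew element $\psi(u,U_u(su))$, provided the latter is itself nonzero. By the previous step, every nonzero skew element is already conjugate invertible, so it suffices to establish the following:
\[
\text{for every }0\neq u\in\A\text{ there exist }0\neq s\in\Ss\text{ with }\psi\bigl(u,U_u(su)\bigr)\neq 0.
\]

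This existence statement is the heart of the matter, and I expect it to be the main obstacle. My plan is to pass to the $5$\dash graded Lie algebra $K(\A)=K_{-2}\oplus K_{-1}\oplus K_0\oplus K_1\oplus K_2$ associated to $\A$ by the construction of section~\ref{se:constr lie alg}, where $K_{\pm 1}\cong\A$ and $K_{\pm 2}\cong\Ss$. In characteristic zero, for an $(8,m_2)$\dash product algebra this is a central simple Lie algebra of exceptional type ($F_4$, $E_6$, $E_7$ or $E_8$ for $m_2=1,2,4,8$), and the bilinear pairing $\Ss\times\Ss\to k$ induced by the Killing form restricts, up to scalar, to $q_A$. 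Under this dictionary, failure of conjugate invertibility of $u\in K_1$ translates into the existence of a proper $k$\dash parabolic subalgebra of $K(\A)$ whose $\pm 2$\dash graded pieces pair trivially with respect to $q_A$, producing an isotropic vector for $q_A$. Anisotropy of $q_A$ therefore forces $K(\A)$ to be anisotropic over $k$, and then every $0\neq u\in K_1\cong\A$ must be conjugate invertible. Making the link between non-invertibility of $u\in K_1$ and isotropy of $q_A$ on $K_2$ fully precise is where the delicate work lies; it is here that the classification-specific information about exceptional Lie algebras of type $F_4,E_6,E_7,E_8$ is used.
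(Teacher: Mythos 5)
Your forward direction and the skew-element part of the converse are correct, and they coincide with what the paper itself records in Remark~\ref{rem:product-div}: anisotropy of $q_A$ follows from the computation $s\,s^\natural=-q_A(s)\cdot 1$ together with Lemma~\ref{ident bioctonion} and Definition~\ref{def:invsharp}\eqref{it:s-inv}. Be aware, though, that the paper does not prove this theorem at all: it is quoted from Allison \cite[Theorem 5.1]{A86}, and the surrounding text explicitly notes that Allison's proof of the hard direction goes through the Lie algebra $K(\A)$ in a way that is tied to characteristic zero, and that the converse remains open in positive characteristic (for $(8,8)$-products in particular).

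The genuine gap is in your ``converse, general elements'' step, which you yourself flag as a plan rather than a proof. Two concrete problems. First, formula~\eqref{hatuskew} cannot be used in the direction you want: as stated (and as proved in \cite[Proposition 2.6]{A86_2}) it presupposes that $u$ is already conjugate invertible and then expresses $\hat u$; it does not assert that conjugate invertibility of the skew element $\psi(u,U_u(su))$ implies that of $u$. The tool that does give such an implication is \cite[Theorem 8.7]{AF92}, which is exactly what Remark~\ref{rem:product-div} invokes, and even then one must actually exhibit an $s_0\in\Ss\setminus\{0\}$ with $q_A(\psi(x,xs_0))\neq 0$ for every $x\neq 0$ --- the paper states this works for $m_2\in\{1,2,4\}$ and is open otherwise outside characteristic zero. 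Second, your Lie-algebra reduction is not yet an argument: $K(\A)$ is never anisotropic (it carries the nontrivial $5$-grading), so what you actually need is that anisotropy of $q_A$ forces the relative rank of $K(\A)$ to be exactly one, after which Theorem~\ref{th:divalg and liealg} yields that $\A$ is a division algebra. The missing implication ``relative rank $\geq 2$ implies $q_A$ isotropic'' is precisely the content of Allison's proof and requires detailed structure theory of the exceptional Lie algebras of types $F_4$, $E_6$, $E_7$, $E_8$; neither your sketch nor anything in this paper supplies it. As written, the hard direction is asserted, not proved.
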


\begin{remark}\label{rem:product-div}
    It seems plausible that this result can be generalized to arbitrary fields of characteristic different from $2$ and $3$,
    but the method used in Allison's proof cannot be generalized.
    One implication is easy: if $\A$ is a structurable division algebra, then in particular all elements of $\Ss$ are conjugate invertible,
    and Lemma~\ref{ident bioctonion} together with Definition~\ref{def:invsharp}\eqref{it:s-inv} implies that $q_A$ is anisotropic.

    For $(8,m_2)$-product algebras with $m_2 \in \{ 1,2,4 \}$, the converse does indeed hold; this can be shown by explicitly constructing
    an element $s_0 = 1 \otimes s_2 \in \Ss \setminus \{ 0 \}$ such that $q_A(\psi(x, xs_0)) \neq 0$ and invoking \cite[Theorem 8.7]{AF92}.
    The question remains open for $(8,8)$-product algebras and for twisted $(8,8)$-product algebras.
\end{remark}

\begin{remark}
\begin{compactenum}[(i)]
    \item
	In \cite{AF92}, it is shown that the conjugate degree (i.e.\@ the degree of the conjugate norm) of a (twisted) $(8,m)$-product algebra is equal to $2$, $4$, $4$ or $8$
	for $m=1,2,4,8$, respectively.
    \item
	In an entirely different context, the authors of \cite{HT} study Albert forms of the (twisted) tensor product of two octonion algebras.
	In \cite[Theorem 2.1]{HT} it is mentioned that every anisotropic 14\dash dimensional quadratic form with trivial discriminant and trivial Clifford invariant
	is the Albert form of a twisted tensor product of two octonion algebras.
	They also show the existence of quadratic forms that are the Albert form of a twisted tensor product of two octonion algebras,
	but that are not similar to the Albert form of a non-twisted tensor product of two octonion algebras;
	these are certain quadratic forms over fields that do not satisfy a condition called ``$D(14)$'' in \cite{HT}.
\end{compactenum}
\end{remark}

\section{Construction of Lie algebras from structurable algebras}\label{se:constr lie alg}

In this section, we will recall the Tits--Kantor--Koecher construction of the Lie algebra $K(\A)$ associated to any structurable algebra $\A$.
This Lie algebra will play a crucial role to make the connection with linear algebraic groups (see section~\ref{se:SSA and SAG} below).

In order to describe the construction, we need to introduce some more concepts; we give a brief summary of \cite{A79}.
We continue to assume that $\A$ is a structurable algebra with involution $\sigma \colon \A \to \A \colon x \mapsto \overline{x}$,
and we let $\End(\A)$ be the ring of $k$\dash linear maps from $\A$ to $\A$.
For each $A\in \End(\A)$, we define new $k$\dash linear maps
\begin{align*}
A^\eps&=A-L_{A(1)+\overline{A(1)}},\\
A^\delta&=A+R_{\overline{A(1)}},
\end{align*}
where $L_x$ and $R_x$ denote left and right multiplication by an element $x \in \A$, respectively.
One can verify that
\begin{align}
    V_{x,y}^\eps&=-V_{y,x},\label{Veps}\\
    V_{x,y}^\delta(s)&=-\psi(x,sy)\label{Vdel},
\end{align}
for all $x,y\in \A$ and $s\in \Ss$.
Define the Lie subalgebra $\Strl(\A,\barop)$ of $\End(\A)$ as
\begin{equation}\label{defstrl}
    \Strl(\A,\barop)=\{A\in \End(\A)\mid [A, V_{x,y}]=V_{Ax,y}+V_{x,A^\eps y}\}.
\end{equation}
(This definition follows from \cite[Corollary 5]{A78}.)
It follows from the definition of structurable algebras that $V_{x,y}\in \Strl(\A,\barop)$, so we can define the Lie subalgebra
\[\Instrl(\A, \barop)=\Span \{V_{x,y}\mid x,y\in \A\},\]
which is, in fact, an ideal of $\Strl(\A,\barop)$.
Notice that for all $s,t\in \Ss$, we have $L_sL_t\in \Instrl(\A)$, since it follows from \eqref{sxy} that
\begin{align}\label{eq:LsLt}
L_sL_t=\half( V_{st,1}-V_{s,t})=\half(V_{1,ts}-V_{s,t}).
\end{align}
It follows from skew-alternativity and the definition of $\delta$ and $\eps$ that
\begin{align}
    (L_rL_t)^\eps&=-L_tL_r,\label{LsLteps}\\
    (L_rL_t)^\delta(s)&=s(tr)+r(ts)\label{LsLtdel}
\end{align}
for all $r,t,s\in\Ss$.
For all $A\in \Strl(\A,\barop)$ we have a version of triality for endomorphisms:
\begin{align}\label{triality}
A(sx)=A^\delta(s)x+sA^\eps(x) \quad \text{for all } x\in\A, s\in \Ss.
\end{align}
By \cite[Lemma 1]{A79}, we have for all $A\in\Strl(\A,\barop)$ and $x,y\in\A$ that
\begin{align}\label{deltapsi}
A^\delta\psi(x,y)=\psi(A x,y)+\psi(x,Ay).
\end{align}
For all $A\in \Strl(\A,\barop)$, the map $A\mapsto A^\eps$ is a Lie algebra automorphism of $\Strl(\A,\barop)$ of order 2; the map $A\mapsto A^\delta|_\Ss$ is a Lie algebra homomorphism from $\Strl(\A,\barop)$ into $\End_k(\Ss)$.

It follows that $\A\oplus\Ss$ is a $\Strl(\A,\barop)$-module under the action $A(x,s)=(Ax,A^\delta s)$ for all $A\in \Strl(\A,\barop)$ and $(x,s)\in \A\oplus \Ss$.

\begin{definition}\label{def:Lie alg}
Consider two copies $\A_+$ and $\A_-$ of $\A$ with corresponding isomorphisms $\A \to \A_+ \colon x \mapsto x_+$
and $\A \to \A_- \colon x \mapsto x_-$, and let $\Ss_+\subset A_+$ and $\Ss_-\subset A_-$ be the corresponding subspaces of skew elements.
Let
\[ K(\A)=\Ss_-\oplus \A_-\oplus \Instrl(\A) \oplus \A_+ \oplus \Ss_+ \]
as a vector space; as in \cite[\S 3]{A79}, we make $K(\A)$ into a Lie algebra by extending the Lie algebra structure of $\Instrl(\A)$ as follows:
\begin{alignat*}{2}
\intertext{$\bt\ [\Instrl,K(\A)]$}
[V_{a,b},V_{a',b'}]&=V_{\{a,b,a'\},b'}-V_{a',\{b,a,b'\}} &\in \Instrl(\A),&\\
[V_{a,b},x_+] &:= (V_{a,b}x)_+ \in \A_+ , \quad & [V_{a,b},y_-] &:= (V_{a,b}^\eps y)_- \\*
&&&\ =(-V_{b,a}y)_-\in \A_-,\\
[V_{a,b},s_+] &:= (V_{a,b}^\delta s)_+ & [V_{a,b},t_-] &:= (V_{a,b}^{\eps\delta} t)_- \\*
&\ = -\psi(a,sb)_+\in \Ss_+,&&\ =\psi(b,ta)_-\in \Ss_-,\\
\intertext{$\bt\ [\Ss_\pm,\A_\pm]$}
[s_+,x_+] &:= 0, & [t_-,y_-] &:= 0,\\
[s_+,y_-] &:= (sy)_+\in \A_+,& [t_-,x_+] &:= (tx)_-\in \A_-,\\
\intertext{$\bt\  [\A_\pm,\A_\pm]$}
[x_+,y_-] &:= V_{x,y}\in \Instrl(\A),\\
[x_+,x_+'] &:= \psi(x,x')_+\in \Ss_+,&[y_-,y_-'] &:= \psi(y,y')_-\in \Ss_-,\\
\intertext{$\bt \ [\Ss_\pm,\Ss_\pm]$}
[s_+,s_+']&:=0,&[t_-,t_-']&:=0,\\
[s_+,t_-]&:=L_{s}L_{t}\in \Instrl(\A),
\end{alignat*}
for all $x,x',y,y'\in \A$, all $s,s',t,t'\in \Ss$, and all $V_{a,b},V_{a',b'}\in \Instrl(\A)$.
\end{definition}

From the definition of the Lie bracket we clearly see that the Lie algebra $K(\A)$ has a $5$\dash grading given by $K(\A)_j=0$ for all $|j|>2$ and
\begin{multline*}
	K(\A)_{-2}=\Ss_-,\quad K(\A)_{-1}=\A_-,\quad K(\A)_{0}=\Instrl(\A),\\
		K(\A)_{1}=\A_+,\quad K(\A)_{2}=\Ss_+.
\end{multline*}
In the case where $\A$ is a Jordan algebra, we have $\Ss=0$, and thus the Lie algebra $K(\A)$ has a $3$-grading;
in this case $K(\A)$ is exactly the Tits--Kantor--Koecher construction of a Lie algebra from a Jordan algebra (see, for example, \cite[Section~VIII.5]{J_Jordan}).

It is shown in \cite[\S 5]{A79} that the structurable algebra $\A$ is simple if and only if $K(\A)$ is a simple Lie algebra,
and that $\A$ is central if and only if $K(\A)$ is central.
The following strong result motivates the construction of structurable algebras.
\begin{theorem}[{\cite[Theorem 4 and 10]{A79}}]\label{th:kantorkoecher}
Let $\mathcal{L}$ be a simple Lie algebra over a field $k$ of characteristic different from $2$, $3$ and $5$. We define the following condition on $\LL$:
\begin{compactitem}[\rm ($\star$)] \item
	$\LL$ contains an $sl_2$-triple%
	\footnote{A triple $\{ e,f,h \}$ is called an $sl_2$-triple if $[e,f]=h,\ [h,e]=2e,\  [h,f]=-2f$; such a triple spans an $sl_2$ Lie subalgebra.}
	$\{e,f,h\}$ such that $\LL$ is the direct sum of an arbitrary number of irreducible $sl_2$-modules over this triple of highest weight $0$, $2$ or $4$.
\end{compactitem}
Then $\LL\cong K(\A)$ for some simple structurable algebra $\A$ if and only if $\LL$ satisfies~\textup{($\star$)}.

When $\cha(k)=0$, the condition \textup{($\star$)} on $\LL$ is fulfilled if and only if $\LL$ is isotropic,
i.e.\@ contains a non-trivial split toral subalgebra (see {\cite{Seligman}}).
\end{theorem}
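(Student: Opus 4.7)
The plan is to prove both directions of the biconditional. For the forward implication, assume $\A$ is a simple structurable algebra. The defining identity $\{1,1,z\}=z$ yields $V_{1,1}=\id_\A$, and reading off the bracket formulas in Definition~\ref{def:Lie alg}, one checks directly that
\[
	e := 1_+,\qquad f := 2\cdot 1_-, \qquad h:=2V_{1,1}
\]
form an $sl_2$-triple in $K(\A)$: we have $[e,f]=2V_{1,1}=h$, $[h,e]=2(V_{1,1}\cdot 1)_+=2e$, $[h,f]=4(-V_{1,1}\cdot 1)_-=-2f$, and $\mathrm{ad}(h)$ acts as $2j\cdot\id$ on the graded piece $K(\A)_j$ (e.g.\ on $\Ss_+$, using $[V_{1,1},s_+]=-\psi(1,s)_+=2s_+$). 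Hence every eigenvalue of $\mathrm{ad}(h)$ lies in $\{-4,-2,0,2,4\}$, and complete reducibility of finite-dimensional $sl_2$-modules with weights at most~$4$ -- available because $\cha(k)\neq 2,3,5$, i.e.\ either $\cha(k)=0$ or $\cha(k)\geq 7$ -- decomposes $K(\A)$ into irreducible summands of highest weights $0$, $2$, or $4$.

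For the converse, assume $\LL$ is simple and satisfies~($\star$) with $sl_2$-triple $\{e,f,h\}$. The eigenspace decomposition of $\tfrac12\mathrm{ad}(h)$ gives a $\mathbb{Z}$-grading $\LL=\bigoplus_{j=-2}^{2}\LL_j$ supported in $\{-2,-1,0,1,2\}$. I would take $\A:=\LL_1$ as underlying vector space with identity chosen to correspond to $e$, identify a second copy $\A_-$ of $\A$ with $\LL_{-1}$ via a suitable normalisation of $\mathrm{ad}(f)^2$, and identify $\LL_{\pm 2}$ with copies $\Ss_\pm$ of a common skew subspace $\Ss\subseteq\A$. Using the bracket rules of Definition~\ref{def:Lie alg} in reverse, define the triple product on $\A$ by $\{x,y,z\}_+ := [[x_+,y_-],z_+]$, let the involution be the unique $k$\dash linear map satisfying $\psi(x,y)_+ = [x_+,y_+]\in\LL_2$ and $\bar 1=1$, and recover the multiplication from the triple product by specialisation at~$1$. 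By construction the Kantor--Koecher Lie algebra $K(\A)$ built from this $\A$ is naturally graded-isomorphic to~$\LL$, and simplicity of~$\A$ follows from that of~$\LL$, since any ideal of $\A$ would generate a graded ideal of $K(\A)$.

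The main obstacle is the verification that the reconstructed $\A$ genuinely satisfies the structurable identity~\eqref{struct id}. This is a formal but delicate consequence of the Jacobi identity in~$\LL$, unpacked component by component across the five graded pieces and respecting the subtle interplay between the involution, multiplication and triple product; it is the technical heart of~\cite[\S 4]{A79}. A secondary technical point is the complete reducibility of the $sl_2$-action in positive characteristic, which holds only for $p\geq 7$ when weights up to~$4$ occur -- precisely the reason for excluding characteristic~$5$. Finally, the assertion in characteristic zero reduces, via the Jacobson--Morozov theorem, to exhibiting for every isotropic simple~$\LL$ an $sl_2$-triple with weights bounded by~$4$; this follows from the structure theory of relative root systems and is the content of Seligman's reference cited in the statement.
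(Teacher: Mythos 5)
The paper does not prove this statement at all: it is quoted verbatim from Allison's paper and attributed to \cite[Theorems 4 and 10]{A79}, so there is no in-house argument to compare against. Measured against the cited source, your outline is faithful. The forward direction is essentially complete and your computations are correct: $V_{1,1}=\id$, the triple $\{1_+,\,2\cdot 1_-,\,2V_{1,1}\}$ satisfies the $sl_2$ relations, and $\ad(2V_{1,1})$ acts on $K(\A)_j$ as multiplication by $2j$ (your check $[V_{1,1},s_+]=-\psi(1,s)_+=2s_+$ is right, since $\psi(1,s)=-2s$). The only point you wave at is the complete reducibility of a restricted $sl_2$-module whose weights lie in $\{-4,\dots,4\}$ when $\cha(k)\geq 7$; that is a genuine input, it is exactly where $\cha(k)=5$ dies, and you correctly flag it, but you do not supply it.

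The converse is where your proposal is a roadmap rather than a proof. You correctly extract the $5$\dash grading from $\tfrac12\ad(h)$, take $\A=\LL_1$ with unit $e$, and reverse-engineer the triple product, involution and multiplication from the bracket rules of Definition~\ref{def:Lie alg}. But the two decisive steps are both deferred: (1) that the reconstructed product actually satisfies the structurable identity~\eqref{struct id} -- this is not ``formal''; it is the content of \cite[\S 4]{A79} and consumes most of that paper; and (2) that the identifications you need ($\ad(f)^2\colon\LL_1\to\LL_{-1}$ and $\ad(f)\colon\LL_2\to\LL_1$ landing on the skew part, both bijectively) really do hold -- this is precisely where the full strength of ($\star$), i.e.\ the decomposition into irreducibles of highest weight $0$, $2$, $4$ and not merely the weight bound, is used, and your sketch does not make that dependence explicit. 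Since the theorem is being used in this paper purely as a citation, deferring these points to \cite{A79} is consistent with the paper's own treatment, but as a standalone proof your proposal has a real gap at exactly the technical heart you yourself identify.
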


The following result is particularly relevant for our purposes.
\begin{theorem}[{\cite[Theorem 3.1]{A86}}]\label{th:divalg and liealg}
Let $\cha(k)=0$, and let $\A$ be a central simple structurable algebra. Then $K(\A)$ has relative rank 1 if and only if $\A$ is a structurable division algebra.

Moreover, each central simple Lie algebra of relative rank 1 can be obtained in this way.
\end{theorem}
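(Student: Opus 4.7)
The plan is to use the canonical 5-grading of $K(\A)$ to exhibit a distinguished split toral subalgebra, and then to match the minimality of the associated parabolic with the invertibility structure of $\A$ through the one-invertibility results of Chapter~\ref{se:one-inv}.

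First, I would check that $h := V_{1,1} \in \Instrl(\A)$ is a grading derivation: the computations $V_{1,1}|_{\A} = \id_{\A}$, $V_{1,1}^{\delta}(s) = -\psi(1,s) = 2s$, and $V_{1,1}^{\eps} = V_{1,1} - 2 L_1 = -\id_{\A}$ show that $\ad h$ acts as multiplication by $j$ on $K(\A)_j$. Hence $kh$ is a one-dimensional split toral subalgebra, so the relative rank of $K(\A)$ is always at least $1$, and
\[
P^{\pm} := K(\A)_0 \oplus K(\A)_{\pm 1} \oplus K(\A)_{\pm 2}
\]
are opposite parabolic subalgebras with $\radu(P^{\pm}) = K(\A)_{\pm 1} \oplus K(\A)_{\pm 2}$. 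The theorem thus reduces to showing that $P^+$ is a minimal parabolic subalgebra of $K(\A)$ exactly when $\A$ is a structurable division algebra.

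Next I would pass from conjugate invertibility in $\A$ to one-invertibility of pairs in $\A \times \Ss$, via Corollary~\ref{cor:oneinverse}: $\A$ is a structurable division algebra if and only if every non-zero pair $(x,s)$ is one-invertible (the ``if'' direction being immediate by taking $s = 0$). Using the Allison--Faulkner formulas recalled in Chapter~\ref{se:one-inv}, a one-invertible pair $(x,s)$ produces, via the embeddings $\A \hookrightarrow \A_+$ and $\Ss \hookrightarrow \Ss_+$, a nilpotent $e := x_+ + s_+ \in \radu(P^+)$ that extends to an $\mathfrak{sl}_2$-triple $(e, h_e, f)$ of $K(\A)$ with $f \in \radu(P^-)$, and conversely any such triple with $e \in \radu(P^+) \setminus \{0\}$ gives back a one-invertible pair. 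Thus one-invertibility of all non-zero pairs amounts to saying that every non-zero element of $\radu(P^+)$ has its Jacobson--Morozov filtration equal to the one determined by $h$, which is precisely the statement that $P^+$ is a minimal parabolic. The ``moreover'' part then follows from Theorem~\ref{th:kantorkoecher}: in characteristic zero a central simple Lie algebra $\LL$ of relative rank $1$ is isotropic, hence satisfies condition $(\star)$ by Seligman's theorem cited there, so $\LL \cong K(\A)$ for some central simple structurable algebra $\A$, which must be a division algebra by the equivalence just proved.

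The main obstacle is the precise translation in the middle step: showing that one-invertibility of every non-zero pair is \emph{equivalent} to $P^+$ being minimal. Concretely, the ``rank $1 \Rightarrow$ one-invertibility'' direction requires extracting the explicit inverse pair from the abstract geometry of the opposite parabolic pair, while the ``one-invertibility $\Rightarrow$ rank $1$'' direction requires ruling out any larger split torus $T \supsetneq kh$ by showing that a refined weight decomposition of $\A$ under some $t \in T \cap \Instrl(\A)$ would force a non-trivial splitting of the identity $V_{x, \hat x} = \id$ via the triality~\eqref{triality}, contradicting one-invertibility of the weight components. The Allison--Faulkner inverse formulas and the triality identity are the key technical tools that make this translation go through.
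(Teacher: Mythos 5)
First, a remark on the ground truth: the paper offers no proof of this statement at all — it is quoted from \cite[Theorem~3.1]{A86} — so there is no in-paper argument to compare yours against directly. The closest analogue is the proof of Theorem~\ref{thm:AG1} together with Lemma~\ref{lem:kantor-str}, and your overall architecture (grading derivation, reduction to minimality of $P^+$, translation into one-invertibility, and Theorem~\ref{th:kantorkoecher} for the ``moreover'') does track that proof. Your opening computation is correct: $V_{1,1}=\id_\A$, $V_{1,1}^\eps=-\id_\A$ and $V_{1,1}^\delta(s)=2s$ exhibit $V_{1,1}\in\Instrl(\A)$ as a grading derivation, so $K(\A)$ always has relative rank at least $1$; and the ``moreover'' part via Theorem~\ref{th:kantorkoecher} is fine.

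The genuine gap is the middle equivalence. You assert that one-invertibility of every non-zero pair is the same as saying that every non-zero $e\in\radu(P^+)$ has Jacobson--Morozov filtration ``equal to the one determined by $h$'', and that this in turn ``is precisely'' minimality of $P^+$. Neither claim is justified, and the first is false as stated: already in the Jordan case ($\Ss=0$), for invertible $x$ the triple $\bigl(x_+,\,2V_{1,1},\,2\hat x_-\bigr)$ is an $\mathfrak{sl}_2$-triple whose semisimple element is $2h$, so the Jacobson--Morozov grading of $x_+$ is the \emph{double} of the $h$-grading, not equal to it; and for pairs with $x\neq 0\neq s$ the element $x_++s_+$ is not even $\ad h$-homogeneous, so its JM grading is yet another filtration. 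What one-invertibility of $(x,s)$ actually encodes is that $e_+(x,s)$ lies in the big cell $U_-H_-U_-$, i.e.\ that some element of $U_-\,e_+(x,s)\,U_-$ reverses the grading; the passage from ``all non-zero pairs lie in the big cell'' to ``relative rank one'', and back via the Bruhat decomposition, is exactly the content of Lemma~\ref{lem:kantor-str} and Step~1 of the proof of Theorem~\ref{thm:AG1} — and even there the paper defers the final rank computation to Allison's original (ii)$\Rightarrow$(iii) argument. Your sketch of the converse direction (a larger split torus forcing a splitting of $V_{x,\hat x}=\id$ via triality) is the right kind of idea but is not carried out. So the skeleton is sound, but the load-bearing step is missing; to repair it, replace the Jacobson--Morozov reformulation by the grade-reversal/big-cell characterisation of one-invertibility and run the Bruhat-type argument of Lemma~\ref{lem:kantor-str}, either at the level of $\Aut(K(\A))^\circ$ (which in characteristic zero determines the relative rank of $K(\A)$) or directly in the Lie algebra following Allison.
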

One of the goals of the current paper is to prove a similar result for linear algebraic groups (over fields of arbitrary characteristic different from $2$ or $3$).
We refer to Theorems~\ref{thm:AGadjoint} and~\ref{thm:AG1} below.

We now give a brief overview of the various types of simple Lie algebras that can be obtained starting from central simple structurable algebras.
When $\cha(k)=0$, we mention, in view of the previous theorem, the Tits index (see Table~\ref{ta:exc} on page~\pageref{ta:exc}) when the structurable algebra is division.
Since our main goal is to understand the exceptional cases, we do not go into detail about the classical types that are obtained.

\begin{compactenum}[(1)]
    \item
	If $\A$ is an associative algebra, then the corresponding Lie algebra is classical.
    \item
	Let $\A$ be a central simple Jordan algebra.
	Then $K(\A)$ is classical unless the Jordan algebra is an exceptional Jordan algebra (i.e.\@ an Albert algebra);
	in this case, it is of type $E_7$.
	If $J$ is an Albert division algebra, then the corresponding Lie algebra has Tits index $E_{7,1}^{78}$;
	see also Example~\ref{ex:abmoufsets}\eqref{it:Albert}.
    \item
	If $\A$ is a structurable algebra arising from a hermitian form, then the corresponding Lie algebra is classical.
    \item\label{KKstruct dim 1}
	Let $\A$ be of skew-dimension $1$, thus $\A$ is a form of the algebra $M(J,\eta)$ for several possibilities of the Jordan algebra $J$
	as in Definition \ref{def:structmatalg}.
	If $N=0$, then $\A$ is isomorphic to a structurable algebra of hermitian form type by Remark~\ref{rem:N=0}, and hence $K(\A)$ is classical.
	So assume that $N\neq 0$, and $J$ is a Jordan algebra of a non-degenerate cubic norm as in Definition \ref{def:structmatalg}(i).
	It follows from \cite[Theorem V.4,V.8 and V.9]{J_Jordan}
	that $\dim_k J \in \{ 1,3,6,9,15,27 \}$.
	We get the following types of Lie algebras (at least in characteristic $0$):
	\[ \begin{array}{c|c|c}
	\dim_k(J)& \dim_k(\A)& \text{type of } K(\A)\\
	\hline
	1  & 4 & G_2\\
	3  & 8 & D_4\\
	6  & 14 & F_4\\
	9  & 20 & E_6\\
	15 & 32 & E_7\\
	27 & 56 & E_8
	\end{array}\]
	The known examples of structurable division algebras of this type arise from the Cayley--Dickson process described in Definition~\ref{def:CD},
	starting from a central simple Jordan division algebra of degree~$4$.
	Such an algebra has dimension $10$, $16$ or $28$, and the respective Tits indices of the Lie algebra $K(\A)$ are $^2\!E^{35}_{6,1}$, $E^{66}_{7,1}$
	and $E^{133}_{8,1}$, respectively (see  \cite[Example 7.2]{AF84}), again assuming that $\cha(k) = 0$.
     \item
	When $C_1,C_2$ are composition algebras, the Lie algebra $K(C_1\otimes_k C_2)$ coincides with the Lie algebra constructed from $C_1$ and $C_2$
	using Tits' second Lie algebra construction, which is visualized in Freudenthal's magic square (see \cite[p. 672]{A88}).

	In the case where $\A$ is a non-associative (twisted) product algebra, we have
	\[\begin{array}{c|c}
	\text{(twisted) } (i,j)\text{-product algebra}& \text{type of } K(\A)\\
	\hline
	(8,1)& F_4\\
	(8,2)& E_6\\
	(8,4)& E_7\\
	(8,8)& E_8
	\end{array}\]
	If $\cha(k)=0$, it is shown in \cite[Theorem 6.22]{A88} that if the algebras in the above list are structurable division algebras,
	then $K(\A)$ has Tits index $F_{4,1}^{21},$ $^2\!E^{29}_{6,1},$ $ E^{48}_{7,1}$ or $E^{91}_{8,1}$, respectively.
    \item
	The exceptional 35-dimensional structurable algebras give rise to split Lie algebras of type $E_7$; see \cite{AF_35dim}.
\end{compactenum}

\section{Isotopies of structurable algebras}\label{se:struct5}

Although isomorphisms of structurable algebras are well defined, it turns out that it is better
to allow the unit element $1$ to be mapped to a different element.
This idea is encapsulated in the notion of an isotopy.
\begin{definition}[{\cite[Section 8]{AH81}}]\label{def:isotopy}
Two structurable algebras $(\A,\bar{\ })$ and $(\A',\bar{\ })$ over a field $k$ are {\em isotopic}
if there exists a $k$\dash vector space isomorphism $\psi \colon \A\rightarrow \A'$ such that there exists a $\chi\in \Hom_k(\A,\A')$ such that
\[\psi(V_{x,y}z)=V_{\psi(x),\chi(y)}\psi(z)\quad\forall x,y,z\in \A.\]
The map $\psi$ is then called an {\em isotopy} between $(\A,\bar{\ })$ and $(\A',\bar{\ })$.
The map $\chi$ is completely determined by the map $\psi$; we call $\chi$ the {\em inverse dual} of $\psi$ and denote it by $\chi:=\hat{\psi}$.

In this case, the map $\chi$ is again an isotopy, with inverse dual $\psi$; in particular, $\doublehat{\psi}=\psi$.
Isotopy defines an equivalence relation on structurable algebras.

If $\psi$ maps the identity of $\A$ to the identity of $\A'$, then $\psi$ is an isomorphism of structurable algebras.

By \cite[Lemma 1.20]{AF84}, if $\A$ and $\A'$ are isotopic, then $\A$ is (central) simple if and only if $\A'$ is (central) simple.
\end{definition}

The following theorem indicates why isotopy is a useful equivalence relation on structurable algebras.
\begin{theorem}[{\cite[Proposition 12.3]{AH81}}]\label{th:isotopic_isomor}
	Two structurable algebras $\A$ and $\A'$ are isotopic if and only if $K(\A)$ and $K(\A')$ are graded-isomorphic graded Lie algebras.
\end{theorem}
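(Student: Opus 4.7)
The plan is to prove both directions by explicitly transferring data between structurable algebras and the Tits--Kantor--Koecher Lie algebras, component by component through the $5$-grading.

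For the forward direction, given an isotopy $\psi\colon\A\to\A'$ with inverse dual $\hat\psi$, I would build a graded linear bijection $\Phi\colon K(\A)\to K(\A')$ by putting, on the middle three grades,
\[ \Phi(y_-):=\hat\psi(y)_-,\quad \Phi(V_{a,b}):=V'_{\psi(a),\hat\psi(b)},\quad \Phi(x_+):=\psi(x)_+, \]
where the prime refers to operators in $\A'$. The grade $\pm 2$ components are then forced by the bracket $[x_\pm,x'_\pm]=\psi(x,x')_\pm$: set
\[ \Phi(\psi(x,y)_+):=\psi'(\psi(x),\psi(y))_+,\qquad \Phi(\psi(x,y)_-):=\psi'(\hat\psi(x),\hat\psi(y))_-. \]
These last assignments make sense as linear maps because $\psi(1,s)=2s$ shows that $\psi$ already spans $\Ss$, so $\Ss_\pm=[\A_\pm,\A_\pm]$ inside $K(\A)$; a linear dependence $\sum_i\psi(x_i,y_i)=0$ can be analyzed via the formula $L_{\psi(x,y)}=U_{x,y}-U_{y,x}$ from~\eqref{Uxy-Uyx} together with the symmetrized isotopy identity $\psi(U_{x,y}z)=V'_{\psi(x),\hat\psi(z)}\psi(y)$, which follows at once from Definition~\ref{def:isotopy}.

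The verification that $\Phi$ preserves all the brackets of Definition~\ref{def:Lie alg} then reduces to a case check using the identities~\eqref{Veps}--\eqref{deltapsi}, the isotopy relation in both forms (for $\psi$ and for $\hat\psi$), and the formula $L_sL_t=\tfrac{1}{2}(V_{st,1}-V_{s,t})$ from~\eqref{eq:LsLt}. The brackets internal to $\A_\pm\oplus\Instrl(\A)$ are immediate from the definitions; the brackets involving $\Ss_\pm$ and the bracket $[\Ss_+,\Ss_-]=L_sL_t$ are handled by rewriting left multiplications in terms of $V$-operators via~\eqref{LsLteps} and~\eqref{LsLtdel}, reducing everything to the transport law for $V$.

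For the reverse direction, suppose $\Phi\colon K(\A)\to K(\A')$ is a graded Lie algebra isomorphism. Set $\psi:=\Phi|_{\A_+}$ and $\chi:=\Phi|_{\A_-}$, both viewed as linear bijections $\A\to\A'$. Applying $\Phi$ to the defining bracket $[x_+,y_-]=V_{x,y}$ shows $\Phi(V_{x,y})=V'_{\psi(x),\chi(y)}$; applying $\Phi$ to $[V_{a,b},z_+]=(V_{a,b}z)_+$ and using this determination of $\Phi$ on $\Instrl(\A)$ now yields
\[ \psi(V_{a,b}z)=V'_{\psi(a),\chi(b)}\psi(z), \]
which is exactly the isotopy condition with $\hat\psi=\chi$.

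The main obstacle is therefore in the forward direction, and specifically in proving consistency and bracket-preservation on the grade $\pm 2$ components: one must check that if $\sum_i\psi(x_i,y_i)=0$, then $\sum_i\psi'(\psi(x_i),\psi(y_i))=0$, and then check the compatibility of $\Phi$ with the left-multiplication brackets $[\Ss_+,\A_-]$ and $[\Ss_+,\Ss_-]$. A cleaner route that would sidestep some of this bookkeeping is to factor an arbitrary isotopy as the composition of an isomorphism—for which the whole statement is tautological—and a principal isotopy $R_u$ associated to a conjugate-invertible element $u\in\A$, and then to realize the graded automorphism of $K(\A)$ corresponding to $R_u$ as an inner automorphism generated by suitable elements of $\A_+$, $\A_-$, and $\Ss_\pm$ inside the Lie algebra itself.
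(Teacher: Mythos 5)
The paper offers no proof of this statement at all: it is imported verbatim from \cite[Proposition 12.3]{AH81}, so there is no in-house argument to compare against. Your proposal is essentially the standard argument from that reference, and it is correct in outline; both directions go through as you describe. A few details are worth tightening. First, $\psi(1,s)=\bar s-s=-2s$ (not $2s$), though the spanning conclusion $\Ss_{\pm}=[\A_\pm,\A_\pm]$ is unaffected. Second, your well-definedness argument on the grade $\pm 2$ components should be finished explicitly: from $\psi(U_{x,y}z)=\psi(V_{x,z}y)=V'_{\psi(x),\hat\psi(z)}\psi(y)=U'_{\psi(x),\psi(y)}\hat\psi(z)$ and \eqref{Uxy-Uyx} one gets $\psi\circ L_{\psi(x,y)}=L'_{\psi'(\psi(x),\psi(y))}\circ\hat\psi$, so a relation $\sum_i\psi(x_i,y_i)=0$ yields $L'_s=0$ for $s=\sum_i\psi'(\psi(x_i),\psi(y_i))$, and then $s=L'_s(1)=0$; that last evaluation at the unit is the step that actually closes the argument. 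The same one-line device (apply the candidate operator to $\psi(z)$ and use surjectivity of $\psi$) is also needed, and suffices, to see that $V_{a,b}\mapsto V'_{\psi(a),\hat\psi(b)}$ is well defined on $\Instrl(\A)$ — a point your write-up skips. Third, the bracket check simplifies considerably once you record that $\Phi(W)=\psi\circ W\circ\psi^{-1}$ for every $W\in\Instrl(\A)$: then $\Phi(L_sL_t)=\psi L_sL_t\psi^{-1}=L'_{s'}L'_{t'}$ follows directly from the transport law above, with no need for \eqref{eq:LsLt}. Your reverse direction is clean and complete as written. Finally, the alternative route you sketch at the end is precisely the map $\gamma_x$ described in the remark following Lemma~\ref{le:isotopic1inv}; note that the paper itself cautions there that verifying $\gamma_x$ is a Lie algebra isomorphism takes more effort, not less, so the direct case check is probably the better choice.
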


We collect some useful facts about isotopies.
From \cite[Proposition~11.3]{AH81}, it follows that
\begin{align}
L_s\{x,y,z\}&=\{L_s x, L_{\hat{s}} y, L_s z\}\label{Lsxyz}
\end{align}
for all $x,y,z\in\A$ and $s\in \Ss$ conjugate invertible. Therefore $L_s$ is an isotopy with $\widehat{L_s}=L_{\hat{s}}$.
Let $u\in\A$ be conjugate invertible and let $\alpha$ be an isotopy; then by \cite[Proposition 8.2]{AH81},
\begin{align}\label{hatalphau}
\widehat{\alpha u}=\hat{\al} \hat{u},
\end{align}
and in particular if $s\in \Ss$ is conjugate invertible, then
\begin{align}
\widehat{su}=\hat{s}\hat{u}. \label{hatsx}
\end{align}

If $(\A',\bar{\ })$ is isotopic to $(\A,\bar{\ })$, there exists a conjugate invertible $u\in \A$ such that $(\A',\bar{\ })$ is isomorphic to a certain isotope of $(\A,\bar{\ })$, denoted by $(\A,\bar{\ })^{\langle u\rangle}$, which we describe below.
\begin{construction}\label{constr:isot}
Let $u\in \A$ be a conjugate invertible element. We give the definition of the $u$-conjugate isotope $\A^{\langle u\rangle}$ of $\A$ following the approach in \cite[p.\@ 364]{A86_2}; see \cite[Section 7]{AH81} for the original definition.

The algebra $\A^{\langle u\rangle}$ will be a structurable algebra with underlying vector space $\A$.
Its involution is defined by
\begin{align}
\tau^{\langle u\rangle} x=\bar{x}^{\langle u\rangle}=2x-\{x,u,\hat{u}\}=x-\psi(x,\hat{u})u.\label{tauu}
\end{align}
We have ${\tau^{\langle u\rangle}}^2=\id$, and we define $\Ss^{\langle u\rangle}$ and $\mathcal{H}^{\langle u\rangle}$ as the $(-1)$- and $1$\dash eigenspace, respectively, for $\tau^{\langle u\rangle}$. Then
\[\A=\Ss^{\langle u\rangle} \oplus \mathcal{H}^{\langle u\rangle};\]
moreover, one can show that $\Ss^{\langle u\rangle}=\Ss u$.

Next, we define the operator $P_u$ given by
\begin{align}\label{defPu}
P_u x=\tfrac{1}{3}U_u(2\tau^{\langle u\rangle}+\id)x=\frac{1}{3}U_u(5  x-2V_{x,u}\hat{u}).
\end{align}
This operator is invertible and has the following nice properties:
\begin{align}
P_uP_{\hat{u}}&=P_{\hat{u}}P_u=\id\label{PuPhatu},\\
P_u\hat{u}&=u\label{Puhatu},\\
P_u(su)&=-\frac{1}{3}U_u(su)\quad \text{ for all } s\in \Ss\label{Pusu},\\
P_u\{x,y,z\}&=\{P_u x, P_{\hat{u}} y, P_u z\}\quad \text{ for all } x,y,z\in \A\label{Puxyz}.
\end{align}
This last identity says that $P_u$ is an isotopy on $\A$ with $\widehat{P_u}=P_{\hat{u}}$.

Finally, if $x,y\in \A$ we can write $x=su+a$ where $s\in \Ss$ and $a\in \mathcal{H}^{\langle u\rangle}$, and we define
\begin{align}
x_{\langle u\rangle}y=(su+a)_{\langle u\rangle}y=sP_u y+ V_{a,u}y.\label{mult isotop}
\end{align}

This defines a product on the vector space $\A$.
Then
\begin{equation}\label{1u}
	1^{\langle u\rangle}=\hat{u}
\end{equation}
is a unit for the product and $\tau^{\langle u\rangle}$ is an involution for this product.
We denote the algebra with this product and involution $\tau^{\langle u\rangle}$ by $\A^{\langle u\rangle}$; this algebra is again structurable.
\end{construction}

The $V$-operator of the algebra $\A^{\langle u\rangle}$ is given by
\begin{align}
V^{\langle u\rangle}_{x,y}z=\{x,y,z\}^{\langle u\rangle}=\{x,P_u y,z\}, \label{xyzu}
\end{align}
for all $x,y,z\in\A$. We denote by $L^{\langle u\rangle}_x$ the left multiplication with $x$ in $\A^{\langle u\rangle}$ and by  $\psi^{\langle u\rangle}(x,y):=L^{\langle u\rangle}_x \overline{y}^{\langle u\rangle}-L^{\langle u\rangle}_y \overline{x}^{\langle u\rangle}$.
Notice that
\begin{align}
\psi^{\langle u\rangle}(x,y)&=\psi(x,y)u\label{psiuxy},\\
L_{su}^{\langle u\rangle}&=L_sP_u, \label{Lusu}
\end{align}
for all $x,y\in \A$ and all $s\in \Ss$.

Let $x$ be conjugate invertible in $\A$. It follows from the identity $V^{\langle u\rangle}_{x,P_{\hat{u}} \widehat{x}}=V_{x,\widehat{x}}=\id$ that $x$ is also conjugate invertible in $\A^{\langle u\rangle}$, with
\begin{align}\label{isotopeinv}
\widehat{x}^{\langle u\rangle}=P_{\hat{u}}\widehat{x}
\end{align}
where $\widehat{\phantom{x}}^{\langle u\rangle}$ denotes the conjugate inverse in $\A^{\langle u\rangle}$.

We give an overview of some interesting facts about isotopies for the different classes of central simple algebras.
\begin{compactenum}[(1)]
    \item
	Let $\A$ be an associative algebra with involution.
	It is shown in \cite{A86_2} that two associative algebras are isotopic if and only if they are isomorphic. If $u\in \A$ is invertible, $P_u=L_{u\overline{u}}$.
    \item
	Let $\A$ be a Jordan algebra.
	It follows from \eqref{tauu} and \eqref{mult isotop} that two Jordan algebras are isotopic as Jordan algebras (see \cite[Section 12]{J_Jordan})
	if and only if they are isotopic as structurable algebras.
	If $u\in \A$ is invertible, $P_u=U_{u}$.
    \setcounter{enumi}{3}
    \item
	Let $\A$ be of skew-dimension $1$.
	In \cite{A90}, various interesting properties of the isotope $(\A,\barop)^{\langle u \rangle}$ for $u\in k1\oplus k s_0$ have been obtained.
    \item\label{pg:isotopic}
	If $\cha(k)=0$, \cite[Theorem 5.4]{A88} states that two structurable algebras that are forms of an $(m_1,m_2)$-product algebra are isotopic
	if and only if their respective Albert forms are similar.

	In \cite{A86_2} it is shown that two alternative algebras are isotopic if and only if they are isomorphic;
	in this case, $P_u=L_{u\overline{u}}$ for every invertible element $u\in \A$.
    \item
	Two exceptional $35$-dimensional structurable algebras are always isotopic; see \cite{AF_35dim}.
\end{compactenum}


\chapter{One-invertibility for structurable algebras}\label{se:one-inv}

As we will see, the description of the Moufang sets arising from structurable division algebras will rely on
the notion of {\em one-invertibility}, introduced by Bruce Allison and John Faulkner in the context of Kantor pairs \cite{AF99},
generalizing the identically named concept for Jordan pairs.
They explicitly proved a criterion for one-invertibility (Theorem~\ref{th:1inv});
this criterion allows us to explicitly compute the left and right inverse of arbitrary non-zero elements in $\A\times\Ss$
when $\A$ is a division algebra (Theorem~\ref{th:formula-oneinverse} and Corollary~\ref{cor:oneinverse}).

In order to include the case $\Char(k) = 5$, we have to deal with the rather subtle notion of {\em algebraicity} for structurable algebras
and their corresponding $5$\dash graded Lie algebras.
This is what we do in section~\ref{se:alg-gr}; see also Remark~\ref{rem:loos} below.
However, we will see later that every structurable {\em division} algebra is automatically algebraic (see Theorem~\ref{thm:div-alg}), and
this will allow us to state our results in section~\ref{se:MS-SDA} for arbitrary structurable division algebras over fields $k$ with $\Char(k) \neq 2,3$,
without having to be concerned about algebraicity any longer.

In section~\ref{se:oneinverse} we then define one-invertibility for elements in $\A\times \Ss$,
and in section~\ref{se:divtooneinv} we show that if $\A$ is an (algebraic) structurable division algebra,
then each element in $(x,s)\in \A\times\Ss\setminus\{(0,0)\}$ is one-invertible.
We are indebted to John Faulkner for providing the main idea for the proof of this fact.

The results from this section will be used later in section~\ref{se:MS-SDA} when we describe the construction
of Moufang sets from structurable division algebras and determine the group $U$ and permutation $\tau$;
see Theorem \ref{mainth:moufset} for the main result.


\section{Algebraicity of $5$-graded Lie algebras}\label{se:alg-gr}

In this section, we introduce the notion of an {\em algebraic} $5$\dash graded Lie algebra;
we will then define an {\em algebraic} structurable algebra as an algebra such that the associated graded Lie algebra $K(\A)$ is algebraic.
It turns out that a simple structurable algebra is associated to an algebraic $k$\dash group only
if it satisfies the algebraicity condition. This concept, as well as other constructions employed in the present
section, go back to~\cite[Chapter 2]{St-thes}, which, actually, deals with a more general case of $(2n+1)$-graded Lie algebras over
commutative rings, and due to that is much more technical.

\begin{notation}
Let $\LL$ be a finite-dimensional $5$\dash graded Lie algebra over a field $k$, $\Char k\neq 2,3$.
For any commutative $k$\dash algebra $R$ and any
$(x,s)\in (\LL\otimes_k R)_\sigma\oplus (\LL\otimes_k R)_{2\sigma}=(\LL_\sigma\oplus \LL_{2\sigma})\otimes_k R$
we set
\[
e_\si(x,s)=\sum\limits_{i=0}^4\frac 1{i!}\ad(x+s)^i\in\End_R(\LL\otimes_k R).
\]
\end{notation}

\begin{definition}
Let $\LL$ be a finite-dimensional $5$\dash graded Lie algebra over $k$. The \emph{grading derivation}
on $\LL$ is the derivation $\zeta\in\Der_k(\LL)$ such that for any $-2\le i\le 2$ and any $x\in\LL_i$
one has
\[
\zeta(x)=i\cdot x\quad\mbox{for any $-2\le i\le 2$ and any $x\in\LL_i$}.
\]
If $\LL$ contains an element $\zeta$ such that $\ad_\zeta$ is the grading derivation, we call $\zeta$
a grading derivation of $\LL$ by abuse of language.
\end{definition}

\begin{lemma}\label{lem:esigma'-prod}
Let $\LL$ be a finite-dimensional $5$\dash graded Lie algebra over a field~$k$, $\Char k\neq 2,3$.
\begin{compactenum}[\rm (i)]
\item\label{esigma':prod}
Let $R$ be a commutative associative unital $k$\dash algebra. For all $x,y\in \LL_\si\otimes_k R$ and $s,t\in \LL_{2\si}\otimes_k R$
we have
\begin{align}\label{eq:esi-operations}
    e_\si(x,s)\,e_\si (y,t) &= e_\si(x+y,s+t+\frac 12 [x,y])\quad\text{and} \\
    e_\si(x,s)^{-1} &= e_\si(-x,-s).
\end{align}
\item Let $\GU_\sigma$ denote $\LL_\sigma\oplus\LL_{2\sigma}$ considered as an algebraic $k$\dash group
with respect to the operation
\[
(x,s)*(y,t)=(x+y,s+t+\frac 12[x,y]).
\]
Then
\[
e_\si \colon \GU_\sigma\to \GL(\LL),\quad (x,s)\mapsto e_\si(x,s),
\]
is a
homomorphism of algebraic $k$\dash groups. If
there is a grading derivation $\zeta\in\LL$, then this homomorphism is a closed embedding.
\end{compactenum}
\end{lemma}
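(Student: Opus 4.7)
The plan is to exploit the fact that the $5$-grading forces $\ad(x+s)$ to be nilpotent of index at most five, so that $e_\sigma(x,s)$ is a genuine (truncated) exponential of $\ad(x+s)$. Indeed, $\ad(x+s)$ sends $\LL_i \otimes_k R$ into $\LL_{i+1} \otimes_k R \oplus \LL_{i+2} \otimes_k R$, so each application raises the grading by at least one; after five applications the image lies outside $\{-2,-1,0,1,2\}$ and hence vanishes.

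For part (i), I would first verify the key bracket simplification
\[ [x+s,\,y+t] = [x,y] \in \LL_{2\sigma} \otimes_k R, \]
which holds because $[x,t]$ and $[s,y]$ lie in $\LL_{3\sigma}\otimes_k R = 0$ and $[s,t]$ lies in $\LL_{4\sigma}\otimes_k R = 0$. A second degree count shows $[x+s,[x,y]] = 0$, so $[\ad(x+s),\ad(y+t)] = \ad([x,y])$ commutes with both $\ad(x+s)$ and $\ad(y+t)$ inside $\End_R(\LL \otimes_k R)$. In that situation the associative Baker--Campbell--Hausdorff identity collapses to the three-term form $\exp(X)\exp(Y) = \exp(X+Y+\tfrac{1}{2}[X,Y])$, and reinserting $X = \ad(x+s)$, $Y = \ad(y+t)$ yields the product formula
\[ e_\sigma(x,s)\,e_\sigma(y,t) = e_\sigma\bigl(x+y,\ s+t+\tfrac{1}{2}[x,y]\bigr). \]
The inverse formula then follows by setting $(y,t) = (-x,-s)$ and using $e_\sigma(0,0) = \id$. (Alternatively, one could avoid citing BCH and simply expand the two finite sums degree by degree; all terms beyond the three listed vanish by the same grading argument.)

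For part (ii), the operation $(x,s)*(y,t) = (x+y,\,s+t+\tfrac{1}{2}[x,y])$ is polynomial in its arguments, so $\GU_\sigma$ is an algebraic $k$-group (of Heisenberg type), and (i) shows $e_\sigma$ to be a group homomorphism whose matrix entries depend polynomially on $(x,s)$; hence it is a morphism of algebraic groups. To obtain the closed embedding under the assumption that $\LL$ contains a grading derivation $\zeta$, I would compute the action on $\zeta$: we have $[x+s,\zeta] = -x-2s$, and then $[x+s,-x-2s] = 0$ by the same degree bookkeeping as before, so the truncated exponential simplifies to
\[ e_\sigma(x,s)(\zeta) = \zeta - x - 2s. \]
Projection onto $\LL_\sigma \otimes_k R$ and $\LL_{2\sigma} \otimes_k R$ then recovers $-x$ and $-2s$ respectively, giving a polynomial retraction of the image of $e_\sigma$ back onto $\GU_\sigma$; this proves the embedding is closed. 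The main subtlety throughout is not any individual calculation but verifying that every higher-order BCH correction in the product formula really does vanish, which is exactly what the $5$-grading hypothesis controls.
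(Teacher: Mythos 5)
Your proof is correct and for part (i) it is essentially the paper's argument: the paper simply cites a Campbell--Baker--Hausdorff formula over commutative rings (\cite[Theorem 9]{AF99}) together with the $5$\dash grading, whereas you supply the degree bookkeeping yourself --- namely that $[x+s,y+t]=[x,y]$ and that $[x,y]$ is killed by a further $\ad(x+s)$ or $\ad(y+t)$, so the BCH series truncates to the three-term form. For part (ii) the key computation is the same ($e_\sigma(x,s)(\zeta)=\zeta-\sigma x-2\sigma s$; note you dropped the factor $\sigma\in\{\pm1\}$, which is harmless since projecting onto $\LL_\sigma$ and $\LL_{2\sigma}$ still recovers $x$ and $s$ after dividing by the units $-\sigma$ and $-2\sigma$), but you finish differently: the paper deduces that $e_\sigma$ is universally injective and invokes \cite[II, \S 5, Proposition 5.1]{DeGa}, while you build an explicit polynomial retraction $r\colon\GL(\LL)\to\GU_\sigma$, $g\mapsto$ (rescaled graded components of $g(\zeta)-\zeta$), with $r\circ e_\sigma=\id$, so that $e_\sigma$ is a section of a separated morphism and hence a closed immersion. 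Your route is slightly more self-contained (no appeal to the general monomorphism criterion for group schemes), at the cost of having to know that a section of a separated morphism is a closed immersion; both are standard, and neither approach gains or loses generality here.
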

\begin{proof}
\begin{compactenum}[(i)]
    \item The statement follows immediately from a version of the Campbell--Baker--Hausdorff formula
	 over an arbitrary commutative ring in which $6$ is invertible \cite[Theorem 9]{AF99}, and the fact that
    $\LL$ is $5$\dash graded.
    \item Clearly, $e_\sigma \colon \GU_\sigma\to\End(\LL)$ is a morphism of $k$\dash varieties. By~\eqref{esigma':prod}
we have $e_\sigma(\GU_\sigma)\subseteq\GL(\LL)$ and $e_\sigma$ is
homomorphism of $k$\dash groups. If $\zeta$ exists, then $e_\sigma(x,s)(\zeta)=\zeta-\sigma x-2\sigma s$,
and hence $e_\sigma$ is universally injective. Then by~\cite[II, \S 5, Proposition 5.1]{DeGa} $e_\sigma$ is a
closed embedding.
\qedhere
\end{compactenum}
\end{proof}

\begin{definition}\label{def:algebraic}
Let $\LL$ be a finite-dimensional $5$\dash graded Lie algebra over a field $k$, $\Char k\neq 2,3$.
We say that $\LL$ is \emph{algebraic},
if for any $(x,s)\in \LL_\si\oplus\LL_{2\sigma}$
the endomorphism $e_\si(x,s)$ of $\LL$ is a Lie algebra automorphism. We say that a
(finite-dimensional) structurable algebra $\A$ over $k$ is \emph{algebraic}, if $K(\A)$ is algebraic
in the above sense.
\end{definition}

\begin{remark}\label{rem:algebraic}
Lemma~\ref{lem:esigma'-alg} below shows that any Jordan algebra over a field of characteristic $\neq 2,3$
is algebraic, and any structurable algebra over a field of characteristic $\neq 2,3,5$ is algebraic.
We will be able to prove later that every structurable {\em division} algebra is algebraic,
but this will require much more preparation; see Theorem~\ref{thm:div-alg} below.

In fact, we do not know whether there exist central simple structurable algebras that are not algebraic.
\end{remark}

\begin{lemma}\label{lem:ext-alg}
Let $\LL$ be a finite-dimensional $5$\dash graded Lie algebra over a field $k$, $\Char k\neq 2,3$.
If $\LL$ is algebraic, then
for any commutative associative unital $k$\dash algebra $R$ and any
$(x,s)\in (\LL\otimes_k R)_\sigma\oplus (\LL\otimes_k R)_{2\sigma}=(\LL_\sigma\oplus \LL_{2\sigma})\otimes_k R$
the endomorphism
\[
e_\si(x,s)=\sum\limits_{i=0}^4\frac 1{i!}\ad(x+s)^i
\]
of $\LL\otimes_k R$ is an $R$-Lie algebra automorphism.
\end{lemma}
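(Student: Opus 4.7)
The plan is to reduce the claim to a polynomial identity on $V := \LL_\si\oplus\LL_{2\si}$ and then bootstrap that identity from $k$\dash points to $R$\dash points. Set
\[
D(x,s;a,b) := e_\si(x,s)[a,b] - [e_\si(x,s)a,\, e_\si(x,s)b] \in \LL\ot_k R,
\]
so that showing $D\equiv 0$ on $(V\ot R)\times(\LL\ot R)^2$, together with the inverse $e_\si(-x,-s)$ supplied by Lemma~\ref{lem:esigma'-prod}, produces the desired $R$\dash Lie algebra automorphism. A direct application of the Leibniz rule for the derivation $\ad(x+s)$ shows that the terms in $e_\si(x,s)[a,b]$ and in $[e_\si(x,s)a,\, e_\si(x,s)b]$ with bidegree $j+k\le 4$ cancel in pairs, leaving
\[
D(x,s;a,b) = -\sum_{\substack{j,k\le 4\\ j+k\ge 5}} \tfrac{1}{j!\,k!}\,\bigl[\ad(x+s)^j a,\, \ad(x+s)^k b\bigr],
\]
a polynomial in $v:=(x,s)$ of total degree between $5$ and $8$, bilinear in $(a,b)$.

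Next I would split $D$ by homogeneous degrees, $D = \sum_{l=5}^{8}\phi_l$, with $\phi_l$ homogeneous of degree $l$ in $v$. For any $c\in k$ the element $cv$ again lies in $V$, so algebraicity of $\LL$ gives $\sum_{l=5}^8 c^l\phi_l(v;a,b) = 0$. Since $\Char k\ge 5$ forces $|k^*|\ge 4$, one can pick four distinct scalars $c_1,\dots,c_4\in k^*$; reducing the exponents $5,6,7,8$ modulo $|k^*|$ via Fermat's little theorem shows that the $4\times 4$ matrix $(c_i^{l})_{i,l}$ has invertible Vandermonde-type determinant over $k$ (even over $\FF_5$, where the reduced exponents are $1,2,3,0$), so the linear system forces $\phi_l(v;a,b)=0$ individually for each $l\in\{5,6,7,8\}$, all $v\in V$, and all $a,b\in\LL$.

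Finally I would promote the identity $\phi_l(v;a,b)=0$ from $v\in V$, $a,b\in\LL$ to $v\in V\ot R$, $a,b\in\LL\ot R$. The $(a,b)$\dash extension is immediate by $R$\dash bilinearity. For the $v$\dash extension, $\phi_l$ is a homogeneous polynomial map of degree $l$, so polarization yields a symmetric $l$\dash linear form $\widetilde{\phi}_l\colon V^{\times l}\to\Hom_k(\LL^{\ot 2},\LL)$ uniquely characterized by $\phi_l(v;a,b) = \widetilde{\phi}_l(v,\dots,v;a,b)$ whenever $l!$ is invertible; once $\widetilde{\phi}_l$ is known to vanish on $V^{\times l}$, it extends identically $R$\dash multilinearly, yielding $\phi_l\equiv 0$ on $V\ot R$ and hence $D\equiv 0$, as required. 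The delicate point, and the raison d'\^etre of this entire section, is that polarization of a degree\dash$l$ polynomial map needs $l!$ invertible in $k$, which fails for $l\ge 5$ when $\Char k = 5$; this forces one either to exploit the specific form of the $\phi_l$'s as iterated brackets in $\ad(v)$ (as carried out in much greater generality in~\cite[Chapter~2]{St-thes}), or to pass to an infinite extension $K/k$, verify each $\phi_l\equiv 0$ there by Zariski density of $V(K)$ in the affine space $V$, and descend back to $R$ by faithful flatness.
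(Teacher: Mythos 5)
Your reduction of the claim to the vanishing of the polynomial map
\[
D(v;a,b)=-\sum_{\substack{p,q\le 4\\ p+q\ge 5}}\tfrac{1}{p!\,q!}\bigl[\ad(v)^p a,\ad(v)^q b\bigr]
\]
is correct, and over an \emph{infinite} field $k$ your argument does go through (one does not even need polarization there: a polynomial map vanishing on all $k$\dash points of an affine space over an infinite field is the zero polynomial, and its defining formula then vanishes after any base change $\otimes_k R$). The genuine gap is the case where $k$ is a finite field of characteristic $5$, which is precisely the situation this lemma exists to handle. There your argument breaks at the step ``$\phi_l$ vanishes on $V(k)$, hence $\phi_l$ vanishes on $V\otimes_k R$'': over $\FF_q$ a polynomial of degree $\ge 5$ can vanish at every rational point without being zero (e.g.\ $x^5-x$ over $\FF_5$), so knowing $\phi_l(v;a,b)=0$ for all $v\in\LL_\si\oplus\LL_{2\si}$ does not let you conclude anything about $R$\dash points. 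Neither of your proposed repairs closes this: polarization requires $l!\in k^\times$, which fails for $l\ge 5$ in characteristic $5$, as you note; and the suggestion to ``pass to an infinite extension $K/k$ and verify $\phi_l\equiv 0$ there by Zariski density'' is circular, because the algebraicity hypothesis only gives the vanishing of $D$ at $k$\dash points, not at $K$\dash points --- establishing it at $K$\dash points is itself an instance of the lemma you are trying to prove. Your remaining alternative (``exploit the specific form of the $\phi_l$'s, as in the thesis'') is a deferral rather than an argument.

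The paper closes the gap by a mechanism your proposal is missing. By the multiplication formula of Lemma~\ref{lem:esigma'-prod}, any $e_\si(x,s)$ with $(x,s)\in(\LL_\si\oplus\LL_{2\si})\otimes_k R$ factors as a product of elements $e_\si(\lambda a_0)$ with $a_0$ a genuine $k$\dash point of $\LL_\si$ or $\LL_{2\si}$ and $\lambda\in R$. When $R$ is a field one conjugates by the grading automorphism $\phi_\lambda$ (which scales $\LL_n\otimes_k R$ by $\lambda^n$ and is an automorphism for \emph{every} $\lambda\in R^\times$, with no polynomial identity needed), giving $e_\si(\lambda a_0)=\phi_\lambda\circ e_\si(a_0)\circ\phi_\lambda^{-1}$ and thereby reducing to the $k$\dash point $a_0$, which is exactly what the hypothesis covers (a quadratic extension $R[\sqrt{\lambda}\,]$ is needed for $a_0\in\LL_{2\si}$). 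General $R$ is then treated by passing through fraction fields of domains and presenting $R$ as a quotient of a polynomial ring. Some such scaling-and-conjugation device, or an equally explicit use of the graded structure of the $\phi_l$, is indispensable; without it the statement over finite fields of characteristic $5$ remains unproved.
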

\begin{proof}
Take any $a\in (\LL_\sigma\oplus \LL_{2\sigma})\otimes_k R$.
Since $e_\si(a)$ is an $R$-linear endomorphism of $\LL\otimes_k R$, in order to establish the claim it is
enough to prove that
\begin{equation}\label{eq:esi-aut}
e_\si(a)([b,c])=[e_\si(a)(b),e_\si(a)(c)]
\end{equation}
for any $b\in\LL_i$, $c\in\LL_j$, $-2\le i,j\le 2$. The formulas~\eqref{eq:esi-operations} of Lemma~\ref{lem:esigma'-prod}
imply
that, moreover, it is enough to prove~\eqref{eq:esi-aut} only for elements $a=\lambda a_0$, where
$a_0\in\LL_\sigma$ or $a_0\in\LL_{2\si}$ and $0\neq\lambda\in R$.

Assume first $R$ is a field extension of $k$. Then $\lambda$ is invertible. Consider the Lie algebra automorphism
$\phi_\lambda$ of $\LL\otimes_k R$ defined by the formula $\phi_\lambda(x)=\lambda^n x$ for any
$x\in\LL_n\otimes_k R$,
$-2\le n\le 2$. Assume that $a_0\in\LL_1$. Then, clearly,
$$
\phi_\lambda^{-1}\circ e_\si(a)\circ\phi_\lambda=e_\si(\phi_\lambda^{-1}(a))=e_\si(a_0).
$$
Therefore,
\begin{multline*}
e_\si(a)([b,c])=\phi_\lambda\circ e_\si(a_0)\circ\phi_\lambda^{-1}([b,c])=\\
\phi_\lambda\bigl([e_\si(a_0)(\lambda^{-i}b),e_\si(a_0)(\lambda^{-j}c)]\bigr)=
[\phi_\lambda e_\si(a_0)\phi_\lambda^{-1}(b),\phi_\lambda e_\si(a_0)\phi_\lambda^{-1}(c)]=\\
[e_\si(a)(b),e_\si(a)(c)],
\end{multline*}
which settles the case $a_0\in\LL_1$. If $a_0\in\LL_{-1}$, one should run the same argument for $\lambda^{-1}$ instead of $\lambda$.
If $a_0\in\LL_{2\si}$, then one can run the same argument with the element $\sqrt\lambda$ of the
quadratic field extension $R\sqrt\lambda$ of $R$. Since the map $\LL\otimes_k R\to\LL\otimes_k R\sqrt\lambda$
is injective, the fact that~\eqref{eq:esi-aut} holds in $\LL\otimes_k R\sqrt\lambda$ implies that
it holds in $\LL\otimes_k R$.

Next, assume that $R$ is a domain, and let $K$ be its fraction field. By the previous
case the equality~\eqref{eq:esi-aut} holds in $\LL\otimes_k K$. Since the natural map $\LL\otimes_k R\to \LL\otimes_k K$
is injective, we conclude that~\eqref{eq:esi-aut} holds in $\LL\otimes_k R$.

Now let $R$ be arbitrary. Since $a,b,c$ are finite $R$-linear combinations of elements in $\LL$,
we can assume without loss of generality that $R$ is a finitely generated $k$\dash algebra. Then
$R\cong k[x_1,\ldots,x_n]/I$, where $k[x_1,\ldots,x_n]$ is the polynomial ring in $n$ variables over $k$.
Let $\tilde a$, $\tilde b$ and $\tilde c$ be any preimages of $a,b,c$ contained in the
corresponding graded summands of $\LL\otimes_k k[x_1,\ldots,x_n]$.
Since $k[x_1,\ldots,x_n]$ is a domain, the equality~\eqref{eq:esi-aut} holds for $\tilde a,\tilde b,\tilde c$.
Then it holds for $a,b,c$, since the natural projection $\LL\otimes_k k[x_1,\ldots,x_n]\to\LL\otimes_k R$
preserves the Lie bracket.
\end{proof}

\begin{lemma}\label{lem:esigma'-alg}
Let $\LL$ be a finite-dimensional $5$\dash graded Lie algebra over a field~$k$, $\Char k\neq 2,3$.
If, moreover, $\Char k\neq 5$ or $\LL_2\oplus\LL_{-2}=\{0\}$, then $\LL$ is algebraic.
\end{lemma}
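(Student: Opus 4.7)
Setting $D := \ad(x+s)$, the plan is first to observe that $D$ is a nilpotent derivation of $\LL$. Indeed, because $x\in\LL_\sigma$ and $s\in\LL_{2\sigma}$, each application of $D$ strictly shifts the grading in the direction of $\sigma$, so $D^5=0$ on $\LL$; in the special case $\LL_2\oplus\LL_{-2}=0$, the element $s$ must vanish and $D=\ad(x)$ shifts the grading by exactly one step, giving $D^3=0$. Write $N$ for the corresponding nilpotency bound ($N=5$ or $N=3$). Since $\Char k\neq 2,3$, the factorial $(N-1)!$ is invertible in $k$, so $e_\sigma(x,s)=\sum_{i=0}^{N-1}\tfrac{1}{i!}D^i$ is a well-defined element of $\End_k(\LL)$, and by Lemma~\ref{lem:esigma'-prod} it already lies in $\GL(\LL)$ with inverse $e_\sigma(-x,-s)$. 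It therefore remains only to verify that $e_\sigma(x,s)$ preserves the Lie bracket.

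To that end, for homogeneous $a\in\LL_p$ and $b\in\LL_q$, the plan is to expand both sides and apply the iterated Leibniz rule $D^n[a,b]=\sum_{i+j=n}\binom{n}{i}[D^i a,D^j b]$ together with the identity $\binom{n}{i}/n!=1/(i!j!)$. A short rearrangement then yields
\[
[e_\sigma(x,s)a,\,e_\sigma(x,s)b]-e_\sigma(x,s)[a,b]=\sum_{n=N}^{2(N-1)}\sum_{\substack{i+j=n\\ 0\le i,j\le N-1}}\frac{[D^i a,D^j b]}{i!j!},
\]
reducing the problem to the vanishing of each inner sum.

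For each such $n$, the identity $D^n[a,b]=0$ (valid because $n\ge N$), combined with the observation that the summands with $i\ge N$ or $j\ge N$ vanish on their own, yields $\sum_{i+j=n,\,i,j\le N-1}\binom{n}{i}[D^i a,D^j b]=0$. Whenever $n!$ is invertible in $k$, dividing by $n!$ produces the desired identity at degree $n$. In the case $N=3$ this handles $n=3,4$ (with $3!,4!$ invertible under $\Char k\neq 2,3$), and in the case $N=5$ with $\Char k\neq 5$, it handles $n=5,6$.

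The only degrees that remain are $n=7,8$ in the $N=5$ case, which are problematic in characteristic $7$; these I intend to dispose of by a purely grading-theoretic argument. Since each $D$ shifts the grading in the direction of $\sigma$ by at least one step, $[D^i a,D^j b]$ lies in $\LL_{p+q+\sigma m}$ for some $m\ge i+j$; because $p,q\in[-2,2]$ and $i+j\ge 7$, the resulting grading has absolute value at least $3$, forcing the bracket into $\LL_{\ge 3}\cup\LL_{\le -3}=\{0\}$. I expect this grading argument to be the only non-routine step, in that it is precisely what plugs the gap left by the Leibniz/division argument in the small problematic characteristics; the remainder is a mechanical assembly of the Leibniz formula and factorials.
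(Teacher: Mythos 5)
Your proof is correct and uses the same two ingredients as the paper's proof: the integer-coefficient Leibniz identity $D^n([a,b])=\sum_{i+j=n}\binom{n}{i}[D^i a,D^j b]$, divided by $n!$ in the degrees where $n!$ is invertible, plus a grading argument for the degrees where it is not. The one organizational difference is that the paper first factors $e_\si(x,s)=e_\si(x,0)\,e_\si(0,s)$ via Lemma~\ref{lem:esigma'-prod} and works with $\ad_a$ for a single homogeneous $a\in\LL_i$, so that the $m$-fold term sits in the single component $\LL_{im+j+l}$; you keep the inhomogeneous operator $\ad(x+s)$ and replace this exact degree count by the lower bound that each application of $D$ shifts the degree by at least one step towards $\si$, which is still enough to force $[D^ia,D^jb]=0$ once $i+j\ge 7$. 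Both versions consume the hypothesis identically: invertibility of $5!$ and $6!$ (hence $\Char k\neq 5$) is what disposes of $n=5,6$ in the genuinely $5$-graded case, while the grading argument alone covers $n=7,8$, so no condition at the prime $7$ is needed.
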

\begin{proof}
Since by Lemma~\ref{lem:esigma'-prod} we have $e_\si(x)e_\si(s)=e_\si(x,s)$, it is enough to prove the claim
for all endomorphisms $e_\si(a)$, where $a\in\LL_i$, $i\neq 0$.
If $\LL_2\oplus\LL_{-2}=\{0\}$, we can consider $\LL$ as a $3$-graded Lie algebra. Thus,
     we are given a $(2n+1)$-graded Lie algebra $\LL$ over $k$ such that $(3n)!\in k^\times$, where $n=1$ or $2$.
For any derivation $D\in\End(\LL)$, any
$b,c\in \LL$, and any $3n\geq m\geq 0$ it follows by induction on $m$ that
\begin{equation}\label{eq:Dm}
\frac 1{m!}D^m([b,c])=\sum\limits_{p+q=m}\left[\frac 1{p!}D^p(b),\frac 1{q!}D^q(c)\right].
\end{equation}
For any $a\in \LL_i$, $b\in \LL_j$, $c\in \LL_l$, where $-n\leq i,j,l\leq n$ and $i\neq 0$, all homogeneous graded components of $[e_\si(a)(b),e_\si(a)(c)]$
are of the form
\begin{equation}\label{eq:exp-homog}
\sum\limits_{\substack{p+q=m \\[.6ex] p,q\leq 2n}}
\left[\frac 1{p!}\ad_a^p(b),\frac 1{q!}\ad_a^q(c)\right]\in\LL_{im+j+l}.
\end{equation}
If $3n\geq m\geq 0$, such a component is equal to the corresponding component of $e_\si(a)[b,c]$ by~\eqref{eq:Dm}.
If $4n\geq m>3n$, then $|im|>3n$ while $|j+l|\leq 2n$, which implies that $\LL_{im+j+l}=0$.
\end{proof}

\begin{lemma}\label{lem:univ-alg}
Let $\LL$, $\LL'$ be two finite-dimensional $5$\dash graded Lie algebras over a field $k$, $\Char k\neq 2,3$.
Let $f \colon \LL\to\LL'$ be a graded $k$\dash homomorphism of Lie algebras, such that $f|_{\LL_i} \colon \LL_i\to\LL'_i$
is a bijection for all $i\in\{\pm 1,\pm 2\}$. If $\LL$ is algebraic, then $\LL'$ is algebraic.
\end{lemma}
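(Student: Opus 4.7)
The plan is as follows. Fix $(x', s') \in \LL'_\sigma \oplus \LL'_{2\sigma}$, and use the bijectivity of $f|_{\LL_\sigma}$ and $f|_{\LL_{2\sigma}}$ to pick a lift $(x, s) \in \LL_\sigma \oplus \LL_{2\sigma}$ with $f(x) = x'$ and $f(s) = s'$. Since $f$ is a Lie algebra homomorphism, $\ad_{x'+s'} \circ f = f \circ \ad_{x+s}$ on $\LL$; iterating and summing with coefficients $\tfrac{1}{i!}$ yields the intertwining identity
\begin{equation*}
    e_\sigma(x', s') \circ f = f \circ e_\sigma(x, s).
\end{equation*}
Moreover, Lemma~\ref{lem:esigma'-prod}(i) applied to $\LL'$ already gives $e_\sigma(x',s')^{-1} = e_\sigma(-x',-s')$, so $e_\sigma(x', s')$ is a $k$-linear automorphism of the underlying vector space of $\LL'$; only the preservation of the Lie bracket remains to be shown.

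By bilinearity, it suffices to verify the bracket identity on pairs $a' \in \LL'_i$, $b' \in \LL'_j$ for $i, j \in \{-2, \ldots, 2\}$, and I would split into two cases. If $i, j \in \{\pm 1, \pm 2\}$, the bijectivity hypothesis provides lifts $a' = f(a)$, $b' = f(b)$ with $a \in \LL_i$, $b \in \LL_j$; applying the algebraicity of $\LL$ at $(x, s)$ to $a,b$ and then $f$ together with the intertwining identity immediately yields the desired equality for $a', b'$ in $\LL'$.

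The main obstacle is the remaining case where at least one of $a'$, $b'$ lies in $\LL'_0$, because $f$ is not assumed surjective in degree $0$ and a lift need not exist. I would handle this case by direct computation: writing $D = \ad(x'+s')$ and using the Leibniz rule together with the truncation $e_\sigma(x',s') = \sum_{i=0}^{4} \tfrac{1}{i!} D^i$, a short manipulation yields
\begin{equation*}
    [e_\sigma(x', s')(a'), e_\sigma(x', s')(b')] - e_\sigma(x', s')([a', b']) = \sum_{\substack{0 \leq p, q \leq 4 \\ p + q \geq 5}} \frac{1}{p!\,q!}\,[D^p a', D^q b'].
\end{equation*}
If both $a', b' \in \LL'_0$, then $D^p a' = D^q b' = 0$ as soon as $p \geq 3$ or $q \geq 3$ by the $5$-grading, so $p + q \leq 4$ and the sum is empty. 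If $a' \in \LL'_0$ while $b' \in \LL'_j$ with $j \in \{\pm 1, \pm 2\}$ (the symmetric situation being identical), then $D^p a' = 0$ for $p \geq 3$ forces $p \leq 2$ and hence $q \geq 3$; a short degree count in the $5$-grading (for $\sigma = 1$, the bracket sits in degrees $\geq p + q + j \geq 3$, and symmetrically in degrees $\leq -3$ for $\sigma = -1$) shows that $[D^p a', D^q b']$ lies in graded components of $\LL'$ whose degrees fall outside $\{-2, \ldots, 2\}$ and therefore equals zero. This completes the verification.
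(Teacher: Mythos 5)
Your proof is correct and follows essentially the same route as the paper's: both arguments dispose of the case where the two bracket arguments lie in nonzero graded components by transporting it along $f$ to $\LL$ (via the intertwining of $e_\sigma$ with $f$), and then handle the remaining degree-zero case via the truncated Leibniz expansion, using that $4!$ is invertible and that the excess terms with $p+q>4$ vanish for degree reasons in the $5$-grading. The only cosmetic difference is that the paper first factors $e_\sigma(x,s)=e_\sigma(x)\,e_\sigma(s)$ so as to do the degree count with a homogeneous $\ad_a$, whereas you bound the degrees of $\ad(x'+s')^p$ directly; both versions of the count are valid.
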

\begin{proof}
We use the same idea as in the proof of Lemma~\ref{lem:esigma'-alg}.
In order to show that $\LL'$ is algebraic, it is enough to show that
\begin{equation}\label{eq:exp-tilde}
e_\sigma(a)([b,c])=[e_\sigma(a)(b),e_\sigma(a)(c)]
\end{equation}
for any $a\in \LL'_i=f(\LL_i)$ with $i\in\{\pm 1,\pm 2\}$, any
$b\in\LL'_j$ with $-2\leq j\leq 2$, and any $c\in \LL'_l$ with $l=0$.
The equality~\eqref{eq:Dm} now holds for $4\geq m\geq 0$,
since $\Char k\neq 2,3$ (but may be equal to $5$) and hence $4!$ is invertible.
It remains to note that if $8\geq m> 4$, then
the degree $m$ homogeneous component~\eqref{eq:exp-homog} of $[e_\sigma(a)(b),e_\sigma(a)(c)]$
is trivial, since $l=0$. This implies the equality~\eqref{eq:exp-tilde}.
\end{proof}


\section{One-invertibility in $\A\times \Ss$}\label{se:oneinverse}

In \cite{AF99} the notion of $n$-invertibility for Kantor pairs is introduced.
Kantor pairs are generalizations of Jordan pairs; an example of a Kantor pair is a pair of structurable algebras,
in the same way as every Jordan algebra gives rise to a Jordan pair.

Since we will only apply the results of \cite{AF99} in the context of a pair of structurable algebras,
 we only explain the necessary terminology and results of \cite{AF99} in this context. This makes the
exposition less technical, and in Remark~\ref{rem:AF99} we explain why our point of view is the same
as in \cite{AF99}.

\begin{quote}
        {\em Throughout section~\ref{se:oneinverse}, we will assume that $k$ is a field of characteristic
     different from $2$ and $3$.}
\end{quote}

Let $\A$ be an arbitrary structurable $k$\dash algebra. In section \ref{se:constr lie alg}, we described the $5$\dash graded
Lie algebra $K(\A)$ constructed from $\A$.
In \cite{AF99}, a slightly different (but isomorphic) Lie algebra is used.
We will adopt this Lie algebra; this will make the formulas for one-invertibility more elegant.
\begin{definition}\label{def:KacA}
Let $\A$ be a structurable algebra,
and let $K(\A)$ be the Lie algebra as introduced in Definition~\ref{def:Lie alg} above.
\begin{compactenum}[(i)]
    \item\label{it:KK'}
        Consider two copies $\A_+$ and $\A_-$ of $\A$ with corresponding isomorphisms $\A \to \A_+ \colon x \mapsto x_+$
        and $\A \to \A_- \colon x \mapsto x_-$, and let $\Ss_+\subset A_+$ and $\Ss_-\subset A_-$ be the corresponding subspaces of skew elements.
        We define a new Lie algebra $K'(\A)$ with the same underlying vector space and the same grading as $K(\A)$, i.e.
        \[K'(\A)=\Ss_-\oplus \A_-\oplus \Instrl(\A) \oplus \A_+ \oplus \Ss_+,\]
        and we keep the Lie bracket of $K(\A)$ except that we modify the formulas for $[\A_\pm,\A_\pm]$ by a factor $-2$ as follows:
        \begin{align*}
            [x_+,y_-] &:= -2 V_{x,y}\in \Instrl(\A),\\
            [x_+,x_+'] &:= -2\psi(x,x')_+\in \Ss_+, \\
            [y_-,y_-'] &:= -2\psi(y,y')_-\in \Ss_-,
        \end{align*}
        for all $x,x',y,y'\in \A$.

        It is straightforward to verify that the following map from $K(\A)$ to $K'(\A)$ is a Lie algebra isomorphism:
        \[ \begin{cases}
             \Instrl(\A) \to \Instrl(\A) :& V_{a,b}  \mapsto V_{a,b},\\
             \A_+ \to \A_{+} :&x \mapsto x,\\
             \A_{-} \to \A_{-} :&y \mapsto-\tfrac{1}{2} y,\\
             \Ss_+ \to \Ss_{+} :&s \mapsto -2s,\\
             \Ss_{-} \to \Ss_{-} :&t \mapsto -\tfrac{1}{2}t.
        \end{cases} \]
    \item\label{KacA:frakg}
	Let $\zeta\in\End_k(K'(\A))$ be the grading derivation of $K'(\A)$.
        In what follows, it is convenient to consider $\zeta$ as an element of the Lie algebra;
        we thus define
	\begin{equation}\label{eq:frakg}
        	\G:=\Ss_-\oplus \A_-\oplus (\Instrl(\A)+k\zeta) \oplus \A_+ \oplus \Ss_+
	\end{equation}
        with the same Lie bracket as $K'(\A)$ and with $[\zeta,x_i]=\zeta(x_i)=ix_i$ for all $x_i\in K'(A)_i$ with $i\in [-2,2]$.
        It follows that also $\G$ has a $5$\dash grading with
        \begin{align*}
                \G_0 &= K'(\A)_0+k\zeta=\Instrl(\A)+k\zeta, \\
                \G_{\pm1} &= K'(\A)_{\pm1}=\A_{\pm}, \\
                \G_{\pm2} &= K'(\A)_{\pm2}=\Ss_{\pm}.
        \end{align*}
        Observe that $K'(\A)$ is an ideal of $\G$ and that $[\G,\G]=K'(\A)$.
\end{compactenum}
\end{definition}

The Lie algebra $\G$ defined above is the same algebra as in \cite{AF99} in case the Kantor pair in \cite{AF99} is a pair of structurable algebras,
as we now briefly explain.

\begin{definition}[\cite{AF99}]\label{def:Kantor}
A \emph{Kantor pair} is a pair of vector spaces $(K_+,K_-)$ over $k$
equipped with a trilinear product
\[
\lK\cdot,\cdot,\cdot\rK \colon K_\si\times K_{-\si}\times K_\si\to K_\si,\quad \si\in\{-1,1\},
\]
satisfying the following two identities:
        \begin{compactitem}
           \item[(KP1)]  $[\VK_{x,y}, \VK_{z,w}] = \VK_{\lK x,y,z\rK,w} - \VK_{z,\lK y,x,w \rK}$;
           \item[(KP2)]  $\KK_{a,b}\VK_{x,y}+\VK_{y,x}\KK_{a,b}=\KK_{\KK_{a,b}x,y}$;
        \end{compactitem}
where $\VK_{x,y}z:=\lK xyz\rK$ and $\KK_{a,b}z:=\lK azb \rK -\lK bza \rK$.
\end{definition}
There is a tight connection between Kantor pairs and Lie triple systems in the sense of~\cite{J_Jordan}.
In~\cite{AF99}, a $\ZZ$-graded Lie triple system $\TT=\bigoplus\limits_{i\in\ZZ}\TT_i$ is called \emph{sign-graded}, if
$\TT_i=0$ for all $i\neq\pm 1$.

\begin{theorem}[{\cite[Theorem 7]{AF99}}]\label{thm:Kan-triple}
Let $(K_+,K_-)$ be a pair of vector spaces over $k$ equipped
with a trilinear product
\[
\lK\cdot,\cdot,\cdot\rK \colon K_\si\times K_{-\si}\times K_\si\to K_\si,\quad \si\in\{-1,1\}.
\]
Then $(K_+,K_-)$ is a Kantor pair if and only if
$\TT=K_+\oplus K_-$ is a sign-graded Lie triple system with two
non-zero graded components $\TT_1=K_+$ and $\TT_{-1}=K_-$ and the
triple product $[\,,\,,\,]\colon\TT\times\TT\times\TT\to\TT$ defined as follows
\begin{equation}\label{eq:tripleprod}
\begin{aligned}
& [x_\sigma,y_\sigma,z_\sigma]=0;\\
& [x_\sigma,y_{-\sigma},z_\sigma]=-\lK x_\sigma,y_{-\sigma},z_\sigma \rK;\\
& [x_{-\sigma},y_{\sigma},z_\sigma]=\lK y_\sigma,x_{-\sigma},z_\sigma \rK;\\
& [x_{\sigma},y_{\sigma},z_{-\sigma}]=\lK y_\sigma,z_{-\sigma},x_\sigma \rK-\lK x_\sigma,z_{-\sigma},y_\sigma \rK.
\end{aligned}
\end{equation}
\end{theorem}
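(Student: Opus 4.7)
The plan is to verify the three Lie triple system axioms for the product~\eqref{eq:tripleprod} on $\TT=K_+\oplus K_-$, namely $[x,x,y]=0$, the Jacobi identity $[x,y,z]+[y,z,x]+[z,x,y]=0$, and the derivation identity
\[
[x,y,[u,v,w]]=[[x,y,u],v,w]+[u,[x,y,v],w]+[u,v,[x,y,w]],
\]
and to show that the first two follow unconditionally from the defining formulas while the third is equivalent to (KP1) and (KP2) together. The sign-grading $\TT_{\pm 1}=K_\pm$ is manifest from~\eqref{eq:tripleprod}: each output lies in the unique graded component compatible with the signs of the inputs, and the first formula handles the case of three inputs of the same sign (which would otherwise have to land outside of $\TT_{\pm 1}$).

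For the first axiom, the only case not killed by the grading or the first formula is $x\in K_\sigma$, $y\in K_{-\sigma}$, for which the fourth formula yields $[x,x,y]=\lK x,y,x\rK-\lK x,y,x\rK=0$. For the Jacobi identity, grading reduces to the case of exactly two arguments in one graded component; taking $x,y\in K_\sigma$ and $z\in K_{-\sigma}$, substitution via the second, third, and fourth formulas of~\eqref{eq:tripleprod} gives
\[
\bigl(\lK y,z,x\rK-\lK x,z,y\rK\bigr)+\bigl(-\lK y,z,x\rK\bigr)+\lK x,z,y\rK=0,
\]
and the case $x,y\in K_{-\sigma}$, $z\in K_\sigma$ is symmetric. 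Thus (LT1) and Jacobi hold for \emph{any} pair $(K_+,K_-)$ equipped with such a triple product; no Kantor-pair axiom is used.

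The heart of the proof is the derivation identity. By grading, the signs of $x,y,u,v,w$ must sum to $\pm 1$, so the sign multiset is either three $+$'s and two $-$'s, or the reverse. Up to the symmetry $\sigma\leftrightarrow -\sigma$, the distinct sub-cases correspond to which two of the five slots carry the minority sign. Expanding each nested bracket using~\eqref{eq:tripleprod}, the sub-cases in which the two minority-sign arguments occupy an ``outer'' slot of the outer bracket and one of the inner bracket reduce, after collecting terms, to
\[
[\VK_{x,y},\VK_{u,v}]w=\VK_{\lK x,y,u\rK,v}w-\VK_{u,\lK y,x,v\rK}w
\]
evaluated at an arbitrary $w\in K_\sigma$, which is exactly (KP1). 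The remaining sub-cases, in which both minority-sign arguments lie among $\{x,y\}$ or both among $\{u,v,w\}$, give after rearrangement an identity of the form
\[
\KK_{a,b}\VK_{x,y}w+\VK_{y,x}\KK_{a,b}w=\KK_{\KK_{a,b}x,y}w,
\]
which is precisely (KP2). Conversely, granted (KP1) and (KP2), the same expansions verify (LT3) in every sign pattern, so the equivalence follows.

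The main obstacle is the bookkeeping in this last step: each of the roughly ten relevant sign patterns must be expanded separately using the four formulas of~\eqref{eq:tripleprod} and then matched against (KP1) or (KP2). The cases producing (KP1) are fairly direct; the ones producing (KP2) only surface after exploiting the built-in antisymmetry $\KK_{a,b}=-\KK_{b,a}$ inherent in the third and fourth formulas of~\eqref{eq:tripleprod}, together with the fact that $\VK$-operators can be moved across $\KK_{a,b}$ via the defining expansion of $\KK_{a,b}$. Once this matching is completed, no further identities remain, and the equivalence between the Kantor pair axioms and the sign-graded Lie triple system structure is established.
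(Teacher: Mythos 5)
The paper itself offers no proof of this statement: it is imported verbatim as \cite[Theorem~7]{AF99}, so there is no internal argument to compare yours against. Taken on its own terms, your strategy --- direct verification of the Lie triple system axioms organized by sign pattern --- is the standard route. Your checks of $[x,x,y]=0$ and of the Jacobi identity are correct (and you rightly note they need no Kantor-pair axiom), and the two sign patterns you isolate do recover (KP1) and (KP2) on the nose; this completely settles the implication from the Lie triple system structure to the Kantor pair axioms.

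The gap is in the converse, and your classification of the remaining sign patterns is where it hides. It is not true that every instance of the derivation identity rearranges into a relabeling of (KP1) or (KP2). Take $x,u,v\in K_\sigma$ and $y,w\in K_{-\sigma}$: by your scheme the minority signs sit in one outer and one inner slot, so this should be a ``(KP1) case'', but expanding $[x,y,[u,v,w]]=[[x,y,u],v,w]+[u,[x,y,v],w]+[u,v,[x,y,w]]$ via~\eqref{eq:tripleprod} gives
\[
\VK_{x,y}\KK_{u,v}+\KK_{u,v}\VK_{y,x}=\KK_{\VK_{x,y}u,v}+\KK_{u,\VK_{x,y}v},
\]
which, after substituting (KP2) on the left, is equivalent to the new identity $\KK_{\KK_{u,v}y,x}=\KK_{\VK_{x,y}u,v}+\KK_{u,\VK_{x,y}v}$. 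Similarly, $x,y\in K_\sigma$ with $u,v\in K_{-\sigma}$, $w\in K_\sigma$ produces $\KK_{x,y}\KK_{u,v}=\VK_{\KK_{x,y}u,v}-\VK_{\KK_{x,y}v,u}$. Neither of these is an instance of (KP1) or (KP2); each must be \emph{derived} from them (using the expansion $\KK_{a,b}z=\VK_{a,z}b-\VK_{b,z}a$ together with (KP1) and (KP2)), and these derivations are precisely the nontrivial content of the ``if'' direction. Your proposal asserts that the matching works out but supplies neither the correct case split (the position of the minority sign \emph{inside} the inner triple matters) nor the derived identities. To close the argument you must either carry out those derivations explicitly or cite them from \cite{AF99}.
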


Let $\TT=\TT_1\oplus\TT_{-1}$ be a sign-graded Lie triple system over $k$.
In \cite[p.\@ 532]{AF99} the $5$\dash graded Lie algebra
$\G(\TT)=\bigoplus\limits_{i=-2}^2\G(\TT)_i$ is defined,
which is called the standard graded embedding of $\TT$.
Recall that $\G(\TT)$ is the Lie subalgebra of the Lie algebra
\[
\Der(\TT)\oplus \TT,
\]
where $\Der(\TT)$ is the Lie algebra of $k$\dash derivations of the Lie triple system $\TT$, with the following graded
components:
\[
\G(\TT)_\sigma=\TT_\sigma,
\]
\[
\G(\TT)_0=k\delta+[\TT_1,\TT_{-1},-]\subseteq \Der(\TT_1\oplus \TT_{-1}),
\]
where $\delta$ is the grading derivation of $\G(\TT)$, and
\[
\G(\TT)_{2\sigma}=[\TT_\sigma,\TT_\sigma,-]\subseteq \Der(\TT_1\oplus \TT_{-1}).
\]
If $\TT=K_+\oplus K_-$ is the Lie triple system corresponding to a Kantor pair $(K_+,K_-)$,
the $5$\dash graded Lie algebra $\G(K_+\oplus K_-)$ is also called the standard graded embedding of this Kantor pair.

\begin{remark}\label{rem:AF99}
Let $\A$ be a structurable algebra over $k$ and let $\A_+,\  \A_-$ be two isomorphic copies of $\A$;
then the pair $(\A_+,\A_-)$ with the triple product
\[
\lK x,y,z \rK:=2V_{x,y}z=2\{x,y,z\}
\]
for $x,z\in \A_\si$ and $y\in \A_{-\si}$ is a Kantor pair, and the standard graded embedding $\G(\A_+\oplus\A_-)$
coincides with the Lie algebra $\G$ of~\eqref{eq:frakg}.
To see this, we identify $L_s\in \G(\A_+\oplus \A_-)_{\pm2}$ with $s\in \G_{\pm2}$ and represent the elements of
$\G(\A_+\oplus \A_-)_0$ with their action on $\G(\A_+\oplus \A_-)_1$.
Then the algebras $\G(\A_+\oplus \A_-)$ and $\G$ are identical;
this can be verified using $K_{a,b}=2L_{\psi(a,b)}$ and the identity (KP2) of Definition~\ref{def:Kantor}.
The Lie algebra $\G(\A_+\oplus \A_-)$ is described more explicitly in \cite[p.\@ 535]{AF99}.
\end{remark}

We will now define some subgroups of $\End_k(\G)$. In \cite{AF99} the action of $\End_k(\G)$ on $\G$ is denoted on the left, whereas we need an action on the right in order to be compatible with the conventions in the theory of Moufang sets.
This is why some formulas differ slightly from \cite{AF99}.
\begin{definition}
Let $\si\in \{-1,+1\}$, $x\in \G_\si, s\in \G_{2\si}$; we define
\[e_\si(x,s)=\text{exp}(\ad(x+s))=\sum_{i=0}^4\frac{1}{i!} (\ad(x+s))^i\in \End_k(\G).\]
Define the set
\[U_\si=\{e_\si(x,s)\mid x\in \G_\si, s\in \G_{2\si}\}.\]
\end{definition}

\begin{remark}\label{rem:loos}
Throughout~\cite{AF99}, one uses the fact that the elements $e_\si(x,s)$ are Lie algebra automorphisms
of $\G$, i.e. belong to $\Aut_k(\G)$. However, the proof of this statement in~\cite[Theorem 8]{AF99}
requires the (missing) assumption \mbox{$\cha(k)\neq 5$}. We thank Ottmar Loos for bringing this to our attention.
In place of the assumption $\cha(k)\neq 5$, in what follows we impose the weaker condition that $\A$ is {\em algebraic}
(see Definition~\ref{def:algebraic}).
\end{remark}

For future reference, we reproduce here the basic properties of $e_\si(x,s)\in\End_k(\G)$
claimed in~\cite[Theorem 8]{AF99} together with a complete proof based on our results from section~\ref{se:alg-gr}.

\begin{lemma}[{\cite[Theorem 8]{AF99}}]\label{le:esigma}
Let $\A$ be an algebraic structurable algebra over $k$.
Let $\si\in \{-1,+1\}$, then we have the following properties for all $x,y\in \G_\si$ and $s,t\in \G_{2\si}$:
\begin{compactenum}[\rm (i)]
\item $e_\si (x,s)$ is an automorphism of the Lie algebra $\G$,\\i.e. $[a,b]. e_\si(x,s)=[a. e_\si(x,s), b. e_\si(x,s)]$ for all $a,b\in \G$.
\item $e_\si(x,s)e_\si (y,t)=e_\si(x+y,s+t+ \psi(x,y))$,
\item $e_\si(x,s)^{-1}=e_\si(-x,-s)$.
\item The map $e_\si \colon \G_\si\times \G_{2\si}\to U_\si \colon (x,s)\mapsto e_\si(x,s)$ is a bijection.
\end{compactenum}
\end{lemma}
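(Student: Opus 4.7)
The plan is to derive all four assertions from the preparatory material of section~\ref{se:alg-gr} together with the bracket conventions of Definition~\ref{def:KacA}.

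For (i), I would first note that $K'(\A)$ is isomorphic as a graded Lie algebra to $K(\A)$ and is therefore algebraic, since $\A$ is algebraic by hypothesis. The natural inclusion $K'(\A)\hookrightarrow\G$ is a graded $k$-homomorphism of Lie algebras, and it is bijective on each graded component $\G_i$ for $i\in\{\pm1,\pm2\}$ (the two algebras differ only in degree $0$, where $\G_0=\Instrl(\A)\oplus k\zeta$). Hence Lemma~\ref{lem:univ-alg} applies and yields that $\G$ itself is algebraic. By Definition~\ref{def:algebraic} this is precisely the assertion that each $e_\sigma(x,s)$ is a Lie algebra automorphism of $\G$, which, rewritten in the right-action convention used in the remainder of the paper, is (i).

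For (ii), I would invoke Lemma~\ref{lem:esigma'-prod}(i), being careful that it is stated in the usual left-action convention, while the formula in (ii) uses right-action composition. The right-action product $e_\sigma(x,s)\,e_\sigma(y,t)$ coincides with the left-action product $e_\sigma(y,t)\,e_\sigma(x,s)$, so Lemma~\ref{lem:esigma'-prod}(i) gives
\[
e_\sigma(x,s)\,e_\sigma(y,t)=e_\sigma\bigl(x+y,\,s+t+\tfrac12[y,x]\bigr).
\]
In $K'(\A)$ (and therefore in $\G$) the bracket of two positive-degree-$1$ elements is, by Definition~\ref{def:KacA}(\ref{it:KK'}), $[y_+,x_+]=-2\psi(y,x)_+=2\psi(x,y)_+$, using the antisymmetry of $\psi$. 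Consequently $\tfrac12[y,x]=\psi(x,y)$, which is the desired formula. Statement (iii) is an immediate corollary: in (ii) take $(y,t)=(-x,-s)$ and observe that $\psi(x,-x)=0$ by antisymmetry, so $e_\sigma(x,s)\,e_\sigma(-x,-s)=e_\sigma(0,0)=\id$.

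For (iv), surjectivity of the map $(x,s)\mapsto e_\sigma(x,s)$ onto $U_\sigma$ holds by the very definition of $U_\sigma$. For injectivity I would apply Lemma~\ref{lem:esigma'-prod}(ii): since $\G$ possesses the grading element $\zeta$, the morphism $e_\sigma\colon \GU_\sigma\to\GL(\G)$ is a closed embedding of algebraic $k$-groups, and in particular is injective on $k$-points. Alternatively, injectivity can be checked directly by evaluating at $\zeta$: a short computation using $[x,s]\in\G_{3\sigma}=0$ gives $e_\sigma(x,s)(\zeta)=\zeta-\sigma x-2\sigma s$, from which $(x,s)$ can be read off.

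The only real subtlety is the bookkeeping in step (i), namely checking that Lemma~\ref{lem:univ-alg} legitimately transfers algebraicity from $K'(\A)$ to the slightly larger algebra $\G$; everything else is a direct invocation of the preparatory lemmas, modulo the left/right and sign translations in (ii).
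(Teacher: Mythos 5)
Your proposal is correct and follows essentially the same route as the paper: algebraicity of $\G$ is obtained from Lemma~\ref{lem:univ-alg} applied to $K(\A)\cong K'(\A)\to\G$, which gives (i), and (ii)--(iv) are read off from Lemma~\ref{lem:esigma'-prod}, with (iv) using the grading derivation $\zeta\in\G$ to get injectivity. Your explicit bookkeeping of the left/right action conventions and the identification $\tfrac12[y,x]=\psi(x,y)$ in degree $\sigma$ is exactly the translation the paper leaves implicit, and it comes out with the correct sign.
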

\begin{proof}
Since $\A$ is algebraic, the $5$\dash graded Lie algebra $\G$ is algebraic
by Lemma~\ref{lem:univ-alg} applied to the natural homomorphism of $5$\dash graded Lie algebras
\[ K(\A)\cong K'(\A)\to\G.  \]
This implies (i). All other statements follow from Lemma~\ref{lem:esigma'-prod} applied to the $5$\dash graded Lie algebra $\G$;
in particular, (iv) follows from Lemma~\ref{lem:esigma'-prod}(ii).
\end{proof}

\begin{quote}
        {\em From now on until the end of section~\ref{se:oneinverse}, we will assume that $\A$
is an algebraic structurable algebra over $k$.}
\end{quote}

\begin{definition}\label{def:G}
\begin{compactenum}[(i)]
    \item
        The {\em elementary group} of the algebraic structurable algebra $\A$ is defined as\[G:=\langle U_+,U_-\rangle \leq \Aut(\G).\]
    \item
        As in \cite{AF99}, we define $H_+$ as the subgroup of all automorphisms in $G$ that preserve the gradation of $\G$,
        and $H_-$ as the subset of all automorphisms in $G$ that reverse the gradation of $\G$, i.e.
        \begin{align*}
            H_+ &:= \{h\in G\mid \zeta.h = \zeta\}\\
            &\ = \{h\in G\mid \G_i.h=\G_{i} \text{ for all } i\in \{-2,-1,0,1,2\}\}, \\
            H_- &:= \{h\in G\mid \zeta.h = -\zeta\}\\
            &\ = \{h\in G\mid \G_i.h=\G_{-i} \text{ for all } i\in \{-2,-1,0,1,2\}\}.
        \end{align*}
    \item
        Define $\varphi\in \End_k(\G)$, which reverses the gradation of $\G$, as follows:
          \begin{alignat*}{2}
         \G_0&\to \G_0  \colon & \ V_{a,b} &\mapsto -V_{b,a},\\
         &&\zeta&\mapsto -\zeta,\\
         \A_1&\to \A_{-1}  \colon &x&\mapsto x,\\
         \A_{-1}&\to \A_{1}  \colon &x&\mapsto x,\\
         \A_2&\to \A_{-2}  \colon &s&\mapsto s,\\
         \A_{-2}&\to \A_{2}  \colon &s&\mapsto s.
        \end{alignat*}
        We will observe in Lemma~\ref{toevoeging}(ii) that $\varphi$ is an automorphism of $\G$,
        and in Lemma \ref{le:e-alpha} below, we will show that it is, in fact, an element of $H_-$.
\end{compactenum}
\end{definition}
In Theorem \ref{th:Gisrank1group} we will show that if all elements in $\A\times \Ss$ are one-invertible, then the group $G=\langle U_+, U_-\rangle$ is an abstract rank one group.

\begin{lemma}\label{toevoeging} Let $\si\in\{-1,+1\}$ and $(x,s)\in \G_\si\times \G_{2\si}$.
\begin{compactenum}[\rm (i)]
    \item For all $h\in H_-$, we have $e_\si(x,s)^h=e_{-\si}(x.h,s.h)$ and ${U_\si}^h=U_{-\si}$.
    \item We have $\varphi\in \Aut(\G)$, $e_\si(x,s)^\varphi=e_{-\si}(x,s)$ and ${H_-}^\varphi=H_-$.
\end{compactenum}
\end{lemma}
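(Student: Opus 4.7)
My plan is to treat (i) and (ii) largely independently, relying in both on the generic fact that a Lie algebra automorphism conjugates $\ad$-operators nicely.

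For (i), since $h\in H_-$ is by definition an automorphism of $\G$, conjugation by $h$ takes $\ad(y)$ to $\ad(y.h)$, and this commutes term-by-term with the polynomial $e_\si(x,s) = \sum_{i=0}^{4}\frac{1}{i!}\ad(x+s)^i$. Thus
\[
    e_\si(x,s)^h \;=\; \sum_{i=0}^{4}\frac{1}{i!}\ad(x.h+s.h)^i .
\]
Since $h$ reverses the grading, $x.h\in\G_{-\si}$ and $s.h\in\G_{-2\si}$, so the right-hand side is exactly $e_{-\si}(x.h,s.h)$. The second statement $U_\si^h=U_{-\si}$ then follows because the restrictions $h|_{\G_\si}\colon\G_\si\to\G_{-\si}$ and $h|_{\G_{2\si}}\colon\G_{2\si}\to\G_{-2\si}$ are bijections.

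For (ii), the main obstacle is to verify $\varphi\in\Aut(\G)$. Well-definedness of $\varphi|_{\Instrl(\A)}$ follows from \eqref{Veps}: the prescribed rule $V_{a,b}\mapsto -V_{b,a}$ coincides with the $\eps$-involution, which is a Lie algebra automorphism of $\Strl(\A)$ stabilizing $\Instrl(\A)$. For the bracket-preservation identity $\varphi[a,b]=[\varphi a,\varphi b]$, I would run a case-by-case check over all pairs of graded components listed in Definition~\ref{def:Lie alg} (as modified in Definition~\ref{def:KacA}). The identities \eqref{Veps} and \eqref{Vdel} handle the brackets $[\Instrl(\A),\A_\pm]$ and $[\Instrl(\A),\Ss_\pm]$; the decomposition \eqref{eq:LsLt} of $L_sL_t$ handles the nontrivial bracket $[\Ss_+,\Ss_-]$; and the remaining brackets are immediate from the symmetric form of the defining formulas.

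Once $\varphi\in\Aut(\G)$ is in hand, the formula $e_\si(x,s)^\varphi=e_{-\si}(x,s)$ follows by the same argument as in (i), noting that $\varphi$ acts as the tautological identification on each graded component ($x.\varphi=x$ under $\A_+\leftrightarrow\A_-$ and $s.\varphi=s$ under $\Ss_+\leftrightarrow\Ss_-$). For $H_-^\varphi=H_-$: the just-proved formula shows that conjugation by $\varphi$ exchanges $U_+$ and $U_-$ as subsets of $\Aut(\G)$, so it normalizes $G=\langle U_+,U_-\rangle$. Hence for $h\in H_-$ the conjugate $\varphi^{-1}h\varphi$ lies in $G$; and using $\zeta.\varphi=-\zeta$ together with $\zeta.h=-\zeta$, one computes $\zeta.(\varphi^{-1}h\varphi)=-\zeta$, so $\varphi^{-1}h\varphi\in H_-$. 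The reverse containment is immediate from $\varphi^2=\id$.
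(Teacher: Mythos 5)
Your proof is correct and follows essentially the same route as the paper's: part (i) via conjugating the $\ad$-polynomial by the automorphism $h$, and part (ii) by checking $\varphi\in\Aut(\G)$ componentwise (with \eqref{eq:LsLt} handling the $[\Ss_+,\Ss_-]$ bracket) and then deducing the remaining claims. The only cosmetic difference is that for $H_-^\varphi=H_-$ you use the characterization of $H_-$ via $\zeta.h=-\zeta$ while the paper tracks the action on each graded piece $\G_i$; both characterizations appear in Definition~\ref{def:G}, so this is the same argument.
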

\begin{proof}
\begin{compactenum}[(i)]
    \item
        Let $h\in H_-$.
        Then
        \begin{equation}\label{a.ad}
            a.\ad(x+s)^h=a.(h^{-1} \ad(x+s) h)=a.\ad(x.h+s.h)
        \end{equation}
        for all $a\in \G$, since $h$ is an automorphism of $\G$.
        Since $h$ reverses the grading, we have $(x.h,s.h)\in \G_{-\sigma}\times \G_{-2\sigma }$, and hence
        it follows from~\eqref{a.ad} that $e_\si(x,s)^h=e_{-\si}(x.h,s.h)$.
        Since $h$ is an isomorphism, we conclude that ${U_\si}^h=U_{-\si}$.
    \item
        It follows from the definition of the Lie bracket of $\G$ that $\varphi\in \Aut_k(\G)$,
        using \eqref{eq:LsLt} to show that $[s,t].\varphi=[s.\varphi, t.\varphi]$ for $s\in \G_{2}$ and $t\in \G_{-2}$.
        Next, since $\ad(x+s)^\varphi=\ad(x.\varphi+s.\varphi)$, we have $e_\si(x,s)^\varphi=e_{-\si}(x,s)$.
        To show the final statement, notice that $\G_i.\varphi^{-1} h\varphi=\G_{-i}.h\varphi=\G_{i}.\varphi=\G_{-i}$.
        Since $G^\varphi=G$, we conclude that $H_-^\varphi=H_-$.
    \qedhere
\end{compactenum}
\end{proof}
It is not clear a priori that $\varphi$ is actually contained in $H_-$.
In Lemma~\ref{le:e-alpha} below, however, we will show that this is indeed the case.

\begin{notation}
Let $(x,s)\in \A\times\Ss$. As $\A_+$ and $\A_-$ are copies of $\A$ and $\Ss_-$ and $\Ss_+$ copies of $\Ss$, we can write $e_\si(x,s)$  without causing any confusion:
If we write $e_+(x,s)$ we consider $(x,s)$ as an element of $\A_+\times \Ss_+$, whereas if we write $e_-(x,s)$ we consider $(x,s)$ as an element of $\A_-\times \Ss_-$.
\end{notation}

We have now collected enough background information to define the notion of one-invertibility.
\begin{definition}\label{def:one inv}
\begin{compactenum}[(i)]
    \item
        Let $(x,s)\in \A\times \Ss$.
        We say that $(x,s)$ is {\em one-invertible} if there exist $(y,t),(z,r)\in \A\times\Ss$ such that
        \[e_{-}(z,r)e_+(x,s)e_{-}(y,t)\in H_-.\]

        Using Lemma \ref{toevoeging}(ii), we see that this condition is equivalent with
        \[e_{+}(z,r)e_-(x,s)e_{+}(y,t)\in H_-.\]
        \item If $(x,s)$ is one-invertible, we say that $(x,s)$ has {\em left
        \footnote{Recall that the action of $G$ is on the right.}
        inverse} $(y,t)$ and {\em right inverse} $(z,r)$.
        By \cite[Lemma 11]{AF99}, the left and right inverses are unique.
    \item
        Let $(x,s)\in \A\times\Ss$ be one-invertible with right inverse $(z,r)$.
        Define the linear map $P_{(x,s)} \colon \A\to\A$ given by
        \[P_{(x,s)}a := U_x \Bigl( a+\frac{2}{3}\psi(z,a)x \Bigr) + s \bigl( a+2\psi(z,a)x \bigr) \quad \text{for all } a\in \A.\]
\end{compactenum}
\end{definition}

\begin{remark}
        In \cite[Section 5]{AF99}, $n$-invertibility for an $n$-tuple in
        \[ (\G_\si\times \G_{2\si})\times(\G_{-\si}\times \G_{-2\si})\times\dots\times (\G_{(-1)^{n-1}\si}\times \G_{(-1)^{n-1}2\si}) \]
        is defined in a similar way, but we will not need this more general notion.
\end{remark}

 The following theorem gives us a very useful characterization of one-invertibility.
\begin{theorem}[{\cite[Theorem 13]{AF99}}]\label{th:1inv}
\begin{compactenum}[\rm (i)]
    \item
        An element $(x,s)\in \A\times\Ss$ is one-invertible if and only if there exists $(u,t)\in \A\times\Ss$ such that
        \begin{gather}\label{inverse}
        \begin{aligned}
        V_{x,u}&=\id+L_sL_t,\\
        su&=-\frac{1}{3}U_x (tx),\\
        \psi(x,s(tx))&=0.
        \end{aligned}
        \end{gather}
        This system of equations has either no solutions or exactly one solution.
    \item\label{it:l-r-inv}
        Let $(x,s)\in \A\times \Ss$ be one-invertible with $(u,t)$ the solution of the system of equations \eqref{inverse}.
        Then the left inverse of $(x,s)$ is $(u-tx,t)$ and its right inverse is $(u+tx,t)$.
    \item
        Let $(x,s)\in \A\times \Ss$ be one-invertible with $(u,t)$ the solution of the system of equations \eqref{inverse}.
        For each $\si\in \{-1,1\}$, let
        \[h_\si := e_{-\si}(u+tx,t) \, e_\si(x,s) \, e_{-\si}(u-tx,t)\in H_-.\]
        Then $h_\si|_{\G_\si}=P_{(u-tx,t)}=P_{(u+tx,t)}$ and $h_\si|_{\G_{-\si}}=P_{(x,s)}$.
\end{compactenum}
\end{theorem}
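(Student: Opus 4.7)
My plan is to unpack Definition~\ref{def:one inv} and reduce the defining condition to a tractable system of equations in $\A$. One-invertibility of $(x,s)$ means exactly that $g := e_-(z,r)\, e_+(x,s)\, e_-(y,t)$ lies in $H_-$ for some $(y,t), (z,r) \in \A \times \Ss$. Since $g$ is automatically a Lie algebra automorphism of $\G$ by Lemma~\ref{le:esigma}(i), the content of the condition $g \in H_-$ is that $g$ reverses the $5$\dash grading. As $g$ is an automorphism, it is enough to impose this reversal on the bottom two homogeneous components $\G_{-2}$ and $\G_{-1}$.

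A crucial simplification is that the outer factors $e_-(z,r)$ and $e_-(y,t)$ act trivially on $\G_{-2}$, since $\ad$ of any element of $\G_{-1}\oplus\G_{-2}$ strictly lowers degree and $\G_{-3}=0$. Hence for $r_0\in\G_{-2}$,
\begin{equation*}
    r_0.g \;=\; \sum_{i=0}^{4}\frac{1}{i!}\,\ad(x+s)^i(r_0),
\end{equation*}
whose five summands lie respectively in degrees $-2,-1,0,1,2$; demanding $r_0.g\in\G_2$ forces the three middle components to vanish. Running the analogous expansion on $\G_{-1}$ (where the outer factors no longer act trivially, but only by a bounded number of terms) gives further constraints. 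Translating each homogeneous component back to $\A$ by means of the brackets in Definition~\ref{def:KacA}, and simplifying via the structurable identities~\eqref{Uxy-Uyx}--\eqref{LsUxyLs} together with the relation~\eqref{eq:LsLt} expressing $L_sL_t$ inside $\Instrl(\A)$, the system should collapse to~\eqref{inverse} after the substitution $y = u - tx$, $z = u + tx$, $r = t$. The mismatch $z - y = 2tx$ is not a choice but is forced by the middle factor $e_+(x,s)$, which is precisely what gives the prescribed shape of the left and right inverses in part~(ii). Uniqueness of $(u,t)$ follows from the injectivity of the parametrisation $e_-$ stated in Lemma~\ref{le:esigma}(iv), which forces $(y,t)$ and $(z,r)$ to be determined by $g$.

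The step I expect to be the main obstacle is the bookkeeping for the iterated brackets: for $2 \leq i \leq 4$, the expression $\ad(x+s)^i$ produces elements of $\Instrl(\A) + k\zeta$ whose action on $\A$ has to be recognized explicitly in terms of $V$-, $U$- and $L$-operators, and matching every graded component requires careful application of the structurable identities. The three equations $V_{x,u} = \id + L_s L_t$, $su = -\tfrac{1}{3}U_x(tx)$ and $\psi(x,s(tx)) = 0$ should surface from the $\G_0$, $\G_{-1}$ and $\G_1$ components of this expansion, respectively. Once parts (i) and (ii) are established, part~(iii) becomes a direct computation: expand $h_\sigma = e_{-\sigma}(u+tx,t)\, e_\sigma(x,s)\, e_{-\sigma}(u-tx,t)$ on $\G_\sigma$ and on $\G_{-\sigma}$ using the same truncated exponential formulas, apply~\eqref{inverse} to simplify, and compare the result to the operator defined in Definition~\ref{def:one inv}(iii). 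The identity $P_{(u-tx,t)} = P_{(u+tx,t)}$ is then forced by the fact that $h_\sigma|_{\G_\sigma}$ is a single well-defined linear map, independent of which factorisation of $h_\sigma$ one chooses to compute it.
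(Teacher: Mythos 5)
Your plan is a genuinely different route from the paper's: the paper does not re-derive the criterion at all, but simply quotes \cite[Theorem~13]{AF99} (via the identification of Remark~\ref{rem:AF99}) and then translates the three Lie-bracket conditions $2[s,t]+[x,u]+2\si\zeta=0$, $[s,u]-\frac16[x,[x,[x,t]]]=0$, $[x,[x,[s,t]]]=0$ into the operator form~\eqref{inverse}; the only real work there is observing that the degree-zero condition reads $V_{x,u}=\id+L_sL_t$ for $\si=+1$ but $V_{u,x}=\id+L_tL_s$ for $\si=-1$, and that these are equivalent via the map $A\mapsto A^\eps$. Uniqueness is then inherited from the uniqueness of left and right inverses (\cite[Lemma~11]{AF99}, already quoted in Definition~\ref{def:one inv}). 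Re-deriving the criterion from the definition, as you propose, amounts to re-proving the cited theorem; that is legitimate, but your sketch has concrete errors that would derail the computation.

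First, the displayed formula $r_0.g=\sum_{i=0}^{4}\frac{1}{i!}\ad(x+s)^i(r_0)$ is false. The first factor $e_-(z,r)$ does fix $\G_{-2}$ pointwise, but after applying $e_+(x,s)$ the element $r_0.e_+(x,s)$ is spread over degrees $-2,\dots,2$ (and your degree bookkeeping is also off: already the $i=1$ term $[x+s,r_0]$ has components in degrees $-1$ \emph{and} $0$), so the last factor $e_-(y,t)$ acts on it nontrivially. Worse, with your formula the degree-$(-2)$ component of $r_0.g$ would be $r_0$ itself, which never vanishes, so the condition $r_0.g\in\G_2$ could never be satisfied: the setup as written cannot produce the equations~\eqref{inverse}. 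Second, imposing grading reversal only on $\G_{-2}$ and $\G_{-1}$ is not justified as sufficient for $g\in H_-$: the defining condition is $\zeta.g=-\zeta$, and since $\G$ is generated by $\G_{\pm1}$ together with $\zeta$, you need control of the image of $\G_{+1}$ (or of $\zeta$) as well; knowing only where $\G_{-1}\oplus\G_{-2}$ goes does not determine where $\G_0$ and the positive part go. Third, the uniqueness argument is incomplete: injectivity of $e_-$ (Lemma~\ref{le:esigma}(iv)) shows that the factors $(y,t)$ and $(z,r)$ are determined \emph{by $g$}, but not that at most one such $g\in H_-$ exists for the given $(x,s)$; that is exactly the content of the uniqueness of left and right inverses, which you would still have to prove. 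Likewise, in part~(iii) the identity $P_{(u-tx,t)}=P_{(u+tx,t)}$ is not ``forced by well-definedness'': the two operators are defined via the right inverses of two different pairs, and identifying both with $h_\si|_{\G_\si}$ requires showing that $(u\mp tx,t)$ are themselves one-invertible with inverses expressible through $(x,s)$.
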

\begin{proof} We transfer \cite[Theorem 13]{AF99} to our setup using Remark \ref{rem:AF99}.

 It is shown in the proof in \cite{AF99} that $(x,s)\in \G_\si\times\G_{2\si}$ is one-invertible if and only if there exist $(u,t)\in  \G_{-\si}\times\G_{-2\si}$ such that
\begin{gather}\label{inverse lie bracket}
\begin{aligned}
2[s,t]+[x,u]+2\si\zeta&=0,\\
[s,u]-\frac{1}{6}[x,[x,[x,t]]]&=0,\\
-\frac{1}{3}[x,[x,[s,t]]]&=0,
\end{aligned}
\end{gather}
and that in this case the left inverse of $(x,s)$ is $(u-tx,t)$ and the right inverse is $(u+tx,t)$.

The last two equations of \eqref{inverse lie bracket} immediately give the last two equations of \eqref{inverse},
but the first equation needs some more explanation.
Recall that the identification between $\G(\LL(\A))$ and $\G$ in Remark \ref{rem:AF99} was made in such a way that the elements of $\G(\LL(\A))_0$
are identified with their action on $\A_+$.
So let $w\in \A_+$; if $\si=+1$, we get
\begin{equation}\label{eq:ad+}
    \ad(2[s,t]+[x,u]+2\si\zeta)(w)=2L_sL_tw-2V_{x,u}w+2w=0,
\end{equation}
and if $\si=-1$, we get
\begin{equation}\label{eq:ad-}
    \ad(2[s,t]+[x,u]+2\si\zeta)(w)=-2L_tL_sw+2V_{u,x}w-2w=0.
\end{equation}
Thus we have two conditions $V_{x,u}=\id+L_sL_t$ and $V_{u,x}=\id+L_tL_s$.
These two identities are equivalent when we consider $x,u\in\A$ and $s,t\in\Ss$,
since $V_{u,x}^\eps =\id^\eps+(L_tL_s)^\eps$ gives $-V_{x,u}=-\id-L_sL_t$ with $\eps$ as defined on page \pageref{Veps}.

Since the left and right inverses of an one-invertible element are uniquely determined,
the system of equations \eqref{inverse} has either no solutions or it has a unique solution.
\end{proof}

\begin{remark}
        \begin{compactenum}[(i)]
            \item
                In the setup of \cite{AF99}, the equations~\eqref{eq:ad+} and~\eqref{eq:ad-} are not necessarily equivalent, since they deal with
                the more general situation of Kantor pairs.
            \item
                In \cite{AF99}, the identity that should be satisfied is given by $V_{x,u}=2(\id +L_sL_t)$;
                notice that the difference with the first equation of \eqref{inverse} is caused by the fact that the $V$-operator in \cite{AF99}
                is the double of the $V$-operator of the structurable algebra.
        \end{compactenum}
\end{remark}

The following lemma shows that one-invertibility in $\A\times\Ss$ is a generalization of conjugate invertibility in $\A$.
\begin{lemma}\label{le:1inv to div}
    \begin{compactenum}[\rm (i)]
        \item
            Let $x\in \A$.
            Then $(x,0)\in\A\times\Ss$ is one-invertible if and only if $x$ is conjugate invertible in $\A$.
            The left and right inverse of $(x,0)$ are both equal to $(\hx,0)$.
        \item
            Let $s\in\Ss$.
            Then $(0,s)\in\A\times \Ss$ is one-invertible if and only if $s$ is conjugate invertible in $\A$.
            The left and right inverse of $(0,s)$ are both equal to $(0,\hat{s})$.
    \end{compactenum}
\end{lemma}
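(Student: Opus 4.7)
The plan is to derive both statements as immediate consequences of the criterion in Theorem~\ref{th:1inv}, which reduces one-invertibility to solving the system
\[
V_{x,u}=\id+L_sL_t,\qquad su=-\tfrac{1}{3}U_x(tx),\qquad \psi(x,s(tx))=0,
\]
and uses that solutions, when they exist, are unique. Since the system has at most one solution, it suffices for each direction to exhibit a candidate $(u,t)$ and verify the three equations; the uniqueness then forces the left/right inverses to collapse to the claimed form via Theorem~\ref{th:1inv}\eqref{it:l-r-inv}.

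For part (i), I would substitute $s=0$. The third equation becomes trivial, the second reduces to $U_x(tx)=0$, and the first reduces to $V_{x,u}=\id$. If $x$ is conjugate invertible, then $(u,t)=(\hat x,0)$ plainly satisfies all three equations (cf.~\eqref{Vuhatu}), so by uniqueness this is the only solution; Theorem~\ref{th:1inv}\eqref{it:l-r-inv} then yields left inverse $(\hat x - 0\cdot x,0)=(\hat x,0)$ and right inverse $(\hat x+0\cdot x,0)=(\hat x,0)$. Conversely, if $(x,0)$ is one-invertible with some solution $(u,t)$, then $V_{x,u}=\id$ is exactly the defining relation for conjugate invertibility of $x$ with $u=\hat x$.

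For part (ii), I would substitute $x=0$. The second and third equations are then automatic, while the first collapses to $L_sL_t=-\id$. If $s$ is conjugate invertible, then by \eqref{LhatsLs} we have $L_sL_{\hat s}=-\id$, and the candidate $(u,t)=(0,\hat s)$ satisfies the system; uniqueness and Theorem~\ref{th:1inv}\eqref{it:l-r-inv} then give that both left and right inverses equal $(0,\hat s)$. Conversely, if $(0,s)$ is one-invertible with solution $(u,t)$, then $L_sL_t=-\id$ shows $L_s$ is invertible, which by the criterion recalled after~\eqref{hatu} is equivalent to $s$ being conjugate invertible in $\A$; applying the Lie-isomorphism $L_t=-L_s^{-1}=L_{\hat s}$ to $1$ identifies $t=\hat s$, and then $L_s u = su = 0$ together with the invertibility of $L_s$ forces $u=0$, confirming the uniqueness prediction.

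There is no real obstacle; the only care needed is to recognize that even though Theorem~\ref{th:1inv}\eqref{it:l-r-inv} a priori encodes left and right inverses asymmetrically through the term $\pm tx$, the fact that one of $x$ or $s$ vanishes in each case makes this asymmetry collapse, so the two inverses coincide with the value predicted by the classical notion of (conjugate) inverse.
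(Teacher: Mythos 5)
Your proposal is correct and follows essentially the same route as the paper: specialize the system of Theorem~\ref{th:1inv} to $s=0$ (resp.\@ $x=0$), observe that the first equation forces conjugate invertibility of $x$ (resp.\@ of $s$, via invertibility of $L_s$), exhibit the obvious candidate solution, and let uniqueness plus Theorem~\ref{th:1inv}\eqref{it:l-r-inv} collapse the left and right inverses. Your extra verification that $t=\hat{s}$ and $u=0$ in the converse of (ii) is slightly more explicit than the paper's, but adds nothing essentially different.
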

\begin{proof}
\begin{compactenum}[(i)]
    \item
        If we want to determine the one-invertibility of $(x,0)$, the system of equations \eqref{inverse} reduces to
        \begin{align*}
        V_{x,u}&=\id,\\
        0&=-\frac{1}{3}U_xtx,\\
        0&=0.
        \end{align*}
        From the first equation it follows that this system of equations can only have a solution if $x$ is conjugate invertible.
        In this case, by \eqref{Vuhatu}, $u=\hat{x}$ and $t=0$ is the solution.
    \item
        If we want to determine the one-invertibility of $(0,s)$, the system of equations \eqref{inverse} reduces to
        \begin{align*}
        0&=\id+L_sL_t,\\
        su&=0,\\
        0&=0.
        \end{align*}
        From the first equation it follows that this system of equations can only have a solution if $s$ is conjugate invertible.
        In this case, by \eqref{LhatsLs}, $u=0$ and $t=\hat{s}$ is the solution.
    \qedhere
\end{compactenum}
\end{proof}

The map $P_{(x,s)}$ defined in Definition \ref{def:one inv} is a generalization of the map $P_x$ on the structurable algebra $\A$ defined in \eqref{defPu}.
Indeed, using \eqref{Uxy-Uyx}, we obtain
\begin{multline*}
        P_{(x,0)}a = U_x \bigl( a+\frac{2}{3}\psi(\hx,a)x \bigr) = U_x \bigl( a+\frac{2}{3}(V_{\hx,x}a-V_{a,x}\hx) \bigr) \\
            = \frac{1}{3}U_x(5a-2V_{a,x}\hx) = P_xa .
\end{multline*}


\section{One-invertibility for structurable division algebras}\label{se:divtooneinv}

\begin{quote}
    \em Throughout section~\ref{se:divtooneinv}, we will continue to assume that $k$ is a field of characteristic
    different from $2$ and $3$ and that $\A$ is an algebraic structurable algebra over $k$.
\end{quote}

It follows from Lemma \ref{le:1inv to div} that if $\A$ is a structurable algebra such that each element in $\A\times\Ss\setminus{(0,0)}$ is one-invertible,
then $\A$ is a structurable division algebra.
In Theorem \ref{th:formula-oneinverse}, we will show that also the converse is true:
if $\A$ is a structurable division algebra, then each element of $\A\times\Ss\setminus{(0,0)}$ is one-invertible, and we determine the left and right inverse.

We start by showing that $(1,s)\in \A\times\Ss$ is always one-invertible if $\A$ is a structurable division algebra.

\begin{lemma}[J. Faulkner]\label{le:x=1inv}
Let $\A$ be a structurable division algebra, let $s\neq 0\in \Ss$.
Then $s+\hat{s}\neq 0$, and
\[u:=-\hat{s}(\widehat{s+\hat{s}})=\widehat{1-s^2},\quad  t:=\widehat{s+\hat{s}}\]
is the solution of
\begin{gather}\label{inversex=1}
\begin{aligned}
V_{1,u}&=\id+L_sL_t,\\
su&=-\frac{1}{3}U_1t,\\
\psi(1,st)&=0.
\end{aligned}
\end{gather}
In particular, $(1,s)$ is one-invertible.
\end{lemma}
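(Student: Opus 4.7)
First I would verify that $s+\hat s\neq0$, so that $t=\widehat{s+\hat s}$ is well-defined. Suppose for contradiction that $\hat s=-s$; then $-L_s^2=L_{\hat s}L_s=-\id$ by \eqref{LhatsLs}, so $L_s^2=\id$, and evaluating at $1$ gives $s^2=1$. Now $e:=(1+s)/2\neq0$ (since $s$ is skew whereas $-1$ is hermitian in characteristic $\neq 2$), and $e\bar e=\tfrac14(1+s)(1-s)=\tfrac14(1-s^2)=0$; direct expansion of the triple product shows that $U_e(e)=\{e,e,e\}=(e\bar e)e=0$, contradicting the invertibility of $U_e$ implied by the conjugate invertibility of $e$ in the division algebra $\A$ (via \eqref{hatu}).

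Next I would describe the relevant structure. Since $s$ is skew-alternative, the subalgebra $k[s]\subseteq\A$ generated by $s$ and $1$ is commutative and associative, and because $L_s$ is invertible on $\A$, we have $s^{-1}\in k[s]$ and $\hat s=-s^{-1}\in k[s]$. In $k[s]$ the element $s+\hat s=s^{-1}(s^2-1)$ has inverse $-su$ with $u:=(1-s^2)^{-1}\in k[s^2]$; hence $t=\widehat{s+\hat s}=su=us$, and the identity $u=-\hat s\,\widehat{s+\hat s}=\widehat{1-s^2}$ follows from \eqref{hatsx} and the computation $\hat s\,\widehat{s+\hat s}=\widehat{s(s+\hat s)}=\widehat{s^2-1}=-\widehat{1-s^2}$. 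Since $u\in k[s^2]$ we have $\bar u=u$ and $\bar t=-t$, and $(1-s^2)u=1$ inside $k[s]$ gives $u=1+s^2u=1+st$. The second and third equations of \eqref{inversex=1} then fall out: $\psi(1,st)=\overline{st}-st=\bar t\bar s-st=ts-st=0$ by commutativity in $k[s]$, and since $t$ is skew, $U_1(t)=\{1,t,1\}=2\bar t-t=-3t$, so $-\tfrac13 U_1(t)=t=su$.

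The crux is the first equation, $V_{1,u}=\id+L_sL_t$. Since $\bar u=u$, one computes $V_{1,u}(z)=\bar u z+z\bar u-zu=uz$, so $V_{1,u}=L_u=\id+L_{st}$ using $u=1+st$; thus it suffices to prove $L_{st}=L_sL_t$ on all of $\A$, i.e.\ $[s,t,z]=0$ for every $z\in\A$. I would derive this from the operator identity
\[
 L_t=-(L_{s+\hat s})^{-1}=-(L_s-L_s^{-1})^{-1}=L_s(\id-L_s^2)^{-1},
\]
which follows from \eqref{LhatsLs} applied both to $s$ (giving $L_{\hat s}=-L_s^{-1}$) and to $s+\hat s$ (giving $L_{s+\hat s}L_t=-\id$). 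Being a rational function of $L_s$, the operator $L_t$ commutes with $L_s$, so $s(tz)=t(sz)$ for every $z\in\A$; combined with $st=ts$ in $k[s]$, this yields
\[
 [s,t,z]=(st)z-s(tz)=(ts)z-t(sz)=[t,s,z],
\]
while the skew-alternativity identity \eqref{sxy} gives $[s,t,z]=-[t,s,z]$. Combining, $2[s,t,z]=0$, whence $[s,t,z]=0$ because $\Char k\neq 2$, and by Theorem~\ref{th:1inv}, $(1,s)$ is one-invertible. The main obstacle is this final step, where the operator-level commutativity of $L_s$ and $L_t$ (coming from the rational expression of $L_t$ in $L_s$) has to be combined with skew-alternativity to annihilate the associator $[s,t,z]$.
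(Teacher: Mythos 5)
Your proof is correct and follows essentially the same route as the paper's: the crux in both is that $L_t=-L_{s+\hat s}^{-1}$ is a rational function of $L_s$ (hence $[L_s,L_t]=0$), which combined with the skew-alternativity identity $[s,t,z]=-[t,s,z]$ forces $[s,t,z]=0$ and thus $L_{st}=L_sL_t$, after which the three equations are routine. Your argument that $s^2\neq 1$, via $U_e(e)=(e\bar e)e=0$ for $e=\tfrac12(1+s)$ contradicting the invertibility of $U_e$, is a slightly cleaner variant of the paper's computation of $V_{a,\hat a}a$ and $V_{\hat a,a}a$ for $a=1+s$. The one point to tighten is the blanket assertion that $k[s]$ is a commutative associative subalgebra containing $\hat s$ and all the inverses you use: this is not established anywhere, is not obviously true (structurable algebras are not power-associative in general, and $s^2$ is hermitian, so skew-alternativity does not apply to it directly), and is stronger than what you need. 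Every identity you extract from it --- $s\hat s=-1$, $st=ts$, $\hat s t=t\hat s$, $s^2u=s(su)$, $(s+\hat s)(su)=(s^2-1)u$, $\bar u=u$ --- follows directly from $L_{s^2}=L_s^2$, from \eqref{LhatsLs}, and from evaluating the commuting operators $L_s$, $L_{\hat s}=-L_s^{-1}$ and $L_t$ at $1$, which is exactly how the paper grounds them; you should cite those operator identities rather than an unproven structure theorem for $k[s]$.
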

\begin{proof}
It follows from \eqref{hatsx} and \eqref{LhatsLs} that $-\hat{s}(\widehat{s+\hat{s}})=\widehat{1-s^2}$.
It follows from \eqref{LhatsLs} that $s+\hat{s}=0$ if and only if $s^2=1$.

We suppose that $s^2=1$ and deduce a contradiction.
Define $a := 1+s$.
It follows from \eqref{sxy} that $L_sL_s=L_{s^2}=\id$; therefore $(xa)\overline{a}=(x\overline{a})a=0$ for all $x\in \A$.
Since $a\neq 0$, it is conjugate invertible, we find that
\[a=V_{a,\hat{a}}a=2(a \overline{\hat{a}})a-(a\overline{a})\hat{a}=2(a \overline{\hat{a}})a\]
and
\[a=V_{\hat{a},a}a=(\hat{a} \overline{a})a +(a \overline{a})\hat{a}-(a \overline{\hat{a}}) a=-(a \overline{\hat{a}}) a,\]
a contradiction. It follows that $s+\widehat{s}\neq0$.

Next, we claim that, for $t := \widehat{s+\hat{s}}$,
\begin{align}\label{eq:Faulkner}
L_{st}=L_sL_t=L_tL_s.
\end{align}
By \eqref{LhatsLs} and the fact that $L_s^2=L_{s^2}$ we have $[L_s,L_{s+\hat{s}}]=0$, therefore $[L_s,-L^{-1}_{s+\hat{s}}]=[L_s,L_t]=0$.
Hence $[s,t]=[L_s,L_t](1)=0$ and from \eqref{sxy} it follows that for all $y\in \A$
\begin{align*}
        2(L_{st}-L_sL_t)y&=2[s,t,y]=[s,t,y]-[t,s,y]\\
        &=(L_{[s,t]}-[L_s,L_t])y =0,
\end{align*}
and \eqref{eq:Faulkner} follows.

We now verify that $u=-\hat{s}(\widehat{s+\hat{s}})$ and $t=\widehat{s+\hat{s}}$ are solutions of \eqref{inversex=1}.
\begin{compactenum}[(1)]
\item Since the conjugate inverse of a skew element is again a skew element and $\hat{s}$ and $t$ commute, we have $\overline{u}=u$; hence $V_{1,u}=L_{u}$. From \eqref{eq:Faulkner} it follows that $\id+L_sL_t=\id+L_tL_s=L_{1+st}$, the first equation of \eqref{inversex=1} is satisfied since
\[1+st=-(s+\hat{s})t+st=-\hat{s}t=u.\]
\item
We have $\frac{1}{3}U_1t=-t=L_sL_{\hat{s}}t=-su$.
\item
We have $\psi(1,st)=\overline{st}-st=ts-st=0$ by \eqref{eq:Faulkner}.
\end{compactenum}
This shows that $(u,t)$ is indeed a solution of \eqref{inversex=1}.
By Theorem \ref{th:1inv}, we conclude that $(1,s)$ is one-invertible.
\end{proof}

From now on let $\A$ be a structurable division algebra and let $(x,s)\in \A\times\Ss$ with $x\neq0$ and $s\neq 0$.
Our aim is to determine the solution of \eqref{inverse}.
It follows from \eqref{1u} that $x=\doublehat{x}$ is the unity in the structurable algebra $A^{\langle \hat{x} \rangle}$,
an isotope of $\A$ as described in Construction \ref{constr:isot}.
From the previous lemma we know that $(x,s)$ is one-invertible in the algebra  $A^{\langle \hat{x} \rangle}$.
We will show that this implies that $(x,s)$ is one-invertible also in $\A$.
We are indebted to John Faulkner for bringing this method to our attention.

\begin{definition}
Let $x\in \A \setminus \{ 0 \}$, and define
\[ \alpha_x \colon \A\to \A^{\langle \hat{x} \rangle} \colon y\mapsto y. \]
Note that $\alpha_x(1)$ is not the unit in $\A^{\langle \hat{x} \rangle}$, but that $x$ is the unit.
The map $\alpha_x$ is an isotopy (see Definition \ref{def:isotopy}) with $\hat{\alpha}_x=P_x$.
Indeed, using \eqref{xyzu} we get
\[\alpha_x\{x,y,z\}=\{\alpha_x x, \hat{\al}_x y, \alpha_x z\}^{\langle \hat{x} \rangle}
\iff  \{x,y,z\}=\{ x, P_{\hat{x}}P_x y, z\},\]
which holds by \eqref{PuPhatu}.
\end{definition}

We need to determine a map from $\Ss$ onto $\Ss\is=\Ss\hx$ that is compatible with $\alpha_x$ and $\hat{\alpha}_x$.

\begin{definition}\label{def:qx}
Let $x\in \A \setminus \{ 0 \}$.
Define the $k$\dash linear maps
\begin{align*}
        & q_x \colon \Ss\to\Ss \colon s\mapsto \frac{1}{6}\psi(x,U_x(sx)),\\[1ex]
        & \be_x\colon \Ss\mapsto \Ss^{\langle \hat{x} \rangle} \colon s\mapsto s\hat{x},\\
        & \hat{\be}_x\colon \Ss\mapsto \Ss^{\langle \hat{x} \rangle} \colon s\mapsto q_x(s)\hx.
 \end{align*}
\end{definition}
\begin{lemma}\label{le:be}
Let $\A$ be a structurable division algebra, and let $x \in \A \setminus \{ 0 \}$.
Then
\begin{compactenum}[\rm (i)]
    \item\label{qx}
        $q_x(s) \hx=P_x(sx)=-\frac{1}{3}U_x sx=\half \psi(P_x s,P_x 1)\hat{x}$ for all $s \in \Ss$;
    \item
        $(q_x)^{-1}=q_\hx$, hence $\hat{\be}_x$ is a bijection;
    \item
        \label{qxh} $\widehat{q_x(s)}=q_{\hx}(\hat{s})$ for all $s \in \Ss \setminus \{ 0 \}$;
    \item
        we have
        \[ \alpha_x(L_sy)=L_{\beta_x(s)}\is\hat{\alpha}_x y\quad \text{ and }\quad \hat{\alpha}_x(L_sy)=L_{\hat{\beta}_x(s)}\is\alpha_x y \]
        for all $s \in \Ss$ and all $y\in \A$.
\end{compactenum}
\end{lemma}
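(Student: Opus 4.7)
The plan for (i) begins with the observation that $P_x(sx) = -\tfrac 13 U_x(sx)$ is immediate from \eqref{Pusu}. To show $q_x(s)\hat x = -\tfrac 13 U_x(sx)$, I apply \eqref{hatuskew}: setting $s' := \psi(x, U_x(sx)) = 6 q_x(s) \in \Ss$, the formula reads $\hat x = 2\widehat{s'}\,U_x(sx)$, and left-multiplying both sides by $s'$ while using $s'\widehat{s'} = -1$ from \eqref{LhatsLs} gives $s'\hat x = -2 U_x(sx)$, hence $q_x(s)\hat x = -\tfrac 13 U_x(sx)$. The last equality $q_x(s)\hat x = \tfrac 12 \psi(P_x s, P_x 1)\hat x$ amounts to the identity $\tfrac 13 \psi(x, U_x(sx)) = \psi(P_x s, P_x 1)$, which I would derive by rewriting $P_u a = U_u a - \tfrac 23 U_u(\psi(a, \hat u)u)$ (a direct consequence of \eqref{defPu} after simplifying $V_{a,u}\hat u = a + \psi(a, \hat u)u$ via the symmetry identity $V_{x,y}z - V_{z,y}x = \psi(x,z)y$) and expanding the right-hand side using the bilinearity of $\psi$ together with the identities \eqref{Uxy-Uyx}--\eqref{LsUxyLs}.

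For (ii), starting from $q_x(s)\hat x = P_x(sx)$ established in (i), I apply $P_{\hat x}$ on the left and use $P_{\hat x}P_x = \id$ from \eqref{PuPhatu} to obtain $P_{\hat x}(q_x(s)\hat x) = sx$. The left-hand side, by (i) applied with $\hat x$ in place of $x$, equals $q_{\hat x}(q_x(s))\cdot x$. Cancelling $x$ (non-zero and hence conjugate invertible in the division algebra) yields $q_{\hat x}\circ q_x = \id$, so $q_x$ is a bijection with inverse $q_{\hat x}$, and hence $\hat\beta_x$ is bijective. For (iii), I take $\widehat{\ }$ of both sides of $q_x(s)\hat x = P_x(sx)$: by \eqref{hatsx}, the left-hand side becomes $\widehat{q_x(s)}\cdot x$. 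On the right, since $P_x$ is an isotopy with $\widehat{P_x} = P_{\hat x}$, \eqref{hatalphau} combined with \eqref{hatsx} produces $\widehat{P_x(sx)} = P_{\hat x}(\widehat{sx}) = P_{\hat x}(\hat s\hat x)$, and applying (i) to $\hat x$ identifies this as $q_{\hat x}(\hat s)\cdot x$. Cancelling $x$ gives $\widehat{q_x(s)} = q_{\hat x}(\hat s)$.

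For (iv), I first reduce both identities using \eqref{Lusu}, by which $L^{\langle\hat x\rangle}_{s\hat x} = L_s P_{\hat x}$ and $L^{\langle\hat x\rangle}_{q_x(s)\hat x} = L_{q_x(s)} P_{\hat x}$. The first identity then becomes $sy = L_s P_{\hat x}(P_x y)$, which holds by \eqref{PuPhatu}. The second identity reduces to the operator equation $P_x L_s P_x = L_{q_x(s)}$. Both sides are isotopies of $\A$; by (i) they agree at $\hat x$, and by (iii) their inverse duals $P_{\hat x} L_{\hat s} P_{\hat x}$ and $L_{q_{\hat x}(\hat s)}$ agree at $x$. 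The main obstacle is to promote this pointwise agreement to full equality. I would do this by exploiting the decomposition $\A = \Ss\hat x \oplus \mathcal H^{\langle\hat x\rangle}$ from Construction~\ref{constr:isot}: the case $y \in \Ss\hat x$ follows by a direct application of (i) combined with the characterization of $P_x$ on elements of the form $r\hat x$, while the case $y \in \mathcal H^{\langle\hat x\rangle}$, which is characterized by $\psi(y,x) = 0$, is handled by expanding $P_x$ and $P_{\hat x}$ via the explicit formula $P_u a = U_u a - \tfrac{2}{3}U_u(\psi(a,\hat u)u)$ and invoking \eqref{Puxyz} and \eqref{mult isotop}.
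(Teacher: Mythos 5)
Your treatment of (ii), (iii) and the first identity of (iv) coincides with the paper's proof, and the first two equalities of (i) (via \eqref{Pusu}, then \eqref{hatuskew} combined with \eqref{LhatsLs}) are also exactly the paper's steps; all of that is correct. The two places where you diverge are the last equality of (i) and the second identity of (iv), and both divergences leave real gaps. In the paper both steps are consequences of a single identity,
\[
P_uL_{\psi(y,z)}=L_{\psi(P_u y,P_u z)}\,P_{\hat u},
\]
which follows in one line from \eqref{Puxyz} and \eqref{Uxy-Uyx} by writing $\psi(y,z)a=\{y,a,z\}-\{z,a,y\}$. Applied to $x$ with $(y,z)=(s,1)$ and using $P_{\hx}x=\hx$, it gives $\half \psi(P_x s,P_x 1)\hx=\half L_{\psi(P_x s,P_x 1)}P_{\hx}x=\half P_x\bigl(\psi(s,1)x\bigr)=P_x(sx)$, which is the missing equality in (i); read as an operator identity it gives $P_xL_s=\half P_xL_{\psi(s,1)}=\half L_{\psi(P_xs,P_x1)}P_{\hx}=L_{q_x(s)}P_{\hx}$, which is exactly the second identity of (iv). You never isolate this identity, and neither of your substitutes for it closes.

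Concretely: for (i) you propose to expand $\psi(P_xs,P_x1)$ using $P_xa=U_xa-\tfrac{2}{3}U_x(\psi(a,\hx)x)$ and the identities \eqref{Uxy-Uyx}--\eqref{LsUxyLs}. That expansion produces terms of the form $\psi(U_xa,U_xb)$, and none of the identities you cite controls such terms --- \eqref{spsixys} and \eqref{LsUxyLs} concern multiplication by skew elements, not $U_x$ for general $x$; the identity that does control them is precisely the displayed one. For (iv) you correctly reduce the second identity to the operator equation $P_xL_sP_x=L_{q_x(s)}$, but agreement of two isotopies at the single point $\hx$ (plus agreement of their inverse duals at $x$) does not imply equality: the stabilizer of a point in the structure group is the automorphism group of the corresponding isotope, which is in general nontrivial, so an isotopy is not determined by its value at one point. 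Your fallback case analysis over $\A=\Ss\hx\oplus\Herm^{\langle\hx\rangle}$ is not carried out and runs into the same wall: on $\Ss\hx$ you would need a formula for $P_x(r\hx)$, which part (i) does not supply (it computes $P_x(rx)$), and the $\Herm^{\langle\hx\rangle}$ case again requires controlling $\psi(U_x\cdot\,,U_x\cdot)$. Both gaps trace back to the same missing lemma; once you prove the displayed identity from \eqref{Puxyz} and \eqref{Uxy-Uyx}, your whole argument goes through.
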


\begin{proof}
\begin{compactenum}[(i)]
    \item
        By \eqref{Pusu}, we have $P_x(sx)=-\frac{1}{3}U_x (sx)$.
        It follows from \eqref{hatuskew} that $U_x sx=-\frac{1}{2}\psi(x,U_x sx) \hx$.
        Combining \eqref{Puxyz} and \eqref{Uxy-Uyx} yields
        \begin{align}\label{Pupsi}
        P_uL_{\psi(y,z)}=L_{\psi(P_u y,P_u z)} P_{\hat{u}},
        \end{align}
        for all $u,y,z\in \A$, and by \eqref{Puhatu} we have
        \[\half \psi(P_x s,P_x 1)\hat{x}=\half(P_x(\psi(s,1)x))=P_x(sx).\]
    \item
        Using (i) we find
        \[q_\hx(q_x(s))x=P_\hx(q_x(s)\hx)=P_\hx(P_x(sx))=sx,\]
        which shows that $q_\hx(q_x(s))x=s x$; this implies that $q_\hx\circ q_x=\id$.
        Similarly, $q_x\circ q_\hx=\id$.
    \item
        By \eqref{hatsx} and \eqref{hatalphau} ,
        \[\widehat{(q_x(s))} x=\widehat{(q_x(s)\hx)}=\widehat{P_x(sx)}=P_\hx(\hat{s}\hx)=q_{\hx}(\hat{s})x;\]
        it follows that $\widehat{q_x(s)}=q_{\hx}(\hat{s})$.
    \item
        The first equality follows by \eqref{Lusu}:
        \begin{align*}
        L_{\beta_x(s)}\is\hat{\alpha}_x y=L_{s\hx}\is P_xy
        =L_s P_{\hx}P_x y
        =L_s y.
        \end{align*}
        The second equality follows by (i), \eqref{Lusu} and \eqref{Pupsi}:
        \begin{align*}
        L\is_{\hat{\be}_x (s)}\al_x y&=L\is_{ \half \psi(P_x s,P_x 1)\hat{x}} y
        =L_{ \half \psi(P_x s,P_x 1)}P_{\hat{x}} y\\
        &=P_x(L_{\half\psi(s,1)} y)
        =P_xL_s y. \qedhere
        \end{align*}
\end{compactenum}
\end{proof}

\begin{lemma}\label{le:isotopic1inv}
	Let  $(x,s)\in \A\times \Ss\setminus\{0,0\}$ and $(u,t)\in \A\times\Ss\setminus\{0,0\}$.
	Then $(u,t)$ is the solution of the equations \eqref{inverse} w.r.t. $(x,s)$ in $\A$ if and only if
	$(\hat{\al}_x u, \hat{\be}_x t)\in \A\is\times\Ss\is$ is the solution of the equations \eqref{inverse} w.r.t. $(\al_x x,\be_x s)$ in $\A\is$.
\end{lemma}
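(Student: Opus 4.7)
My plan is to verify, equation by equation, that the system \eqref{inverse} for $(x,s)$ with candidate solution $(u,t)$ in $\A$ is equivalent to the system \eqref{inverse} for $(\alpha_x x,\beta_x s) = (x,s\hat x)$ with candidate solution $(\hat\alpha_x u,\hat\beta_x t) = (P_x u, q_x(t)\hat x)$ in $\A\is$. Since the solution to \eqref{inverse}, when it exists, is unique by Theorem~\ref{th:1inv}, this equivalence of the two systems immediately yields the lemma.

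The first step is to assemble a short dictionary translating the operations appearing in \eqref{inverse} in the isotope $\A\is = \A^{\langle\hat x\rangle}$ back into operations in $\A$. Identity \eqref{xyzu} gives $V\is_{a,b} = V_{a,P_{\hat x}b}$, and hence $U\is_x = U_x P_{\hat x}$; formula \eqref{Lusu} combined with Lemma~\ref{le:be}(iv) gives $L\is_{s\hat x} = L_s P_{\hat x}$ and $L\is_{q_x(t)\hat x} = P_x L_t$; formula \eqref{psiuxy} gives $\psi\is(a,b) = \psi(a,b)\hat x$; and Lemma~\ref{le:be}(i) gives $q_x(t)\hat x = P_x(tx)$. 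Substituting these translations into the three equations of \eqref{inverse} in $\A\is$ and repeatedly collapsing $P_x P_{\hat x} = P_{\hat x}P_x = \id$ from \eqref{PuPhatu}, one checks in a couple of lines that the first equation reduces to $V_{x,u} = \id + L_sL_t$, the second to $su = -\tfrac{1}{3} U_x(tx)$, and the third to $\psi(x,s(tx))\,\hat x = 0$.

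The main obstacle, and the only step that is not purely formal, is passing from $\psi(x,s(tx))\,\hat x = 0$ (produced in the isotope) back to the original equation $\psi(x,s(tx)) = 0$ in $\A$. Here I invoke the division hypothesis on $\A$: since $x\neq 0$ we have $\hat x\neq 0$, and if $\sigma := \psi(x,s(tx)) \in \Ss$ were non-zero, then $\sigma$ would be conjugate invertible and $L_\sigma$ invertible, so $L_\sigma(\hat x) = \sigma\hat x = 0$ would force $\hat x = 0$, a contradiction. With this reverse implication in hand, the three equations of \eqref{inverse} in $\A\is$ for $(x,s\hat x,P_x u, q_x(t)\hat x)$ are equivalent to the three equations of \eqref{inverse} in $\A$ for $(x,s,u,t)$, which proves the lemma.
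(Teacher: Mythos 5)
Your proof is correct and takes essentially the same route as the paper's: both arguments translate the three equations of \eqref{inverse} between $\A$ and $\A\is$ using the same dictionary (\eqref{xyzu}, \eqref{Lusu}, \eqref{psiuxy} and Lemma~\ref{le:be}) and cancel $P_xP_{\hat x}=\id$ throughout. The only cosmetic differences are that you run the translation from $\A\is$ back to $\A$ rather than the other way around, and that you make explicit --- via invertibility of $L_\sigma$ for $0\neq\sigma\in\Ss$ in a division algebra --- the injectivity of $s\mapsto s\hat x$ needed for the third equation, a point the paper subsumes under the assertion that $\be_x$ is a bijection.
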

\begin{proof}
Let  $(x,s)\in \A\times \Ss\setminus\{0,0\}$ and let $\al:=\al_x, \hat{\al}:=\hat{\al}_x, \be:=\be_x, \hat{\be}:=\hat{\be}_x$.
Suppose that the following equations hold in $\A$:
\begin{align*}
	V_{x,u}&=\id+L_sL_t,\\
	su&=-\frac{1}{3}U_xtx,\\
	{\psi(x,s(tx))}&=0.
\end{align*}
Applying the isotopy $\al$ we obtain, using Lemma \ref{le:be}, that
\begin{align*}
V\is_{\al x,\hat{\al} u}\al&=\al+L\is_{\be s}L\is_{\hat{\be} t}\al,\\
L\is_{\be s}\hat{\al} u&=-\frac{1}{3}U\is_{\al x} L\is_{\hat{\be} t}\al x,\\
{\be (\psi(x,s(tx)))}&=0.
\end{align*}
We determine ${\be \psi(x,s(tx))}$ in terms of the multiplication of $\A\is$.
By \eqref{psiuxy}, we have
\[\be \psi(x,s(tx))=\psi(x,s(tx))\hat{x} =\psi\is(\al x,\al s(tx))=\psi\is(\al x, L\is_{\be s}L\is_{\hat{\be} t}\al x).\]
Since $\al,\hat{\al},\be,\hat{\be}$ are bijections we conclude that $(u,t)$ is the solution of the equations for $(x,s)$ in $\A$ if and only if $(\hat{\al} u, \hat{\be} t)$ is the solution of the equations for $(\al x,\be s)$ in $\A\is.$
\end{proof}

\begin{remark}
Define the Lie algebra $\G\is$ as the Lie algebra obtained by applying Definition \ref{def:KacA} to $\A\is$. Define the graded bijection $\gamma_x \colon \G\to \G\is$ given by $\ga_x \colon \G_i\to\G\is_i$ for $i\in[-2,2]$ such that
\begin{alignat*}{2}
\ga_x|_{\G_1}&=\alpha_x,
&\ga_x|_{\G_{-1}}&=\widehat{\alpha}_x,\\
\ga_x|_{\G_2}&=\be_x,
&\ga_x|_{\G_{-2}}&=\widehat{\be}_x,\\
\ga_x(V_{a,b})&=V_{a,P_xb}\is =V_{a,b},\quad
&\ga_x(\zeta)&=\zeta.
\end{alignat*}
Using \eqref{psiuxy}, \eqref{Lusu}, \eqref{Pupsi} and Lemma \ref{le:be}(iii), it can be verified that $\ga_x$ is a Lie algebra isomorphism.
This can be used to give a different proof of Lemma~\ref{le:isotopic1inv}, but the verification of the fact that $\ga_x$ is a Lie algebra isomorphism
requires more effort than we needed in our proof of Lemma~\ref{le:isotopic1inv}.

In \cite[Section 12]{AH81} it is shown that each isomorphism from $K(A)$ to $K(A')$ can be obtained in a similar way from an isotopy between $\A$ and $\A'$.
\end{remark}

We are now ready to prove the main theorem of this section.
\begin{theorem}\label{th:formula-oneinverse}
	Let $\A$ be a structurable division algebra, and let $0\neq x\in \A$ and $0\neq s\in \Ss$.
	Then $(x,s)$ is one-invertible.
	Moreover, the solution of the system of equations \eqref{inverse} is
	\[ u=\big(x-s(q_{\hx}(s)x)\big)^\wedge=-\hat{s}\big((q_{\hx}(s)+\hat{s})^\wedge\hx\big)
		\quad\text{and}\quad t=\big(s+q_x(\hat{s})\big)^\wedge, \]
	where the expressions of which the conjugate inverse is taken are different from zero.
\end{theorem}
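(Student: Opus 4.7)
The approach is to reduce to Faulkner's Lemma~\ref{le:x=1inv} by passing to the isotope $\A^{\langle\hat{x}\rangle}$. Because $\A$ is a structurable division algebra and $x\neq 0$ is conjugate invertible, $\A^{\langle\hat{x}\rangle}$ is itself a structurable division algebra, and by~\eqref{1u} its unit is $x$. The element $\tilde{s}:=\beta_x(s)=s\hat{x}\in\Ss^{\langle\hat{x}\rangle}$ is nonzero, so Lemma~\ref{le:x=1inv} applied inside $\A^{\langle\hat{x}\rangle}$ shows that $(x,\tilde{s})$ is one-invertible in the isotope; Lemma~\ref{le:isotopic1inv} then transfers this to one-invertibility of $(x,s)$ in $\A$ and describes how the two solutions correspond, namely via $(\hat{\alpha}_x u,\hat{\beta}_x t)=(P_x u,\,q_x(t)\hat{x})$.

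To extract the formulas, I will translate Faulkner's solution back to $\A$. Using~\eqref{Lusu} and Lemma~\ref{le:be}(i) one computes $\tilde{s}^{2^{\langle\hat{x}\rangle}}=s(q_{\hat{x}}(s)x)$, while~\eqref{isotopeinv} together with~\eqref{hatsx} gives $\tilde{s}^{\wedge^{\langle\hat{x}\rangle}}=q_x(\hat{s})\hat{x}$. Applying~\eqref{isotopeinv} once more rewrites Faulkner's expression $\tilde{u}=(1^{\langle\hat{x}\rangle}-\tilde{s}^{2^{\langle\hat{x}\rangle}})^{\wedge^{\langle\hat{x}\rangle}}$ as $\tilde{u}=P_x\bigl((x-s(q_{\hat{x}}(s)x))^\wedge\bigr)$, and a parallel computation yields $\tilde{t}=q_x\bigl((s+q_x(\hat{s}))^\wedge\bigr)\hat{x}$. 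Since $P_x$ is invertible with inverse $P_{\hat{x}}$ by~\eqref{PuPhatu}, the correspondence from Lemma~\ref{le:isotopic1inv} unwinds these into the clean expressions $u=(x-s(q_{\hat{x}}(s)x))^\wedge$ and $t=(s+q_x(\hat{s}))^\wedge$. The elements being inverted are nonzero because Faulkner's lemma guarantees both $1^{\langle\hat{x}\rangle}-\tilde{s}^{2^{\langle\hat{x}\rangle}}\neq 0$ and $\tilde{s}+\tilde{s}^{\wedge^{\langle\hat{x}\rangle}}\neq 0$ inside $\A^{\langle\hat{x}\rangle}$.

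The second expression for $u$ I will obtain not by transferring Faulkner's alternative form $-\hat{s}\widehat{s+\hat{s}}$ through the isotope (which yields a less tractable expression), but by invoking the second equation of the system~\eqref{inverse} directly: $su=-\tfrac{1}{3}U_x(tx)$, which by Lemma~\ref{le:be}(i) equals $q_x(t)\hat{x}$. Applying $L_{\hat{s}}$ and using~\eqref{LhatsLs} produces $u=-\hat{s}(q_x(t)\hat{x})$; Lemma~\ref{le:be}(ii)--(iii), together with the observation that $q_{\hat{x}}(q_x(\hat{s}))=\hat{s}$, then transforms $q_x(t)=q_x\bigl((s+q_x(\hat{s}))^\wedge\bigr)$ into $(q_{\hat{x}}(s)+\hat{s})^\wedge$, yielding $u=-\hat{s}\bigl((q_{\hat{x}}(s)+\hat{s})^\wedge\hat{x}\bigr)$. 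The principal difficulty is the bookkeeping between operations in $\A$ and in $\A^{\langle\hat{x}\rangle}$: the identities~\eqref{Lusu},~\eqref{isotopeinv},~\eqref{hatsx},~\eqref{PuPhatu} and Lemma~\ref{le:be} must be combined in the right order for the isotope expressions to collapse to the stated formulas in $\A$, and the detour through the second equation of~\eqref{inverse} for the alternative form of $u$ is what keeps the derivation tractable.
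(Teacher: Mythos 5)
Your proposal follows essentially the same route as the paper: reduce to Lemma~\ref{le:x=1inv} by passing to the isotope $\A^{\langle\hat{x}\rangle}$ (whose unit is $x$), then transfer the solution back via Lemma~\ref{le:isotopic1inv} using $\hat{\alpha}_x=P_x$ and $\hat{\beta}_x(t)=q_x(t)\hat{x}$, exactly as the paper does. The only (correct) deviation is cosmetic: for the alternative form of $u$ the paper rewrites $x=-s(\hat{s}x)$ and factors out $s$ via~\eqref{hatsx}, whereas you obtain it from the second equation of~\eqref{inverse} together with Lemma~\ref{le:be}(ii)--(iii); both computations are valid and of comparable length.
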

\begin{proof}
Let $0\neq x\in \A$ and $0\neq s\in \Ss$, and let $\al:=\al_x, \hat{\al}:=\hat{\al}_x, \be:=\be_x, \hat{\be}:=\hat{\be}_x$.
We have $(\al x,\be s)=(x,s\hat{x})$.
Now $x=\widehat{\hat{x}}=1\is$, by \eqref{1u}.
We can apply Lemma \ref{le:x=1inv} to find the solution of the equations \eqref{inverse} for $(x,s\hat{x})$ in $\A\is$;
we then use Lemma \ref{le:isotopic1inv} to translate this solution back to $\A$.

In Lemma \ref{le:x=1inv}, we found that for $(1,s)$ we have $u=-L_{\hat{s}} t=\widehat{1-s^2}$ and $t=\widehat{s+\hat{s}}$.
It also follows from this lemma that the expressions of which the conjugate inverses are considered, are never zero.

By \eqref{isotopeinv}, Lemma \ref{le:be}\eqref{qx} and \eqref{hatsx}, $\hat{\be}(t) \in \A\is$ is equal to
\begin{multline*}
	\hat{\be}(t) = \big(\be(s)+(\be(s))^{\wedge\langle \hx\rangle}\big)^{\wedge\langle \hx\rangle} = P_x (\big(s\hx+P_x (\widehat{s \hx})\big)^{\wedge})\\
	= P_x (\big(s\hx+P_x (\hat{s} x)\big)^{\wedge})
	= P_x (\big(s\hx+q_x(\hat{s})\hx\big)^{\wedge})\\
	= P_x(\big(s+q_x(\hat{s})\big)^{\wedge}x)
	= q_x(\big(s+q_x(\hat{s})\big)^{\wedge})\hx.
\end{multline*}
It follows that $t=\big(s+q_x(\hat{s})\big)^{\wedge}$.
In order to determine $\hat{\alpha}_x(u)$ in $\A\is$, we first observe that
\[ {L_{\be s}\is}^2\al{x}={L_{s\hx}\is}^2x =L_{s\hx}\is s P_\hx x=L_{s\hx}\is s \hx=sP_{\hx}(s \hx)=s(q_{\hx}(s)x). \]
Then
\[ \hat{\al}(u)=\big(\al{x}-{L_{\be s}\is}^2\al{x}\big)^{\wedge\langle \hx\rangle}=P_x(\big(x-{L_{s\hx}\is}^2x\big)^\wedge)=P_x(\big(x-s(q_{\hx}(s)x)\big)^\wedge), \]
and hence
\[
u=\big(x-s(q_{\hx}(s)x)\big)^\wedge=\big(-s(\hat{s}x)-s(q_{\hx}(s)x)\big)^\wedge=-\hat{s}(\big(q_{\hx}(s)+\hat{s}\big)^\wedge \hx). \qedhere
\]
\end{proof}

Combining Theorem \ref{th:1inv}, Lemma \ref{le:1inv to div} and the previous theorem, we obtain an expression for the left and right inverses of elements in $\A\times\Ss$.
\begin{corollary}\label{cor:oneinverse}
	Let $\A$ be a structurable division algebra. Then all elements in $\A\times\Ss\setminus\{(0,0)\}$ are one-invertible.
	Let $0\neq x\in \A$ and $0\neq s\in \Ss$.
	Then:
	\begin{compactitem}
	\item the left and right inverse of $(x,0)$ are both equal to $(\hx,0)$;
	\item the left and right inverse of $(0,s)$ are both equal to $(0,\hat{s})$;
	\item the right inverse of $(x,s)$ is
	\[ \Bigl( -\hat{s}\big((q_{\hx}(s)+\hat{s})^\wedge\hx\big)+ \big(s+q_x(\hat{s})\big)^\wedge x,\ \big(s+q_x(\hat{s})\big)^\wedge \Bigr) , \]
	and the left inverse of $(x,s)$ is
	\[ \Bigl( -\hat{s}\big((q_{\hx}(s)+\hat{s})^\wedge\hx\big)- \big(s+q_x(\hat{s})\big)^\wedge x,\ \big(s+q_x(\hat{s})\big)^\wedge \Bigr). \]
	\end{compactitem}
\end{corollary}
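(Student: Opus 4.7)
The plan is to derive the corollary purely by assembling the three preceding results: Lemma~\ref{le:1inv to div}, Theorem~\ref{th:1inv}, and Theorem~\ref{th:formula-oneinverse}. I would split into three cases depending on which of $x$ and $s$ vanishes, and verify that each formula in the corollary falls out as a direct substitution into the machinery already in place.

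For the degenerate cases, Lemma~\ref{le:1inv to div}(i) shows that $(x,0)$ with $x\neq 0$ is one-invertible because $x$ is conjugate invertible in the division algebra $\A$, and that both the left and right inverse equal $(\hx,0)$. The symmetric statement Lemma~\ref{le:1inv to div}(ii) handles $(0,s)$ with $s\neq 0$, giving left and right inverse $(0,\hat{s})$. This settles the first two bullet points.

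For the main case $x\neq 0$ and $s\neq 0$, I would appeal to Theorem~\ref{th:formula-oneinverse}, which simultaneously asserts the one-invertibility of $(x,s)$ and produces the unique solution of the system~\eqref{inverse}, namely
\[
u=-\hat{s}\bigl((q_{\hx}(s)+\hat{s})^\wedge\hx\bigr) \qquad \text{and} \qquad t=\bigl(s+q_x(\hat{s})\bigr)^\wedge.
\]
Feeding this pair $(u,t)$ into Theorem~\ref{th:1inv}(ii), which states that the right inverse of $(x,s)$ is $(u+tx,t)$ and the left inverse is $(u-tx,t)$, yields precisely the formulas asserted in the third bullet. The three cases exhaust $(\A\times\Ss)\setminus\{(0,0)\}$, so every non-zero pair is one-invertible.

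There is no genuine obstacle to overcome: the substantive work is entirely contained in Theorem~\ref{th:formula-oneinverse} (whose proof reduced to Lemma~\ref{le:x=1inv} via the isotopy $\alpha_x$), and the present corollary is simply its repackaging in terms of left and right inverses rather than the intermediate solution $(u,t)$. The only thing worth double-checking during the write-up is the sign convention in $u\pm tx$, which must be read off from Theorem~\ref{th:1inv}(ii) exactly as stated.
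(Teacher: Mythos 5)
Your proposal is correct and follows exactly the paper's own (one-line) proof: the corollary is obtained by combining Lemma~\ref{le:1inv to div} for the degenerate cases with Theorem~\ref{th:formula-oneinverse} and Theorem~\ref{th:1inv}\eqref{it:l-r-inv} for the case $x\neq 0$, $s\neq 0$. The sign convention $u\mp tx$ for the left and right inverses is read off correctly.
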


%
%
\chapter{Simple structurable algebras and simple algebraic groups}\label{se:SSA and SAG}

In this chapter, we will study the connection between simple structurable algebras and simple algebraic groups.
In one direction, we will show in section~\ref{ss:SAG-SSA}, in full generality, that every algebraic central simple structurable algebra $\A$ over a field $k$
with $\Char(k) \neq 2,3$ gives rise to an adjoint simple algebraic $k$-group.
For the converse, we restrict to the rank one case; in section~\ref{ss:SSA-SAG}, we will show that,
for any adjoint simple algebraic group $\GG$ of $k$\dash rank 1  over a field~$k$ with $\Char(k) \neq 2,3$,
there is a structurable division algebra $\A$ over $k$ such that $\GG$ is precisely the algebraic group arising from $\A$.

Again, we will pay special attention to the case $\Char(k) = 5$ and the related notion of algebraicity.
More precisely, we will show in section~\ref{ss:dedalg} that every Lie algebra of an adjoint simple algebraic $k$-group is indeed algebraic,
and we will deduce from this that any structurable division algebra is algebraic.
This will allow us, from section~\ref{ss:SSA-SAG} on, to part with the algebraicity condition when we deal with structurable division algebras.

\section{Simple algebraic groups from simple structurable algebras}\label{ss:SAG-SSA}

In this section, we show how to associate in a natural way to an algebraic
simple structurable algebra $\A$ over a field $k$ of characteristic
$\neq 2,3$ a simple algebraic $k$\dash group $\GG$. Eventually, this will serve to prove that
the Moufang sets we will construct in Theorem~\ref{mainth:moufset} are, in fact,
Moufang sets arising from linear algebraic groups of $k$\dash rank one (as described in Theorem~\ref{th:M(G)}),
but our results in section~\ref{ss:SAG-SSA} are more general.

All commutative rings and algebras mentioned in this section are assumed to be unital.

The main result of the present section is the following theorem.

\begin{theorem}\label{thm:AGadjoint}
Let $\A$  be an algebraic central simple structurable algebra over a field $k$ of characteristic different from $2,3$.
Then the algebraic $k$\dash group $\GG=\Aut(K(\A))^\circ$
is an adjoint absolutely simple group of $k$\dash rank $\geq 1$, and $K(\A)=[\Lie(\GG),\Lie(\GG)]$.
\end{theorem}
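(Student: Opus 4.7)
The plan is to construct $\GG$ explicitly via the $5$-graded structure of $K(\A)$ and the algebraicity hypothesis, then verify the required properties. First, the grading yields a $k$-rational cocharacter $\lambda \colon \Gm \to \Aut(K(\A))$ given by $\lambda(t)|_{K(\A)_i} = t^i \cdot \id$; this is a homomorphism of algebraic groups because the bracket respects the grading, and its image $\GT$ is a non-trivial $k$-split torus in $\GG$, establishing $k$-rank $\geq 1$. Simultaneously, via algebraicity of $\A$ (Lemma~\ref{le:esigma} and Lemma~\ref{lem:esigma'-prod}(ii)), the exponential-like maps
\[ e_\pm \colon K(\A)_{\pm 1} \oplus K(\A)_{\pm 2} \hookrightarrow \Aut(K(\A)) \]
are closed embeddings of algebraic $k$-groups whose connected unipotent images $\GU_\pm$ lie in $\GG$ and are normalized by $\GT$ with weights in $\{\pm 1, \pm 2\}$.

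Next I would put $\GG^+ := \langle \GU_+, \GU_- \rangle \subseteq \GG$ and show that $\GG^+$ is an adjoint absolutely simple $k$-subgroup of $\GG$. Taking Lie algebras, $\Lie(\GG^+)$ contains $K(\A)_{\pm 1} \oplus K(\A)_{\pm 2}$; since $K(\A)_{\pm 2} = [\A_\pm, \A_\pm]$ and $K(\A)_0 = \Instrl(\A) = \Span\{V_{x,y}\} = [\A_+, \A_-]$ in the bracket of $K(\A)$, taking commutators yields $\Lie(\GG^+) \supseteq K(\A)$. By \cite[\S 5]{A79}, centrality and simplicity of $\A$ translate into centrality and simplicity of $K(\A)$ as a Lie algebra, and these properties are preserved under base change to $\bar k$. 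The action of $\GG^+$ on the simple Lie algebra $K(\A)$ is faithful; a Lie--Kolchin argument (the fixed subspace of the solvable radical is non-zero, and its $\GG^+$-span is a non-trivial Lie ideal of $K(\A)$, hence all of $K(\A)$) forces the solvable radical, and hence the center, to be trivial, so $\GG^+$ is adjoint semisimple. Absolute simplicity over $\bar k$ follows from $K(\A) \otimes_k \bar k$ remaining central simple.

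Finally, I would identify $\GG$ with $\GG^+$ and extract $K(\A) = [\Lie(\GG), \Lie(\GG)]$. For a centerless simple Lie algebra of dimension $>1$, $\Aut(K(\A))$ is an algebraic group with $\Lie(\Aut(K(\A))) = \Der(K(\A))$ and a finite group of connected components (the outer automorphism group of a simple Lie algebra), so $\GG = \Aut(K(\A))^\circ$ coincides with the inner adjoint group $\GG^+$. The embedding $\ad \colon K(\A) \hookrightarrow \Der(K(\A))$ is injective by centerlessness, and perfectness of $K(\A)$ yields
\[ K(\A) = [\ad K(\A), \ad K(\A)] \subseteq [\Lie(\GG), \Lie(\GG)]. \]
For the reverse containment, one needs $[\Der(K(\A)), \Der(K(\A))] \subseteq \ad(K(\A))$, equivalently that the outer derivation Lie algebra $\Der(K(\A))/\ad(K(\A))$ is abelian. \textbf{This is the main obstacle} in positive characteristic: in characteristic $5$, without the algebraicity assumption on $\A$, the outer derivation algebra need not be abelian, and the theorem could fail. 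The algebraicity condition is precisely what guarantees that the image of $e_\pm$ in $\Aut(K(\A))$ generates a group whose tangent directions cover $\Der(K(\A))$ in a way compatible with this abelianness property, bridging the subtle gap in characteristic $5$.
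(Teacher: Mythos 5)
Your overall strategy is the same as the paper's: the paper proves Theorem~\ref{thm:AGadjoint} by reducing to Theorem~\ref{thm:LGadjoint}, whose proof likewise uses the grading torus to get $k$\dash rank $\geq 1$, uses algebraicity to realize $e_\pm$ as closed embeddings of unipotent groups $\GU_\pm$ into $\Aut$, forms the subgroup generated by $\GU_+$ and $\GU_-$, shows it is adjoint absolutely simple, and then identifies it with $\Aut(K(\A))^\circ$. So the skeleton is right. But there is a genuine gap exactly where you flag ``the main obstacle'': you reduce the inclusion $[\Lie(\GG),\Lie(\GG)]\subseteq K(\A)$ to the statement that $\Der(K(\A))/\ad(K(\A))$ is abelian, and then you do not prove it --- you only assert that algebraicity ``guarantees'' it. Algebraicity by itself does not; what the paper actually uses is Lemma~\ref{lem:liead}, a structural fact about Lie algebras of adjoint simple algebraic groups in characteristic $p$ (that $[\LL,\LL]$ is central simple of codimension $\leq 1$ and $\LL=\Der([\LL,\LL])$, proved via results of Block, Winter and Zassenhaus, with a separate computation for type $A_n$ when $p\mid n+1$). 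The logical order matters: one first proves that the group $\GZ=\langle\GU_+,\GU_-\rangle$ is an adjoint simple algebraic group, and only then does Lemma~\ref{lem:liead} applied to $\GZ$ yield $\Lie(\GZ)=\Der(K(\A))$, $K(\A)=[\Lie(\GZ),\Lie(\GZ)]$, and finally $\GZ=\Aut(K(\A))^\circ$. Your proposal never establishes $\Lie(\GG^+)=\Der(K(\A))$, and without it the identification $\GG=\GG^+$ and the equality $K(\A)=[\Lie(\GG),\Lie(\GG)]$ are both unproved.

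A secondary problem is your reductivity argument. Lie--Kolchin gives a common eigenvector for a connected solvable group, not a fixed vector, and the fixed subspace of the \emph{solvable} radical can be zero (a torus may act with all weights nonzero); the argument only has a chance for the \emph{unipotent} radical, and then only after base change to $\bar k$. Even in that corrected form you must still justify that the fixed subspace is a Lie ideal (it is, because it is stable under $\Lie(\GG^+)\supseteq\ad K(\A)$, but this needs $K(\A)\subseteq\Lie(\GG^+)$ first) and separately rule out a decomposition of $\GG^+$ into a product of simple factors. The paper handles both points with a single graded-ideal recognition statement, Lemma~\ref{lem:Gseparable}(i), applied to the Lie algebra of the last term of the lower central series of $\radu(\GZ)$ and then to a hypothetical product decomposition. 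Your version of this step is repairable; the missing proof of Lemma~\ref{lem:liead} (or a substitute for it) is not a detail but the technical heart of the theorem in positive characteristic.
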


In the proof of Theorem~\ref{thm:AGadjoint} we will employ the following Lie-theoretic gadget.
Assume that $R$ be a commutative ring, and let $\LL=\bigoplus\nolimits_{i\in\ZZ}\LL_i$ be a $\ZZ$-graded Lie algebra over $R$.
Recall that $\LL$ is called \emph{$(2n+1)$-graded}, if $\LL_i=0$ for all $i\in\ZZ$ such that $|i|>n$.

\begin{construction}\label{constr:newL} For any $(2n+1)$-graded Lie algebra $\LL$ over $R$ we define the $(2n+1)$-graded Lie algebra
\[
\newl \LL=\bigoplus\limits_{i\in\ZZ}\newl \LL_i
\]
over $R$ as follows.
For any $i\neq 0$, $\newl \LL_i=\LL_i$.  For $i=0$, we define $\newl \LL_0$ to be the set of all
$\phi=(\phi_i)\in\prod\limits_{i\in\ZZ\setminus\{0\}}\End_R(\LL_i)$ satisfying the following conditions:
\begin{align}\label{eq:newL}
	\phi_{i+j}([a,b]) &= [\phi_i(a),b]+[a,\phi_j(b)] \notag \\
		&\text{for all} -n\leq i,j\leq n,\ i,j\neq 0,\ i\neq -j;\ a\in \LL_i,\ b\in \LL_j; \notag \\[1ex]
	\phi_j([[a,b],c]) &= [[\phi_i(a),b],c]+[[a,\phi_{-i}(b)],c]+[[a,b],\phi_{j}(c)] \notag \\
		&\text{for all} -n\leq i,j\leq n,\ i,j\neq 0, \ a\in \LL_i,\ b\in \LL_{-i},\ c\in \LL_j.
\end{align}
In other words, $\phi\in\newl \LL_0$ behaves as a derivation of $\LL$ that preserves grading, except that $\phi$ is not defined
on $\LL_0$. Clearly, $\newl \LL_0$ is an $R$-module.

We define the Lie bracket $[-,-]_{\newl \LL}$ on $\newl \LL$ in terms of the
Lie bracket $[-,-]$ on $\LL$ as follows.
For any $u\in \newl \LL_i,v\in \newl \LL_j$, $i,j\in\ZZ$, we let
\begin{equation}\label{eq:[]}
[u,v]_{\newl \LL}=
\begin{cases}
	[u,v] & \text{if } i\neq -j,\ i,j\neq 0; \\
	\bigl(\ad([u,v])|_{\LL_i}\bigr)_{i\in\ZZ\setminus\{0\}} & \text{if } i= -j,\ i\neq 0; \\
	u(v) & \text{if } i=0,\ j\neq 0;\\
	-v(u) & \text{if } i\neq 0,\ j=0;\\
	uv-vu & \text{if } i=j=0.
\end{cases}
\end{equation}
It is straightforward to check that $\newl \LL$ is indeed a $(2n+1)$-graded Lie algebra over~$R$.
Since it is not likely to provoke confusion, in what follows
we denote the Lie bracket on $\newl \LL$ simply by $[-,-]$.

Note that the Lie algebra $\newl \LL$ by construction contains an element $\zeta\in\newl \LL_0$ which acts
as a {\it grading derivation}:
\[
[\zeta,x]=ix\quad\mbox{for any $i\in\ZZ$ and $x\in\newl \LL_i$.}
\]
It is also easy to see that there is a natural graded Lie algebra homomorphism $\LL\to\newl \LL$ that sends
any element $x\in \LL_0$ to $\bigl(\ad(x)|_{\LL_i}\bigr)_{i\in\ZZ\setminus\{0\}}$.
\end{construction}

\begin{lemma}\label{lem:40}
Let $\LL$ be a $(2n+1)$-graded Lie algebra over a commutative ring $R$ such that $(2n)!\in R^\times$, and
let $\newl \LL=\newl \LL_0\oplus\bigoplus\limits_{i\neq 0}\LL_i$ be the Lie algebra of~Construction~\ref{constr:newL}. Then
the natural homomorphism
$\ad \colon \newl \LL\to\Der_R(\newl \LL)$, $x\mapsto\ad(x)$, is an isomorphism of $\ZZ$-graded $R$-Lie algebras.
\end{lemma}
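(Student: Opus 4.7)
The plan is to prove that $\ad \colon \newl\LL \to \Der_R(\newl\LL)$ is a homomorphism of graded $R$-Lie algebras that is both injective and surjective. That $\ad$ is $R$-linear and bracket-preserving follows from Jacobi, and for $u \in \newl\LL_i$ the bracket rules \eqref{eq:[]} of Construction~\ref{constr:newL} show that $\ad(u)$ sends $\newl\LL_j$ into $\newl\LL_{i+j}$, so $\ad(u)$ is a graded derivation of degree $i$. Both bijectivity arguments exploit the grading derivation $\zeta \in \newl\LL_0$ supplied by the construction together with the fact that $\newl\LL$ has finite support $\{-n,\dots,n\}$.

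\textbf{Injectivity.} The kernel is the center $Z(\newl\LL)$. For a homogeneous central element $z \in \newl\LL_i$, the identity $[\zeta, z] = iz$ forces $iz = 0$. If $i \neq 0$, then $|i| \leq n$ together with the hypothesis $(2n)! \in R^\times$ ensures $i \in R^\times$, whence $z = 0$. If $i = 0$, write $z = (z_k)_{k \neq 0}$; centrality applied to any $x \in \LL_k$ with $k \neq 0$ gives $0 = [z,x] = z(x) = z_k(x)$ by \eqref{eq:[]}, so every $z_k$ vanishes, i.e., $z = 0$.

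\textbf{Surjectivity.} Since $\newl\LL$ is $(2n+1)$-graded with finite support, every $D \in \Der_R(\newl\LL)$ decomposes uniquely as a finite sum $D = \sum_{j=-2n}^{2n} D_j$ of homogeneous derivations of degree~$j$, and it suffices to show each $D_j$ is inner. If $j \neq 0$, set $u := -\tfrac{1}{j} D_j(\zeta) \in \LL_j = \newl\LL_j$ (using invertibility of $j$). Applying $D_j$ to $[\zeta, x] = kx$ for $x \in \LL_k$, $k \neq 0$, yields $D_j(x) = [u,x] = \ad(u)(x)$; applying $D_j$ to $[\phi, \zeta] = 0$ for $\phi \in \newl\LL_0$ and unfolding via \eqref{eq:[]} yields $-jD_j(\phi) + \phi_j(D_j\zeta) = 0$, i.e., $D_j(\phi) = -\phi_j(u) = \ad(u)(\phi)$. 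Thus $D_j = \ad(u)$. If $j = 0$, define $u_k := D_0|_{\LL_k}$ for each $k \neq 0$ and set $u := (u_k)_{k \neq 0}$. Two things need to be verified: that $u \in \newl\LL_0$, i.e.\ $u$ satisfies the relations \eqref{eq:newL}, and that $\ad(u) = D_0$. Both follow by applying the derivation identity of $D_0$ to the brackets $[a,b]$ for $a \in \LL_i$, $b \in \LL_j$ with $i,j \neq 0$ and $i \neq -j$, to the three-fold bracket $[[a,b]_{\newl\LL}, c]$ when $j = -i$, and to $[\phi, x]$ for $\phi \in \newl\LL_0$, $x \in \LL_k$, in each case unfolding the brackets according to \eqref{eq:[]} and using in particular the identity $[[a,b]_{\newl\LL}, c] = [[a,b], c]$ (where the outer bracket is in $\LL$).

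\textbf{Main obstacle.} The only non-routine step is the verification that, in the degree-zero case, the tuple $u = (D_0|_{\LL_k})$ lies in $\newl\LL_0$: the peculiar second (three-term) relation of \eqref{eq:newL} is needed precisely here, and it arises naturally from the derivation law of $D_0$ applied to $[[a,b]_{\newl\LL}, c]$, because $[a,b]_{\newl\LL} \in \newl\LL_0$ acts on $c$ as $\ad[a,b]$. This relation in the definition of $\newl\LL_0$ is exactly what makes the Lie algebra $\newl\LL$ large enough to realise every degree-zero derivation of itself, which is the design feature of Construction~\ref{constr:newL}. The hypothesis $(2n)! \in R^\times$ is used only to invert the integers $j$ with $1 \leq |j| \leq 2n$ occurring in the centrality calculation and in the rescaling $u = -D_j(\zeta)/j$.
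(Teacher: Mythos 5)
Your proof is correct and follows essentially the same route as the paper's: injectivity via the grading derivation $\zeta$ together with the faithfulness of the $\newl\LL_0$-action on $\bigoplus_{i\neq 0}\LL_i$, and surjectivity by using the image of $\zeta$ to absorb the off-degree part of a derivation $D$ into inner derivations and then invoking the defining relations of $\newl\LL_0$ for the degree-zero part. Your explicit decomposition of $D$ into homogeneous components is only a repackaging of the paper's correction $D'=D+\sum_{i\neq 0}\tfrac1i\ad(a_i)$, and your verification that the tuple $(D_0|_{\LL_k})_k$ satisfies the relations~\eqref{eq:newL} spells out the step the paper dispatches with ``by the very definition of $\newl\LL$''.
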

\begin{proof}
We first show that $\ad$ is injective.
Let $a \in \ker(\ad)$, and write $a = \sum_{i\in\ZZ} a_i$ with $a_i\in\newl \LL_i$ for all $i$.
Since $\newl \LL$ contains the grading derivation $\zeta$, this already implies $a_i = 0$ for all $i \neq 0$,
so that $a \in \newl \LL_0$, and now $\ad a$ coincides with $a$ by definition.
Hence $a = 0$, and we conclude that $\ad$ is injective.

We now show that $\ad$ is surjective; so let $D\in\Der_R(\newl \LL)$ be any derivation.
Write

Write
$
D(\zeta)=\sum\limits_{i\in\ZZ}a_i,
$
where $a_i\in\newl \LL_i$ for all $i$. Note that $a_i=0$ for $|i|>n$.
Set
\[
D' = D + \hspace{-1ex}\sum_{\substack{-n \leq i \leq n \\[.7ex] i\neq 0}} \tfrac 1i \ad(a_i).
\]
We show that $D'\in\ad(\newl \LL)$.
Indeed, take any $b\in\newl \LL_j$, $-n\le j\le n$, and write
$D(b)=\sum\limits_{i\in\ZZ} b_i$, $b_i\in \newl \LL_i$. Then
the $(i+j)$-th graded component of
\begin{equation*}
jD(b)=D([\zeta,b])=[D(\zeta),b]+[\zeta,D(b)]
\end{equation*}
is equal to $jb_{i+j}=[a_i,b]+(i+j)b_{i+j}$, which implies $-ib_{i+j}=[a_i,b]$ for all $i\in\ZZ$.
Note that $[a_i,b]=0$ as soon as $|i|>n$, since $a_i=0$.
Thus we have
\[
D'(b)=D(b)+ \hspace{-1ex}\sum_{\substack{-2n \leq i \leq 2n \\[.7ex] i\neq 0}}\frac 1i[a_i,b]=
D(b)- \hspace{-1ex}\sum_{\substack{-2n \leq i \leq 2n \\[.7ex] i\neq 0}}b_{i+j}=b_j.
\]
Hence
$D'$ preserves $\newl \LL_i$, $i\in\ZZ$. Then, since $D'$ is
a derivation, by the very definition of $\newl \LL$ there is an element $x\in\newl \LL_0$ such that
for any $u\in\newl \LL_i$, $i\neq 0$, we have $D'(u)=\ad(x)(u)$. Then for any
$y\in\newl \LL_0$ and $u\in\newl \LL_i$, $i\neq 0$,
we have
\begin{align*}
	\ad([x,y])(u) &= \ad(x)\bigl(\ad(y)(u)\bigr)-\ad(y)\bigl(\ad(x)(u)\bigr) \\
	&= D'\bigl(\ad(y)(u)\bigr)-\ad(y)\bigl(D'(u)\bigr) \\
	&= D'([y,u])-[y,D'(u)]=[D'(y),u]=D'(y)(u),
\end{align*}
which implies that $[x,y]=D'(y)$ as elements of $\newl \LL_0$. Consequently, $D'$ acts on the whole of
$\newl \LL$ as $\ad(x)$, that is, $D'\in\ad(\newl \LL)$, and therefore $D \in\ad(\newl \LL)$ as well.
\end{proof}

\begin{lemma}\label{lem:basechange-newL}
Let $\LL$ be a $(2n+1)$-graded finite-dimensional Lie algebra over a field $k$. Then for any
commutative $k$\dash algebra $R$, there is a natural isomorphism
of $(2n+1)$-graded $R$-Lie algebras
\[
\newl{\LL}\otimes_k R\cong (\LL\otimes_k R)^{\newl{}}.
\]
\end{lemma}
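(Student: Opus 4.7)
The plan is to build the isomorphism separately in each graded degree and then check that Lie brackets match. In every nonzero degree $i$ we have $(\newl\LL)_i = \LL_i$ and $((\LL\otimes_k R)^{\newl{}})_i = \LL_i\otimes_k R$ by definition, so the identity on $\LL_i\otimes_k R$ does the job; the only genuine content is at degree $0$.

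For the degree-$0$ part, I would first use finite-dimensionality of each $\LL_i$ to identify
\[
\Bigl(\prod_{i\neq 0}\End_k(\LL_i)\Bigr)\otimes_k R \;\cong\; \prod_{i\neq 0}\End_R(\LL_i\otimes_k R),
\]
which is legitimate because the product is finite (as $\LL_j=0$ for $|j|>n$) and each factor is a finite-dimensional $k$-vector space. Under this identification, $\newl\LL_0\otimes_k R$ sits as a subspace of the right-hand side, and I claim it coincides with $((\LL\otimes_k R)^{\newl{}})_0$. The defining relations~\eqref{eq:newL} are $k$-linear in $\phi$; moreover, as equations in $a,b,c$ they are $k$-multilinear (respectively $R$-multilinear after base change), so requiring them for all $a\in(\LL\otimes_k R)_i$, $b\in(\LL\otimes_k R)_j$, $c\in(\LL\otimes_k R)_\ell$ is equivalent to requiring them only on pure tensors $a\otimes 1$, $b\otimes 1$, $c\otimes 1$. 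Hence the $R$-subspace defining $((\LL\otimes_k R)^{\newl{}})_0$ is the $R$-linear extension of the $k$-subspace defining $\newl\LL_0$. Because $\newl\LL_0$ is the intersection of kernels of finitely many $k$-linear maps between finite-dimensional $k$-vector spaces, and $R$ is flat over the field $k$, these kernels commute with $-\otimes_k R$, yielding the desired equality.

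Assembling the graded pieces, I obtain a natural $R$-module isomorphism $\Phi \colon \newl\LL\otimes_k R \to (\LL\otimes_k R)^{\newl{}}$ that respects the grading. It remains to verify that $\Phi$ preserves the Lie bracket. I would do this case by case following the five cases of~\eqref{eq:[]}: in the first two cases the bracket is computed via $\ad$ on $\LL$ (respectively $\LL\otimes_k R$), and compatibility is immediate since the bracket on $\LL\otimes_k R$ is the $R$-bilinear extension of the bracket on $\LL$; in the third and fourth cases, $\Phi$ sends $\phi\otimes r$ to a tuple whose $i$-th component acts as $r\phi_i$ on $\LL_i\otimes_k R$, so $(\phi\otimes r)(v\otimes s) = rs\,\phi_i(v)$ matches the required formula; and the last case reduces to the fact that composition of endomorphisms commutes with tensoring by $R$.

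I do not anticipate a conceptual obstacle; the only slightly subtle point is ensuring that ``quantification over all $a\in(\LL\otimes_k R)_i$'' in the defining conditions on the right-hand side really does reduce to quantification over $a\in\LL_i$, which is a multilinearity argument. Once that is in place, everything else is either immediate naturality or follows from flatness of a field extension.
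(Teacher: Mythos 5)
Your proposal is correct and follows essentially the same route as the paper's proof: identify the degree-zero component as the solution set of finitely many $k$-linear equations inside a finite product of finite-dimensional endomorphism spaces (the reduction to pure tensors being the paper's ``substituting the bases of $\LL_i$'' step), invoke flatness of $R$ over the field $k$ to commute kernels with $-\otimes_k R$, and observe bracket compatibility. The only difference is that you spell out the bracket verification case by case where the paper dismisses it as clear.
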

\begin{proof}
By construction, $\left({(\LL\otimes_k R)^{\newl{}}}\,\right)_0$ is the
$R$-submodule of the free $R$-module
\[
\prod\limits_{i\in\ZZ\setminus\{0\}}\End_R(\LL_i\otimes_k R)\cong \prod\limits_{i\in\ZZ\setminus\{0\}}
\End_k(\LL_i)\otimes_k R
\]
given by a finite set of linear equations
with coefficients in $k$, obtained by substituting the bases of $\LL_i$, $i\in\ZZ\setminus\{0\}$, into
the equations~\eqref{eq:newL}. Since $R$ is free, and hence flat as a $k$\dash module, we have
$\left({(\LL\otimes_k R)^{\newl{}}}\,\right)_0\cong \newl{\LL}_0\otimes_k R$ as $R$-modules. It remains to note that,
clearly, the Lie brackets on $\newl{\LL}\otimes_k R$ and $(\LL\otimes_k R)^{\newl{}}$ are compatible.
\end{proof}

To simplify the notation, under the assumptions of Lemma~\ref{lem:basechange-newL} we will consider
the Lie algebras $\LL$ and $\newl{\LL}$ as functors on the category of $k$\dash algebras $R$, so that, by definition,
\[
\newl{\LL}(R)=\newl{\LL}\otimes_k R\cong (\LL\otimes_k R)^{\newl{}}.
\]
Similarly, $\Aut(\LL)$ and $\Der(\LL)$ will stand for the functors of Lie automorphisms and
Lie derivations of $\LL$ respectively:
\[
\Aut(\LL)(R)=\Aut_R(\LL\otimes_k R),\qquad \Der(\LL)(R)=\Der_R(\LL\otimes_k R).
\]
Note that $\Aut(\LL)$ and $\Der(\LL)$ are naturally represented by closed $k$\dash subschemes of the affine $k$\dash scheme
of linear endomorphisms of $\LL$.
In particular, Lemmas~\ref{lem:40} and~\ref{lem:basechange-newL} imply that there is an isomorphism of functors
\begin{equation}\label{eq:L-Der}
\newl{\LL}\cong\Der(\newl{\LL}).
\end{equation}

\begin{lemma}\label{lem:Gseparable}
Let $\LL$  be a central simple finite-dimensional $(2n+1)$-graded Lie algebra over a field $k$,
and let $K$ be a field extension of $k$.

\begin{compactenum}[\rm (i)]
\item\label{gsep:i} Let $I_i\subseteq\LL(K)_i$, $i\in\ZZ\setminus\{0\}$, be $K$-subspaces
that satisfy the following conditions: for all $i,j\in\ZZ\setminus\{0\}$ such that $i\neq -j$
we have
\begin{gather}
	\label{eq:Iij} [\LL(K)_i,I_j]\subseteq I_{i+j};\\
	\label{eq:Iiii-1} [[\LL(K)_i,I_{-i}],\LL(K)_i]\subseteq I_i;\\
	\label{eq:Iiii-2} [[\LL(K)_i,I_{-i}],\LL(K)_{-i}]\subseteq I_{-i}.
\end{gather}
Then either $I_i=0$ for all $i\in\ZZ\setminus\{0\}$, or $I_i=\LL(K)_i$ for all $i\in\ZZ\setminus\{0\}$.

\item\label{gsep:ii} If $f\in\Aut(\newl\LL)(K)$ satisfies
$f|_{\LL(K)_i}=\id_{\LL(K)_i}$ for all $i\in\ZZ\setminus\{0\}$, then
$f=\id_{\newl\LL(K)}$.
\end{compactenum}
\end{lemma}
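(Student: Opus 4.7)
The plan is to deduce (i) by assembling the given data into a graded ideal of the simple Lie algebra $\LL(K)=\LL\otimes_k K$, and to deduce (ii) directly from how $\newl\LL_0$ is constructed as a space of compatible tuples of endomorphisms.

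As a preliminary step I would use that $\LL$ is central simple, hence $\LL(K)$ is simple, to conclude that
\[
\LL(K)_0=\sum_{i\neq 0}[\LL(K)_i,\LL(K)_{-i}],
\]
by observing that $\sum_{i\neq 0}[\LL(K)_i,\LL(K)_{-i}]\oplus\bigoplus_{i\neq 0}\LL(K)_i$ is a nonzero graded ideal of $\LL(K)$. Next I would set $I_0:=\sum_{i\neq 0}[\LL(K)_i,I_{-i}]$ and $I:=I_0\oplus\bigoplus_{i\neq 0}I_i$, and verify that $I$ is a graded ideal of $\LL(K)$ by checking, in turn, $[\LL(K)_0,I_l]\subseteq I_l$ for $l\neq 0$, then $[\LL(K)_j,I_0]\subseteq I_j$ for $j\neq 0$, and finally $[\LL(K)_0,I_0]\subseteq I_0$. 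Each reduces, via the identification of $\LL(K)_0$ above and one application of the Jacobi identity, to double brackets of the shape $[[\LL(K)_i,\LL(K)_{-i}],I_l]$ or $[\LL(K)_j,[\LL(K)_i,I_{-i}]]$. Away from the excluded diagonal of \eqref{eq:Iij} one applies \eqref{eq:Iij} twice to land in the appropriate $I_\bullet$; in the remaining diagonal subcases, where \eqref{eq:Iij} is inapplicable, one of the hypotheses \eqref{eq:Iiii-1} or \eqref{eq:Iiii-2} supplies the required containment. Once $I$ is known to be a graded ideal, simplicity of $\LL(K)$ forces $I=0$ or $I=\LL(K)$, which respectively yield the two alternatives $I_i=0$ for all $i\neq 0$ and $I_i=\LL(K)_i$ for all $i\neq 0$.

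For (ii), I would invoke the construction of $\newl\LL_0$ directly: any $\phi\in\newl\LL(K)_0$ is by definition a tuple $(\phi_i)_{i\neq 0}$ of endomorphisms of the $\LL(K)_i$, and is therefore determined by its action on $\bigoplus_{i\neq 0}\LL(K)_i$ via the formula $[\phi,a]=\phi(a)$ for $a\in\LL(K)_i$, $i\neq 0$. Applying the Lie automorphism $f$ to this formula and using $f(a)=a$ on each such $\LL(K)_i$ gives
\[
f(\phi)(a)=[f(\phi),a]=[f(\phi),f(a)]=f([\phi,a])=f(\phi(a))=\phi(a)
\]
for every such $a$, whence $f(\phi)=\phi$ and therefore $f=\id_{\newl\LL(K)}$.

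The main obstacle is the case-check in the second paragraph: the hypotheses are silent about $[\LL(K)_0,I_l]$, and precisely the diagonal indices in the resulting Jacobi expansion are the ones where the general hypothesis \eqref{eq:Iij} cannot be applied, so each such subcase must be paired carefully with one of the diagonal hypotheses \eqref{eq:Iiii-1} or \eqref{eq:Iiii-2}. This is also where \emph{centrality} of $\LL$ (and not merely simplicity) is essential, in order to guarantee simplicity of $\LL(K)$ after the scalar extension to $K$.
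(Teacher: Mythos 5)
Your argument for part (i) is essentially the paper's own proof: both form the graded subspace $I_0\oplus\bigoplus_{i\neq 0}I_i$ with $I_0=\sum_{i\neq 0}[\LL(K)_i,I_{-i}]$, use centrality to get simplicity of $\LL(K)$ and the identity $\LL(K)_0=\sum_{i\neq 0}[\LL(K)_i,\LL(K)_{-i}]$, and then verify the ideal property by Jacobi, with the diagonal cases handled by \eqref{eq:Iiii-1} and \eqref{eq:Iiii-2}. That part is fine.

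Part (ii), however, has a genuine gap. You write $f(\phi)(a)=[f(\phi),a]$, which tacitly assumes that $f(\phi)$ again lies in $\newl\LL(K)_0$. But $f$ is only assumed to be an automorphism of $\newl\LL(K)$ restricting to the identity on each $\LL(K)_i$ with $i\neq 0$; it is not assumed to preserve the grading, so a priori $f(\phi)=\sum_i v_i$ with possibly nonzero components $v_i$ in degrees $i\neq 0$. What your computation actually yields is only $[f(\phi),a]=[\phi,a]$ for all homogeneous $a$ of nonzero degree, i.e.\ that $f(\phi)-\phi$ centralizes $\bigoplus_{j\neq 0}\LL(K)_j$. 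Comparing graded components, the faithfulness built into the definition of $\newl\LL_0$ kills the degree-zero component, but to kill the components $v_i$ with $i\neq 0$ you must show that no nonzero element of $\LL(K)_i$ centralizes all of $\bigoplus_{j\neq 0}\LL(K)_j$ --- and that is precisely an application of part (i) to the subspaces $I_i=\{v\in\LL(K)_i\mid [v,\LL(K)_j]=0\text{ for all }j\neq 0\}$, together with simplicity of $\LL(K)$ to exclude the alternative $I_i=\LL(K)_i$. So (ii) is not a formal consequence of the construction of $\newl\LL_0$ alone; it genuinely depends on (i), and your proposal as written omits the step where that dependence enters. This is exactly the portion of the argument the paper carries out explicitly.
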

\begin{proof}
Since $\LL$ is central simple over $k$, the $K$-Lie algebra $\LL(K)=\LL\otimes_k K$ is also
central simple. In particular, we have
\begin{equation}\label{eq:0=[]}
\LL(K)_0=\sum\limits_{i\in\ZZ\setminus\{0\}}[\LL(K)_i,\LL(K)_{-i}].
\end{equation}
(Indeed, the sum of the right hand side of~\eqref{eq:0=[]} and $\bigoplus\limits_{i \in \ZZ \setminus \{0\}} \LL(K)_i$ is an ideal of $\LL(K)$.)
We claim that
\[
J=\bigoplus\limits_{i\in\ZZ\setminus\{0\}}I_i\oplus\sum\limits_{i\in\ZZ\setminus\{0\}}[\LL(K)_i,I_{-i}]
\]
is a Lie ideal
in $\LL(K)$. By~\eqref{eq:0=[]}, it is enough to check that  for any $i,j\in\ZZ\setminus\{0\}$ we have
\begin{equation}\label{eq:ijJ}
[I_i,\LL(K)_j]\subseteq J,
\end{equation}
and
\begin{equation}\label{eq:iijJ}
[[\LL(K)_i,I_{-i}],\LL(K)_j]\subseteq J.
\end{equation}
The inclusion~\eqref{eq:ijJ} holds by the definition of $J$ if $j=-i$, and by~\eqref{eq:Iij} if $j\neq -i$.
The inclusion~\eqref{eq:iijJ} holds by~\eqref{eq:Iiii-1} and~\eqref{eq:Iiii-2} if $j=\pm i$.
If $j\neq\pm i$, then~\eqref{eq:iijJ} follows from Jacobi identity and~\eqref{eq:Iij}:
\begin{multline*}
[[\LL(K)_i,I_{-i}],\LL(K)_j]\\
\subseteq
[\LL(K)_i,[I_{-i},\LL(K)_j]]+[I_{-i},[\LL(K)_i,\LL(K)_j]]\\
\subseteq [\LL(K)_i,I_{j-i}]+[I_{-i},\LL(K)_{i+j}]\subseteq I_j.
\end{multline*}
Thus, $J$ is an ideal. Then $J=0$ or $J=\LL(K)$, which implies the claim~\eqref{gsep:i}.

Now we prove~\eqref{gsep:ii}. Take any $d\in \newl\LL(K)_0$ and write
\[
f(d)=\sum\limits_{i\in\ZZ} v_i,\quad v_i\in \newl\LL(K)_i,\ i\in\ZZ.
\]
For any $j\in\ZZ\setminus\{0\}$ and any $u\in \newl\LL(K)_j=\LL(K)_j$ we have
\[
[f(d), u]=[f(d),f(u)]=f([d,u])=[d,u]\in \LL(K)_j.
\]
Therefore,
\[
[f(d),u]=[d,u]=[v_0,u],
\]
and for any $i\in\ZZ\setminus\{0\}$ we have $[v_i,u]=0$.
By the very definition of $\newl\LL(K)_0$, the equality $[d,u]=[v_0,u]$ for all $u$ as above implies
that $d=v_0$.

It remains to show that $v_i=0$ for all $i\in\ZZ\setminus\{0\}$. For any $i\in\ZZ\setminus\{0\}$ set
\[
I_i=\{v\in \LL(K)_i \mid [v,\LL(K)_j]=0\ \mbox{for any}\ j\in\ZZ\setminus\{0\}\}.
\]
Clearly, $v_i\in I_i$ for all $i\in\ZZ\setminus\{0\}$.
Moreover, $[I_i, \LL(K)_j] = 0$ for all $j \in \ZZ\setminus\{0\}$, and hence the $I_i$ trivially satisfy the assumptions of~\eqref{gsep:i}.
Therefore, either $I_i=0$ for all $i\in\ZZ\setminus\{0\}$, or $I_i=\LL(K)_i$ for all $i\in\ZZ\setminus\{0\}$.
The latter is not possible since $\LL(K)$ is a simple Lie algebra.
We conclude that $v_i=0$ for all $i\in\ZZ\setminus\{0\}$.
\end{proof}

In the following lemma, we collect some classical facts about the connection between adjoint simple algebraic groups and their Lie algebras.

\begin{lemma}\label{lem:liead}
Let $k$ be a field, $\Char k\neq 2,3$. Let $\GG$ be an adjoint simple algebraic group over $k$.
Let $\LL=\Lie(\GG)$ be its Lie algebra. Then
\begin{compactenum}[\rm (i)]
\item\label{liead:L'} The $k$\dash Lie algebra $[\LL,\LL]$
is central simple. If $G$ is of type $A_n$ and $\Char k$ divides
$n+1$, then $\dim([\LL,\LL])=\dim\LL-1$; otherwise $[\LL,\LL]=\LL$.
\item\label{liead:iso} There are natural isomorphisms of $k$\dash group schemes
\[
\GG\xrightarrow{\cong}\Aut(\LL)^\circ\xrightarrow{\cong}\Aut([\LL,\LL])^\circ.
\]
In particular, $\LL\cong\Der(\LL)\cong\Der([\LL,\LL])$.
\end{compactenum}
\end{lemma}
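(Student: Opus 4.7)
The plan is to reduce both statements to the case of an algebraically closed base field, since ``central simple'' for the Lie algebra $[\LL,\LL]$ and ``isomorphism of $k$\dash group schemes'' can both be verified after faithfully flat descent; over an algebraic closure $\bar k$, the adjoint simple algebraic group $\GG_{\bar k}$ is a Chevalley group whose Lie algebra structure in every characteristic different from $2$ and $3$ is classical.

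For part~(i), I would appeal to the classification of ideals in Lie algebras of simple Chevalley groups, due in this generality to Hogeweij (building on earlier work of Seligman, Steinberg and Block). The outcome is that over $\bar k$, either $[\LL,\LL]_{\bar k}=\LL_{\bar k}$ and this is simple as an abstract Lie algebra, or we are in the exceptional situation where $\GG$ is of type $A_n$ with $\Char k \mid n+1$, in which case $\LL_{\bar k}\cong \mathfrak{pgl}_{n+1}$ while $[\LL,\LL]_{\bar k}\cong \mathfrak{psl}_{n+1}$ has codimension one in $\LL_{\bar k}$ and is still simple. In both cases the simplicity of $[\LL,\LL]_{\bar k}$ descends to the statement that $[\LL,\LL]$ is central simple over $k$, and the dimension comparison in the exceptional case is a direct computation with $\mathfrak{gl}_{n+1}/k\cdot I$.

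For part~(ii), the conjugation action of $\GG$ on itself differentiates to a morphism $\Ad\colon \GG\to \Aut(\LL)$ of $k$\dash group schemes, which factors through $\Aut(\LL)^\circ$ because $\GG$ is connected. Since $\GG$ is adjoint, the scheme-theoretic kernel $\ker(\Ad)=\Cent(\GG)$ is trivial, so $\Ad$ is a closed immersion. To upgrade this to an isomorphism onto $\Aut(\LL)^\circ$, it then suffices to verify that $\Lie(\Ad)\colon \LL\to \Lie(\Aut(\LL)^\circ)=\Der(\LL)$ is an isomorphism, i.e.\ that every derivation of $\LL$ is inner; this is again classical for central simple Lie algebras in characteristic $\neq 2,3$, and in the exceptional type~$A_n$ case it reduces to the direct verification $\Der(\mathfrak{psl}_{n+1})\cong \mathfrak{pgl}_{n+1}$ (every derivation of $\mathfrak{psl}_{n+1}$ is induced by conjugation with an element of $\mathfrak{gl}_{n+1}$, well-defined modulo scalars). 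Running the same argument with $[\LL,\LL]$ in place of $\LL$ gives the second isomorphism; the restriction map $\Aut(\LL)^\circ\to \Aut([\LL,\LL])^\circ$ is injective on $k$\dash points (any automorphism acting trivially on $[\LL,\LL]$ must act trivially on its normalizer $\LL$ by a short Jacobi-identity computation, as in Lemma~\ref{lem:Gseparable}\eqref{gsep:ii}) and an isomorphism on Lie algebras by the inner-derivation statement already proved for $[\LL,\LL]$.

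The main obstacle is the bookkeeping in the exceptional type $A_n$ case with $\Char k \mid n+1$, where $\LL$ fails to be simple and one must track the one-dimensional discrepancy between $\LL$ and $[\LL,\LL]$ carefully in order to extract both natural isomorphisms of~(ii) simultaneously; everywhere else, the lemma follows from the standard facts that a central simple Lie algebra in characteristic $\neq 2,3$ has only inner derivations and that the adjoint action of an adjoint semisimple group on its Lie algebra is a closed embedding with image the identity component of the automorphism scheme.
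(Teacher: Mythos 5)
Your proposal follows essentially the same route as the paper: reduce to $\bar k$, quote the classical structure theory of Chevalley Lie algebras for part~(i) (simplicity of $[\LL,\LL]$, the codimension-one anomaly in type $A_n$ with $\Char k\mid n+1$, and the fact that all derivations are inner), and use the adjoint representation $\Ad\colon\GG\to\Aut(\LL)$ for part~(ii). The paper cites Seligman, Block--Winter and Zassenhaus for the individual facts where you cite Hogeweij's classification of ideals, but the content is the same; your treatment of the second isomorphism via the restriction map $\Aut(\LL)^\circ\to\Aut([\LL,\LL])^\circ$ is a harmless variant of the paper's choice to simply rerun the $\Ad$ argument with $[\LL,\LL]$ in place of $\LL$.

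One step is asserted in the wrong logical order and is not justified as written: you claim that the scheme-theoretic kernel of $\Ad$ \emph{equals} $\Cent(\GG)$ and is therefore trivial, and from this deduce that $\Ad$ is a closed immersion, \emph{before} verifying that $\Lie(\Ad)=\ad$ is bijective. A priori one only has $\Cent(\GG)\subseteq\ker(\Ad)$, and the reverse inclusion for group schemes is exactly what can fail outside very good characteristic --- which is precisely the delicate case here, namely type $A_n$ with $\Char k\mid n+1$, where $\ker(\Ad)$ could in principle be a nontrivial infinitesimal subgroup even though $\Cent(\GG)=1$. The paper avoids this by arguing in the opposite order: first establish that $\ad\colon\LL\to\Der(\mathcal M)$ is bijective, then conclude from the injectivity of $\Lie(\Ad)$ that $\ker(\Ad)$ is \'etale, hence a central subgroup of the connected group $\GG$, hence trivial since $\GG$ is adjoint. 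Since you do prove the bijectivity of $\Lie(\Ad)$ anyway, your argument is repaired by this reordering, but as stated the triviality of $\ker(\Ad)$ is unsupported at the point where you use it.
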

\begin{proof}
First we prove~\eqref{liead:L'}.
Let $\pi \colon \GG^{sc}\to \GG$ be the simply connected cover of $\GG$ over $k$. Denote $\Lie(\GG^{sc})$ by $\LL^{sc}$.
One has $\ker(\pi)=\Cent(\GG^{sc})$. Note that, since $\GG$ and $\GG^{sc}$ are smooth and $\ker(\pi)$
is finite, we have
\begin{equation}\label{eq:dimG}
\dim\LL=\dim\GG=\dim\GG^{sc}=\dim\LL^{sc}.
\end{equation}
Set
\[
\LL'=\pi(\LL^{sc})=\LL^{sc}/\Lie(\Cent(\GG^{sc}))\subseteq\LL.
\]
Let $\bar k$ be the algebraic closure of $k$. It is well-known that
$\Lie(\GG^{sc}\times_k \bar k)=\LL^{sc}\otimes_k \bar k$ is a split Chevalley Lie algebra over $\bar k$.

Assume that $\Char k=0$. Then $\Lie(\Cent(\GG^{sc}))=0$, and the Lie algebra $\LL^{sc}=\LL$ is simple,
since $\LL^{sc}\otimes_k \bar k$ is simple. Hence
\[
\LL=[\LL,\LL]=\Der(\LL).
\]

Assume that $\Char k> 3$. If $\GG$ is not of type $A_n$ or $\Char k$ does not divide $n+1$, then
the $k$\dash group $\Cent(\GG^{sc})$ is smooth, and hence satisfies $\Lie(\Cent(\GG^{sc}))=0$.
It is also known that in this case the Chevalley Lie algebra $\LL^{sc}\otimes_k \bar k$ is simple
(see e.g.~\cite[p. 29]{Sel67}).
Hence $\LL^{sc}\cong\LL'$ is simple. By~\eqref{eq:dimG} we have $\LL'=\LL$. Therefore, $\LL$
is simple and $[\LL,\LL]=\LL$. It is also known that
we have
$\LL'\otimes_k \bar k=\Der(\LL'\otimes_k \bar k)$~\cite{block62},~\cite[Theorem 3.6]{winter}, and
hence
\[
\LL=\Der(\LL).
\]

Assume that $\GG$ is of type $A_n$ and $\Char k$ divides $n+1$. In this case
$\LL^{sc}\otimes_k\bar k=\Lie(\SL_{n+1,\bar k})$ is the Lie algebra of $(n+1)\times(n+1)$ matrices
of trace 0 over $\bar k$. The Lie algebra $\LL'\otimes_k\bar k$ is the quotient of $\LL^{sc}\otimes_k\bar k$
by its 1-dimensional center $Z$ consisting of diagonal matrices of trace $0$.
It is known that $\LL'\otimes_k\bar k$
is simple and of codimension 1 in its Lie algebra of derivations that coincides with the Lie algebra
of all $(n+1)\times(n+1)$ matrices over $\bar k$ modulo diagonal matrices~\cite[pp. 59--60]{Zas}. Consequently,
\[
\Der(\LL'\otimes_k\bar k)=\Lie(\PGL_{n+1,\bar k})=\LL\otimes_k\bar k.
\]
Clearly, we also have
\[
[\LL\otimes_k\bar k,\LL\otimes_k\bar k]=\LL'\otimes_k\bar k.
\]
We claim that these two facts imply
\begin{equation}\label{eq:DerLL-An}
\Der(\LL\otimes_k\bar k)=\LL\otimes_k\bar k.
\end{equation}
Indeed, let $D$ be a $\bar k$-derivation of $\LL\otimes_k\bar k$. Then $D$ induces a derivation
on $[\LL\otimes_k\bar k,\LL\otimes_k\bar k]$. Hence there is $x\in \LL\otimes_k\bar k$ such that
$D|_{\LL'\otimes_k\bar k}=\ad x|_{\LL'\otimes_k\bar k}$. Then for any $y\in \LL\otimes_k\bar k$ and any
$u\in \LL'\otimes_k\bar k$ we have
\[
[D(y),u]=D([y,u])-[y,D(u)]=[x,[y,u]]-[y,[x,u]]=[[x,y],u].
\]
Since $\LL'\otimes_k\bar k$ has trivial center, we deduce that $D(y)=[x,y]$ for any $y\in \LL\otimes_k\bar k$, which means that $D=\ad x$.
This proves~\eqref{eq:DerLL-An}.
Therefore, over $k$ we conclude that $\LL'=[\LL,\LL]$ is simple, of codimension 1 in $\LL$, and
\[
\LL=\Der([\LL,\LL])=\Der(\LL).
\]

It remains to show that in all these cases, $\LL'=[\LL,\LL]$ is, moreover, central simple. Let $l$ be any field extension of $k$.
Then $\LL'\otimes_k l=[\LL\otimes_k l,\LL\otimes_k l]$. Since $\LL\otimes_k l=\Lie(\GG\times_k l)$,
and $\GG\times_k l$ is an adjoint simple $l$-group, the above argument implies that $\LL'\otimes_k l$ is simple.
Hence $\LL'$ is central simple.

Now we finish the proof of~\eqref{liead:iso}.
Let $\mathcal{M}$ denote any of the Lie algebras $\LL$, $\LL'$. Let
$\Ad \colon \GG\to\Aut(\mathcal{M})$ be the natural homomorphism of $k$\dash groups. By~\cite[Ch.\@~II, \S 4, 2.3]{DeGa}
we have $\Lie(\Aut(\mathcal{M}))\cong\Der(\mathcal{M})$, and by~\cite[Ch.\@~II, \S 4, 4.2]{DeGa} we know that
$\Lie(\Ad)=\ad \colon \LL\to\Der(\mathcal{M})$ is the natural map. We have seen above that the map $\ad$ is bijective. Then by~\cite[Ch.\@~II, \S 5, Corollaire 5.5(a)]{DeGa}
$\ker(\Ad)$ is \'etale. Since $\GG$ is adjoint, this implies that $\ker(\Ad)=1$.
Then by~\cite[Ch.\@~II, \S 5, Proposition 5.1 and Corollaire 5.6]{DeGa} $\Ad$ is an open and closed immersion.
Since $\GG$ is connected, this implies that $\GG\cong\Aut(\mathcal{M})^\circ$.
\end{proof}

\begin{definition}\label{def:gradingtorus}
Let $\LL=\bigoplus\limits_{i\in\ZZ}\LL_i$ be a $\ZZ$-graded Lie algebra over a field $k$.
Consider  the 1-dimensional split $k$\dash subtorus $\GS\cong\Gm$ of $\Aut(\LL)$
defined as follows: for any $k$\dash algebra $R$, any $t\in\Gm(R)$,
and any $i\in\ZZ$, $v\in\LL_i\otimes_{k} R$, we set $t\cdot v=t^iv$. We call $\GS$
\emph{the grading torus of $\LL$}.
\end{definition}

We deduce Theorem~\ref{thm:AGadjoint} from the following more general result.

\begin{theorem}\label{thm:LGadjoint}
Let $\LL$ be an algebraic central simple $5$\dash graded Lie algebra over
a field $k$ of characteristic different from $2,3$, such that $\LL\neq\LL_0$.
Then the algebraic $k$\dash group $\GG=\Aut(\LL)^\circ$
is an adjoint absolutely simple group of $k$\dash rank $\geq 1$, satisfying $\LL=[\Lie(\GG),\Lie(\GG)]$ and
$\Lie(\GG)\cong\newl{\LL}$.
\end{theorem}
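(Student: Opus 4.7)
The plan is to identify $\GG=\Aut(\LL)^\circ$ with the identity component $\Aut(\newl{\LL})^\circ$, compute $\Lie(\GG)\cong\newl{\LL}$ via Lemma~\ref{lem:40}, and then deduce $k$\dash rank, adjointness, absolute simplicity, and the commutator equality from the central simplicity of $\LL$ via Lemma~\ref{lem:Gseparable} together with the algebraicity hypothesis.

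I would first produce the split torus. The grading torus $\GS\cong\Gm$ of Definition~\ref{def:gradingtorus} is a $1$\dash dimensional split $k$\dash subtorus of $\Aut(\LL)$, lies in $\GG$ by connectedness, and acts nontrivially because $\LL\neq\LL_0$ forces some $\LL_i$ with $i\neq 0$ to be nonzero. Hence the $k$\dash rank of $\GG$ is at least $1$; moreover, algebraicity combined with Lemma~\ref{le:esigma} supplies honest one-parameter unipotent subgroups $e_\sigma(\cdot,\cdot)$ of $\GG$. Next I would compare $\Aut(\LL)$ with $\Aut(\newl{\LL})$ via the natural Lie algebra embedding $\LL\hookrightarrow\newl{\LL}$, which is injective because $\LL$ is central. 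Any grading-preserving automorphism $\phi$ of $\LL$ extends to $\newl{\LL}$ by conjugation $\psi\mapsto\phi\psi\phi^{-1}$ on $\newl{\LL}_0$, and the resulting map still satisfies \eqref{eq:newL}. Conversely, a grading-preserving automorphism of $\newl{\LL}$ restricts to $\bigoplus_{i\neq 0}\LL_i$ and extends uniquely to $\LL$ because $\LL_0=\sum_{i\neq 0}[\LL_i,\LL_{-i}]$ by central simplicity. By Lemma~\ref{lem:Gseparable}\eqref{gsep:ii} the restriction map is injective, so these constructions match up on the centralizers of the grading tori on each side; bootstrapping from the fact that $\GG$ is generated by the one-parameter groups $e_\sigma(\cdot,\cdot)$ together with the centralizer of $\GS$—and analogously for $\Aut(\newl{\LL})^\circ$—yields $\GG\cong\Aut(\newl{\LL})^\circ$. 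Then Lemma~\ref{lem:40} gives $\Lie(\GG)\cong\Der(\newl{\LL})\cong\newl{\LL}$.

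The remaining group-theoretic properties follow. Adjointness is Lemma~\ref{lem:Gseparable}\eqref{gsep:ii}: any central element of $\GG$ fixes every $\LL_i$, $i\neq 0$, pointwise, hence acts as the identity on $\newl{\LL}$, hence is trivial. For absolute simplicity, Lemma~\ref{lem:Gseparable}\eqref{gsep:i} applied over $\bar k$ to the family $I_i=J\cap\LL_i$ for a graded Lie ideal $J\subseteq\newl{\LL}\otimes\bar k$ shows that $\newl{\LL}\otimes\bar k$ is simple as a Lie algebra; any proper connected normal $\bar k$\dash subgroup of $\GG_{\bar k}$ would have Lie algebra a proper ideal of $\newl{\LL}\otimes\bar k$, hence trivial. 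Finally $\LL=[\newl{\LL},\newl{\LL}]$: the bracket $[\zeta,x]=ix$ for $\zeta\in\newl{\LL}_0$ reproduces every $\LL_i$, $i\neq 0$, inside $[\newl{\LL},\newl{\LL}]$ (using $|i|\leq 2$ and $\Char k\neq 2,3$); brackets $[\LL_i,\LL_{-i}]$ land in the image of $\LL_0$ in $\newl{\LL}_0$ and exhaust it by central simplicity; and an argument via Lemma~\ref{lem:Gseparable}\eqref{gsep:i} shows that $[\newl{\LL}_0,\newl{\LL}_0]$ contributes nothing beyond this image.

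I expect the main obstacle to be the scheme-theoretic identification $\GG\cong\Aut(\newl{\LL})^\circ$ in positive characteristic. The algebraicity hypothesis on $\LL$ is indispensable here: without it, the exponentials $e_\sigma$ would fail to define Lie algebra automorphisms, $\GG$ could be too small to see the grading via genuine root groups, and the tight Lie-to-group correspondence underlying the adjointness and simplicity arguments would collapse, particularly in characteristic~$5$.
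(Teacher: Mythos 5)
Your outline diverges from the paper's proof at the crucial point, and the divergence is a genuine gap rather than an alternative route. The step ``bootstrapping from the fact that $\GG$ is generated by the one-parameter groups $e_\sigma(\cdot,\cdot)$ together with the centralizer of $\GS$'' assumes a big-cell/Bruhat-type generation statement for the connected group $\Aut(\LL)^\circ$. That statement is part of the structure theory of reductive groups, and at this stage you do not yet know that $\Aut(\LL)^\circ$ is reductive --- establishing that is most of the content of the theorem. The paper avoids the circularity by going the other way: it forms the closed subgroup $\GZ=\langle \GU_+,\GU_-\rangle$ of $\Aut(\newl{\LL})^\circ$ generated by the exponentials (which are honest subgroups thanks to algebraicity), kills the unipotent radical of $\GZ$ by showing that the graded pieces of $\Lie(\radu(\GZ))$ satisfy the hypotheses of Lemma~\ref{lem:Gseparable}(i) and hence vanish, deduces that $\GZ$ is adjoint simple with $\Lie(\GZ)=\newl{\LL}$, and only \emph{then} identifies $\GZ=\Aut(\newl{\LL})^\circ\cong\Aut(\LL)^\circ$ via the recognition Lemma~\ref{lem:liead}. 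The identification of the two automorphism groups is an output of the argument, not an input.

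A second, independent error: you claim that Lemma~\ref{lem:Gseparable}(i) shows $\newl{\LL}\otimes_k\bar k$ is simple. It is not in general: $\LL\otimes_k\bar k$ is a \emph{proper} ideal of $\newl{\LL}\otimes_k\bar k$ whenever $\newl{\LL}_0$ strictly contains the image of $\LL_0$ (for instance whenever the grading derivation $\zeta$ is not inner, or in the type-$A_n$ cases with $\Char k\mid n+1$ where $[\LL,\LL]$ has codimension $1$). What is true, and what the paper uses, is that $\LL$ is simple and $\LL=[\newl{\LL},\newl{\LL}]$; simplicity of the \emph{group} is then obtained from the graded-ideal lemma applied to $\Lie(\GZ_1)\oplus\Lie(\GZ_2)$ after reductivity is in hand, not from simplicity of $\newl{\LL}$. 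Relatedly, your deduction ``any proper connected normal subgroup has Lie algebra a proper ideal, hence is trivial'' does not by itself dispose of the unipotent radical or of non-smooth normal subgroups in positive characteristic; the paper's treatment via the last term of the lower central series of $\radu(\GZ)$ and the faithfulness statement of Lemma~\ref{lem:Gseparable}(ii) is what makes this rigorous. Your opening moves (the grading torus giving $k$\dash rank $\ge 1$, and $\Der(\newl{\LL})\cong\newl{\LL}$ from Lemma~\ref{lem:40}) do match the paper.
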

\begin{proof}
Instead of proving the claim of the theorem directly, we prove that
$\GG=\Aut\bigl(\newl{\LL}\,\bigr)^\circ$ is an adjoint simple algebraic $k$\dash group of $k$\dash rank $\geq 1$, where
$\newl{\LL}$ is the Lie algebra associated to the $5$\dash graded algebra $\LL$ in Construction~\ref{constr:newL}.
We first explain how this result implies the theorem.
By~\cite[II, \S 4, 2.3]{DeGa} for any finite-dimensional Lie algebra $\LL$ over $k$ there is a natural isomorphism of functors
\begin{equation}\label{eq:derLL}
	\Der(\LL)\cong\Lie(\Aut(\LL)).
\end{equation}
Therefore,
\[
\Lie(\GG)=\Lie\bigl(\Aut\bigl(\newl{\LL}\,\bigr)\bigr)\cong\Der\bigl(\newl{\LL}\,\bigr).
\]
Combining this fact with~\eqref{eq:L-Der}, we conclude that $\Lie(\GG)\cong\newl{\LL}$.
Since $\LL$ is simple, the natural
homomorphism of Lie algebras $\LL\to\newl{\LL}$
is injective, and $\LL$ is a simple ideal of $\newl{\LL}$.
By Lemma~\ref{lem:liead}, also $[\Lie(\GG),\Lie(\GG)]$ is a simple ideal of $\Lie(\GG)\cong\newl{\LL}$.
Since $[\LL,\LL]\neq 0$, we thus have
\[
\LL\cap [\newl{\LL},\newl{\LL}]=\LL=[\newl{\LL},\newl{\LL}].
\]
Applying Lemma~\ref{lem:liead} again, we deduce that $\GG\cong\Aut(\LL)^\circ$.

Now we proceed to establish that $\GG=\Aut\bigl(\newl{\LL}\,\bigr)^\circ$ is an adjoint simple algebraic
$k$\dash group of $k$\dash rank $\geq 1$. Let $\GS\subseteq\Aut\bigl(\newl{\LL}\,\bigr)$ be the grading torus of $\newl{\LL}$.
Since $\GS$ is connected, we have $\GS\subseteq\GG$. By Lemma~\ref{lem:40} we have
$\Der\bigl(\newl{\LL}\,\bigr)=\newl{\LL}$,
and~\eqref{eq:derLL} then implies $\Lie(\GG)=\newl{\LL}$;
moreover, the adjoint action of $\GS$ on $\Lie(\GG)$ is precisely the one described in Definition~\ref{def:gradingtorus}.

It remains to show that $\GG$ is an adjoint simple algebraic $k$\dash group. In order to do that, it is
enough to establish the same for
\[
\GG\times_k {\bar k}=\Aut\bigl(\newl{\LL}\otimes_k\bar k\bigr)^\circ,
\]
where $\bar k$ is an algebraic closure of $k$.
Note that by Lemma~\ref{lem:basechange-newL} we have
\[
\newl{\LL}\otimes_k\bar k=\bigl(\LL\otimes_k\bar k\bigr)^{\newl{}}.
\]
Since $\LL$ is algebraic, the $5$\dash graded Lie algebra $\LL\otimes_k\bar k$ is also algebraic by Lemma~\ref{lem:ext-alg}. Since
$\LL$ is central simple, the Lie algebra $\LL\otimes_k\bar k$ is also central simple.
Summing up, we can assume that $k=\bar k$ from now on.

Note that by Lemma~\ref{lem:univ-alg} the $5$\dash graded Lie algebra $\newl{\LL}$ is algebraic.
By Lemma~\ref{lem:esigma'-prod} applied to $\newl{\LL}$, there are
homomorphic closed embeddings
\[
e_\si \colon \GU_\sigma\to \GL\bigl(\newl{\LL}\,\bigr),\quad (x,s)\mapsto e_\si(x,s),
\]
where $\GU_\sigma$ is the affine $k$\dash variety
$\newl{\LL}_\sigma\oplus \newl{\LL}_{2\sigma}=\LL_\si\oplus\LL_{2\si}$ considered as a $k$\dash group with respect to the operation
\[
(x,s)+(y,t)=\bigl(x+y,s+t+\frac 12[x,y]\bigr).
\]
We identify $\GU_\sigma$ with its image under $e_\si$. Since $\newl{\LL}$ is algebraic,
we have $\GU_\sigma\subseteq\Aut(\newl{\LL})$.
Note that since $\GU_\sigma$ as a variety is isomorphic to an affine $k$\dash space, it is connected and smooth.
By the very definition of a Lie algebra of an algebraic group we have
\begin{equation}\label{eq:Lie-U}
\Lie(\GU_\sigma)\cong \newl{\LL}_\sigma\oplus \newl{\LL}_{2\sigma}
\end{equation}
as $k$\dash Lie algebras.

Let $\GZ$ be the closed $k$\dash subgroup of $\Aut\bigl(\newl{\LL}\,\bigr)$ generated by $\GU_+$ and $\GU_-$ in the sense
of~\cite[Exp.\@~$\mathrm{VI_B}$ Prop. 7.1]{SGA3}.  Since $\LL\neq\LL_0$,
the group $\GZ$ is non-trivial. By loc. cit. $\GZ$ is smooth.
Since $\GU_+$ and $\GU_-$ are connected,
$\GZ$ is also connected,
 and hence $\GZ\subseteq\GG$. Since $\GS$ normalizes $\GU_\sigma$, it normalizes
$\GZ$. Therefore, $\Lie(\GZ)$ is a $\ZZ$-graded Lie subalgebra
of $\newl{\LL}$. Moreover,~\eqref{eq:Lie-U} implies that
\begin{equation}\label{eq:Lie(Z)}
\Lie(\GZ)_i=\newl{\LL}_i=\LL_i\quad\mbox{for all}\ i\in\ZZ\setminus\{0\},
\end{equation}
and $\LL\subseteq\Lie(\GZ)$.

We show that $\GZ$ is an adjoint simple $k$\dash group.
Let $\GU$ be the last non-trivial member of the algebraic lower central series of the unipotent radical
$\radu(\GZ)$.
Then $\GU$ is a commutative algebraic $k$\dash group which is a characteristic closed $k$\dash subgroup of $\GZ$.
Since $\GS$ acts on $\GZ$, we conclude that
$\Lie(\GU)$ is a commutative  $\ZZ$-graded Lie ideal in $\Lie(\GZ)$.
Note that the $k$\dash subspaces $\Lie(\GU)_i \subseteq \newl{\LL}_i = \LL_i$,
$i\in\ZZ\setminus\{0\}$, satisfy the assumptions of Lemma~\ref{lem:Gseparable}(i),
and recall that $\LL$ is central simple.
Hence $\Lie(\GU)_i=0$ or $\Lie(\GU)_i=\newl{\LL}_i$ for all $i\in\ZZ\setminus\{0\}$.
The latter is not possible, since $\Lie(\GU)$ is a commutative Lie algebra.
Hence $\Lie(\GU)_i=0$ for all $i\in\ZZ\setminus\{0\}$.
Then also $\Lie(\GU)_0=0$, since
\[
[\Lie(\GU)_0,\newl{\LL}_i]=[\Lie(\GU)_0,\Lie(\GZ)_i]\subseteq\Lie(\GU)_i=0
\]
for any $i\in\ZZ\setminus\{0\}$, and on the other hand the adjoint action of
$\Lie(\GU)_0\subseteq\newl{\LL}_0$
on $\bigoplus\limits_{i\in\ZZ\setminus\{0\}}\newl{\LL}_i$ is faithful by the definition of
$\newl{\LL}$.
Thus, $\Lie(\GU)=0$, and hence $\GU$ is trivial. This proves that $\GZ$ is reductive.
Moreover, $\GZ$ acts
faithfully on $\Lie(\GZ)$, since $\GZ\subseteq\GG$ and $\GG=\Aut\bigl(\newl{\LL}\,\bigr)^\circ$ acts faithfully on
$\bigoplus\limits_{i\in\ZZ\setminus\{0\}}\newl{\LL}_i$ by Lemma~\ref{lem:Gseparable}(ii). Hence $\GZ$
is semisimple and adjoint. Assume $\GZ$ is not simple. Then $\GZ\cong \GZ_1\times \GZ_2$ is a product of two non-trivial
semisimple adjoint $k$\dash groups, and $\Lie(\GZ)=\Lie(\GZ_1)\oplus\Lie(\GZ_2)$, where $\Lie(\GZ_1)$ and $\Lie(\GZ_2)$
are non-trivial Lie ideals of $\Lie(\GZ)$. However, this contradicts Lemma~\ref{lem:Gseparable}(i).
Therefore, $\GZ$ is a simple adjoint $k$\dash group.

Now we apply Lemma~\ref{lem:liead} to $\GZ$. Since $\LL\subseteq\Lie(\GZ)$,
and because both $\LL$ and $[\Lie(\GZ),\Lie(\GZ)]$ are simple Lie algebras, we see that $\LL=[\Lie(\GZ),\Lie(\GZ)]$ and
$\Lie(\GZ)=\Der(\LL)$. Since $\Lie(\GZ)\subseteq \newl{\LL}$, and the natural Lie homomorphism
\[
\newl{\LL}\to\Der(\LL)
\]
is injective, we have $\Lie(\GZ)=\newl{\LL}$.
Then, again by Lemma~\ref{lem:liead}, we have
$\GZ=\Aut\bigl(\newl{\LL}\,\bigr)^\circ$. This finishes the proof.
\end{proof}

\begin{proof}[Proof of Theorem~\ref{thm:AGadjoint}]
Since $\A$ is central simple, the Lie algebra $K(\A)$ is also central simple by~\cite[\S 5]{A79}.
Since $\A$ is algebraic, $K(\A)$ is also algebraic.
Thus the claim follows from Theorem~\ref{thm:LGadjoint}.
\end{proof}

The following statement readily follows from the proof of Theorem~\ref{thm:LGadjoint}.

\begin{corollary}\label{cor:newl-K}
Let $\A$  be an algebraic central simple structurable algebra over a field $k$ of characteristic different from $2,3$,
and let $\GG=\Aut(K(\A))^\circ$.
Then $\Lie(\GG)\cong \Der(K(\A))\cong\newl{K(\A)}$.
\qed
\end{corollary}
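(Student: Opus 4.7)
The plan is to assemble the two isomorphisms from pieces already established in the preceding machinery, treating the corollary as bookkeeping that falls out of Theorem~\ref{thm:LGadjoint} together with Lemma~\ref{lem:liead}. First I would verify that $\LL := K(\A)$ satisfies the hypotheses of Theorem~\ref{thm:LGadjoint}: centrality and simplicity of $K(\A)$ follow from the hypothesis that $\A$ is central simple via~\cite[\S 5]{A79}; the $5$\dash grading is built into Definition~\ref{def:Lie alg}; the inequality $K(\A)\neq K(\A)_0$ is immediate since $\A_{\pm}\subseteq K(\A)_{\pm 1}$ is non-zero; and algebraicity of $K(\A)$ holds precisely because $\A$ is assumed algebraic (this is how algebraicity of a structurable algebra is defined in Definition~\ref{def:algebraic}).

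Applying Theorem~\ref{thm:LGadjoint} to $\LL = K(\A)$ then yields both the second isomorphism
\[
\Lie(\GG) \;\cong\; \newl{K(\A)},
\]
and the equality $K(\A)=[\Lie(\GG),\Lie(\GG)]$, together with the fact that $\GG$ is an adjoint absolutely simple $k$\dash group (the latter being what Theorem~\ref{thm:AGadjoint} records separately).

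For the remaining isomorphism $\Lie(\GG)\cong\Der(K(\A))$, I would invoke Lemma~\ref{lem:liead}\eqref{liead:iso} applied to the adjoint simple group $\GG$. That lemma gives the natural chain $\Lie(\GG)\cong\Der(\Lie(\GG))\cong\Der([\Lie(\GG),\Lie(\GG)])$, and substituting $[\Lie(\GG),\Lie(\GG)]=K(\A)$ from the previous step produces $\Der(K(\A))\cong\Lie(\GG)$. Composing with the isomorphism from Theorem~\ref{thm:LGadjoint} gives the full chain $\Lie(\GG)\cong\Der(K(\A))\cong\newl{K(\A)}$ asserted in the corollary.

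There is essentially no obstacle here; the only point that requires any care is making sure that the algebraicity hypothesis on $\A$ is precisely what allows Theorem~\ref{thm:LGadjoint} to be applied to $K(\A)$, and that the adjoint simplicity needed to invoke Lemma~\ref{lem:liead} is already furnished by Theorem~\ref{thm:AGadjoint} (proved using exactly the same reduction). All the substantive work—the construction of $\newl{\LL}$, the identification $\Der(\newl{\LL})\cong\newl{\LL}$ of Lemma~\ref{lem:40}, and the structural analysis of the group generated by the $\GU_\sigma$—has already been carried out in the preceding pages.
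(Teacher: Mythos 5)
Your proposal is correct and matches the paper's intent: the paper simply records this corollary as following from (the proof of) Theorem~\ref{thm:LGadjoint}, and your assembly — checking the hypotheses exactly as in the proof of Theorem~\ref{thm:AGadjoint}, reading off $\Lie(\GG)\cong\newl{K(\A)}$ and $K(\A)=[\Lie(\GG),\Lie(\GG)]$ from Theorem~\ref{thm:LGadjoint}, and then applying Lemma~\ref{lem:liead}\eqref{liead:iso} to get $\Lie(\GG)\cong\Der(K(\A))$ — is precisely the argument the authors have in mind.
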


\section{Deducing algebraicity}\label{ss:dedalg}

In this section we show that if a $5$-graded Lie algebra over a field $k$ of characteristic $\neq 2,3$
is the tangent Lie algebra of an adjoint simple
algebraic $k$-group, then it is algebraic. Of course, this is automatically true if $\Char k\neq 2,3,5$,
thanks to Lemma~\ref{lem:esigma'-alg}. However, since our proof
is independent of characteristic and involves a general technical lemma that is potentially useful in other contexts,
we do not restrict ourselves to the characteristic $5$ case.
As a consequence, we also show that any structurable division algebra over a field of characteristic $5$ is automatically algebraic.

\begin{definition}\label{def:rootsys}
Let $\GG$ be an algebraic group over a field $k$ and $\GT\subseteq\GG$ be a split $n$-dimensional
$k$\dash subtorus of $\GG$.
Let $X^*(\GT)\cong \ZZ^n$ be the group of characters of $\GT$, and let
\[
\Lie(\GG)=\bigoplus_{\alpha\in X^*(\GT)}\Lie(\GG)_\alpha
\]
be the $\ZZ^n$-grading on $\Lie(\GG)$ induced by the adjoint action of $\GT$. We call
\[
\Phi(\GT,\GG)=\{\alpha\in X^*(\GT) \mid \Lie(\GG)_\alpha\neq 0\}
\]
the set of \emph{roots of $\GG$ with respect to $\GT$}.
\end{definition}

If $\GG$ is a reductive algebraic group over $k$ and $\GT$ is a maximal split $k$\dash subtorus of $\GG$, then
$\Phi(\GT,\GG)\setminus \{0\}$ is a root system in the sense of Bourbaki~\cite{Bou} (see~\cite{BorelTits}).
By abuse of language, we call $\Phi(\GT,\GG)$ a root system of $\GG$.

Let $\Phi$ be a root system and $\Pi\subseteq\Phi$ be a system of simple roots. For any $\alpha\in\Phi$
we write
\[
\alpha=\sum\limits_{\beta\in\Pi}m_\beta(\alpha)\beta,
\]
where the coefficients $m_\beta(\alpha)$ are either all non-negative, or all non-positive. Once $\Pi$
is fixed, we denote the corresponding sets of positive and negative roots by $\Phi^\sigma$.

\begin{lemma}\label{lem:5-parab}
Let $k$ be a field, $\Char k\neq 2,3$. Let $\GG$ be an adjoint simple algebraic group over $k$.
Let $\LL=\Lie(\GG)$ be its Lie algebra.
Let $\LL=\bigoplus\limits_{i=-2}^2\LL_i$ be any $5$\dash grading on $\LL$
such that $\LL_1\oplus\LL_{-1}\neq 0$.

\begin{compactenum}[\rm (i)]
\item\label{5-parab:PU} There is a unique pair of opposite parabolic
subgroups $\GP_{\pm }$ of $\GG$ with unipotent radicals $\GU_{\pm}$, and a 1-dimensional split
$k$\dash torus $\GS\le\Cent(\GP_+\cap\GP_-)$, such that
\[
\Lie(\GP_\sigma)=\LL_0\oplus\LL_\sigma\oplus\LL_{2\sigma},\quad \Lie(\GU_\sigma)=\LL_\sigma\oplus\LL_{2\sigma},\quad
\Lie(\GP_+\cap\GP_-)=\LL_0,
\]
and the adjoint action of $\GS$ on $\Lie(\GG)$ induces the given $5$\dash grading.

\item\label{5-parab:roots} There is a maximal split $k$\dash subtorus $\GT$ of $\GG$ such that
\[
\GS\subseteq\GT\subseteq\GP_+\cap\GP_-.
\]
Set $\Phi=\Phi(\GT,\GG)$, then  there is a system of simple roots
$\Pi\subseteq\Phi$ and a non-empty subset $J\subseteq\Pi$ such that
\begin{equation}\label{eq:lieP}
\begin{aligned}
&\Phi(\GT,\GP_\sigma)=\Phi^{\sigma}\cup\bigl(\Phi\cap\ZZ(\Pi\setminus J)\bigr),\\
&\Phi(\GT,\GU_\sigma)=\Phi^{\sigma}\setminus\ZZ (\Pi\setminus J),\\
&\Phi(\GT,\GP_+\cap\GP_-)=\Phi\cap\ZZ (\Pi\setminus J).
\end{aligned}
\end{equation}

\item\label{5-parab:height} For any $-2\leq i\leq 2$ we have
\begin{equation}\label{eq:Li}
\LL_i = \hspace{-2ex}\bigoplus\limits_{\substack{\alpha\in\Phi \colon \\[.6ex] \sum\limits_{\beta\in J}m_\beta(\alpha)=i}}
\hspace*{-2ex}\Lie(G)_\alpha.
\end{equation}

\item\label{5-parab:type} The type of the root system $\Phi\cap\ZZ J$
is $A_1$, $BC_1$, $A_2$, or $A_1\times A_1$.

\end{compactenum}
\end{lemma}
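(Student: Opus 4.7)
For parts~(i) and~(ii), I would argue as follows. The $5$-grading on $\LL=\Lie(\GG)$ is induced by the adjoint action of the grading torus $\GS$ of Definition~\ref{def:gradingtorus}; since $\GS$ is connected, it is contained in $\Aut(\LL)^\circ=\GG$ by Lemma~\ref{lem:liead}. Let $\lambda\in X_*(\GS)$ be the generator inducing the grading, and define $\GP_\pm$ and $\GU_\pm$ to be the attractor parabolic $k$\dash subgroups of $\GG$ associated to $\pm\lambda$ and their unipotent radicals. Standard reductive-group theory then yields that $\GP_\pm$ are opposite parabolics of $\GG$ with common Levi $\Cent_\GG(\GS)=\GP_+\cap\GP_-$ and with Lie algebras matching the graded pieces; uniqueness follows because $\GS$ is intrinsic to the grading and parabolics are determined by their Lie algebras. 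For~(ii), I extend $\GS$ to a maximal split $k$\dash subtorus $\GT$ of the Levi $\Cent_\GG(\GS)$, which is automatically maximal split in $\GG$. Choosing $\Pi\subseteq\Phi=\Phi(\GT,\GG)$ so that $\lambda$ is dominant and $\GP_+$ is the standard parabolic for this positive system, and setting $J:=\{\beta\in\Pi\mid\langle\beta,\lambda\rangle\neq 0\}$ (non-empty because $\LL_1\oplus\LL_{-1}\neq 0$), the descriptions in~\eqref{eq:lieP} follow from the standard parabolic--root correspondence.

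For part~(iii), the main content is to normalize $\lambda$ so that $\langle\beta,\lambda\rangle=1$ for every $\beta\in J$. Since $\GG$ is adjoint, $X_*(\GT)$ is the full coweight lattice, so one may write $\lambda=\sum_i c_i\omega_{\beta_i}^\vee$ with $c_i\in\ZZ_{\geq 0}$ in the basis of fundamental coweights. Suppose $c_j\geq 2$ for some $j$. Since $\GG$ is simple, $\Phi$ is irreducible, so the highest root $\tilde\alpha$ satisfies $m_\beta(\tilde\alpha)\geq 1$ for every $\beta\in\Pi$, whence $\langle\tilde\alpha,\lambda\rangle\geq\sum_i c_i\geq c_j\geq 2$. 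The $5$\dash grading bound $\langle\tilde\alpha,\lambda\rangle\leq 2$ then forces equality throughout, giving $c_j=2$, $c_i=0$ for $i\neq j$, and $m_{\beta_j}(\tilde\alpha)=1$; but then $\lambda=2\omega_{\beta_j}^\vee$ produces only even $\GS$\dash weights, contradicting $\LL_{\pm 1}\neq 0$. Hence each $c_i\in\{0,1\}$, and then $\alpha|_\GS=\langle\alpha,\lambda\rangle=\sum_{\beta\in J}m_\beta(\alpha)$, which is~\eqref{eq:Li}.

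For part~(iv), combining~(iii) with the irreducibility lower bound $m_\beta(\tilde\alpha)\geq 1$ yields $|J|\leq\sum_{\beta\in J}m_\beta(\tilde\alpha)\leq 2$. When $|J|=1$ with $J=\{\beta\}$, the subsystem $\Phi\cap\ZZ\{\beta\}$ is $\{\pm\beta\}\cong A_1$ if $2\beta\notin\Phi$ and $\{\pm\beta,\pm 2\beta\}\cong BC_1$ if $2\beta\in\Phi$. When $|J|=2$ with $J=\{\beta_1,\beta_2\}$, both $m_{\beta_i}(\tilde\alpha)=1$, and every root of $\Phi\cap\ZZ J$ has $J$\dash height at most $2$; among rank-two root systems, only $A_2$ (when $\beta_1+\beta_2\in\Phi$) and $A_1\times A_1$ (when $\beta_1+\beta_2\notin\Phi$) satisfy this, since $B_2$, $C_2$, $BC_2$, and $G_2$ each contain a root of height $\geq 3$. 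This yields the four types listed. The most delicate step is the normalization in~(iii); once that is established, the classification in~(iv) is a direct inspection of rank-two root systems.
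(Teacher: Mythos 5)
Your proof is correct and follows essentially the same route as the paper's: the grading torus $\GS$ sitting inside $\GG=\Aut(\LL)^\circ$, the associated unique pair of opposite parabolics with common Levi $\Cent_\GG(\GS)$, extension of $\GS$ to a maximal split torus $\GT$ of that Levi, the parabolic--root correspondence for \eqref{eq:lieP}, and a height estimate forced by the $5$\dash grading to get \eqref{eq:Li} and the list of types in (iv). The only cosmetic difference is in part (iii), where you pair the cocharacter against the highest root and expand it in fundamental coweights (ruling out a coefficient $2$ via the even-grading contradiction), whereas the paper pairs against the sum of all simple roots and notes directly that some $\beta\in J$ must satisfy $\pi(\beta)=1$ since otherwise $\LL_1\oplus\LL_{-1}=0$; these are the same estimate.
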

\begin{proof}
First we prove~\eqref{5-parab:PU}. By Lemma~\ref{lem:liead}, we have $\GG=\Aut(\LL)^\circ$ and hence it is a closed $k$\dash subscheme
of $\End(\LL)$. 
 Let $\GS\cong\Gm$ be the grading torus of $\LL$.
Since $\GS$ is connected, we have $\GS\subseteq\GG$.

By~\cite[Exp.\@~XXVI, Prop. 6.1]{SGA3} there is a unique pair of opposite parabolic
subgroups $\GP_{\pm }$ of $\GG$ with a common Levi subgroup
\[
\GLe=\Cent_{\GG}(\GS)=\GP_+\cap\GP_-,
\]
such that
\[
\Lie(\GP_\sigma)=\LL_0\oplus\LL_\sigma\oplus\LL_{2\sigma},\quad \Lie(\GLe)=\LL_0.
\]
Since $\GP_{\sigma}$ is a semidirect product of $\GLe$ and the unipotent radical $\GU_{\sigma}=\radu(\GP_\sigma)$, we have
\[
\Lie(\GU_\sigma)=\LL_\sigma\oplus\LL_{2\sigma}.
\]

Now we prove~\eqref{5-parab:roots}. Let $\GT$ be a maximal $k$\dash split torus of $\GG$ containing $\GS$.
Let $\Phi=\Phi(\GT,\GG)$ be the root system of $\GG$ with respect to $\GT$.
Since $\GS\subseteq\GT$, there is a natural surjective homomorphism
\[
\pi \colon X^*(\GT)\to X^*(\GS)\cong\ZZ.
\]
Since $\Lie(\GP_\sigma)=\LL_0\oplus\LL_\sigma\oplus\LL_{2\sigma}$, we have
\[
\Phi(\GT,\GP_\sigma)=\bigl\{\alpha\in\Phi \mid \pi(\alpha)\in\{0,\sigma,2\sigma\}\bigr\};
\]
in particular, $\Phi(\GT,\GP_+)=-\Phi(\GT,\GP_-)$.
Then $\Phi(\GT,\GP_\sigma)\setminus\{0\}$ is a parabolic set of roots in the
sense of~\cite[Ch.\@ VI, \S 1, D\'ef.\@ 4]{Bou}. Then by~\cite[Ch.\@~VI, \S 1, Prop.\@ 20]{Bou} there is
a system of simple roots $\Pi$ in $\Phi$ and a subset $J$ of $\Pi$ such that $\Phi(\GT,\GP_\sigma)$
satisfies~\eqref{eq:lieP}. Since
\[
\Phi(\GT,\GLe)=\Phi(\GT,\GP_+\cap\GP_-)=\Phi(\GT,\GP_+)\cap\Phi(\GT,\GP_-)
\]
and
\[
\Phi(\GT,\GU_\sigma)=\Phi(\GT,\GP_\sigma)\setminus \Phi(\GT,\GP_+\cap\GP_-),
\]
the rest of~\eqref{eq:lieP} holds as well.

Now we prove~\eqref{5-parab:height}. Note that by~\eqref{eq:lieP} we have
$J\subseteq\Phi(\GT,\GP_+)\setminus\Phi(\GT,\GLe)$ and $\Pi\setminus J\subseteq\Phi(\GT,\GLe)$.
Therefore,
$\pi(\alpha)\in\{1,2\}$ for any $\alpha\in J$ and $\pi(\alpha)=0$ for any $\alpha\in\Pi\setminus J$.
Clearly, there is $\beta\in J$ such that $\pi(\beta)=1$, since otherwise $\LL_1\oplus\LL_{-1}=0$.
Let $\gamma\in\Phi$ denote the sum of all simple roots.
Then
\[
2\geq \pi(\gamma)\ge\sum\limits_{\beta\in J}\pi(\beta)
\]
implies that $|J|\leq 2$ and $\pi(\beta)=1$ for all $\beta\in J$.
Then for any $\alpha\in\Phi$ we have
\[
\pi(\alpha)=\sum\limits_{\beta\in J}m_\beta(\alpha).
\]
This implies~\eqref{eq:Li}.

In order to prove~\eqref{5-parab:type}, note that the root system $\Psi=\ZZ J\cap\Phi$  has $J$ as its system of simple roots.
Clearly, if $|J|=1$, then $\Psi$ has type $A_1$ or $BC_1$. If $|J|=2$,
then $\pi(\beta)\leq 2$ for any $\beta\in\Psi$ readily implies that $\Psi$ is of type $A_1\times A_1$ or $A_2$.
\end{proof}

We will need the following technical recognition lemma, which uses the {\em algebra of distributions} $\Dist(\GG)$ associated to an algebraic $k$\dash group $\GG$
(also known as the {\em hyperalgebra of $\GG$}).
This algebra is very well known to people working in representation theory.
Over a field of characteristic $0$, the representation theory of~$\GG$ is essentially identical to the representation theory of its Lie algebra $\Lie(\GG)$,
but this is no longer true for fields of positive characteristic.
The algebra $\Dist(\GG)$ is a suitable replacement for $\Lie(\GG)$ in this case; for fields of characteristic $0$, it coincides with the universal enveloping
algebra of $\Lie(\GG)$.
We refer to \cite{Tak}, \cite[Chapter II, \S 4]{DeGa} or \cite[Chapter~7]{Jantzen}.
The idea to use the algebra of distributions in our context came from the work of
J.~Faulkner on $3$-graded Lie algebras~\cite{Fau, Fau2}, and was previously applied to the study of $(2n+1)$-graded Lie algebras
of algebraic groups in the third author's thesis~\cite{St-thes}.

\begin{lemma}\label{lem:H1H2}
	Let $k$ be a field such that $n!\in k^\times$. Let $\GG$ be a smooth algebraic group over $k$ equipped
	with an action of $\Gm$  by $k$\dash automorphisms. Assume that the induced $\ZZ$-grading on $\Lie(\GG)$ is a $(2n+1)$-grading.
	Let $\GH_1,\GH_2$ be two $\Gm$-invariant smooth connected closed $k$\dash subgroups of $\GG$ such that
	\[
	\Lie(\GH_1)=\Lie(\GH_2)\subseteq\bigoplus_{i>0}\Lie(\GG)_i.
	\]
	Then $\GH_1=\GH_2$.
\end{lemma}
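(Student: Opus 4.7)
The plan is as follows. The hypothesis that $\Lie(\GH_i) \subseteq \bigoplus_{j > 0} \Lie(\GG)_j$ together with the $\Gm$-invariance of $\GH_i$ forces each $\GH_i$ to be contained in the attracting unipotent subgroup $\GU^+$ of $\GG$ — that is, the smooth connected unipotent closed $k$-subgroup with Lie algebra $\bigoplus_{j > 0} \Lie(\GG)_j$. Since the $\Gm$-weights on $\Lie(\GH_i)$ all lie in $\{1, \ldots, n\}$, the group $\GH_i$ is moreover nilpotent of class at most $n$. The lemma therefore reduces to showing that, inside $\GU^+$, two smooth connected $\Gm$-stable closed subgroups with the same Lie algebra must coincide.

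My strategy is to construct a $\Gm$-equivariant isomorphism of pointed $k$-schemes
\[
\exp \colon \Lie(\GU^+) \xrightarrow{\sim} \GU^+
\]
inducing the identity on tangent spaces at the origin. This is built inductively along the $\Gm$-weight filtration $\GU^+ = \GU^+_{\geq 1} \supseteq \cdots \supseteq \GU^+_{\geq n+1} = \{e\}$, whose successive quotients are vector groups isomorphic to the graded pieces $\Lie(\GG)_i$. Each extension is split $\Gm$-equivariantly via a truncated exponential: after choosing a faithful rational representation $\rho \colon \GG \hookrightarrow \GL(V)$, any $\Gm$-homogeneous element $x \in \Lie(\GU^+)$ of weight $w \geq 1$ acts on $V$ through strictly positive $\Gm$-weights, hence nilpotently, so the expression $\sum_{m \geq 0} \rho(x)^m/m!$ is a finite sum in $\End(V)$. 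The hypothesis $n! \in k^\times$ is exactly what is needed to make sense of every $m!$ appearing in this construction, and in the Campbell--Hausdorff-style gluing across the weight filtration, since the relevant values of $m$ are uniformly bounded by~$n$.

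Once such an exponential isomorphism is in hand, the conclusion follows from a Bia\l{}ynicki-Birula-type observation: a $\Gm$-invariant smooth connected closed subscheme of a positively-graded vector space $V = \bigoplus_{w > 0} V_w$ passing through the origin is equal to its tangent space at the origin. Indeed, its defining ideal in $k[V]$ is $\Gm$-homogeneous; its linear homogeneous generators cut out the tangent space, and smoothness at the origin together with the $\Gm$-contraction forces every higher-degree homogeneous generator to already lie in the ideal generated by the linear ones. Applying this to $\exp^{-1}(\GH_i) \subseteq \Lie(\GU^+)$ gives $\exp^{-1}(\GH_i) = \Lie(\GH_i)$ for $i = 1, 2$, and the equality $\Lie(\GH_1) = \Lie(\GH_2)$ then yields $\GH_1 = \GH_2$.

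The hard part will be making the construction of the exponential both precise and $\Gm$-equivariant in positive characteristic: one must carefully track the denominators produced by non-commutative gluing along the successive quotients of the weight filtration and verify that all of them divide $n!$. An alternative route, closer in spirit to Faulkner's treatment of $3$-graded Lie algebras, is to work entirely inside $\Dist(\GG)$: identify $\Dist(\GH_i)$ as the Hopf subalgebra generated, via the divided-power structure, by $\Lie(\GH_i)$, observe that under $n! \in k^\times$ the divided powers relevant to nilpotency class $\leq n$ reduce to ordinary powers and hence already live in this subalgebra, and conclude using the fact that a smooth connected closed $k$-subgroup of $\GG$ is determined by its algebra of distributions.
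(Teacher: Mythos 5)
Your overall strategy (reduce to the attractor subgroup $\GU^+$, linearize it $\Gm$\dash equivariantly, then argue inside a graded vector space) is not the paper's, and as sketched it has two genuine gaps. First, the exponential. You define $\exp$ on a homogeneous $x$ of weight $w\geq 1$ by the finite sum $\sum_m \rho(x)^m/m!$ for a faithful representation $\rho\colon\GG\hookrightarrow\GL(V)$, and assert that $n!\in k^\times$ is exactly what makes the denominators legal. It is not: $\rho(x)^m$ vanishes only once $m$ exceeds the spread of the $\Gm$\dash weights of $V$, and the hypotheses provide no faithful $V$ whose weight spread is bounded by $n$ --- only the adjoint representation is $(2n+1)$\dash graded, and $\GG$ need not act faithfully on $\Lie(\GG)$. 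So factorials $m!$ with $m>n$ can genuinely occur and need not be invertible. Second, the ``Bia\l{}ynicki-Birula-type observation'' is false as stated: in $V=V_1\oplus V_2$ with a coordinate $x$ of weight $1$ and $y$ of weight $2$, the curve $\{y=x^2\}$ is closed, smooth, connected, $\Gm$\dash stable and passes through the origin, yet it is not equal to its tangent space $\{y=0\}$. Smoothness plus the contracting action does not force higher-degree homogeneous generators of the ideal into the ideal generated by the linear ones; to exclude such subvarieties you would have to use that $\exp^{-1}(\GH_i)$ is a subgroup for the transported group law, which your sketch nowhere does.

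Your one-sentence alternative via $\Dist(\GG)$ is indeed the paper's route, but it elides the actual difficulty. In characteristic $p$ the Hopf algebra $\Dist(\GH)$ of a smooth group is \emph{not} generated by $\Lie(\GH)$: the divided powers $x^{(p^j)}$ are extra, non-canonical data, and two smooth connected subgroups with the same Lie algebra could a priori carry different ones --- that is precisely what the lemma must rule out. The paper passes to $\GX=\GH_1\cap\GH_2$ and shows that every element of $\Lie(\GX)$ admits an infinite divided power sequence in $\Dist(\GH_1)\cap\Dist(\GH_2)$, in two steps: (i) an inductive correction argument, using $n!\in k^\times$, normalizes a homogeneous d.p.s.\ over a homogeneous $x$ so that its first $n$ terms are the ordinary $x^m/m!$; (ii) for $m> n$ the difference of the $m$\dash th terms of the two normalized sequences is a \emph{primitive} element of $\Dist(\GG)$, homogeneous of degree $jm>n$, hence zero because $\Lie(\GG)$ is $(2n+1)$\dash graded. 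Step (ii) is where the grading hypothesis actually does its work, and it has no counterpart in your sketch; without it you cannot compare divided powers beyond degree $n$, which is exactly where a smooth group in characteristic $p$ stores the information not visible in its Lie algebra.
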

\begin{proof}
Let $\GH$ be an algebraic $k$\dash group,
let $\Dist(\GH)$ be its $k$\dash Hopf algebra of distributions, and denote the comultiplication in $\Dist(\GH)$ by $\Delta$.
Then $\Lie(\GH)$ identifies with the Lie algebra of primitive elements of $\Dist(\GH)$~\cite[Ch.\@~3, Proposition~3.1.8]{Tak}.
Recall that an {\em infinite divided power sequence} (abbreviated to {\em d.p.s.}) in $\Dist(\GH)$
is a sequence of elements $\{a_i\}_{i=0}^\infty$ such that $a_0=1$ and for any $i\geq 1$,
\[
\Delta(a_i)=\sum_{p+q=i}a_p\otimes a_q.
\]
If the field $k$ is perfect, then by~\cite[Ch.\@~II, \S 5, 2.3]{DeGa} $\GH_{red}$ is a smooth $k$\dash subgroup
of $\GH$, and by~\cite[Ch.\@~3, Corollary 3.3.12]{Tak} the Lie algebra
$\Lie(\GH_{red})$ is the set of elements $a\in\Lie(\GH)$ such that $\Dist(\GH)$ contains
an infinite d.p.s over $a$, i.e. an infinite d.p.s. $\{a_i\}_{i=0}^\infty$ with $a_1=a$.

The $\Gm$-action on $\GG$ induces a $\ZZ$-grading on $\Dist(\GG)$ compatible with the grading on $\Lie(\GG)$.
The algebras $\Dist(\GH_1)$ and $\Dist(\GH_2)$ are $\ZZ$-graded Hopf
subalgebras of $\Dist(\GG)$. 
One has $\GH_1=\GH_2$ if and only if $\Dist(\GH_1)=\Dist(\GH_2)$ by~\cite[Ch.\@~3, Corollary~3.3.9]{Tak}.
By~\cite[Ch.\@~3, Proposition~3.1.7]{Tak}
the functor $\Dist$ commutes with extensions of the base field, so we can assume from the start that $k=\bar k$.
In particular, $k$ is now perfect.

Let $X=\GH_1\cap \GH_2$ be the closed $k$\dash subgroup of $\GG$ which
is the intersection of $\GH_1$ and $\GH_2$ as $k$\dash subgroup schemes of $\GG$.
We have $\Dist(\GX)=\Dist(\GH_1)\cap\Dist(\GH_2)$ by~\cite[Ch.\@~3, Corollary 3.3.9]{Tak}.
We will show that for any $a\in\Lie(\GX)$ there is an infinite d.p.s. over $a$ in $\Dist(\GX)$. Then
$\Lie(\GX_{red})=\Lie(\GH_1)=\Lie(\GH_2)$, and consequently $\GX_{red}=\GH_1=\GH_2$ by~\cite[Ch.\@~II, \S 5, Corollary 5.6]{DeGa},
since $\GH_1$ and $\GH_2$ are connected.

Let $\GH$ be any of $\GH_1$, $\GH_2$, $\GG$. Since $\GH$ is smooth, we have $\GH_{red}=\GH$, and hence
for any $x\in\Lie(\GH)$ there is an infinite d.p.s. over $x$ in $\Dist(\GH)$. By~\cite[Lemma~4(i)]{Fau},
moreover, if $x\in\Lie(\GH)_j$, $j\in\ZZ$, then there exists a homogeneous infinite d.p.s. over $x$, i.e. a d.p.s.
$\{x_i\}_{i=0}^\infty$ such that $x_i\in\Dist(\GH)_{i\cdot j}$ for any $i\geq 0$.

We show by induction on $m$, $0\leq m\leq n$, that there is a homogeneous d.p.s. $\{x^{(k)}\}_{k=0}^\infty\subseteq\Dist(\GH)$
over $x\in\Lie(\GH)_j$ satisfying $x^{(k)}=\frac 1 {k!}x^k$ for any $0\leq k\leq m$.
The case $m=1$ is already known. Assume the claim for $m$; we will prove it for $m+1\leq n$.
Since $x$ is a primitive element, we have
\begin{multline*}
\Delta \Bigl( \tfrac 1{(m+1)!}x^{m+1}-x^{(m+1)} \Bigr) \\
\begin{aligned}
	&= \tfrac 1{m+1}\,\Delta(\tfrac 1{m!}x^m)\cdot (x\otimes 1+1\otimes x) - \textstyle\sum\limits_{i+j=m+1} x^{(i)}\otimes x^{(j)} \\
	&= \tfrac 1{m+1}\Bigl(\,\textstyle\sum\limits_{i+j=m} \tfrac 1{i!}x^i\otimes \tfrac 1{j!}x^j\Bigr)\cdot(x\otimes 1+1\otimes x) \\
	& \hspace*{4ex} - \textstyle\sum\limits_{\substack{i+j=m+1 \\[.7ex] i,j\neq 0}} \tfrac 1{i!}x^i\otimes \tfrac 1{j!}x^j
		- \bigl( x^{(m+1)}\otimes 1+1\otimes x^{(m+1)} \bigr) \\
	&= \bigl( \tfrac 1{(m+1)!}x^{m+1}\otimes 1+1\otimes \tfrac 1{(m+1)!}x^{m+1} \bigr) - \bigl( x^{(m+1)}\otimes 1+1\otimes x^{(m+1)} \bigr) \\
	&= \bigl( \tfrac 1{(m+1)!}x^{m+1}-x^{(m+1)} \bigr) \otimes 1 + 1\otimes \bigl( \tfrac 1{(m+1)!}x^{m+1}-x^{(m+1)} \bigr).
\end{aligned}
\end{multline*}
This means that $\frac 1{(m+1)!}x^{m+1}-x^{(m+1)}$ is a primitive element of $\Dist(\GH)$. Hence
there is a homogeneous d.p.s. $\{y_k\}_{k=0}^\infty$ over $y_1=\frac 1{(m+1)!}x^{m+1}-x^{(m+1)}$ in $\Dist(\GH)$.
By~\cite[Lemma 2]{Fau} the sequence $\{z_k\}_{k=0}^\infty$ where $z_k=y_{k/m+1}$ if $m+1 \mid k$ and
$z_k=0$ otherwise, is also a homogeneous d.p.s. Again by~\cite[Lemma 2]{Fau}, the ``product'' of two d.p.s.
\[
\{z_k\}_{k=0}^\infty\cdot \{x^{(k)}\}_{k=0}^\infty=\bigl\{\!\!\sum\limits_{p+q=k}z_{p}x^{(q)}\bigr\}_{k=0}^\infty
\]
is also a d.p.s. Clearly, it is a homogeneous d.p.s. over $x$ and its first $m+1$ members are of the form
$\frac 1{k!}x^{k}$, as required.

Now let $x\in\Lie(\GX)_j=\Lie(\GH_1)_j\cap\Lie(\GH_2)_j$, $j>0$. Let $\{x_k\}_{k=0}^\infty$ and
$\{y_k\}_{k=0}^\infty$ be two homogeneous d.p.s. over $x$ in $\Dist(\GH_1)$ and $\Dist(\GH_2)$ respectively,
satisfying $x_k=y_k=\frac 1{k!}x^k$ for any $1\leq k\leq n$. We claim that these two sequences are
equal. Indeed, assume that this is true for all $k\leq m$, where $m\geq n$. Then
\begin{align*}
	\Delta(x_{m+1}-y_{m+1})
	&= \Delta(x_{m+1})-\Delta(y_{m+1}) \\
	&= \textstyle\sum\limits_{i+j=m+1}x_i\otimes x_j-\sum\limits_{i+j=m+1}y_i\otimes y_j \\
	&= x_{m+1}\otimes 1+1\otimes x_{m+1}-y_{m+1}\otimes 1-1\otimes y_{m+1} \\
	&= (x_{m+1}-y_{m+1})\otimes 1+1\otimes (x_{m+1}-y_{m+1}).
\end{align*}
This means that $x_{m+1}-y_{m+1}$ is primitive element in $\Dist(\GG)$. Since, clearly, $x_{m+1}-y_{m+1}$ is
homogeneous of degree $j(m+1)$, and $\Lie(\GG)$ is $(2n+1)$\dash graded, we have $x_{m+1}-y_{m+1}=0$. Proceeding
by induction, we show that $\{x_k\}_{k=0}^\infty=\{y_k\}_{k=0}^\infty$. Consequently, there is a homogeneous
d.p.s. over $x$ in $\Dist(\GH_1)\cap\Dist(\GH_2)=\Dist(\GX)$.

It remains to note that if $x,y\in\Lie(\GX)$ are two homogeneous elements,
and $\{x_k\}_{k=0}^\infty$ and
$\{y_k\}_{k=0}^\infty$ are two homogeneous d.p.s. over $x$ and $y$ in $\Dist(\GX)$, then
$\bigl\{\sum\limits_{p+q=k}x_py_q\bigr\}_{k=0}^\infty$ is a d.p.s. over $x+y$ by~\cite[Lemma~2]{Fau}.
Therefore, there is an infinite d.p.s. in $\Dist(\GX)$ over any element of $\Lie(\GX)$, as required.
\end{proof}

\begin{lemma}\label{lem:expad}
Let $k$ be a field, $\Char k\neq 2,3$. Let $\GG$ be an adjoint simple algebraic group over $k$.
Let $\LL=\Lie(\GG)$ be its Lie algebra.
Let $\LL=\bigoplus\limits_{i=-2}^2\LL_i$ be any $5$\dash grading on $\LL$
such that $\LL_1\oplus\LL_{-1}\neq 0$. Let $R$ be any commutative $k$\dash algebra.

\begin{compactenum}[\rm (i)]
\item The $5$-graded Lie algebra $\LL$ is algebraic (in the sense of Definition~\ref{def:algebraic}).

\item Let $\GU_\sigma$ be the $k$\dash subgroups of $\GG\cong\Aut(\LL)^\circ$ introduced in Lemma~\ref{lem:5-parab}.
Then for any any commutative $k$\dash algebra $R$ one has
\[
\GU_{\sigma}(R)=\{e_\sigma(x,s) \mid (x,s)\in (\LL_\sigma\oplus\LL_{2\sigma})\otimes_k R\}.
\]
\end{compactenum}
\end{lemma}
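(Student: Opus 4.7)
The strategy is to identify $\GU_\sigma$ with the ``exponential'' subgroup $\widetilde{\GU}_\sigma:=e_\sigma(\LL_\sigma\oplus\LL_{2\sigma})$ of $\GL(\LL)$ under the adjoint embedding of $\GG$; both the algebraicity of $\LL$ and the description of $\GU_\sigma(R)$ will then fall out together. First I would apply Lemma~\ref{lem:5-parab} to produce opposite parabolic $k$\dash subgroups $\GP_\pm\subseteq\GG$ with unipotent radicals $\GU_\pm$ and a $1$\dash dimensional split $k$\dash torus $\GS\subseteq\Cent(\GP_+\cap\GP_-)$ whose adjoint action on $\LL$ induces the given $5$\dash grading and satisfies $\Lie(\GU_\sigma)=\LL_\sigma\oplus\LL_{2\sigma}$. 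In particular $\Lie(\GS)\subseteq\LL_0$ contains a grading derivation $\zeta\in\LL$, so Lemma~\ref{lem:esigma'-prod}(ii) furnishes a closed immersion
\[
e_\sigma\colon \LL_\sigma\oplus\LL_{2\sigma}\hookrightarrow \GL(\LL),\qquad (x,s)\mapsto e_\sigma(x,s),
\]
where the source is regarded as an algebraic $k$\dash group under the Baker--Campbell--Hausdorff multiplication of Lemma~\ref{lem:esigma'-prod}. The image $\widetilde{\GU}_\sigma$ is then a smooth connected closed $k$\dash subgroup of $\GL(\LL)$ with Lie algebra $\ad(\LL_\sigma)\oplus\ad(\LL_{2\sigma})\subseteq\End_k(\LL)$; note that $\ad$ is injective on these summands, since $\LL$ has trivial centre ($\GG$ being adjoint).

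Next, by Lemma~\ref{lem:liead} the adjoint representation $\Ad\colon\GG\hookrightarrow\GL(\LL)$ is a closed immersion, so $\Ad(\GU_\sigma)$ is a second smooth connected closed $k$\dash subgroup of $\GL(\LL)$ whose Lie algebra is also $\ad(\LL_\sigma)\oplus\ad(\LL_{2\sigma})$. To conclude $\Ad(\GU_\sigma)=\widetilde{\GU}_\sigma$, I would apply the recognition Lemma~\ref{lem:H1H2} to the smooth algebraic group $\GL(\LL)$ endowed with the $\Gm$\dash action obtained by composing the $\GS$\dash conjugation action with $t\mapsto t^{-\sigma}$. Under this action $\End_k(\LL)=\Lie(\GL(\LL))$ becomes $(2n+1)$\dash graded with $n=4$, and the hypothesis $\Char k\neq 2,3$ ensures that $4!\in k^\times$ as required by that lemma. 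Both $\widetilde{\GU}_\sigma$ and $\Ad(\GU_\sigma)$ are $\Gm$\dash invariant: the former because $\Ad(t)\, e_\sigma(x,s)\,\Ad(t)^{-1}=e_\sigma(t^\sigma x,t^{2\sigma}s)$, the latter because $\GS\subseteq\GP_+\cap\GP_-$ normalises $\GU_\sigma$; and their common Lie algebra sits in the strictly positive-degree part of $\End_k(\LL)$ after the sign twist. Lemma~\ref{lem:H1H2} then yields $\Ad(\GU_\sigma)=\widetilde{\GU}_\sigma$.

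Identifying $\GG$ with $\Aut(\LL)^\circ$ via $\Ad$, this equality is precisely statement (ii), read at the level of functors of points on any commutative $k$\dash algebra $R$, with the description of $\widetilde{\GU}_\sigma(R)$ supplied by Lemma~\ref{lem:esigma'-prod}(i). Statement (i) follows immediately: for arbitrary $(x,s)\in\LL_\sigma\oplus\LL_{2\sigma}$ the endomorphism $e_\sigma(x,s)$ lies in $\GU_\sigma(k)\subseteq\Aut_k(\LL)$ and is therefore a Lie algebra automorphism, which is exactly the condition of Definition~\ref{def:algebraic}. The principal technical obstacle will be the careful verification of the hypotheses of Lemma~\ref{lem:H1H2}, in particular identifying the two Lie subalgebras inside $\End_k(\LL)$, confirming the $\Gm$\dash equivariance of both candidate subgroups, and the sign bookkeeping needed to place their common Lie algebra in the positive part of the $9$\dash grading on $\End_k(\LL)$.
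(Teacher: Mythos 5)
Your proposal is correct and follows essentially the same route as the paper: identify $\GU_\sigma$ with the image of the closed embedding $e_\sigma$ from Lemma~\ref{lem:esigma'-prod} (using the grading derivation supplied by $\LL\cong\Der(\LL)$), observe that both are $\GS$\dash invariant smooth connected closed subgroups of $\GL(\LL)$ with the same Lie algebra inside the $9$\dash graded $\End(\LL)$, and invoke the recognition Lemma~\ref{lem:H1H2}. Your explicit sign twist of the $\Gm$\dash action to place the common Lie algebra in strictly positive degrees is a point the paper leaves implicit, but it is a detail of the same argument rather than a different approach.
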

\begin{proof}
Let $\GU_\sigma'$ denote $\LL_\sigma\oplus\LL_{2\sigma}$ considered as a $k$\dash group with the
operation
\[
(x,s)*(y,t)=(x+y,s+t+\frac 12[x,y]).
\]
The Lie algebra $\LL$ is isomorphic to $\Der(\LL)$ by Lemma~\ref{lem:liead}\eqref{liead:iso},
hence it contains a grading derivation. Then we can apply Lemma~\ref{lem:esigma'-prod} to $\LL$.
It implies that the morphism of $k$\dash schemes
\[
e_\sigma \colon \GU_\sigma'\to \End(\LL)
\]
factors through $\GL(\LL)\subseteq \End(\LL)$, and the induced homomorphism of $k$\dash groups
\[
e_\sigma \colon \GU_\sigma'\to\GL(\LL)
\]
is a closed embedding. Direct computation shows that
\[
\Lie(e_\sigma(\GU_\sigma'))=\LL_\sigma\oplus\LL_{2\sigma}
\]
as a Lie subalgebra of
\[
\LL=\Der(\LL)\subseteq\End(\LL)=\Lie(\GL(\LL)).
\]
Thus, $\Lie(\GU_\sigma)=\Lie(e_\sigma(\GU_\sigma'))$. Note that $\GS\cong\Gm$ induces a
9-grading on $\End(\LL)=\Lie(\GL(\LL))$ extending the grading on $\LL$, given by
\[ \End(\LL)_i := \{ \phi\in\End(\LL) \mid \phi(\LL_j)\subseteq\LL_{j+i} \ \text{ for all } -2 \leq j\leq 2 \} \]
for each $-4\leq i\leq 4$.
Clearly, both $\GU_\sigma$ and $e_\sigma(\GU_\sigma')$ are $\GS$-invariant.
Then by Lemma~\ref{lem:H1H2} we have $e_\sigma(\GU_\sigma')=\GU_\sigma$ as closed $k$\dash subgroups of $\GL(\LL)$.
In particular, each $e_\si(x,s)$ belongs to $\Aut(\LL)(R)$, and hence $\LL$ is algebraic.
\end{proof}

The following lemma is an easy corollary of the previous one combined with Theorem~\ref{thm:AGadjoint}.

\begin{lemma}\label{lem:Ch-alg}
Let $\A$ be a central simple structurable algebra over a field $k$ of characteristic $\neq 2,3$.
Let $\bar k$ be the algebraic closure of $k$.
Then $\A$ is algebraic if and only if
the simple Lie algebra $K(\A)\otimes_k\bar k$ is isomorphic to a simple Lie algebra over $\bar k$ of Chevalley
type (i.e. the commutator
Lie subalgebra of the Lie algebra of a split adjoint algebraic $\bar k$-group, cf. Lemma~\ref{lem:liead}).
\end{lemma}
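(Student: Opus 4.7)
The plan is to prove both implications by connecting algebraicity with the group of Lie algebra automorphisms, invoking Theorem~\ref{thm:AGadjoint} in one direction and Lemma~\ref{lem:expad} in the other, together with the base change stability of Lemma~\ref{lem:ext-alg}.

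For the forward direction, suppose $\A$ is algebraic. By Lemma~\ref{lem:ext-alg}, algebraicity is preserved under base change, so $K(\A) \otimes_k \bar k \cong K(\A \otimes_k \bar k)$ is an algebraic $5$\dash graded Lie algebra over $\bar k$ (the isomorphism coming from the functoriality of the Kantor--Koecher--Tits construction). Since $\A$ is central simple, $\A \otimes_k \bar k$ is a central simple (hence algebraic) structurable $\bar k$\dash algebra, so Theorem~\ref{thm:AGadjoint} yields an adjoint absolutely simple algebraic $\bar k$\dash group $\GG = \Aut(K(\A) \otimes_k \bar k)^\circ$ with $K(\A) \otimes_k \bar k = [\Lie(\GG), \Lie(\GG)]$. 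Since $\bar k$ is algebraically closed, $\GG$ is split, and hence $K(\A) \otimes_k \bar k$ is of Chevalley type.

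For the converse, assume $K(\A) \otimes_k \bar k \cong \mathcal{M} := [\Lie(\GG'), \Lie(\GG')]$ for some split adjoint simple algebraic $\bar k$\dash group $\GG'$. Transport the $5$\dash grading on $K(\A) \otimes_k \bar k$ along the isomorphism to obtain a $5$\dash grading on $\mathcal{M}$, which is induced by a $\Gm$\dash action with grading torus $\GS \subseteq \Aut(\mathcal{M})$; because $\GS$ is connected, we have $\GS \subseteq \Aut(\mathcal{M})^\circ = \GG'$ by Lemma~\ref{lem:liead}(ii). Thus $\GS$ acts on all of $\Lie(\GG')$ via the adjoint representation, extending the given grading on $\mathcal{M}$. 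By Lemma~\ref{lem:liead}(i), the quotient $\Lie(\GG')/\mathcal{M}$ has dimension at most one and, when one\dash dimensional, it is a $\GG'$\dash module affording a character of $\GG'$; since $\GG'$ is semisimple and therefore has trivial character lattice, $\GS$ acts trivially on this quotient. Consequently the induced grading on $\Lie(\GG')$ is again a $5$\dash grading with $\Lie(\GG')_{\pm 1} = \mathcal{M}_{\pm 1} \neq 0$, and Lemma~\ref{lem:expad}(i) yields that $\Lie(\GG')$ is algebraic.

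It remains to transfer algebraicity from $\Lie(\GG')$ to $\mathcal{M}$ and then descend to $K(\A)$. Since $\mathcal{M}$ is the characteristic Lie ideal $[\Lie(\GG'), \Lie(\GG')]$ and satisfies $\mathcal{M}_i = \Lie(\GG')_i$ for $i \neq 0$, for any $(x,s) \in \mathcal{M}_\sigma \oplus \mathcal{M}_{2\sigma} = \Lie(\GG')_\sigma \oplus \Lie(\GG')_{2\sigma}$ the operator $e_\sigma(x,s)$ on $\mathcal{M}$ is simply the restriction of the corresponding operator on $\Lie(\GG')$. The latter is a Lie automorphism of $\Lie(\GG')$ preserving $\mathcal{M}$, so its restriction is a Lie automorphism of $\mathcal{M}$. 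Hence $K(\A) \otimes_k \bar k \cong \mathcal{M}$ is algebraic over $\bar k$. Finally, the defining identity $e_\sigma(x,s)([y,z]) = [e_\sigma(x,s)(y), e_\sigma(x,s)(z)]$ for algebraicity is polynomial in $(x,s,y,z)$ with $k$\dash coefficients, so its validity on $K(\A) \otimes_k \bar k$ descends to the $k$\dash subspace $K(\A)$. Thus $K(\A)$, and therefore $\A$, is algebraic. The main subtlety is verifying that the transported grading on $\mathcal{M}$ extends to an honest $5$\dash grading (rather than some wider grading) on $\Lie(\GG')$; this is where semisimplicity of $\GG'$ enters in an essential way.
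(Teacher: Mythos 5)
Your proof is correct and follows essentially the same route as the paper: Theorem~\ref{thm:AGadjoint} (over $k$, then base-changed, or directly over $\bar k$) for the forward implication, Lemma~\ref{lem:expad}(i) for the converse, and descent along the bracket-preserving inclusion $K(\A)\to K(\A)\otimes_k\bar k$. The one genuine addition is your careful argument (via the grading torus, the triviality of the at most one-dimensional $\GG'$-module $\Lie(\GG')/[\Lie(\GG'),\Lie(\GG')]$, and the splitting of the $\GS$-module sequence) that the transported $5$-grading on $[\Lie(\GG'),\Lie(\GG')]$ extends to a $5$-grading on $\Lie(\GG')$ itself; the paper's one-line citation of Lemma~\ref{lem:expad}(i) leaves this step implicit, so this is a worthwhile elaboration rather than a deviation.
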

\begin{proof}
If $\A$ is algebraic, the Lie algebra $K(\A)\otimes_k\bar k$ has the desired form by Theorem~\ref{thm:AGadjoint}.
Assume that $K(\A)\otimes_k\bar k$ is isomorphic a simple Lie algebra over $\bar k$ of Chevalley
type. Then by Lemma~\ref{lem:expad}(i) the Lie algebra $K(\A)\otimes_k\bar k$ is algebraic.
Since the natural inclusion $K(\A)\to K(\A)\otimes_k\bar k$ preserves the Lie bracket, it follows that $K(\A)$ is
algebraic, and hence $\A$ is algebraic.
\end{proof}

Relying on Lemma~\ref{lem:Ch-alg},
we can investigate the algebraicity of central simple
structurable algebras over a field of characteristic 5.
We approach this problem by relating algebraicity to the existence of absolute zero divisors. In the course of
the proof, we use the classification of simple Lie algebras over an algebraically closed field
of characteristic $\ge 5$~\cite{PrStr-V,PrStr-VI}.

\begin{definition}
	\begin{compactenum}[\rm (i)]
        \item
            Let $\A$ be a structurable algebra over a field $k$ of characteristic $\neq 2,3$.
            An element $x\in\A$ is called an \emph{absolute zero divisor} if $U_xy=0$ for any $y\in\A$.
            The algebra $\A$ is called \emph{non-degenerate} if it has no non-trivial absolute zero divisors.
        \item
            Let $\LL$ be a Lie algebra over a field $k$.
            An element $x\in\LL$ is called an \emph{absolute zero divisor} if $\ad_x^2=0$.
            The Lie algebra $\LL$ is called \emph{non-degenerate} if it has no non-trivial absolute zero divisors.
	\end{compactenum}
\end{definition}
If an element $x\in K(\A)_\si=\A_\si$ is an absolute zero divisor of $K(\A)$,
then it is represented by an absolute zero divisor of $\A$;
this follows from the fact that by Definition~\ref{def:Lie alg},
\[ [x_\sigma, [x_\sigma, y_{-\sigma}]] = -V_{x,y} x \in \A_\sigma \]
for all $x,y \in \A$.

\begin{lemma}\label{lem:azd-alg}
    Let $\A$ be a central simple structurable algebra over a field $k$ of characteristic $5$. Let
    $\bar k$ be an algebraic closure of $k$, and let $k^s\subseteq \bar k$ be a separable closure of $k$.
    Then the following conditions are equivalent.
	\begin{compactenum}[\rm (a)]
        \item\label{azd:alg} $\A$ is algebraic.
        \item\label{azd:bar} $\A\otimes_k \bar k$ is non-degenerate.
        \item\label{azd:sep} $\A\otimes_k k^s$ is non-degenerate.
        \item\label{azd:Kbar} $K(\A) \otimes_k \bar k$ is non-degenerate.
        \item\label{azd:Ksep} $K(\A) \otimes_k k^s$ is non-degenerate.
	\end{compactenum}
\end{lemma}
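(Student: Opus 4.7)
The strategy is to close an equivalence chain among (a)--(e) by combining three ingredients: a correspondence between absolute zero divisors of $\A$ and of $K(\A)$; Lemma~\ref{lem:Ch-alg}, which characterizes algebraicity of $\A$ via the Chevalley property of $K(\A)\otimes_k\bar k$; and the Premet--Strade classification of simple Lie algebras in characteristic $\geq 5$~\cite{PrStr-V,PrStr-VI}.

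First, I would establish the Lie-algebra correspondence: for any field extension $K/k$, the structurable algebra $\A\otimes_k K$ is non-degenerate if and only if $K(\A)\otimes_k K$ is non-degenerate as a Lie algebra. This immediately yields (b) $\Leftrightarrow$ (d) and (c) $\Leftrightarrow$ (e). The forward direction is straightforward from the bracket formulas in Definition~\ref{def:Lie alg}: if $U_x y = 0$ for all $y$, then the identity $[x_+,[x_+,y_-]] = -(V_{x,y}x)_+ = -(U_x y)_+$ shows $\ad(x_+)^2 = 0$ on $K(\A)_{-1}$, and extending this to all graded pieces requires only routine computation with the structural identities of section~\ref{se:struct1}. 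The converse uses the $\GS$-invariance of the cone of absolute zero divisors together with the simplicity of $K(\A)$: one decomposes any nonzero absolute zero divisor into homogeneous components and, acting by $\Gm$-scalings and a Jordan-type argument, localizes it to a single graded piece $K(\A)_i$ with $i\neq 0$. The identifications $K(\A)_{\pm 1}\cong\A$ and $K(\A)_{\pm 2}\cong\Ss$ then yield a non-trivial absolute zero divisor of $\A\otimes K$.

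Next, I would establish (a) $\Leftrightarrow$ (d). By Lemma~\ref{lem:Ch-alg}, $\A$ is algebraic iff $K(\A)\otimes\bar k$ is isomorphic to a simple Lie algebra of Chevalley type. By the Premet--Strade classification, every simple Lie algebra over an algebraically closed field of characteristic $\geq 5$ is of Chevalley, Cartan, or Melikyan type. The non-Chevalley families are always degenerate: each contains explicit absolute zero divisors coming from the restricted Lie algebra structure (for instance, elements of the form $x_1^{p-1}\partial_2$ in a Jacobson--Witt algebra $W_n$). On the other hand, Chevalley simple Lie algebras in characteristic $\neq 2,3$ are non-degenerate: any absolute zero divisor would be ad-nilpotent with $\ad(x)^2 = 0$, which is incompatible with the classification of nilpotent orbits via $\mathfrak{sl}_2$-triples in good characteristic. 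Hence non-degeneracy of $K(\A)\otimes\bar k$ matches the Chevalley property, giving (a) $\Leftrightarrow$ (d).

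Finally, I would fold in (c) and (e). Algebraicity is a polynomial-identity condition and hence invariant under field extensions in both directions (cf.\ Lemma~\ref{lem:ext-alg}): $\A$ is algebraic iff $\A\otimes_k k^s$ is algebraic as a $k^s$-algebra. Applying (a) $\Leftrightarrow$ (d) over the base field $k^s$ (and using $\overline{k^s} = \bar k$) already confirms (a) $\Leftrightarrow$ (b) $\Leftrightarrow$ (d). For (e), note that if $\A\otimes k^s$ is algebraic, Theorem~\ref{thm:AGadjoint} produces an adjoint simple algebraic $k^s$-group $\GG_{k^s}$ whose commutator Lie algebra is $K(\A)\otimes k^s$; since every reductive group over a separably closed field is split, $K(\A)\otimes k^s$ is itself of Chevalley type over $k^s$ and therefore non-degenerate. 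Conversely, if $K(\A)\otimes\bar k$ fails to be of Chevalley type, i.e., is of Cartan or Melikyan type, then the explicit rational absolute zero divisors available in such Lie algebras (defined by equations rational over the prime field, via the $p$-power operation) yield a non-trivial absolute zero divisor in $K(\A)\otimes k^s$, contradicting (e). This closes the cycle. The main technical obstacle is precisely this last descent step: ensuring that degeneracy of $K(\A)\otimes\bar k$ forces degeneracy already of $K(\A)\otimes k^s$, despite the purely inseparable nature of $\bar k/k^s$. The key is that the restricted Lie algebra structure of Cartan and Melikyan type algebras provides absolute zero divisors whose defining polynomial relations descend to any subfield, combined with the $\Aut(K(\A))^\circ$-invariance of the absolute-zero-divisor variety and the density of its $k^s$-points in this setting.
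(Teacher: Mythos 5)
Your outline agrees with the paper for the core equivalences: (a) $\Leftrightarrow$ (d) is obtained exactly as in the paper (Lemma~\ref{lem:Ch-alg} plus the Premet--Strade classification, with degeneracy of the Cartan and Melikian types and non-degeneracy of the Chevalley types), and the transfer (b) $\Leftrightarrow$ (d), (c) $\Leftrightarrow$ (e) is in the paper a citation of \cite[Corollary 2.5]{GGLN} for the Kantor pair $(\A_-,\A_+)$. Note that your sketched replacement for that citation is thinner than it looks: decomposing an absolute zero divisor of $K(\A)\otimes_k K$ into homogeneous pieces and ``localizing'' it to degree $\pm 1$ is not routine, since a priori the element could sit in $K(\A)_0$ or $K(\A)_{\pm 2}$, where it does not directly produce an absolute zero divisor of $\A$; this is precisely what the cited result handles.

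The genuine gap is in your last step, (e) $\Rightarrow$ (d). You propose to show that if $K(\A)\otimes_k\bar k$ is of Cartan or Melikian type, then ``explicit rational absolute zero divisors'' descend to $K(\A)\otimes_k k^s$. This does not work as stated: the extension $\bar k/k^s$ is purely inseparable, so descent is not governed by Galois cohomology, and $K(\A)\otimes_k k^s$ may be a non-split form of a Cartan/Melikian algebra that only splits over $\bar k$. The explicit absolute zero divisors you invoke (e.g.\ elements like $x_1^{p-1}\partial_2$) live in the split model over $\bar k$ and there is no reason for them, or for any $k^s$-point of the zero-divisor cone, to exist in the twisted form; the cone is a closed subvariety with no smoothness guarantee, so density of separable points cannot be invoked. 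The paper avoids all of this with a short purely inseparable descent: given $x\in K(\A)\otimes_k\bar k$ with $\ad_x^2=0$, some Frobenius power $F^n$ moves $x$ into $K(\A)\otimes_k k^s$ while preserving the condition $\ad^2=0$, so non-degeneracy over $k^s$ forces $F^n(x)=0$ and hence $x=0$. You would need to replace your descent argument by this (or an equivalent) mechanism; as written, the implication (e) $\Rightarrow$ (d) is not established.
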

\begin{proof}
By the classification of simple finite-dimensional Lie algebras over an algebraically closed field of characteristic~$5$,
any such algebra is a Lie algebra of Chevalley (also called classical), Cartan, or Melikian type~\cite{PrStr-V,PrStr-VI}.
Among these Lie algebras, only the Lie algebras of Chevalley type are non-degenerate~\cite[p. 124]{Sel67},
while all simple Lie algebras of Cartan and Melikian type are degenerate
(this observation goes back to A. I. Kostrikin and A. Premet; see e.g.~\cite{Wil76} and~\cite[Corollary 12.4.7]{Strade-bookII}).
Since $K(\A)\otimes_k\bar k$ is a simple $\bar k$-Lie algebra, it follows from Lemma~\ref{lem:Ch-alg} that
\[ K(\A) \text{ is algebraic } \iff \A \text{ is algebraic} \iff K(\A)\otimes_k\bar k \text{ is non-degenerate} . \]
In particular, we have \eqref{azd:alg}~$\Leftrightarrow$~\eqref{azd:Kbar}.

Next, for any field extension $k'/k$, by~\cite[Corollary 2.5]{GGLN} $K(\A)\otimes_k k'$ is non-degenerate
if and only if the associated Kantor pair $(\A_-\otimes_k k',\A_+\otimes_k k')$ is non-degenerate.
The latter is by definition equivalent to the fact that the structurable algebra $\A\otimes_k k'$ is non-degenerate.
In particular, this shows that \eqref{azd:bar}~$\Leftrightarrow$~\eqref{azd:Kbar} and \eqref{azd:sep}~$\Leftrightarrow$~\eqref{azd:Ksep}.

Since the implication \eqref{azd:Kbar}~$\Rightarrow$~\eqref{azd:Ksep} is trivial,
it only remains to show \eqref{azd:Ksep}~$\Rightarrow$~\eqref{azd:Kbar}.
Pick $x\in K(\A)\otimes_k\bar k$ such that $\ad_x^2=0$.
Let $F$ denote the Frobenius automorphism of $\bar k$. It induces a $k$-linear automorphism
of $K(\A)\otimes_k\bar k$ and of $\End_{\bar k}(K(\A)\otimes_k\bar k)\cong\End_k(K(\A))\otimes_k\bar k$.
Since $\bar k/ k^s$ is purely inseparable, there is $n\ge 1$ such that
$F^n(x)\in K(\A)\otimes_k k^s$ inside $K(\A)\otimes_k\bar k$. Clearly, $F^n(\ad_x^2)=\ad_{F^n(x)}^2=0$.
Therefore, $F^n(x)=0$, and hence $x=0$.
\end{proof}

We can now apply our results to deduce that structurable {\em division} algebras are always algebraic.
\begin{theorem}\label{thm:div-alg}
Let $\A$ be a central simple structurable division algebra over a field $k$ of characteristic $\neq 2,3$.
Then $\A$ is algebraic.
\end{theorem}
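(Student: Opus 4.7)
The first reduction is immediate: by Lemma~\ref{lem:esigma'-alg}, every finite-dimensional $5$\dash graded Lie algebra over a field of characteristic $\neq 2,3,5$ is algebraic, so when $\cha(k)\neq 5$ the Lie algebra $K(\A)$ is algebraic and hence so is $\A$. We may therefore assume $\cha(k)=5$ throughout.

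The next step is to invoke Lemma~\ref{lem:azd-alg}, which reduces algebraicity of $\A$ to the claim that $\A\otimes_k k^s$ contains no nonzero absolute zero divisor, where $k^s$ denotes a separable closure of $k$. Over $k$ itself this is clear: if $0\neq x\in\A$ then $x$ is conjugate invertible by hypothesis, and by~\eqref{hatu} the operator $U_x\in\End_k(\A)$ is invertible and in particular nonzero, so $x$ is not an absolute zero divisor. The main task is thus to upgrade this observation from $\A$ to $\A\otimes_k k^s$, and here we expect the principal obstacle: non-degeneracy of a non-associative algebra is not in general preserved under separable scalar extension, so the argument must use the division hypothesis in a decisive way rather than merely the non-degeneracy of $\A$ over $k$.

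The preferred route is a classification one. Using the extension to characteristic~$5$ of the Allison--Smirnov classification of central simple structurable algebras (which is the content of section~\ref{se:classchar5}), combined with the list at the end of section~\ref{se:constr lie alg} identifying the Lie algebra~$K(\A)$ for each of the classes~(1)--(6) of section~\ref{se:struct-ex}, one checks that $K(\A)\otimes_k\bar k$ is in every case a simple Lie algebra of Chevalley type, which is non-degenerate by the classification of simple Lie algebras in characteristic~$5$ invoked in the proof of Lemma~\ref{lem:azd-alg}. That lemma then delivers algebraicity. A direct alternative would be a Galois-descent argument: take a nonzero absolute zero divisor $z\in\A\otimes_k k^s$ lying in some finite Galois subextension $\A\otimes_k l$, observe that the zero locus $\{w\in\A\otimes_k l \colon U_w=0\}$ is $\Gal(l/k)$-stable and defined by polynomial equations over $k$, and use the abundance of conjugate inverses in $\A$ to produce a contradiction; carrying this out cleanly, without relying on the classification, appears to be at least as delicate as the classification itself.
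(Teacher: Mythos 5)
There is a genuine gap in the main step. Your two reductions (to $\cha(k)=5$ via Lemma~\ref{lem:esigma'-alg}, and then to non-degeneracy of $K(\A)\otimes_k k^s$ via Lemma~\ref{lem:azd-alg}) coincide with the paper's, and your observation that conjugate invertibility rules out absolute zero divisors in $\A$ itself is correct. But your ``preferred route'' for the remaining step is circular: the extension of the Allison--Smirnov classification to characteristic $5$ in section~\ref{se:classchar5} is itself a result of this paper, and its proof begins by applying Theorem~\ref{thm:AG1}, whose Step~1 invokes Theorem~\ref{thm:AGadjoint} and hence requires $\A$ to be algebraic --- precisely the statement being proved. (The statement of Theorem~\ref{thm:AG1} explicitly appeals to Theorem~\ref{thm:div-alg} to discharge the algebraicity hypothesis.) So the classification in characteristic $5$ cannot be used as an input here, and the Allison--Smirnov classification for $\cha(k)\neq 2,3,5$ says nothing about the case at hand.

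Your ``direct alternative'' is the route the paper actually takes, but your sketch is missing its two essential ingredients. First, the set of absolute zero divisors is not a linear subspace, so ``the zero locus is $\Gal$-stable and defined over $k$'' does not by itself produce an object to which the division hypothesis can be applied; the paper instead passes to the $k'$-\emph{linear span} $\DD_i$ of the absolute zero divisors in each graded piece of $K(\A)\otimes_k k'$, uses the fact (from Garc{\'\i}a--Gómez Lozano--Neher) that the bracket of two absolute zero divisors is again one to see that $\DD=\sum\DD_i$ is a graded subalgebra invariant under all graded automorphisms, and only then descends it to a graded subalgebra $\CC$ over $k$. Second, the contradiction is not derived from ``abundance of conjugate inverses'' in any direct polynomial sense, but from Jordan pair theory: $(\CC_{-2},\CC_2)$ and then $(\CC_{-1},\CC_1)$ are Jordan \emph{division} pairs because $\A$ is division, so their Jacobson radicals vanish; since $k'/k$ is separable the radical commutes with the base change, while $\DD_{\pm 2}$ and $\DD_{\pm 1}$, being spanned by absolute zero divisors, coincide with their own radicals --- forcing $\DD=0$ and contradicting the existence of a nonzero absolute zero divisor of degree $\pm 1$ (which is first extracted from an arbitrary one by a grading argument). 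Without these steps the argument does not close.
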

\begin{proof}
By Lemma~\ref{lem:esigma'-alg} we only need to consider the case $\Char k=5$.
By Lemma~\ref{lem:azd-alg}, it is enough to show that $K(\A)\otimes_k k^s$ contains no non-trivial absolute zero divisors.
Assume the opposite. Then, clearly, there is a finite Galois extension $k'/k$ such that
$K(\A)\otimes_k k'$ contains non-trivial absolute zero divisors.
The argument in the second half of the proof of~\cite[p.~450, Theorem~2.4]{GGLN} now shows that there is, in fact, a non-trivial absolute zero divisor
$x\in (\A_-\oplus \A_+)\otimes_k k'$.
(Notice that the Lie algebra called $L_0$ in that proof is equal to $\bigl( K(\A)_{-2} \oplus K(\A)_0 \oplus K(\A)_2 \bigr) \otimes_k k'$,
and that the subspace called $L_1$ is equal to $\bigl( K(\A)_{-1} \oplus K(\A)_1 \bigr) \otimes_k k'$.)

Write $x = x_+ + x_-$, $x_+\in \A_+\otimes_k k'$, $x_-\in \A_-\otimes_k k'$.
Then for any homogeneous element $a \in K(\A)_i$, we rewrite $[x,[x,a]] = 0$ as
\begin{multline*}
    0 = [x_-, [x_-, a]] + \bigl( [x_-, [x_+, a]] + [x_+, [x_-, a]] \bigr) + [x_+, [x_+, a]] \\
        \in \bigl( K(\A)_{i-2} \oplus K(\A)_i \oplus K(\A)_{i+2} \bigr) \otimes_k k' ,
\end{multline*}
and we conclude that $x_+$ and $x_-$ are are absolute zero divisors of $K(\A)\otimes_k k'$.
In particular, $K(\A)\otimes_k k'$ contains a non-trivial absolute zero divisor of grading $\pm 1$.

Let $\DD_i$, $-2\le i\le 2$, denote the $k'$-linear span of absolute zero divisors in $K(\A)_i\otimes_k k'$.
By~\cite[Lemma 2.1(i)]{GGLN} the Lie bracket of two absolute zero divisors is again an absolute zero divisor,
hence $\DD=\sum_{i=-2}^2\DD_i$ is a $5$\dash graded Lie subalgebra of $K(\A)\otimes_k k'$. This graded
Lie subalgebra $\DD$ is invariant under all graded Lie algebra automorphisms of $K(\A)\otimes_k k'$, and
hence is defined over $k$ by Galois descent (see e.g.~\cite[\S 15.1]{Loos-book}). In other words,
there is a $5$-graded $k$-Lie subalgebra $\CC=\sum_{i=-2}^2\CC_i$ of $K(\A)$ such that
$\CC_i\otimes_k k'=\DD_i$ for each $-2\le i\le 2$.

The pair of $k$-vector spaces $(\CC_{-2},\CC_2)$ has a structure of a Jordan pair over $k$
induced by the Lie bracket in
$K(\A)$. Since $\A$ is a structurable division algebra, $(\CC_{-2},\CC_2)$ is a Jordan division pair in the
sense of~\cite{Loos-book}.
Therefore, its Jacobson radical $\Rad(\CC_{-2},\CC_2)$ equals $0$ by~\cite[Proposition 4.4]{Loos-book}.
Since $k'/k$ is separable, by~\cite[\S 15.2]{Loos-book} one has
$$
\Rad(\CC_{-2},\CC_2)\otimes_k {k'}=\Rad\bigl(\CC_{-2}\otimes_k k',\CC_2\otimes_k{k'}\bigr)=\Rad(\DD_{-2},\DD_2).
$$
On the other hand, the subspaces $\DD_{2\si}$ are spanned
by absolute zero divisors of $K(\A)\otimes_k k'$ contained in $K(\A)_{2\si}\otimes_k k'$,
and hence $(\DD_{-2},\DD_2)=\Rad(\DD_{-2},\DD_2)$.
Therefore, $\DD_{2\si}=0$, and hence $\CC_{2\si}=0$.

Since $\CC_{2\si}=0$, the pair of $k$-vector spaces $(\CC_{-1},\CC_1)$ also has
a structure of a Jordan pair over $k$ induced
by the Lie bracket of $K(\A)$. Again, it is a Jordan division pair, since $\A$ is division, and by the same token as above we
conclude that $\DD_\si=0$. However, this is not possible, since we have established that $K(\A)\otimes_k k'$
contains a non-trivial absolute zero divisor of grading $\pm 1$. We have arrived to a contradiction, and the theorem is proved.
\end{proof}

\section{Structurable division algebras from simple algebraic groups of $k$-rank~1}\label{ss:SSA-SAG}

In this section, we prove a partial converse to Theorem~\ref{thm:AGadjoint}.
Namely, we show that for any adjoint simple algebraic group $\GG$ of $k$\dash rank 1  over a field~$k$ of
characteristic $\neq 2,3$, there is a structurable division algebra $\A$ over $k$ such that
\[
\GG\cong\Aut(K(\A))^\circ.
\]
Moreover, such an algebra $\A$ is unique up to isotopy. Recall that by Theorem~\ref{thm:div-alg}
all structurable division algebras over $k$ are algebraic.

Our main result is the following theorem.

\begin{theorem}\label{thm:AG1}
Let $k$ be a field of characteristic $\neq 2,3$. The assignment
\[
\A\mapsto\GG=\Aut(K(\A))^\circ
\]
defines a one-to-one correspondence between isotopy classes of central simple structurable division algebras
$\A$ over $k$, and isomorphism classes of adjoint simple algebraic $k$\dash groups $\GG$ of $k$\dash rank 1.
\end{theorem}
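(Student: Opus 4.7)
The plan is to check four claims: (a) for any central simple structurable division algebra $\A$ over $k$, $\GG := \Aut(K(\A))^\circ$ is an adjoint absolutely simple $k$-group; (b) $\GG$ has $k$-rank exactly one; (c) the assignment descends to isotopy classes and is injective there; (d) every adjoint simple algebraic $k$-group of $k$-rank one arises this way.

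Claim (a) is immediate from Theorem~\ref{thm:div-alg} (which makes $\A$ algebraic) combined with Theorem~\ref{thm:AGadjoint}. For (b), the grading torus of $K(\A)$ (Definition~\ref{def:gradingtorus}) embeds into $\GG$, giving a split subtorus $\GS \cong \Gm$ whose adjoint action recovers the 5-grading, and by Lemma~\ref{lem:5-parab} any maximal split $k$-torus $\GT \supseteq \GS$ of $\GG$ produces a subset $J$ of simple roots with $|J| \in \{1,2\}$, where the $k$-rank of $\GG$ is one exactly when $|J|=1$. I would rule out $|J|=2$ as follows: pick a simple root $\alpha \in J$ and a non-zero $x$ in the root space $\LL_\alpha \subseteq K(\A)_1 = \A$. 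For $(x,0)$ to be one-invertible (which it must be, by Corollary~\ref{cor:oneinverse}), Theorem~\ref{th:1inv}(ii) requires a $u \in \A$ with $V_{x,u} = \id$, equivalently $[x_+, u_-] = -2\,\id$ on $K(\A)_1$. But $[x,u]$ is supported in the Cartan plus root spaces $\LL_{\alpha-\gamma}$ for various $\gamma \in J$, and its adjoint action on a root space $\LL_\delta$ for a simple root $\delta \in J$ distinct from $\alpha$ is a scalar different from $-2$ (the coroot $\alpha^\vee$ pairs differently with different simple roots), a contradiction. Hence $|J|=1$ and the $k$-rank of $\GG$ is one.

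For the ``isotopy implies isomorphic group'' direction of (c), Theorem~\ref{th:isotopic_isomor} gives a graded Lie algebra isomorphism $K(\A) \cong K(\A')$, so $\Aut(K(\A))^\circ \cong \Aut(K(\A'))^\circ$. Conversely, an isomorphism $\GG \cong \GG'$ induces an isomorphism of the commutator ideals $K(\A) \cong K(\A')$ of their Lie algebras (by Theorem~\ref{thm:AGadjoint}); since the 5-grading corresponds to a choice of pair of opposite minimal parabolics, and any two such pairs are $\GG(k)$-conjugate in the $k$-rank one setting (by the argument in the proof of Theorem~\ref{th:M(G)}, where all minimal parabolics are pairwise opposite), one may modify the abstract isomorphism by an inner automorphism to respect the grading, and then Theorem~\ref{th:isotopic_isomor} yields the desired isotopy.

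For (d), given $\GG$ adjoint simple of $k$-rank one, pick a maximal split $k$-torus $\GS \cong \Gm$ and a pair of opposite minimal parabolics $\GP_\pm$ with common Levi $\Cent_{\GG}(\GS)$. Let $\LL := \Lie(\GG)$ and $K := [\LL,\LL]$; the adjoint action of $\GS$ induces a 5-grading on $K$, which is central simple (Lemma~\ref{lem:liead}) and algebraic (Lemma~\ref{lem:expad}(i)). Define $\A := K_1$, $\Ss := K_2$, and transport the Lie bracket to a triple product and involution on $\A$ via Definition~\ref{def:Lie alg} in reverse, using Remark~\ref{rem:AF99} for the identification with the Kantor pair structure. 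The resulting $\A$ is a central simple structurable algebra with $K(\A) \cong K$ as 5-graded Lie algebras; its division property follows by applying Theorem~\ref{th:1inv}(ii), Lemma~\ref{le:1inv to div} and Corollary~\ref{cor:oneinverse} in reverse to a Weyl-type element of $\GG(k)$ swapping $\GP_+$ and $\GP_-$, whose existence follows from Lemma~\ref{le:MSandR1G} and Theorem~\ref{th:M(G)}; finally Theorem~\ref{thm:AGadjoint} applied to $\A$ gives $\Aut(K(\A))^\circ \cong \GG$. The main obstacle here, especially in characteristic $5$, is recovering the structurable algebra from a 5-graded simple algebraic Lie algebra uniformly in all characteristics $\neq 2,3$: since Theorem~\ref{th:kantorkoecher} does not directly apply when $\Char k = 5$, one has to rely on algebraicity of $K$ (Lemma~\ref{lem:expad}) and a direct verification of the Kantor pair axioms of Definition~\ref{def:Kantor} from the Jacobi identity on $K$, in the spirit of Lemma~\ref{lem:univ-alg}.
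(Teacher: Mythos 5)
Your overall architecture (forward direction with a rank computation, descent to isotopy classes, surjectivity) matches the paper's three-step proof, and parts (a) and (c) are essentially the paper's argument. But part (b) contains a genuine error. You reduce the $k$\dash rank computation to showing $|J|=1$, asserting that ``the $k$\dash rank of $\GG$ is one exactly when $|J|=1$.'' This is false: the $k$\dash rank of $\GG$ equals $\dim\GT=|\Pi|$, not $|J|$, and Lemma~\ref{lem:5-parab} only guarantees $J\subseteq\Pi$ with $|J|\le 2$. A concrete counterexample to your criterion: the split group of type $E_8$ carries the $5$\dash grading coming from the single end node of its Dynkin diagram (the one circled in the index $E_{8,1}^{91}$), so $|J|=1$, yet its $k$\dash rank is $8$; the corresponding structurable algebra is the split $(8,8)$\dash product, which is of course not a division algebra. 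So even granting your computation ruling out $|J|=2$, you have not used the division hypothesis to exclude $\Pi\supsetneq J$, which is where the real content lies. The paper's Step~1 does this differently: one-invertibility of \emph{every} nonzero $(x,s)$ (Theorem~\ref{th:formula-oneinverse}) produces an element $n\in\GG$ reversing the grading, which forces the graded complement of $K(\A)$ in $\Lie(\GG)$ to sit in degree $0$, hence $K(\A)_\alpha=\LL_\alpha$ for all $\alpha\neq 0$; one then runs Allison's argument from \cite[Theorem 3.1]{A86}, ``(ii)$\Rightarrow$(iii)'', which genuinely exploits conjugate invertibility of all nonzero elements of $\A$ to force $\Phi(\GT,\GG)$ to have rank one. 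You would need to supply an argument of this kind.

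A secondary, less serious issue is in (d). Your plan to ``transport the Lie bracket to a triple product and involution via Definition~\ref{def:Lie alg} in reverse'' skips the step of knowing that the Kantor pair $(\LL_1,\LL_{-1})$ actually \emph{comes from} a structurable algebra: a Kantor pair need not have a unit, and the paper uses the criterion of \cite[Corollary 15]{AF99} that this happens precisely when some $(x,0)$ is one-invertible in the standard graded embedding. Verifying this requires identifying $\radu(\GP_\sigma)(k)$ with $\{e_\sigma(x,s)\}$ --- that is Lemma~\ref{lem:expad}(ii), which rests on the distribution-algebra recognition Lemma~\ref{lem:H1H2} --- and then the Bruhat decomposition to factor a Weyl representative as $e_-(y,t)\,e_\sigma(x,s)\,e_-(z,r)$. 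Your appeal to ``a Weyl-type element whose existence follows from Lemma~\ref{le:MSandR1G} and Theorem~\ref{th:M(G)}'' gives you the element of $\GG(k)$ but not this factorization, which is the point of Lemma~\ref{lem:kantor-str}.
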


In the proofs of Theorem~\ref{thm:AG1} and of the preliminary lemmas, we will extensively use notation of
Definition~\ref{def:rootsys} and Lemma~\ref{lem:5-parab}.

\begin{lemma}\label{lem:L'}
Let $k$ be a field, $\Char k\neq 2,3$. Let $\GG$ be an adjoint simple algebraic group over $k$.
Let $\LL=\Lie(\GG)$ be its Lie algebra, and let $\LL=\bigoplus\limits_{i=-2}^2\LL_i$ be any $5$\dash grading on $\LL$
such that $\LL_1\oplus\LL_{-1}\neq 0$. Then

\begin{compactenum}[\rm (i)]
\item\label{L':grade} $\LL'=[\LL,\LL]$ is a $\ZZ$-graded ideal of $\LL$ and $\LL'_i=\LL_i$ for all $i\neq 0$.
\item\label{L':i} $[\LL_\sigma,\LL_{\sigma}]=\LL_{2\sigma}$.
\item\label{L':ii} $[\LL_\sigma,\LL_{-\sigma}]=\LL'_0$.
\end{compactenum}
\end{lemma}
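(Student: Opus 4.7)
The plan is to leverage the simplicity of $\LL' := [\LL,\LL]$ from Lemma~\ref{lem:liead}\eqref{liead:L'}: I will produce a single graded ideal of $\LL'$ from which (ii) and (iii) can both be read off, after first settling (i) by a short direct argument.

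First I would dispose of (i). Since the $5$\dash grading is induced by the adjoint action of the split torus $\GS$ of Lemma~\ref{lem:5-parab}\eqref{5-parab:PU} by Lie algebra automorphisms, $\LL'$ is automatically graded as a subspace of $\LL$. Let $\zeta \in \Lie(\GS) \subseteq \LL_0$ be the element whose ad-action gives the grading derivation, so $[\zeta, x] = i\cdot x$ for every $x \in \LL_i$. For $i \in \{\pm 1, \pm 2\}$ the scalar $i$ is a unit in $k$ (as $\Char k \neq 2,3$), so $x = i^{-1}[\zeta,x] \in [\LL,\LL] = \LL'$; combined with the trivial inclusion $\LL'_i \subseteq \LL_i$, this yields $\LL'_i = \LL_i$ for $i\neq 0$.

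For (ii) and (iii), set $T := [\LL_\sigma,\LL_{-\sigma}]$ and consider the graded subspace
\[ M := [\LL_{-\sigma},\LL_{-\sigma}] \;\oplus\; \LL_{-\sigma} \;\oplus\; T \;\oplus\; \LL_\sigma \;\oplus\; [\LL_\sigma,\LL_\sigma] \subseteq \LL. \]
The key claim is that $M$ is a graded ideal of $\LL$. Its verification amounts to a finite list of Jacobi manipulations that are kept tractable by the $5$\dash grading, since anything of degree outside $[-2,2]$ vanishes. For example,
\[
[\LL_{2\sigma}, T] \subseteq [[\LL_{2\sigma},\LL_\sigma],\LL_{-\sigma}] + [\LL_\sigma,[\LL_{2\sigma},\LL_{-\sigma}]] = 0 + [\LL_\sigma,\LL_\sigma] = M_{2\sigma},
\]
and similarly $[\LL_{2\sigma},[\LL_{-\sigma},\LL_{-\sigma}]] \subseteq T = M_0$ after one application of Jacobi. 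The remaining inclusions $[\LL_i, M_j] \subseteq M_{i+j}$ are either symmetric variants of these or are immediate from the grading together with $[\LL_0,\LL_i]\subseteq \LL_i$.

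By (i), each $M_i$ with $i\neq 0$ lies in $\LL'_i = \LL_i$, and $M_0 = T \subseteq \LL'_0$, so $M$ is in fact a graded ideal of the simple Lie algebra $\LL'$. Since the set $J$ of Lemma~\ref{lem:5-parab}\eqref{5-parab:roots} is non-empty and each $\beta \in J$ contributes a non-zero root space to $\LL_\sigma$ (and, via the opposite parabolic, to $\LL_{-\sigma}$), we have $M_\sigma = \LL_\sigma \neq 0$ and hence $M \neq 0$. Simplicity of $\LL'$ then forces $M = \LL'$, and comparing graded pieces yields
\[
[\LL_\sigma,\LL_{-\sigma}] = M_0 = \LL'_0 \quad\text{and}\quad [\LL_\sigma,\LL_\sigma] = M_{2\sigma} = \LL'_{2\sigma} = \LL_{2\sigma},
\]
which are (iii) and (ii) (the analogous identity for $-\sigma$ in (ii) is the $-2\sigma$-component of the same equality $M = \LL'$). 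The main obstacle will be the routine but somewhat lengthy bookkeeping needed to certify that $M$ is an ideal; the conceptual step is engineering the choice of $M$ so that (ii) and (iii) drop out of a single invocation of simplicity.
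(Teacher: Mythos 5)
Your proof is correct, but it takes a genuinely different route from the paper's. For (i), the paper argues via a dimension count: $\LL'$ has codimension at most one in $\LL$ by Lemma~\ref{lem:liead}, and since $\dim(\LL_1\oplus\LL_2)=\dim(\LL_{-1}\oplus\LL_{-2})$ (because $\Phi^+=-\Phi^-$), any graded complement must sit in degree zero; your observation that $x=i^{-1}[\zeta,x]$ for the inner grading derivation $\zeta\in\LL_0$ is more direct and equally valid. The larger divergence is in (ii): the paper passes to the algebraic closure, chooses a Chevalley basis with structure constants $\pm p$, $p\in\{1,2,3\}$, and invokes the external combinatorial fact (\cite[Lemma 2]{PS-elem}) that every root of $J$\dash height $2$ is a sum of two roots of height $1$; it then deduces (iii) from (ii), simplicity of $\LL'$, and a Jacobi manipulation. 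Your single graded ideal $M$ replaces all of this root combinatorics by one invocation of the simplicity of $\LL'$, in the spirit of the paper's own Lemma~\ref{lem:Gseparable}(i) but with the non-obvious twist of taking $M_{\pm 2\sigma}=[\LL_{\pm\sigma},\LL_{\pm\sigma}]$ rather than $\LL_{\pm 2\sigma}$. I checked the ideal property: every inclusion $[\LL_i,M_j]\subseteq M_{i+j}$ does follow from the Jacobi identity together with the vanishing of all degrees outside $[-2,2]$, the containment $M\subseteq\LL'$ is clear from (i), and $M\neq 0$ since $M\supseteq\LL_\sigma\oplus\LL_{-\sigma}\neq 0$. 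What your approach buys is the elimination of the base change, the Chevalley basis, and the external citation; what the paper's approach buys is an explicit root-vector description of how $\LL_{2\sigma}$ is generated, which is occasionally useful elsewhere but is not needed for this lemma.
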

\begin{proof}
We can assume from the start that $k=\bar k$ is algebraically closed. We apply the results of
Lemma~\ref{lem:5-parab} in this situation. Then $\GG$ is a split group, $\GT$ is a split maximal torus of $\GG$, and
$\Phi=\Phi(\GT,\GG)$ is a reduced root system.

Since $\LL'$ is a characteristic ideal of $\LL$, it
is $\ZZ$-graded. By Lemma~\ref{lem:liead}, $\LL'$~has codimension $\leq 1$ in $\LL$.
Assume that  $M\subseteq\LL$ is a $1$\dash dimensional
graded complement to $\LL'$. Since $\Phi^+=-\Phi^-$, the equalities~\eqref{eq:lieP} then imply that
$\dim(\LL_1\oplus\LL_2)=\dim(\LL_{-1}\oplus\LL_{-2})$, and hence $M\subseteq\LL_0$. This proves~\eqref{L':grade}.

In order to prove~\eqref{L':i}, note that
we can choose basis elements $e_\alpha\in\LL_\alpha$, $\alpha\in\Phi$,
so that if $\alpha,\beta,\alpha+\beta\in\Phi$, then
\begin{equation}\label{eq:eaeb}
[e_\alpha,e_\beta]=\pm pe_{\alpha+\beta},
\end{equation}
where $p$ is the smallest positive integer such that $\beta-p\alpha\not\in\Phi$ (see
e.g.~\cite[Exp.\@~XXIII, Corollaire 6.5]{SGA3}). Since $p\in\{1,2,3\}$, we always have $p\in k^\times$.
Then by Lemma~\ref{lem:5-parab}\eqref{5-parab:height},
it is enough to show that for any $\alpha\in\Phi$ such that
$\sum_{\beta\in J}m_\beta(\alpha)=2$ there are $\gamma,\delta\in\Phi$ such that
$\sum_{\beta\in J}m_\beta(\gamma)=\sum_{\beta\in J}m_\beta(\delta)=1$ and $\gamma+\delta=\alpha$.
This is proved in~\cite[Lemma 2]{PS-elem}.

Now we prove~\eqref{L':ii}. By Lemma~\ref{lem:liead} $\LL'$  is a simple Lie algebra, therefore
\[
\LL'_0=[\LL_{-1},\LL_1]+[\LL_2,\LL_{-2}].
\]
Using~\eqref{L':i}, we deduce that $\LL'_0=[\LL_{-1},\LL_1]$, since
\begin{align*}
	\bigl[[\LL_1,\LL_1],[\LL_{-1},\LL_{-1}]\bigr] &\subseteq \bigl[\LL_1,[\LL_1,[\LL_{-1},\LL_{-1}]]\bigr]\\
	&\subseteq \bigl[\LL_1,[ [\LL_1,\LL_{-1}],\LL_{-1}] \bigr]\subseteq [\LL_1,[\LL_0,\LL_{-1}]]\subseteq [\LL_1,\LL_{-1}].
	\qedhere
\end{align*}
\end{proof}

\begin{lemma}\label{lem:Gkantor}
Let $k$ be a field of characteristic different from $2,3$. Let $\GG$ be an adjoint simple algebraic group over
$k$. Let $\LL=\Lie(\GG)$ be its Lie algebra, and let $\LL=\bigoplus\limits_{i=-2}^2\LL_i$ be any $5$\dash grading on
$\LL$ such that $\LL_1\oplus\LL_{-1}\neq 0$.
Let $\zeta\in\LL$ be the grading derivation. Then $(\LL_1,\LL_{-1})$ is a Kantor pair in the sense of
Definition~\ref{def:Kantor} with respect
to the triple product operation $\LL_\sigma\times \LL_{-\sigma}\times \LL_{\sigma}\to \LL_\sigma$
given by
\[
\lK x,y,z \rK=-[[x,y],z],
\]
and its standard $5$\dash graded Lie algebra $\G=\G(\LL_1\oplus\LL_{-1})$
is canonically isomorphic to the graded subalgebra $[\LL,\LL]+k\zeta$ of $\LL$.
\end{lemma}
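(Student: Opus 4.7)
The plan is to handle the two assertions separately: first the Kantor pair axioms, then the isomorphism with the standard graded embedding.

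For the Kantor pair structure, I would argue as follows. Since $\LL$ is $5$-graded, the subspace $\LL_1\oplus\LL_{-1}$ is closed under the canonical Lie triple product $[x,y,z]_{\mathrm{LTS}}:=[[x,y],z]$ of $\LL$: indeed, if $x,y,z$ all have degree $\pm 1$, then $[[x,y],z]$ has degree $\pm 1$ again (the intermediate bracket $[x,y]$ lies in $\LL_{-2}\oplus\LL_0\oplus\LL_2$, and bracketing back against $\LL_{\pm 1}$ returns to $\LL_{\pm 1}$). Hence $\LL_1\oplus\LL_{-1}$ is a sign-graded Lie triple system in the sense preceding Theorem~\ref{thm:Kan-triple}. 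Comparing the sign convention of Theorem~\ref{thm:Kan-triple} (where $[x_\sigma,y_{-\sigma},z_\sigma]_{\mathrm{LTS}}=-\lK x_\sigma,y_{-\sigma},z_\sigma\rK$) with $[x,y,z]_{\mathrm{LTS}}=[[x,y],z]$ yields exactly $\lK x,y,z\rK=-[[x,y],z]$, so Theorem~\ref{thm:Kan-triple} converts this LTS into a Kantor pair with the prescribed triple product.

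For the isomorphism with $[\LL,\LL]+k\zeta$, I would define the candidate map $\Psi\colon [\LL,\LL]+k\zeta\to\G$ in the obvious graded way: on $\LL_{\pm 1}\subseteq[\LL,\LL]$ (equality by Lemma~\ref{lem:L'}\eqref{L':grade}) act by the identity, on $\LL_{\pm 2}\subseteq[\LL,\LL]$ send $s\mapsto \ad s|_{\LL_1\oplus\LL_{-1}}$, and on $\LL_0'+k\zeta$ send $h\mapsto\ad h|_{\LL_1\oplus\LL_{-1}}$ (in particular $\zeta\mapsto\delta$, the grading derivation of $\G$). The bracket relations defining $\G$ are built precisely so that $\Psi$ is a Lie algebra homomorphism: brackets of two $\pm 1$-elements realize the LTS-derivation, action of $\G_{\pm 2}$ or $\G_0$ on $\G_{\pm 1}$ is exactly the $\ad$-action, and brackets of two derivations coincide with their commutator. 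Surjectivity is immediate from the identities $\LL_{2\sigma}=[\LL_\sigma,\LL_\sigma]$ and $\LL_0'=[\LL_1,\LL_{-1}]$ of Lemma~\ref{lem:L'}.

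The main obstacle is injectivity. For a degree $\pm 2$ element $s\in\LL_{\pm 2}$ with $[s,\LL_{\mp 1}]=0$, Jacobi together with Lemma~\ref{lem:L'} quickly propagates the triviality of $\ad s$ to every graded piece of $[\LL,\LL]$, so $s$ lies in the center of the central simple Lie algebra $[\LL,\LL]$ and must vanish by Lemma~\ref{lem:liead}\eqref{liead:L'}. For the degree $0$ piece, suppose $h=h_0+\alpha\zeta$ with $h_0\in\LL_0'$ and $\ad h|_{\LL_{\pm 1}}=0$; then $\ad h_0$ acts by the scalars $\mp\alpha$ on $\LL_{\pm 1}$, and a Jacobi computation forces it to act by $\mp 2\alpha$ on $\LL_{\pm 2}$ and by $0$ on $\LL_0'=[\LL_1,\LL_{-1}]$. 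Since $\ad\zeta$ acts by $\sigma$ on $\LL_\sigma$, $2\sigma$ on $\LL_{2\sigma}$ and $0$ on $\LL_0'$, all contributions cancel and $\ad h|_{[\LL,\LL]}=0$. Invoking the isomorphism $\LL\cong\Der([\LL,\LL])$ from Lemma~\ref{lem:liead}\eqref{liead:iso}, we conclude $h=0$ in $\LL$, finishing injectivity and hence the proof.
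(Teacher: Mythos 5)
Your proof is correct, and its skeleton coincides with the paper's: the Kantor-pair axioms are obtained exactly as in the paper, by observing that $\LL_1\oplus\LL_{-1}$ is a sign-graded Lie triple system under $[[x,y],z]$ and invoking Theorem~\ref{thm:Kan-triple}, and your map $\Psi$ is the same map the paper calls $\phi$ (the identity in degrees $\pm1$, and $\ad(\cdot)|_{\LL_1\oplus\LL_{-1}}$ in degrees $0$ and $\pm2$, with $\zeta\mapsto\delta$), with surjectivity coming from Lemma~\ref{lem:L'} in both cases. The one place where you genuinely diverge is the injectivity step. The paper argues abstractly: the kernel of $\phi|_{[\LL,\LL]}$ is an ideal of the simple Lie algebra $[\LL,\LL]$ (Lemma~\ref{lem:liead}) that misses $\LL_{\pm1}$, hence is zero; injectivity then extends to all of $\LL$ (in particular to $k\zeta$) because $x\in\ker\phi$ forces $[x,[\LL,\LL]]=0$, while $\ad\colon\LL\to\Der([\LL,\LL])$ is faithful. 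You instead verify injectivity degree by degree: the Jacobi identity together with the generation statements $\LL_{2\sigma}=[\LL_\sigma,\LL_\sigma]$ and $\LL'_0=[\LL_1,\LL_{-1}]$ propagates the vanishing of $\ad s$ (resp.\ of $\ad h$) from $\LL_1\oplus\LL_{-1}$ to all of $[\LL,\LL]$, after which centrality in the simple algebra $[\LL,\LL]$ (for the degree-$\pm2$ piece) and faithfulness of $\ad\colon\LL\to\Der([\LL,\LL])$ (for the degree-$0$ piece, including $\zeta$) finish the argument. Both versions rest on exactly the same two inputs, Lemma~\ref{lem:liead} and Lemma~\ref{lem:L'}; the paper's is slightly slicker in that simplicity of $[\LL,\LL]$ disposes of the degree-$\pm2$ and degree-$0$ kernels in one stroke, whereas yours is more explicit and makes visible precisely where each hypothesis enters.
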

\begin{proof}
By Theorem~\ref{thm:Kan-triple} in order to check that $(\LL_1,\LL_{-1})$ is a Kantor pair, it is enough to check that $\LL_1\oplus \LL_{-1}$
is sign-graded Lie triple system with the triple product given by~\eqref{eq:tripleprod}.
It is clear that $\LL_1\oplus \LL_{-1}$ considered as a $k$\dash subspace of the $5$\dash graded
Lie algebra $\LL$
is a sign-graded Lie triple system with respect to the triple product $[x,y,z]=[[x,y],z]$. Since this triple
product satisfies the equations~\eqref{eq:tripleprod}, we conclude that $(\LL_1,\LL_{-1})$ is indeed a Kantor pair.

Let $\G=\G(\LL_1\oplus \LL_{-1})$ be the $5$\dash graded Lie algebra which is the standard graded embedding of the Lie triple system
$\LL_1\oplus \LL_{-1}$ as described in~\S~\ref{se:oneinverse}. Let $\delta\in\G_0$ be the grading derivation of $\G$.
Note that there is a homomorphism of Lie algebras
\[
\phi \colon \LL\to \Der(\LL_1\oplus \LL_{-1})\oplus (\LL_1\oplus \LL_{-1})
\]
such that $\phi|_{\LL_1\oplus\LL_{-1}}=\id_{\LL_1\oplus \LL_{-1}}$, and for any
$x\in \LL_2\oplus\LL_0\oplus\LL_{-2}$ we have
\[
\phi(x)=\ad(x)|_{\LL_1\oplus\LL_{-1}}\in\Der(\LL_1\oplus \LL_{-1}).
\]
By Lemma~\ref{lem:liead} $[\LL,\LL]$ is simple, hence $\phi|_{[\LL,\LL]}$ is injective.
Then for any non-zero $x\in\LL$ and $u\in [\LL,\LL]$, the element
$[\phi(x),\phi(u)]=\phi([x,u])$ of $\phi([\LL,\LL])$ is non-zero as soon as $[x,u]$ is non-zero.
Since $\LL=\Der([\LL,\LL])$ by the same Lemma~\ref{lem:liead}, we conclude that $\phi$ is injective
on the whole of $\LL$.

Observe that $\delta$ and $\phi(\zeta)$ act on
$\LL_1\oplus\LL_{-1}$ in the same way, hence $\delta=\phi(\zeta)$.
By Lemma~\ref{lem:L'} $[\LL,\LL]$ is the subalgebra of $\LL$ generated by $\LL_1$ and $\LL_{-1}$.
Therefore,
\[
\G=k\phi(\zeta)+\phi([\LL,\LL]),
\]
and hence $\G$ is isomorphic to $k\zeta+[\LL,\LL]\subseteq\LL$.
\end{proof}

\begin{lemma}\label{lem:kantor-str}
In the setting of Lemma~\ref{lem:Gkantor}, assume moreover that $\GG$
has $k$\dash rank 1. Then there is a structurable division algebra $\A$ over $k$ such that the Kantor
pair $(\LL_1,\LL_{-1})$ is the Kantor pair associated with $\A$
in the sense of Remark~\ref{rem:AF99}.

\end{lemma}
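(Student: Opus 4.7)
The plan is to invoke a converse of the Allison--Kantor--Koecher construction: build a central simple structurable algebra $\A$ whose associated $5$-graded Lie algebra $K(\A)$ is graded-isomorphic to $[\LL,\LL]$, and then use the $k$-rank 1 hypothesis to upgrade $\A$ to a structurable \emph{division} algebra. Combined with Lemma~\ref{lem:Gkantor}, this will identify the Kantor pair $(\LL_1,\LL_{-1})$ with the pair $(\A_+,\A_-)$ of Remark~\ref{rem:AF99}.

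The first step is to produce an $sl_2$-triple $\{e,h,f\}$ inside $[\LL,\LL]$ defined over $k$, with $e\in\LL_1$, $f\in\LL_{-1}$, and $h=[e,f]$ acting on $\LL_i$ as multiplication by $2i$. Since $\GG$ has $k$-rank $1$, the restricted Weyl group has order $2$, and $\GG(k)$ contains an element $w$ that normalizes the split torus $\GS$ of Lemma~\ref{lem:5-parab} and inverts its characters. The Bruhat decomposition expresses $w$ as a product of $k$-points of $\GU_{\pm}$, and differentiating this expression (using the description of $\GU_\sigma(k)$ via the $e_\sigma$-maps of Lemma~\ref{lem:expad}) yields the desired $sl_2$-triple. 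This triple exhibits $[\LL,\LL]$ as a direct sum of irreducible $\mathfrak{sl}_2$-modules of highest weights in $\{0,2,4\}$, matching condition $(\star)$ of Theorem~\ref{th:kantorkoecher}.

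Second, I would construct $\A$ on the vector space $\LL_1$ with unit $e$, defining the involution and multiplication in terms of the Lie bracket and $f$, chosen so that the structurable triple product $2\{x,y,z\}=2V_{x,y}z$ on $\A$ reproduces the Kantor pair triple product $-[[x,y],z]$ on $\LL_1$ supplied by Lemma~\ref{lem:Gkantor}. When $\Char k\neq 5$, the existence of such $\A$ with $K(\A)\cong[\LL,\LL]$ as graded Lie algebras is precisely Theorem~\ref{th:kantorkoecher}. For $\Char k=5$, the structurable identities must be verified by hand; here one invokes the algebraicity of the $5$-graded Lie algebra $[\LL,\LL]+k\zeta$ (Lemma~\ref{lem:expad}(i)) together with Lemma~\ref{lem:Ch-alg} to reduce to the Chevalley case over $\bar k$, and then descends the structurable algebra structure back to $k$.

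Finally, to prove that $\A$ is a division algebra, I would translate the $k$-rank 1 assumption into conjugate invertibility. For any nonzero $x\in\A=\LL_1$, the element $e_+(x,0)\in\GU_+(k)$ is nontrivial, and by the proof of Theorem~\ref{th:M(G)}, the rank 1 hypothesis ensures that there exist unique elements of $\GU_-(k)$ whose composition with $e_+(x,0)$ reverses the $5$-grading, i.e.\@ lies in $H_-$. This is precisely the content of the one-invertibility of $(x,0)$ in the sense of Definition~\ref{def:one inv}, so by Lemma~\ref{le:1inv to div}(i), $x$ is conjugate invertible in $\A$. Since this holds for every nonzero $x$, $\A$ is a structurable division algebra, and the identification of the Kantor pair $(\LL_1,\LL_{-1})$ with $(\A_+,\A_-)$ follows from the construction. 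The main obstacle is the verification of the structurable identities in characteristic $5$, where Theorem~\ref{th:kantorkoecher} is unavailable; the argument must rely on algebraicity (section~\ref{se:alg-gr}) and a descent from the algebraic closure rather than on the standard Allison--Kantor--Koecher machinery.
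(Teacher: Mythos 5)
Your outer steps match the paper's: you use the Bruhat decomposition of a nontrivial element of $\GU_+(k)$ (available by Lemma~\ref{lem:expad}) to produce a grading-reversing element $n\in\Norm_{\GG}(\GS)(k)$, and you reduce the division property to one-invertibility of all pairs $(x,0)$ via Lemma~\ref{le:1inv to div}. That part is sound and is essentially the paper's argument.

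The genuine gap is in your second step, the actual construction of $\A$. You never specify the multiplication and involution on $\LL_1$, and the two tools you propose do not cover the case you need. Theorem~\ref{th:kantorkoecher} is stated only for $\Char k\neq 2,3,5$, and even where it applies it produces a simple structurable algebra with $[\LL,\LL]\cong K(\A)$ as (graded) Lie algebras; identifying the Kantor pair $(\LL_1,\LL_{-1})$ with the triple product $-[[x,y],z]$ as the pair $(\A_+,\A_-)$ of Remark~\ref{rem:AF99} still requires an argument. For $\Char k=5$ your fallback --- ``verify the identities by hand,'' reduce to the Chevalley case via Lemma~\ref{lem:Ch-alg}, and descend --- is not a proof: there is no multiplication written down whose identities could be verified, and passing to $\bar k$ does not by itself make the structurable identities hold. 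Moreover, your appeal to condition~($\star$) already presupposes complete reducibility of $[\LL,\LL]$ as an $sl_2$-module, which is exactly what may fail in characteristic $5$. The paper sidesteps all of this by invoking the Allison--Faulkner criterion \cite[Corollary 15]{AF99}: a Kantor pair is the Kantor pair of a (unital) structurable algebra if and only if its standard graded embedding $\G(\LL_1\oplus\LL_{-1})$ contains a one-invertible element of the form $(x,0)$. Combined with Lemma~\ref{lem:Gkantor}, which identifies $\G(\LL_1\oplus\LL_{-1})$ with $[\LL,\LL]+k\zeta$, the Bruhat-decomposition argument you already have then yields both the existence of $\A$ and the division property in one stroke, uniformly for all characteristics $\neq 2,3$ (algebraicity having been secured by Lemma~\ref{lem:expad}). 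Your ``differentiate the Bruhat decomposition'' step is also imprecise as stated: the $sl_2$-triple with $e\in\LL_1$, $h=2\zeta$ comes from the first equation of the one-invertibility system \eqref{inverse lie bracket} for a pair $(x,0)$, namely $[x,u]=-2\sigma\zeta$, not from differentiating a group-level identity.
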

\begin{proof}
We use the notation of Lemma~\ref{lem:Gkantor}.
By~\cite[Corollary 15 and p.541]{AF99} a Kantor pair is associated with a structurable algebra $\A$ if and only if
the standard $5$\dash graded Lie algebra $\G=\G(\LL_1\oplus\LL_{-1})$ of this Kantor pair contains an  element
$(x,0)\in\G_\sigma\oplus\G_{2\sigma}$ that is one-invertible, i.e. there are
$(y,t),(z,r)\in\G_{-\sigma}\oplus\G_{-2\sigma}$ such that the element
\begin{equation}\label{eq:kantor-n}
e_{-\sigma}(y,t)e_\sigma(x,0)e_{-\sigma}(z,r)=n\in\End(\G)
\end{equation}
satisfies $n(\zeta)=-\zeta$. By Lemma~\ref{le:1inv to div} in order to show that $\A$ is division,
it is enough to show that any element $(x,0)\in\G_\sigma\oplus\G_{2\sigma}$ has this property.
Note that by Lemma~\ref{lem:Gkantor} we can assume that
\[
\G=[\LL,\LL]+k\zeta\subseteq\LL.
\]
By Lemma~\ref{lem:L'} we have $\G_\sigma=\LL_\sigma$ and $\G_{2\sigma}=\LL_{2\sigma}$.

Let $\GP_\sigma$, $\GS$, $\GT$, $\Phi$, $\Pi$, $J$ etc.
be as in Lemma~\ref{lem:5-parab}. Since $\GG$ has $k$\dash rank 1, $\GS=\GT$ is a maximal split $k$\dash subtorus of $\GG$.
Then $\Phi$ is of type $A_1$ or $BC_1$, and $J=\Pi=\{\beta\}$.

By Lemma~\ref{lem:expad} for any $(x,s)\in\LL_\sigma\oplus\LL_{2\sigma}$
the map $e_\sigma(x,s)$, considered as an endomorphism of $\LL$, belongs to $\GU_\sigma(k)$, and,
conversely, any element of $\GU_\sigma(k)$ is of this form.
Then by the Bruhat decomposition~\cite[Th\'eor\`eme 5.15]{BorelTits}
there are $(y,t),(z,r)\in\LL_{-\sigma}\oplus\LL_{-2\sigma}$ such that
\[
e_{-\sigma}(y,t)e_\sigma(x,s)e_{-\sigma}(z,r)=n\in\Norm_{\GG}(\GS)(k),
\]
where $n$ acts on $\Phi$ as the Weyl reflection sending $\beta$ to $-\beta$ (the only non-trivial element
of the Weyl group). Then by Lemma~\ref{lem:5-parab}\eqref{5-parab:height} the adjoint action of $n$
on $\LL$ satisfies $n(\LL_i)=\LL_{-i}$ for any $-2\leq i\leq 2$. Consequently, $n(\zeta)=-\zeta$.
Since, clearly, $n$ preserves the Lie subalgebra $[\LL,\LL]$ of $\LL$,
it also preserves $\G\subseteq\LL$. This shows that the element $(x,s)$ of $\G$ is
one-invertible.
\end{proof}

\begin{proof}[Proof of Theorem~\ref{thm:AG1}]
The proof will be divided into three steps.
\begin{step} 
	If $\A$ is a central simple structurable division algebra over $k$,
	then $\GG=\Aut(K(\A))^\circ$ is an adjoint simple algebraic $k$\dash group of $k$\dash rank 1.
\end{step}
Set $\LL=\Lie(\GG)$. By Theorem~\ref{thm:AGadjoint}
the $k$\dash group $\GG$ is indeed an adjoint simple algebraic $k$\dash group, and $K(\A)=[\LL,\LL]$.

It remains to show that $\GG$ has $k$\dash rank 1. Let $\GS\subseteq\Aut(K(\A))$ be the
grading torus of $K(\A)$.
Since $\GS\cong\Gm$ is connected, we have $\GS\subseteq\GG$. Thus, the $k$\dash rank of $\GG$ is $\geq 1$.

Since $\A$
is algebraic, $e_\sigma(x,s)\in\GG(k)$ for any $(x,s)\in K(\A)_\sigma\oplus K(\A)_{2\sigma}$. Since $\A$
is division, Theorem~\ref{th:formula-oneinverse} implies that any $(x,s)\neq (0,0)$ is one-invertible. This
implies the existence of $n\in\GG$ such that $n(K(\A)_i)=K(\A)_{-i}$ for any $i\in\ZZ$. Consequently,
$n$ acts on $\GS$ by inversion.
Therefore, if we consider the $\ZZ$-grading on the whole of $\LL$ induced by the adjoint action of $\GS$, then
$n(\LL_i)=\LL_{-i}$ for any $i\in\ZZ$. Since $K(\A)=[\LL,\LL]$ has codimension at most 1 in $\LL$ by
Lemma~\ref{lem:liead}, its graded complement lies in degree $0$.

Let $\GT\subseteq\GG$ be a maximal split $k$\dash subtorus of $\GG$ containing $\GS$. Then
$\LL$ decomposes into a direct sum of root subspaces $\LL_\alpha$, $\alpha\in\Phi(\GT,\GG)$.
Since the graded complement of $K(\A)$ in $\LL$ lies in degree $0$, we have
$K(\A)_\alpha=\LL_\alpha$ for any $\alpha\neq 0$.
Then it is shown exactly as in~\cite[Theorem 3.1]{A86}, ``(ii)$\Rightarrow$(iii)'' that $\Phi(\GT,\GG)$ has rank 1
(this part of the proof does not use the $\Char k=0$ assumption included in the statement
of Allison's theorem). This implies that $\GG$ has $k$\dash rank 1.

\begin{step} 
	For any adjoint simple algebraic $k$\dash group $\GG$ of $k$\dash rank 1
	there is an algebraic central simple structurable division algebra $\A$ over $k$ such that
	$\GG\cong\Aut(K(\A))^\circ$.
\end{step}
Set $\LL=\Lie(\GG)$. Let $\GT$ be a maximal split subtorus
of $\GG$. Since $\GG$ has $k$\dash rank~$1$, $\GT$ is 1-dimensional, and the root system $\Phi=\Phi(\GT,\GG)$
has to be of type $A_1$ or $BC_1$. Let $\beta\in\Phi$ be a simple root. For any $-2\leq i\leq 2$ set
\begin{equation}
\LL_i=\LL_{i\beta}.
\end{equation}
Clearly, this is a $5$\dash grading on $\LL$ with $\LL_1\oplus\LL_{-1}\neq 0$.
By Lemma~\ref{lem:Gkantor} $(\LL_1,\LL_{-1})$ is a Kantor pair in the sense of~\cite{AF99} with respect
to the triple product operation $\LL_\sigma\times \LL_{-\sigma}\times \LL_{\sigma}\to \LL_\sigma$
given by
\[
\{x,y,z\}=-[[x,y],z],
\]
and the Lie subalgebra $[\LL,\LL]+k\zeta\subseteq\LL$, where $\zeta\in\LL$ is the grading derivation,
is the corresponding $5$\dash graded Lie algebra $\G(\LL_1\oplus\LL_{-1})$.
By Lemma~\ref{lem:kantor-str} there is a structurable division algebra $\A$ over $k$ such that
$(\LL_1,\LL_{-1})\cong(\A,\A)$ is the Kantor pair associated with $\A$ in the sense of Remark~\ref{rem:AF99}.
In particular, we have a natural isomorphism of graded Lie algebras
\[
[\LL,\LL]\cong K(\A).
\]
By Lemma~\ref{lem:liead} the Lie algebra $[\LL,\LL]$ is central simple, and
\[
\GG\cong\Aut([\LL,\LL])^\circ\cong\Aut(K(\A))^\circ.
\]
Since $K(\A)$ is central simple, by~\cite[Corollaries~6 and 9]{A79} the algebra $\A$ is central simple.
By Lemma~\ref{lem:expad}, $\A$ is algebraic.

\begin{step} 
	Isotopic structurable algebras give rise to isomorphic simple algebraic groups, and vice versa.
\end{step}
By Theorem~\ref{th:isotopic_isomor} structurable algebras $\A$ and $\A'$ over $k$ are isotopic
if and only if $K(\A)$ and $K(\A')$ are isomorphic as graded Lie algebras. This readily implies that
if $\A$ and $\A'$ are isotopic, then the $k$\dash groups $\GG=\Aut(K(\A))^\circ$ and $\GG'=\Aut(K(\A'))^\circ$
are isomorphic.

Conversely, assume that $\GG$ and $\GG'$ are two isomorphic adjoint simple $k$\dash groups. Without loss of generality,
we can assume that $\GG=\GG'$. By Theorem~\ref{thm:AGadjoint} we have
\[
K(\A)=K(\A')=[\Lie(\GG),\Lie(\GG)].
\]
Let $\GS$ (respectively, $\GS'$) be the
1-dimensional split $k$\dash subtorus of $\GG$ which is the grading torus of the $5$\dash graded algebra $K(\A)$
(respectively, $K(\A')$).
Since the $k$\dash rank of $\GG$ is 1,
both $\GS$ and $\GS'$ are maximal split subtori of $\GG$, and hence they
are conjugate by an element $g\in\GG(k)$. The adjoint action of $g$ on $\LL$ preserves
$[\LL,\LL]$, and transforms the canonical grading of $K(\A)$ into the canonical grading of $K(\A')$.
Therefore, $K(\A)$ and $K(\A')$ are isomorphic as graded Lie algebras. Then by Theorem~\ref{th:isotopic_isomor}
the algebras $\A$ and $\A'$ are isotopic.
\end{proof}

%
%
\chapter{Moufang sets and structurable division algebras}\label{se:MS-SDA}

We finally come to the main goal of the paper.
We will show that every structurable division algebra $\A$ over a field $k$ with $\Char(k) \neq 2,3$
gives rise to a Moufang set $\mouf(\A)$ (Theorem~\ref{mainth:moufset}),
and conversely, that every Moufang set arising from a linear algebraic $k$\dash group of $k$\dash rank $1$ is isomorphic to
$\mouf(\A)$ for some structurable division algebra $\A$ 
(Theorem~\ref{mainth:algmoufset}).
Recall that by Theorem~\ref{thm:div-alg}, every structurable division algebra $\A$ over a field $k$ with $\Char(k) \neq 2,3$ is algebraic.

\section{Moufang sets from structurable division algebras}

We will make the connection with Moufang sets through rank one groups; see Lemma~\ref{le:MSandR1G}.
We continue to use the terminology introduced in section~\ref{se:oneinverse}.

\begin{theorem}\label{th:Gisrank1group}
	Let $\A$ be a structurable division algebra over a field $k$ of characteristic different from $2$ and $3$.
	Then the elementary group $G$ of $\A$ is an abstract rank one group with unipotent subgroups $U_+$ and~$U_-$.
\end{theorem}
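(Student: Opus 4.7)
The plan is to verify directly from Definition~\ref{def:rankonegroup} that $G=\langle U_+,U_-\rangle$ is an abstract rank one group with unipotent subgroups $U_+$ and $U_-$. I will first dispose of the easy bookkeeping. By Lemma~\ref{le:esigma}(ii), $e_\sigma$ is a bijection from $\A\oplus\Ss$ onto $U_\sigma$ inducing the group law $(x,s)\star(y,t)=(x+y,\,s+t+\psi(x,y))$, in which the subspace $\{0\}\times\Ss$ is central with abelian quotient; hence $U_\pm$ are nilpotent of class at most two. A direct application of $\ad(x+s)$ to the grading derivation $\zeta$, noting that $[x,s]\in\G_{\pm 3}=0$, yields $e_\sigma(x,s)(\zeta)=\zeta-\sigma x-2\sigma s$; comparing graded components of $\zeta-x-2s=\zeta+y+2t$ then forces $x=y=0$ and $s=t=0$, so $U_+\cap U_-=\{1\}$.

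For the main interchange condition, I would fix $a\in U_+^*$ and write $a=e_+(x,s)$ with $(x,s)\neq(0,0)$. By Theorem~\ref{thm:div-alg}, $\A$ is algebraic, so the machinery of Chapter~\ref{se:one-inv} applies; by Corollary~\ref{cor:oneinverse}, $(x,s)$ is one-invertible. Let $(u,t)$ be the unique solution of the system~\eqref{inverse} furnished by Theorem~\ref{th:1inv}(i), and set $b_1:=e_-(u+tx,t)$ and $b_2:=e_-(u-tx,t)$ in $U_-^*$. Theorem~\ref{th:1inv}(iii) then produces the \emph{flip element} $h:=b_1 a b_2\in H_-$. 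By Lemma~\ref{toevoeging}(i), $h$ swaps $U_+$ and $U_-$ under conjugation; in particular $h^{-1}U_-h=U_+$. Expanding $h=b_1 a b_2$ and absorbing $b_1\in U_-$ inside $b_1^{-1}U_-b_1=U_-$, this simplifies to
\[ b_2^{-1}a^{-1}U_- a\,b_2=U_+,\quad\text{i.e.,}\quad U_-^a=b_2 U_+ b_2^{-1}=U_+^{b_2^{-1}}. \]
Hence $b(a):=b_2^{-1}=e_-(-u+tx,-t)\in U_-^*$ witnesses the required equality $U_+^{b(a)}=U_-^a$.

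To finish I would address uniqueness and the symmetric condition. Any second witness $b'\in U_-^*$ would satisfy $b' b(a)^{-1}\in N_G(U_+)\cap U_-$, so it suffices to show this intersection is trivial. This follows from the grading: if $g=e_-(y,t)\in U_-$ normalises $U_+$, then for every $(x,s)$ the element $g\,e_+(x,s)\,g^{-1}$ must have the shape $e_+(x',s')$, and comparing the actions of both sides on $\zeta$ (together with the vanishing of $\G_{\geq 3}$ and $\G_{\leq -3}$) forces $(y,t)=(0,0)$. For the symmetric condition, that for each $b\in U_-^*$ there is a unique $a(b)\in U_+^*$ with $U_-^{a(b)}=U_+^b$, I would apply the same argument to the $\sigma=-$ case of Theorem~\ref{th:1inv}(iii): writing $b=e_-(x,s)$, letting $(u,t)$ solve~\eqref{inverse}, the element $h':=e_+(u+tx,t)\,b\,e_+(u-tx,t)\in H_-$ plays the analogous role and yields $a(b)=(e_+(u-tx,t))^{-1}$.

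The main obstacle I anticipate is identifying which of the various candidates from the decomposition $h=b_1 a b_2$ actually plays the role of $b(a)$: it is neither the left nor the right one-inverse itself, but the \emph{inverse in $U_-$} of the embedded left one-inverse, i.e.\ $b_2^{-1}=e_-(-u+tx,-t)$. This sign is the origin of the minus sign in the Moufang-set $\tau$ of Theorem~\ref{mainth:moufset}, which in the Jordan case specialises to the familiar $\tau(x)=-x^{-1}$; it drops out cleanly once one conjugates $h^{-1}U_-h=U_+$ from the left by $b_1$ and from the right by $b_2$, but is not obvious a priori. The secondary technical point, triviality of $N_G(U_+)\cap U_-$, is handled by the grading argument sketched above and is the only place where a short, self-contained verification beyond the one-invertibility machinery is required.
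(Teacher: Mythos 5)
Your proposal is correct and follows essentially the same route as the paper's proof: nilpotency of $U_\pm$ from the commutator formula in Lemma~\ref{le:esigma}, one-invertibility of every nonzero pair via Theorem~\ref{thm:div-alg} and Corollary~\ref{cor:oneinverse}, the flip element $h=e_{-\si}(u+tx,t)\,e_\si(x,s)\,e_{-\si}(u-tx,t)\in H_-$, and Lemma~\ref{toevoeging}(i) to extract $b(a)=e_{-\si}(-u+tx,-t)$, which is exactly the paper's $b(a)=h^{-1}e_{-\si}(z,t)a$. The extra verifications you include (that $U_+\cap U_-=\{1\}$ and that $b(a)$ is unique) are not needed: uniqueness is automatic in Timmesfeld's framework, which is why Definition~\ref{def:rankonegroup} parenthesizes it and the paper omits it.
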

\begin{proof}
By definition, $G=\langle U_+,U_-\rangle$.
Next, we observe that $U_+$ and $U_-$ are nilpotent subgroups (of nilpotency class 2), since by Lemma \ref{le:esigma}
\[ \bigl[ [e_\si(x,s),e_\si(y,t)],e_\si(z,r) \bigr] = [e_\si(0,2\psi(x,y)),e_\si(z,r)] = 0 \]
for all $x,y,z\in\A$ and all $s,t,r\in\Ss$.

Now let $(x,s)\in \A\times\Ss\setminus\{(0,0)\}$, let $\si=\pm1$, and let $a=e_\si(x,s)\in U_\si\setminus\{0\}$.
By Corollary \ref{cor:oneinverse} $(x,s)$ is one-invertible; thus by Definition \ref{def:one inv} there exist unique elements $y,z\in \A, t\in \Ss$ such that
\begin{equation}\label{eq:forminv}
	h := e_{-\si}(z,t)\,e_\si(x,s)\,e_{-\si}(y,t)\in H_-.
\end{equation}
Define $b(a):=e_{-\si}(-y,-t) = h^{-1} e_{-\si}(z,t)a$.
Then using Lemma \ref{toevoeging}(i) we obtain
\[ U_\si^{b(a)}=U_{-\si}^{e_{-\si}(z,t)a} = (U_{-\si})^a. \]
By Definition \ref{def:rankonegroup}, this shows that $G$ is an abstract rank one group with unipotent subgroups $U_+$ and~$U_-$.
\end{proof}

Applying Lemma \ref{le:MSandR1G} to the abstract rank one group of the previous theorem, we obtain a Moufang set $\mouf$.
We will use Construction \ref{constr:MS} to give an explicit description of this Moufang set in the form $\mouf(\U, \tau)$ with $\tau$ a permutation of $\U \setminus \{ 0 \}$.

The groups $U_+$ and $U_-$ are root groups of the Moufang set we constructed.
We begin with defining an addition on $\A\times \Ss$ such that $\A\times\Ss$ is a group isomorphic to $U_+$ and $U_-$.
\begin{definition}\label{def:U=AxS}
	Let $\U:=\A\times \Ss$ be the (non-abelian) group with addition
	\[(x,s)+ (y,t)=(x+y,s+t+ \psi(x,y)).\]
	By Lemma~\ref{le:esigma}(ii), $\U\cong U_+\cong U_-$.
	We will also write $0$ for $(0,0) \in \U$, and we will use the notation $\U^*$ for $\U \setminus \{ 0 \}$.

	For each element $u = (x,s) \in \U$, we set
	\begin{align*}
		e_+(u) = e_+(x,s)  \quad \text{and} \quad
		e_-(u) = e_-(x,s).
	\end{align*}
\end{definition}

\begin{construction}\label{constr:moufstructalg}
\begin{compactenum}[(i)]
    \item
	Let $G=\langle U_+,U_-\rangle$ be the abstract rank one group from Theorem \ref{th:Gisrank1group}.
	Then the set $Y$ of the Moufang set $\mouf$ obtained from Lemma~\ref{le:MSandR1G} is given by
	\[ Y = \{ (U_-)^{e_+(u)} \mid u \in \U \} \cup \{ U_+ \} . \]
	We identify $Y$ with $X=\U\cup \{\infty\}$ through the map
	\begin{align*}
		(U_-)^{e_+(u)}&\longleftrightarrow u\\
		U_+&\longleftrightarrow \infty.
	\end{align*}
	The action of elements in $G = \langle U_+, U_- \rangle$ on $Y$ is given by conjugation,
	and this induces an equivariant action of $G$ on $X = \U \cup \{ \infty \}$.
	We denote
	\[ U_\infty := \infty= U_+ \quad \text{and} \quad U_0 :=0= U_- . \]
    \item
	Let $a\in \U$, then the unique element in $U_\infty$ mapping $0$ to $a$ is given by $\alpha_a=e_+(a)$.
	Indeed, $e_+(a)\in U_\infty$ and
	\[(0)e_+(a)=(U_-)^{e_+(0)e_+(a)}=(U_-)^{e_+(a)}=a.\]
	It follows that for all $a,b\in \U$ we have $a+b=a\alpha_b$ and  $\U\cong U_\infty=\{\alpha_a\mid a\in \U\}$.
    \item
	For each $u = (x,s) \in \U^*$, we define $\mu_u$ to be the unique element in the double coset $U_0 \alpha_u U_0$ interchanging the elements $0$ and $\infty$ of $X$
	(see~\eqref{eq:mu} on page~\pageref{eq:mu}).
	By \eqref{eq:forminv} and Lemma \ref{toevoeging}(i), we have
	\begin{align}\label{mu} \mu_u = \mu_{(x,s)} = e_-(z,t) \, e_+(x,s) \, e_-(y,t)\in H_- , \end{align}
	where $(y,t)=(u-tx,t)$ and $(z,t)=(u+tx,t)$ are the left and right inverse of $(x,s)$, respectively.
    \item
	Let $e = (1,0) \in \U^*$, and define $\tau = \mu_e$.
	Then $U_0=U_\infty^\tau$.
\end{compactenum}
\end{construction}

As in (ii), we let $\alpha_u=e_+(u)$ for each $u\in \U$; then
\[ U_\infty = \{ \alpha_u \mid u \in \U \} \quad \text{and} \quad U_0 = \{ \alpha_u^\tau \mid u \in \U \} . \]
Our next goal is to describe the action of $\tau$ on $\U^*$ explicitly.
First we determine the action of $\tau$ on the Lie algebra $\G$.

\begin{lemma}\label{le:e-alpha}
	The automorphism $\tau=\mu_{(1,0)}\in H_-$ is equal to the ``gradation flipping'' automorphism $\varphi$ defined in Definition~\ref{def:G}.
	In particular, $\tau$ is an involution, and for each $u \in \U$, we have $e_-(u) = e_+(u)^\tau$.
\end{lemma}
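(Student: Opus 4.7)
The plan is to show $\tau=\varphi$ by computing the action of $\tau$ on each graded summand of $\G$ and checking it matches the prescribed action of $\varphi$.

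First I would pin down $\tau$ concretely. By Lemma~\ref{le:1inv to div} the left and right inverses of $(1,0)$ are both $(1,0)$, so from \eqref{mu} we have $\tau=e_-(1,0)\,e_+(1,0)\,e_-(1,0)\in H_-$. Applying Theorem~\ref{th:1inv}(iii) with $(x,s)=(1,0)$, $(u,t)=(1,0)$ and $\sigma=+1$, the restrictions $\tau|_{\G_{\pm 1}}$ (viewed as maps $\A\to\A$ after identifying the two copies of $\A$) are both equal to $P_{(1,0)}$. A direct computation from Definition~\ref{def:one inv}(iii), using $\psi(1,a)=\bar a-a$ and $U_1b=2\bar b-b$, gives
\[
P_{(1,0)}a \;=\; U_1\bigl(a+\tfrac{2}{3}\psi(1,a)\bigr)
            \;=\; U_1\bigl(\tfrac{1}{3}a+\tfrac{2}{3}\bar a\bigr)
            \;=\; a,
\]
so $P_{(1,0)}=\id_{\A}$. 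Consequently $\tau$ sends $x_+\in\A_+$ to $x_-\in\A_-$ and $x_-\in\A_-$ to $x_+\in\A_+$ for every $x\in\A$, which is exactly the action of $\varphi$ on $\A_\pm$.

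Next I would propagate this to the other graded pieces using that $\tau\in\Aut(\G)$. For $s\in\Ss$ the bracket rule in $K'(\A)$ gives $[1_+,s_+]=-2\psi(1,s)_+=-2(-2s)_+=4s_+$, where here $1_+,s_+$ lie in $\A_+=\G_1$ and the resulting element lies in $\Ss_+=\G_2$; hence $s_+=\tfrac14[1_+,s_+]$ (with the first two entries in $\G_1$ and the left-hand side in $\G_2$). Applying $\tau$ and the fact that $\tau=\varphi$ on $\A_\pm$ already established:
\[
\tau(s_+) \;=\; \tfrac14[\tau(1_+),\tau(s_+)] \;=\; \tfrac14[1_-,s_-] \;=\; \tfrac14\cdot 4s_- \;=\; s_-,
\]
and symmetrically $\tau(s_-)=s_+$. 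For the $\Instrl(\A)$-part of $\G_0$, the relation $-2V_{a,b}=[a_+,b_-]$ yields
\[
-2\tau(V_{a,b}) \;=\; [\tau(a_+),\tau(b_-)] \;=\; [a_-,b_+] \;=\; -[b_+,a_-] \;=\; 2V_{b,a},
\]
so $\tau(V_{a,b})=-V_{b,a}$, which matches $\varphi$. Finally, $\tau\in H_-$ gives $\tau(\zeta)=-\zeta$ by the very definition of $H_-$. Since $\G=\Ss_-\oplus\A_-\oplus(\Instrl(\A)+k\zeta)\oplus\A_+\oplus\Ss_+$ and $\tau$ agrees with $\varphi$ on every summand, $\tau=\varphi$.

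The two consequences then follow almost immediately: $\varphi^2=\id$ is clear from its definition on each summand, so $\tau$ is an involution; and Lemma~\ref{toevoeging}(ii) asserts $e_\sigma(x,s)^\varphi=e_{-\sigma}(x,s)$, which, since $\tau=\varphi$, specializes to $e_+(u)^\tau=e_-(u)$ for all $u\in\U$. The main technical point is the explicit verification that $P_{(1,0)}=\id$—once this is in hand, the transfer to $\Ss_\pm$ and $\Instrl(\A)$ by applying $\tau$ to the defining bracket relations is routine, and the $\zeta$-component is settled by membership in $H_-$.
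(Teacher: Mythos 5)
Your proof is correct and follows essentially the same route as the paper's: pin down $\tau=e_-(1,0)e_+(1,0)e_-(1,0)$, compute $P_{(1,0)}=\id$ via Theorem~\ref{th:1inv}(iii) to get the action on $\A_\pm$, then transport to $\Ss_\pm$ and $\Instrl(\A)$ by applying the automorphism $\tau$ to the bracket relations $s_+=\tfrac14[1_+,s_+]$ and $[a_+,b_-]=-2V_{a,b}$, with the $\zeta$-component handled by membership in $H_-$. The only (immaterial) difference is that you deduce $e_+(u)^\tau=e_-(u)$ from Lemma~\ref{toevoeging}(ii) applied to $\varphi$, whereas the paper uses Lemma~\ref{toevoeging}(i) applied to $\tau\in H_-$.
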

\begin{proof}
	We first observe that the left and right inverse of $e = (1,0)$ are both equal to $e = (1,0)$ again,
	and hence
	\[ \tau = \mu_{(1,0)} = e_-(1,0) \, e_+(1,0) \, e_-(1,0)\in H_- . \]
	Since $\tau\in H_-$, we have $\zeta.\tau=-\zeta$.
	We now verify that $\tau$ maps every element $x \in \G_\pm$ to the corresponding element $x \in \G_\mp$.

	Indeed, by Theorem~\ref{th:1inv}(iii), we know that $\tau|_{\G_+} = P_{(1,0)}$ and $\tau|_{\G_-}=P_{(1,0)}$,
	which we can compute explicitly.
	We get, for all $a \in \G_\pm$,
	\begin{align*}
		P_{(1,0)}a
		&= U_1(a+\tfrac{2}{3}\psi(1,a)1)
		= U_1(a+\tfrac{2}{3}(\overline{a}-a)1) \\
		&= \tfrac{1}{3}U_1(a+2\overline{a})
		= \tfrac{1}{3}(2(\overline{a}+2a)-(a+2\overline{a}))
		= a \in \G_\mp .
	\end{align*}
    Since $\tau$ is an automorphism of $\G$, we find for all $s\in \G_{\pm2}$ that
     \begin{align*}
        s.\tau=\half \psi(s,1).\tau=-\tfrac{1}{4}[s,1].\tau=-\tfrac{1}{4}[s.\tau,1.\tau]=\half\psi(s,1)=s\in \G_{\mp2},
     \end{align*}
     and for all $a\in \G_+$, $b\in \G_-$ that
     \[ V_{a,b}.\tau=-\half [a,b].\tau=-\half [a.\tau,b.\tau]=\half [b,a]=-V_{b,a}.\]
     We conclude that $\tau=\varphi$, which is clearly an involution.
    Since $\tau\in H_-$, it follows from Lemma  \ref{toevoeging}(i) that
	\[ e_+(x,s)^\tau = e_-(x.\tau, s.\tau)=e_-(x,s) \]
	for all $(x,s) \in \U$.
\end{proof}

We can now determine the action of $\tau$ on $\U^*$ using Lemma \ref{compute tau}.

\begin{theorem}\label{prop:action tau}
	The map $\tau = \mu_{(1,0)}$ maps each element $(x,s) \in \U^*$ to $(-y,-t)$,
	where $(y,t)$ is the left inverse of $(x,s)$.
\end{theorem}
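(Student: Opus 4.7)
The plan is to compare two different expressions for the same element $\mu_{(x,s)}$ and read off the action of $\tau$ from the comparison.

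First, I apply Lemma~\ref{compute tau} with $a = (x,s) \in \U^*$. Since $\tau = \mu_{(1,0)}$ is an involution (Lemma~\ref{le:e-alpha}), we have $\tau^{-1} = \tau$, and Lemma~\ref{le:e-alpha} also gives $\alpha_u^\tau = e_+(u)^\tau = e_-(u)$ for every $u \in \U$. Consequently,
\[
\mu_{(x,s)} = \alpha^\tau_{(-(x,s)).\tau}\,\alpha_{(x,s)}\,\alpha^\tau_{-((x,s).\tau)}
= e_-\bigl((-(x,s)).\tau\bigr)\,e_+(x,s)\,e_-\bigl(-((x,s).\tau)\bigr).
\]
On the other hand, equation~\eqref{mu} in Construction~\ref{constr:moufstructalg} already provides the factorization
\[
\mu_{(x,s)} = e_-(z,t)\,e_+(x,s)\,e_-(y,t),
\]
where $(y,t)$ and $(z,t)$ are the left and right inverses of $(x,s)$ supplied by Corollary~\ref{cor:oneinverse}.

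The crucial step is to observe that the factorization of $\mu_{(x,s)}$ as an element of $U_0\,\alpha_{(x,s)}\,U_0$ is unique. Indeed, if $b_1 \alpha_{(x,s)} b_2 = b_1' \alpha_{(x,s)} b_2'$ with $b_i,b_i' \in U_0$, then $(b_1')^{-1} b_1 = \alpha_{(x,s)} b_2' b_2^{-1} \alpha_{(x,s)}^{-1}$ lies in $U_0 \cap \alpha_{(x,s)} U_0 \alpha_{(x,s)}^{-1} = U_0 \cap U_{(x,s)}$; this element fixes both $0$ and $(x,s)$, and since $U_0$ acts regularly on $X \setminus \{0\}$ by \Mref{reg}, it must be trivial. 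Comparing the two expressions for $\mu_{(x,s)}$ through this uniqueness yields
\[
(z,t) = (-(x,s)).\tau \quad\text{and}\quad (y,t) = -((x,s).\tau).
\]

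Finally, from the identity $\psi(y,y) = y\bar{y} - y\bar{y} = 0$, the inverse of $(y,t)$ in $\U$ equals $(-y,-t)$. Therefore the second relation above rearranges to $(x,s).\tau = -(y,t) = (-y,-t)$, as required. The main technical point is really only the uniqueness of the double-coset factorization; everything else is a direct translation between the Moufang-set formalism of section~\ref{se:moufsets} and the explicit one-invertibility data from Corollary~\ref{cor:oneinverse}.
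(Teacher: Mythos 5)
Your proof is correct and follows essentially the same route as the paper: compare the factorization $\mu_{(x,s)} = e_-(z,t)\,e_+(x,s)\,e_-(y,t)$ from~\eqref{mu} with the one given by Lemma~\ref{compute tau}, using Lemma~\ref{le:e-alpha} to identify $\alpha_u^\tau$ with $e_-(u)$, and read off $(x,s).\tau = -(y,t)$ from uniqueness of the decomposition. The only addition is that you spell out the uniqueness of the double-coset factorization (the paper simply cites the uniqueness of the $\mu$-maps from~\eqref{eq:mu}); your verification is sound, apart from the harmless point that with the paper's conjugation conventions $\alpha_{(x,s)}U_0\alpha_{(x,s)}^{-1}=U_{-(x,s)}$ rather than $U_{(x,s)}$, which does not affect the conclusion since $-(x,s)\neq 0$.
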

\begin{proof}
Let $u = (x,s) \in \U^*$ be arbitrary.
By \eqref{mu} and Lemma~\ref{le:e-alpha}, we have
\[ \mu_u = \mu_{(x,s)} = \alpha_{(z, t)}^\tau \, \alpha_{(x,s)} \, \alpha_{(y, t)}^\tau . \]
On the other hand, it follows from Lemma \ref{compute tau} that
\[\mu_u = \al^\tau_{(-u)\tau^{-1}} \, \al_u \, \al^\tau_{-(u\tau^{-1})} . \]
By the uniqueness of the $\mu$-maps (see \eqref{eq:mu}), we conclude that
\begin{align*}
	(x,s) . \tau^{-1} &= (-y, -t) .
\end{align*}
The lemma follows since $\tau$ is an involution.
\end{proof}

We can now prove the main result of this section.
\begin{theorem}\label{mainth:moufset}
	Let $\A$ be a structurable division algebra over a field of characteristic different from $2$ and $3$.
	Define the group $\U:=A\times S$ with addition
	\[ (x,s)+ (y,t)=(x+y,s+t+ \psi(x,y)). \]
	Let
	\[ q_x \colon \Ss\to\Ss \colon s\mapsto \frac{1}{6}\psi(x,U_x(sx)) , \]
	and define the permutation $\tau$ of $\U^*$ by
	\begin{align*}
	(x,0) &\ \mapsto\ (-\hat{x},0),\\
	(0,s) &\ \mapsto\ (0,-\hat{s}),\\
	(x,s) &\ \mapsto\ \Bigl( \hat{s}\big((q_{\hx}(s)+\hat{s})^\wedge\hx\big)+ \big(s+q_x(\hat{s})\big)^\wedge x,\ -\big(s+q_x(\hat{s})\big)^\wedge \Bigr).
	\end{align*}
	for all $0\neq x\in \A$ and $0\neq s\in \Ss$.
	Then $\mouf(\U,\tau)$ is a Moufang set, which is isomorphic to the Moufang set induced by the abstract rank one group from Theorem~\ref{th:Gisrank1group}.
	We will denote this Moufang set by $\mouf(\A)$.
\end{theorem}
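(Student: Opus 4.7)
The plan is to assemble the theorem from the ingredients that have already been built up throughout Chapters~\ref{se:structalg}--\ref{se:SSA and SAG} and the preceding sections of Chapter~\ref{se:MS-SDA}. Since $\A$ is a structurable division algebra, Theorem~\ref{thm:div-alg} guarantees that $\A$ is algebraic, so all the machinery of section~\ref{se:oneinverse} applies; in particular the elementary group $G = \langle U_+, U_- \rangle$ acting on the Lie algebra $\G$ is well defined, and by Theorem~\ref{th:Gisrank1group} it is an abstract rank one group with unipotent subgroups $U_+$ and $U_-$. Applying Lemma~\ref{le:MSandR1G}(ii) to this rank one group produces a Moufang set $\mouf$ with point set $Y = \{(U_-)^g : g \in G\}$ acted upon by $G$ via conjugation.

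Next I would invoke Construction~\ref{constr:moufstructalg} to recognise this abstract Moufang set in the concrete form $\mouf(\U, \tau')$ for some permutation $\tau'$ of $\U^*$. Specifically, the identification $(U_-)^{e_+(u)} \leftrightarrow u$ and $U_+ \leftrightarrow \infty$ makes $X = \U \cup \{\infty\}$ the underlying set; by Lemma~\ref{le:esigma}(ii) the map $u \mapsto e_+(u)$ identifies $\U$, equipped with the addition $(x,s) + (y,t) = (x+y, s+t+\psi(x,y))$ of Definition~\ref{def:U=AxS}, with the root group $U_\infty$ acting on $\U$ by right translation. Choosing the base element $e = (1,0) \in \U^*$, the permutation $\tau := \mu_{(1,0)}$ satisfies $U_\infty^\tau = U_0$, and by Construction~\ref{constr:MS} the data $(\U, \tau)$ determines the Moufang set completely.

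It then remains only to check that the explicit formula for $\tau$ in the statement coincides with the abstractly defined $\mu_{(1,0)}$. By Theorem~\ref{prop:action tau}, for each $u = (x,s) \in \U^*$, $u . \tau = (-y, -t)$ where $(y,t)$ is the left inverse of $(x,s)$ in the sense of Definition~\ref{def:one inv}. Corollary~\ref{cor:oneinverse} provides the explicit formulas for these left inverses in the three cases $s = 0$ with $x \neq 0$, $x = 0$ with $s \neq 0$, and $x,s$ both non-zero; substituting these directly yields exactly the three-part formula given in the statement. This identifies $\mouf(\U, \tau)$ with the Moufang set coming from the abstract rank one group of Theorem~\ref{th:Gisrank1group} and establishes the theorem.

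There is no serious obstacle left at this stage, as all the heavy lifting has already been done: the hardest single ingredient was the explicit computation of one-invertibility in Theorem~\ref{th:formula-oneinverse} (which required the passage through the isotope $\A^{\langle \hat x\rangle}$ via Lemma~\ref{le:isotopic1inv} and Lemma~\ref{le:x=1inv}), together with Lemma~\ref{le:e-alpha} identifying $\tau$ with the gradation-flipping automorphism $\varphi$. The present theorem is essentially the act of packaging these results into the language of Moufang sets; the only minor verifications are checking signs in the translation $(y,t) \mapsto (-y,-t)$ and confirming that the formulas of Corollary~\ref{cor:oneinverse} restrict correctly to the degenerate cases $x = 0$ and $s = 0$ (both of which reduce via Lemma~\ref{le:1inv to div} to the familiar Jordan-type conjugate inverse).
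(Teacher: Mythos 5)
Your proposal is correct and follows essentially the same route as the paper's own (very short) proof: combine Theorem~\ref{th:Gisrank1group} with Construction~\ref{constr:moufstructalg}, then compute $\tau=\mu_{(1,0)}$ via Theorem~\ref{prop:action tau} and the left-inverse formulas of Corollary~\ref{cor:oneinverse}, checking the sign change $(y,t)\mapsto(-y,-t)$ and the degenerate cases $x=0$ or $s=0$. Nothing is missing.
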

\begin{proof}
Let $\mouf$ be the Moufang set induced by the abstract rank one group from Theorem \ref{th:Gisrank1group},
let $\U$ be as in Definition~\ref{def:U=AxS}, and let $\tau = \mu_{(1,0)}$ be as in Construction~\ref{constr:moufstructalg}(iv).
Then by Theorem \ref{prop:action tau}, we can explicitly compute~$\tau$, using the formulas for the left inverse of $(x,s)$ in Corollary \ref{cor:oneinverse}.
The formula we obtain is precisely the formula given in the statement of the theorem.
\end{proof}

As an easy corollary, we can show that these Moufang sets are {\em special} if and only if they have abelian root groups,
which confirms that \cite[Conjecture 7.2.1]{DS}, the ``special implies abelian conjecture'' for Moufang sets,
holds for the examples arising from structurable division algebras.
\begin{definition}
    A Moufang set $\mouf(U, \tau)$ is called {\em special} if $(-x).\tau = -(x.\tau)$ for all $x \in U^*$.
\end{definition}
\begin{corollary}
	Let $\A$ be a structurable division algebra over a field of characteristic different from $2$ and $3$,
    and let $\mouf = \mouf(\A)$ be the corresponding Moufang set.
    Then $\mouf$ is special if and only if it has abelian root groups, i.e., if and only if $\A$ is a Jordan division algebra.
\end{corollary}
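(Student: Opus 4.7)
The plan is to separately establish the two equivalences: abelian root groups $\Leftrightarrow$ $\A$ is Jordan, and speciality $\Leftrightarrow$ $\A$ is Jordan. For the first one, recall from Definition~\ref{def:U=AxS} that the root group of $\mouf(\A)$ is $\U=\A\times\Ss$ with operation $(x,s)+(y,t)=\bigl(x+y,\,s+t+\psi(x,y)\bigr)$; since $\psi(y,x)=-\psi(x,y)$ and $\cha(k)\neq 2$, this group is abelian if and only if $\psi\equiv 0$, which by the identity $\psi(x,1)=x-\overline{x}$ forces the involution to be trivial, i.e.\@ by section~\ref{Jordan}, forces $\A$ to be Jordan; the converse is immediate. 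For the easy half of the second equivalence I would assume $\A$ is Jordan, so $\U=\A$, and observe that Theorem~\ref{mainth:moufset} gives $\tau(x)=-\hat{x}$; using $U_{-x}=U_x$ one finds $\widehat{-x}=-\hat{x}$, hence $\tau(-x)=\hat{x}=-\tau(x)$ and $\mouf(\A)$ is special.

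For the converse I would assume $\A$ is not Jordan, pick $x\in\A^*$ and $s\in\Ss^*$ (possible since $\Ss\neq 0$), and note that $-(x,s)=(-x,-s)$ in $\U$ because $\psi(x,x)=0$. The elementary sign identities
\[\widehat{-x}=-\hat{x},\quad \widehat{-s}=-\hat{s},\quad q_{-x}(-\hat{s})=-q_x(\hat{s}),\quad q_{-\hat{x}}(-s)=-q_{\hat{x}}(s),\]
together with $(-t)^\wedge=-\hat{t}$ for any conjugate invertible skew $t$, all follow at once from $U_{-x}=U_x$, from $L_{-s}=-L_s$ and from the bilinearity of $\psi$. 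Substituting into the formula for $\tau$ from Theorem~\ref{mainth:moufset} and tracking signs shows that the second coordinates of $\tau(-x,-s)$ and $-\tau(x,s)$ coincide, while their first coordinates differ by $2(s+q_x(\hat{s}))^\wedge x$. By Theorem~\ref{th:formula-oneinverse} the skew element $s+q_x(\hat{s})$ is non-zero and conjugate invertible, so $(s+q_x(\hat{s}))^\wedge$ is a non-zero skew element, and left multiplication by such an element is injective on $\A$ by~\eqref{LhatsLs}. Therefore $\tau(-x,-s)\neq-\tau(x,s)$, and $\mouf(\A)$ is not special.

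The main obstacle is the (purely mechanical) sign bookkeeping in the formula for $\tau$. The conceptual observation behind the argument is that under $(x,s)\mapsto(-x,-s)$ every summand in the formula for $\tau(x,s)$ is ``odd'' \emph{except} the summand $(s+q_x(\hat{s}))^\wedge x$ in the first coordinate, which is ``even''; this single asymmetry between the two contributions to the first coordinate of $\tau$ is precisely what forbids speciality once $\Ss\neq 0$.
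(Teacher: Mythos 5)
Your proof is correct, but it takes a more computational route than the paper. The paper first observes (via Lemma~\ref{compute tau} and the factorization~\eqref{mu} of $\mu_u$ through the left and right inverses) that $\mouf(\A)$ is special if and only if the left and right inverses of every element of $\A\times\Ss$ coincide; since Theorem~\ref{th:1inv}(ii) says these are $(u-tx,t)$ and $(u+tx,t)$, it then suffices to exhibit one element with $t\neq 0$, and $(1,s)$ with $s\neq 0$ does the job by Lemma~\ref{le:x=1inv}. You instead substitute $(-x,-s)$ directly into the explicit formula for $\tau$ from Theorem~\ref{mainth:moufset} and track signs; the ``even'' summand $\bigl(s+q_x(\hat s)\bigr)^\wedge x$ that you isolate is exactly the term $tx$ by which the left and right inverses differ in the paper's argument, so the two proofs detect the same asymmetry, yours at the level of the closed formula and the paper's at the level of one-invertibility. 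I checked your sign identities ($\widehat{-x}=-\hat x$, $q_{-x}=q_x$ combined with linearity of $q_x$ in its argument, $(-t)^\wedge=-\hat t$, and $-(x,s)=(-x,-s)$ since $\psi(x,x)=0$) and the nonvanishing of $2\bigl(s+q_x(\hat s)\bigr)^\wedge x$ (using that $s+q_x(\hat s)\neq 0$ by Theorem~\ref{th:formula-oneinverse}, that $L_t$ is invertible for any nonzero skew $t$ in a division algebra, and $\cha(k)\neq 2$); all of this is sound. Your treatment of the first equivalence (abelianness of $\U$ forces $\psi\equiv 0$, hence trivial involution) is also a welcome explicit justification of something the paper leaves implicit. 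The paper's approach is shorter and avoids the bookkeeping, but yours has the merit of being self-contained given only the statement of Theorem~\ref{mainth:moufset}.
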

\begin{proof}
    Observe that, by Lemma~\ref{compute tau} and~\eqref{mu} on page~\pageref{mu}, $\mouf$ is special if and only if
    the left inverse and the right inverse of each element coincide.

    Recall that $\A$ is a Jordan division algebra if and only if $\Ss = 0$.
    If this holds, then indeed the left and right inverse are both equal to the usual inverse in the Jordan algebra.

    Conversely, assume that $\Ss \neq 0$, and consider $(1,s) \in \A \times \Ss$ for some $s \neq 0$.
    Then by Theorem~\ref{th:1inv}\eqref{it:l-r-inv}, the left and right inverse of $(1,s)$ are $(u-t,t)$ and $(u+t,t)$, respectively,
    and since $t \neq 0$ by Lemma~\ref{le:x=1inv}, we conclude that $\mouf$ is not special.
\end{proof}

\begin{remark}\label{rem:isotopic}
If $\A$ and $\A'$ are isotopic structurable algebras over $k$, then by Theorem \ref{th:isotopic_isomor}, the Lie algebras $K(\A)$ and $K(\A')$ are graded isomorphic. It follows from Lemma \ref{le:MSisom} and Theorem \ref{th:Gisrank1group} that the Moufang sets constructed by Theorem \ref{mainth:moufset} from $\A$ and $\A'$ are isomorphic.
Up to a field isomorphism, the converse of this fact is also true; see Corollary~\ref{co:iso} below.
\end{remark}

We will now determine the Hua maps of the Moufang set $\mouf(\A)$; see Definition~\ref{def:hua}.
We get a (surprisingly) elegant expression.
\begin{theorem}\label{th:hua}
	Let $\A$ be a structurable division algebra over a field of characteristic different from $2$ and $3$,
	and let $\mouf(\A)$ be as in Theorem~\ref{mainth:moufset}.
	Then
	\begin{align*}
		(a,r).h_{(x,s)} &= \bigl( P_{(x,s)}a,\ q_x(r)+\psi(x,s(rx))-s(rs) \bigr) \\
		&= \bigl( P_{(x,s)}a,\ \half \psi(P_{(x,s)} r, P_{(x,s)} 1) \bigr).
	\end{align*}
	for all $(x,s)\in \U^*$ and all $(a,r)\in\U$.
\end{theorem}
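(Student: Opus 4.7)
The Hua map $h_{(x,s)} = \tau\mu_{(x,s)}$ is the product of two elements of $H_-$ (by Lemma~\ref{le:e-alpha} and~\eqref{mu}), so it belongs to $H_+$. The proof of Lemma~\ref{toevoeging}(i) adapts verbatim to show that for any $h \in H_+$ and any $(a,r) \in \U$, one has $e_+(a,r)^h = e_+(a.h,\, r.h)$, where $a.h$ and $r.h$ denote the action of $h$ on the graded pieces $\G_1 = \A_+$ and $\G_2 = \Ss_+$ respectively. Since $H_+$ also normalises $U_0 = U_-$, this implies that the induced action of $h_{(x,s)}$ on $\U \cong U_\infty$ takes the form
\[
(a,r).h_{(x,s)} = \bigl(a.h_{(x,s)}|_{\G_1},\; r.h_{(x,s)}|_{\G_2}\bigr),
\]
reducing the theorem to the computation of these two restrictions.

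For the $\G_1$-component, Lemma~\ref{le:e-alpha} says that $\tau$ sends any $a \in \G_1 = \A_+$ to the same underlying element in $\G_{-1} = \A_-$, and Theorem~\ref{th:1inv}(iii) (applied with $\sigma = +$) gives $\mu_{(x,s)}|_{\G_{-1}} = P_{(x,s)}$. Hence $a.h_{(x,s)}|_{\G_1} = P_{(x,s)} a$, which is the first entry of the claimed formula. The main obstacle is the $\G_2$-component, because Theorem~\ref{th:1inv}(iii) does not describe $\mu_{(x,s)}$ on $\G_{\pm 2}$. I would handle this by the same device that appears in the proof of Lemma~\ref{le:e-alpha} for $\tau|_{\G_{\pm 2}}$: since $\psi(r,1) = 2r$ for $r \in \Ss$ and since $[a_-, b_-] = -2\psi(a,b)_-$ by Definition~\ref{def:KacA}, one has the identity $r_- = -\tfrac{1}{4}[r_-, 1_-]$, where the outer $r_-$ lies in $\G_{-2} = \Ss_-$ and on the right $r_-, 1_-$ live in $\A_- = \G_{-1}$. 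Using that $\mu_{(x,s)}$ is a Lie algebra automorphism with $\mu_{(x,s)}|_{\G_{-1}} = P_{(x,s)}$, and that $\tau$ again acts as the identity on $\Ss$ when passing between $\G_{\pm 2}$ (by Lemma~\ref{le:e-alpha}), one computes
\[
r.h_{(x,s)}|_{\G_2} = -\tfrac{1}{4}\bigl[P_{(x,s)}(r)_+,\; P_{(x,s)}(1)_+\bigr] = \tfrac{1}{2}\,\psi\bigl(P_{(x,s)}\, r,\; P_{(x,s)}\, 1\bigr),
\]
which establishes the second form of the statement.

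Finally, the equivalence of the two displayed expressions in the theorem is obtained by direct expansion. Substituting the explicit formula $P_{(x,s)} a = U_x\bigl(a+\tfrac{2}{3}\psi(z,a)x\bigr) + s\bigl(a+2\psi(z,a)x\bigr)$ from Definition~\ref{def:one inv} into $\tfrac{1}{2}\psi(P_{(x,s)} r, P_{(x,s)} 1)$, using identities~\eqref{spsixys} and~\eqref{Pupsi} together with the skew-alternativity identity~\eqref{sxy} to collect terms, and invoking Lemma~\ref{le:be}(i) (which records $\tfrac{1}{2}\psi(P_x r, P_x 1) = q_x(r)$) for the $s$-free part, one sees that the remaining terms reduce to $\psi(x, s(rx)) - s(rs)$. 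This computation is pure bookkeeping once the clean Lie-algebraic form is in hand.
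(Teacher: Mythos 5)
Your overall skeleton is the same as the paper's: you observe that $h_{(x,s)}=\tau\mu_{(x,s)}$ preserves the grading, that its action on $\U$ is read off from $e_+(a,r)^{h_{(x,s)}}=e_+(a.h_{(x,s)},r.h_{(x,s)})$, that the $\G_1$-component is $P_{(x,s)}a$ via Lemma~\ref{le:e-alpha} and Theorem~\ref{th:1inv}(iii), and that the $\G_2$-component is obtained by writing $r=-\tfrac14[r,1]$ in degree $\mp 2$ and using that $\mu_{(x,s)}$ is a Lie algebra automorphism, giving $\half\psi(P_{(x,s)}r,P_{(x,s)}1)$. This is exactly how the paper proves the second displayed expression (your constant $-\tfrac14$ is in fact the correct one).

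The gap is in your derivation of the \emph{first} displayed expression $q_x(r)+\psi(x,s(rx))-s(rs)$. You propose to obtain it by expanding $\half\psi(P_{(x,s)}r,P_{(x,s)}1)$ and dismiss this as bookkeeping, but the expansion is not what you describe. The operator $P_{(x,s)}a=U_x\bigl(a+\tfrac23\psi(z,a)x\bigr)+s\bigl(a+2\psi(z,a)x\bigr)$ involves $z$, the first component of the \emph{right inverse} of $(x,s)$, and when $s\neq 0$ one has $z\neq\hx$ (indeed $z$ is given by the rather involved formula of Theorem~\ref{th:formula-oneinverse}). Consequently the ``$s$-free part'' of $P_{(x,s)}$ is \emph{not} $P_x=P_{(x,0)}$, so Lemma~\ref{le:be}(i), which records $q_x(r)=\half\psi(P_xr,P_x1)$, does not apply to it; all occurrences of $z$ would have to cancel against each other, and you give no argument for that. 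The paper avoids this entirely by a different route: it quotes from the proofs of Theorems 12 and 13 of \cite{AF99} that the restriction of $\mu_{(x,s)}$ to $\G_{-2}$ equals the top-degree component of $e_+(x,s)$, namely $\tfrac1{24}\ad(x)^4+\tfrac12\ad(x)^2\ad(s)+\half\ad(s)^2$, which is expressed in $x$ and $s$ alone and immediately yields $\tfrac16\psi(x,U_x(rx))+\psi(x,s(rx))-s(rs)$. Either supply the (nontrivial) verification that the $z$-terms cancel in your expansion, or replace this step by the argument via the graded components of $e_+(x,s)$.
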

\begin{proof}
Let $(x,s)\in \U^*$, let $(a,r)\in\U$, and let $(y,t)$ and $(z,t)$ denote the left and right inverse of $(x,s)$, respectively.
By definition, $h_{(x,s)}=\tau \mu_{(x,s)}\in \Aut(\G)$; notice that $h_{(x,s)}$ preserves the gradation of $\G$.
Hence
\[ e_+(a,r)^{h_{(x,s)}} = e_+ \bigl( a.h_{(x,s)},r.h_{(x,s)} \bigr) . \]
On the other hand, by Lemma \ref{le:e-alpha},
\[ e_+(a,r)^{h_{(x,s)}} = e_-(a,r)^{\mu_{(x,s)}} = e_+ \bigl( a.\mu_{(x,s)},r.\mu_{(x,s)} \bigr) . \]
Combining the last two equalities, we find that
\begin{equation}\label{eq:hua}
(a.r).h_{(x,s)} = \bigl( a.\mu_{(x,s)},r.\mu_{(x,s)} \bigr);
\end{equation}
notice that the elements $a$ and $r$ in the right hand side expression have to be interpreted as elements of $\G_-$ and $\G_{-2}$, respectively
(but $x\in \G_+$ and $s\in \G_{+2}$).

By Construction \ref{constr:moufstructalg}(iii), we have $\mu_{(x,s)} = e_-(z,t) \, e_+(x,s) \, e_-(y,t)$.
By Theorem \ref{th:1inv}(iii), we have $\mu_{(x,s)}|_{\G_-}=P_{(x,s)}$.
Hence in \eqref{eq:hua} we find $a.\mu_{(x,s)}=P_{(x,s)}a$.
Since $\mu_{(x,s)}$ is a Lie algebra homomorphism, we have
\[r.\mu_{(x,s)}=\half[r,1].\mu_{(x,s)}=\half [r.\mu_{(x,s)},1.\mu_{(x,s)}]=\half \psi(P_{(x,s)} r, P_{(x,s)} 1),\]
proving the second formula in the statement of the theorem.

Using the theory developed in \cite{AF99}, we can obtain another equivalent formula.
Indeed, in the proof of \cite[Theorem 12]{AF99} it shown that for $h=\mu_{(x,s)}=e_-(z,t)e_+(x,s)e_-(y,t)$,
we have $h|_{\G_{-2}}=\eps_2e_+(x,s)|_{\G_{-2}}$, where $\eps_2$ denotes the projection $\G\to \G_{-2}$.
In the proof of \cite[Theorem 13]{AF99} it is shown that\footnote{This can be easily verified from the definition of $e_+(x,s)$.}
\[ \eps_2e_+(x,s)|_{\G_{-2}} = \tfrac{1}{24} \ad(x)^4 + \tfrac{1}{2} \ad(x)^2 \ad(s) + \half \ad(s)^2 . \]
Using the definition of the Lie bracket of $\G$, we now find
\[ r.\mu_{(x,s)} = \tfrac{1}{6}\psi(x,U_x(rx))+\psi(x,s(rx))-s(rs), \]
proving the first formula in the statement of the theorem.
\end{proof}

\section{Structurable division algebras from algebraic Moufang sets}

In this section, we will verify that every Moufang set arising from a linear algebraic group of $k$\dash rank one, as described in Theorem~\ref{th:M(G)},
is indeed isomorphic to a Moufang set $\mouf(\A)$ for some structurable division algebra $\A$, as described in Theorem~\ref{mainth:moufset}.
Due to the work we have done in section~\ref{ss:SSA-SAG}, this will only be a matter of combining some of our earlier results.

\begin{theorem}\label{mainth:algmoufset}
	Let $k$ be a field with $\Char(k) \neq 2,3$, and let
	$\GG$ be a semisimple linear algebraic group of $k$\dash rank one. There is a finite separable field
	extension $l/k$ and a central simple structurable division algebra $\A$ over $l$ such that
	$\mouf(\GG) \cong \mouf(\A)$.
\end{theorem}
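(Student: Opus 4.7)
The plan is to reduce $\GG$, via successive applications of Lemma~\ref{lem:M(G)-isog}, to an adjoint absolutely simple group $\GH$ over a finite separable extension $l/k$, apply Theorem~\ref{thm:AG1} to produce a central simple structurable division algebra $\A$ over $l$ whose Lie algebra recovers $\GH$, and finally identify the two Moufang sets by identifying their root groups.

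\textbf{Step 1: Reduction to adjoint absolutely simple of relative rank one.} By Lemma~\ref{lem:M(G)-isog}(ii), there is an almost $k$\dash simple closed normal $k$\dash subgroup $\GG_0\le\GG$ of $k$\dash rank $1$ such that $\mouf(\GG_0)\cong\mouf(\GG)$. Let $\GG'$ be the adjoint quotient of $\GG_0$; then $\GG'$ is $k$\dash simple of $k$\dash rank $1$, and Lemma~\ref{lem:M(G)-isog}(i) gives $\mouf(\GG_0)\cong\mouf(\GG')$. By Lemma~\ref{lem:M(G)-isog}(iii) there is a finite separable field extension $l/k$ and an adjoint absolutely simple $l$\dash group $\GH$ of $l$\dash rank $1$ such that $\GG'\cong R_{l/k}(\GH)$ and $\mouf(\GH)\cong\mouf(\GG')$. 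Concatenating these isomorphisms yields $\mouf(\GG)\cong\mouf(\GH)$.

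\textbf{Step 2: Producing the structurable division algebra.} Since $\Char(l)=\Char(k)\neq 2,3$, we may apply Theorem~\ref{thm:AG1} to the adjoint simple $l$-group $\GH$ of $l$\dash rank $1$: this produces a central simple structurable division algebra $\A$ over $l$, unique up to isotopy, such that $\GH\cong\Aut(K(\A))^\circ$. By Theorem~\ref{thm:div-alg} the algebra $\A$ is algebraic, so all the material of Chapter~\ref{se:one-inv} and of Theorem~\ref{mainth:moufset} applies to $\A$, giving a Moufang set $\mouf(\A)$ whose root groups are the $U_\pm=\{e_\pm(x,s)\mid(x,s)\in\A\times\Ss\}$.

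\textbf{Step 3: Matching root groups.} The main point is to identify $\mouf(\GH)$ with $\mouf(\A)$. Since $\GH$ has $l$\dash rank $1$, choose a pair of opposite minimal parabolic $l$\dash subgroups $\GP_\pm\le\GH$ corresponding to the canonical $5$\dash grading of $K(\A)=[\Lie(\GH),\Lie(\GH)]$ (as in Lemma~\ref{lem:5-parab}). Lemma~\ref{lem:expad}(ii), applied inside $\GH=\Aut(K(\A))^\circ$, yields the equality of subgroups of $\GH(l)$
\[
\radu(\GP_\sigma)(l)=\{e_\sigma(x,s)\mid (x,s)\in\A\times\Ss\}=U_\sigma,\qquad \sigma\in\{+,-\}.
\]
Since $\Char(l)\ne 2,3$, we have $|l|\ge 5\ge 4$, so Lemma~\ref{lem:M(G)-E} applies and the little projective group of $\mouf(\GH)$ is $\GH(l)^+=\langle\radu(\GP_+)(l),\radu(\GP_-)(l)\rangle=\langle U_+,U_-\rangle=G$, the elementary group of $\A$ (modulo its center, which acts trivially on the Moufang set). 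Thus both Moufang sets arise from the same abstract rank one group $G$ with the same designated pair of unipotent subgroups $U_\pm$, so by Lemma~\ref{le:MSisom}(iii),
\[
\mouf(\GH)\cong\mouf(\A),
\]
which combined with Step~1 yields $\mouf(\GG)\cong\mouf(\A)$ as desired.

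The only genuinely non-trivial ingredient is Step~3, and there the key technical fact is Lemma~\ref{lem:expad}(ii), which matches the root groups $U_\sigma$ built abstractly from the Lie algebra $K(\A)$ with the $l$\dash rational points of the unipotent radicals of the minimal parabolics of $\GH$. Everything else is bookkeeping through the reduction and isogeny steps of Lemma~\ref{lem:M(G)-isog} together with the correspondence of Theorem~\ref{thm:AG1}.
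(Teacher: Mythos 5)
Your proposal is correct and follows essentially the same route as the paper: reduce to an adjoint absolutely simple $l$\dash group via Lemma~\ref{lem:M(G)-isog}, invoke Theorem~\ref{thm:AG1} and Theorem~\ref{thm:div-alg}, match the root groups, and conclude with Lemma~\ref{le:MSisom}(iii). The one point you gloss over in Step~3 is that the $U_\sigma$ of Definition~\ref{def:G} act on $\G=K(\A)+k\zeta$ whereas $\radu(\GP_\sigma)(l)$ acts on $\Lie(\GH)\cong\newl{K(\A)}$, and these two Lie algebras need not coincide; the identification of the two groups via the restriction isomorphism $\Aut(\Lie(\GH))^\circ\to\Aut(\G)^\circ$ is exactly what Lemma~\ref{lem:GG-G} (the paper's packaging of Lemma~\ref{lem:expad}) supplies, so you should cite it rather than Lemma~\ref{lem:expad}(ii) alone.
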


\begin{lemma}\label{lem:GG-G}
Let $\A$ be an algebraic simple structurable algebra over a field~$k$ of characteristic $\neq 2,3$.
Let $\GG=\Aut(K(\A))^\circ$ be the adjoint simple algebraic $k$\dash group of Theorem~\ref{thm:AGadjoint}.
Let $\G$ be the $5$\dash graded Lie algebra of Definition~\ref{def:KacA}\eqref{KacA:frakg}, and let $G=\left<U_+,U_-\right>$ be the elementary
group of Definition~\ref{def:G}.
Then there exist

\begin{compactenum}[\rm (i)]
\item natural inclusions $K(\A)\subseteq\G\subseteq\Lie(\GG)$ of $5$\dash graded $k$\dash Lie algebras;
\item a $k$\dash group isomorphism
\begin{equation}\label{eq:Autg}
\GG\cong \Aut(\G)^\circ
\end{equation}
induced by the restriction of the adjoint action of $\GG$ on $\Lie(\GG)$ to $\G$;
\item two opposite parabolic $k$\dash subgroups $\GP_\sigma$ of $\GG$ with unipotent radicals $\GU_\sigma$
such that the isomorphism~\eqref{eq:Autg} restricts to isomorphisms $\GU_\sigma(k)\cong U_\sigma$;
\item an isomorphism of groups $\left<\GU_+(k),\GU_-(k)\right>\cong G$ induced by~\eqref{eq:Autg}.
\end{compactenum}
\end{lemma}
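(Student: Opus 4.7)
The proof works by collating the structural results of Chapter~\ref{se:SSA and SAG} and identifying all objects involved. For~(i), Corollary~\ref{cor:newl-K} gives $\Lie(\GG)\cong\newl{K(\A)}$; simplicity of $K(\A)$ (from \cite[\S 5]{A79}) makes the natural graded homomorphism $K(\A)\to\newl{K(\A)}$ injective, yielding $K(\A)\subseteq\Lie(\GG)$. We extend this to $\G$ by sending $\zeta$ to the grading derivation of $\newl{K(\A)}$. Note that $V_{1,1}=\id_\A\in\Instrl(\A)\subseteq K(\A)_0$ already realises the grading derivation on $K(\A)_{\neq 0}$, so $V_{1,1}-\zeta$ lies in the centre of $\G$ and the constructed map $\G\to\Lie(\GG)$ has kernel at most this one-dimensional centre. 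Modulo this harmless identification, $\G$ embeds into $\Lie(\GG)$ as a graded subalgebra, and in fact $\G\cong K(\A)\times k(V_{1,1}-\zeta)$ as a direct product of Lie algebras.

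For~(ii), Theorem~\ref{thm:AGadjoint} combined with Lemma~\ref{lem:liead}\eqref{liead:iso} gives $\GG\cong\Aut(\Lie(\GG))^\circ\cong\Aut(K(\A))^\circ$. The direct product decomposition from~(i) shows that any Lie automorphism of $\G$ preserves the simple factor $K(\A)$ (as the unique simple Lie ideal of maximal dimension) and acts as the identity on the one-dimensional abelian central factor (whose automorphism group is trivial as a Lie algebra). Hence $\Aut(\G)^\circ\cong\Aut(K(\A))^\circ\cong\GG$, the isomorphism being induced by the restriction of the adjoint action of $\GG$ on $\Lie(\GG)$ to $\G$, extended trivially across the centre.

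For~(iii), apply Lemma~\ref{lem:5-parab} to the $5$-grading of $\Lie(\GG)$ arising from the grading torus $\GS\cong\Gm$ of $\G$, equivalently of $K(\A)$. This produces the unique pair of opposite parabolic $k$-subgroups $\GP_\sigma$ of $\GG$ with unipotent radicals $\GU_\sigma$ satisfying $\Lie(\GU_\sigma)=\Lie(\GG)_\sigma\oplus\Lie(\GG)_{2\sigma}=K(\A)_\sigma\oplus K(\A)_{2\sigma}$, where the second equality is Lemma~\ref{lem:L'}\eqref{L':grade}. Lemma~\ref{lem:expad}(ii) then identifies
\[
\GU_\sigma(k)=\{e_\sigma(x,s)\mid(x,s)\in K(\A)_\sigma\oplus K(\A)_{2\sigma}\}=U_\sigma,
\]
the latter equality being the definition of $U_\sigma$ preceding Definition~\ref{def:G}. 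Claim~(iv) follows at once: the isomorphism of~(ii) restricts to the identifications $\GU_\sigma(k)\cong U_\sigma$ of~(iii), hence further restricts to an isomorphism between the subgroups they generate.

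The main obstacle, which requires the most care, is the ambiguity in~(i) and~(ii): since $V_{1,1}$ already implements the grading derivation internally in $K(\A)$, the extra element $\zeta$ is redundant modulo the centre of~$\G$, and the map $\G\to\Lie(\GG)$ is not literally injective. Once $\G$ is recognised as the Lie algebra direct product $K(\A)\times k$, this subtlety is confined to a central factor acted on trivially by every group in sight, and all remaining claims reduce to a routine application of the algebraic-group/Lie-algebra dictionary established in Lemmas~\ref{lem:liead},~\ref{lem:5-parab} and~\ref{lem:expad}.
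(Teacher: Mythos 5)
Your reduction of everything to the decomposition $\G\cong K(\A)\times k(V_{1,1}-\zeta)$ is where the argument breaks, in two ways. First, the step that gets you from this decomposition to (ii) is false: the automorphism group of a one-dimensional abelian Lie algebra is \emph{not} trivial --- every nonzero scalar is a Lie algebra automorphism --- so for a genuine direct product one has $\Aut(K(\A)\times k)^\circ\cong\Aut(K(\A))^\circ\times\Gm$, which is not isomorphic to $\GG$. Second, and more fundamentally, if $\G$ really had a one-dimensional centre then parts (i) and (ii) of the lemma would simply be false: $\Lie(\GG)\cong\Der(K(\A))$ is centreless (a central derivation $D$ satisfies $\ad(Dx)=[D,\ad x]=0$ for all $x$, hence $D=0$ since $K(\A)$ is simple), so no inclusion $\G\subseteq\Lie(\GG)$ could exist, and ``modulo this harmless identification'' is not a substitute for the inclusion the lemma asserts.

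The resolution is that the sum $\Instrl(\A)+k\zeta$ in Definition~\ref{def:KacA}\eqref{KacA:frakg} is not a formal (direct) sum: by Remark~\ref{rem:AF99} the degree-zero part of $\G$ is a subspace of the derivations of $\A_+\oplus\A_-$, i.e.\@ elements of $\G_0$ are identified with their adjoint action. Your own --- correct --- observation that $\ad(V_{1,1})$ is the grading derivation of $K(\A)$ then shows $\zeta=V_{1,1}\in\Instrl(\A)$, so there is no extra central line and $\G\cong K(\A)$ is centreless. The paper's proof sidesteps even this computation: it embeds $K(\A)=[\LL,\LL]$ and $\zeta$ into $\LL=\Lie(\GG)\cong\Der(K(\A))$ as a graded subalgebra, notes that $[\LL,\LL]$ has codimension at most one in $\LL$ by Lemma~\ref{lem:liead}, so that $\G=K(\A)+k\zeta$ equals either $K(\A)$ or $\LL$, and applies Lemma~\ref{lem:liead}(ii) to get $\GG\cong\Aut(\G)^\circ$ in either case. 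Your treatment of (iii) and (iv) via Lemmas~\ref{lem:5-parab}, \ref{lem:L'} and~\ref{lem:expad} matches the paper and is fine once (i) and (ii) are repaired, except that the identification $\GU_\sigma(k)=U_\sigma$ still requires passing through the restriction map $\Aut(\LL)^\circ\to\Aut(\G)^\circ$ when $\G\neq\LL$, since $e_\sigma(x,s)$ is a priori an endomorphism of $\LL$ rather than of $\G$.
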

\begin{proof}
Set $\LL=\Lie(\GG)$.
By Definition~\ref{def:KacA}\eqref{it:KK'}, the Lie algebras $K(\A)$ and $K'(\A)$ are graded-isomorphic. We
identify $K(\A)$ with a graded Lie subalgebra of $\G$ via this isomorphism. By Theorem~\ref{thm:AGadjoint} we have $K(\A)=[\LL,\LL]$
and by Corollary~\ref{cor:newl-K} we have
$\LL\cong\Der(K(\A))\cong\newl{K(\A)}$. Let $\LL=\bigoplus\limits_{i=-2}^2\LL_i$ be the $5$\dash grading on $\LL$
induced by the isomorphism with $\newl{K(\A)}$. Then $K(\A)$ is a $5$\dash graded subalgebra of $\LL$.
Let $\zeta\in\LL$ be the grading derivation. Then, clearly, $\G\cong K(\A)+k\zeta$ is naturally
isomorphic to a $5$\dash graded Lie subalgebra of $\LL$. This settles (i).

In order to prove (ii), note that by Lemma~\ref{lem:liead}(i) $K(\A)=[\LL,\LL]$ is of codimension $\le 1$ in $\LL$.
Therefore, we have $\G=K(\A)$ or $\G=\LL$. Then by Lemma~\ref{lem:liead}(ii) we have $\GG\cong\Aut(\G)^\circ$.

Now we prove (iii) and (iv). Since $K(\A)_\sigma\neq 0$, we have $\LL_1\oplus\LL_{-1}\neq 0$, and hence by Lemma~\ref{lem:5-parab} there
are two opposite parabolic $k$\dash subgroups $\GP_\sigma$ of $\GG$ with unipotent radicals $\GU_\sigma$,
such that
\[
\Lie(\GP_\sigma)=\LL_0 \oplus \LL_{\sigma} \oplus \LL_{2\sigma} \quad\mbox{and}\quad \Lie(\GU_\sigma)=\LL_{\sigma}\oplus\LL_{2\sigma}.
\]
By Lemma~\ref{lem:expad} we have
\[
\GU_\sigma(k)=\{e_\sigma(x,s)\ |\ (x,s)\in\LL_\sigma\oplus\LL_{2\sigma}\},
\]
where $e_\sigma(x,s)$ is considered as an element of $\End(\LL)$.
If $\G=\LL$, this means exactly that $U_\sigma=\GU_\sigma(k)$. If $\G=K(\A)$ is of codimension 1 in $\LL$,
note that $\G=K(\A)+k\zeta$ is invariant under $e_\sigma(x,s)\in\End(\LL)$. Then, since the restriction map
$r \colon \GG\cong\Aut(\LL)^\circ\to\Aut(\G)^\circ$ is an isomorphism, we conclude that $r$ maps
$\GU_\sigma(k)$ isomorphically onto $U_\sigma\le\Aut(\G)(k)$. Thus, (iii) is proved.
Now the claim of (iv) follows, since by definition $G$ is the subgroup of $\Aut(\G)(k)$ generated by $U_+$ and $U_-$.
\end{proof}

\begin{proof}[Proof of Theorem~\ref{mainth:algmoufset}]
The algebraic $k$\dash group $\GG'=\GG/\Cent(\GG)$ is an adjoint semisimple $k$\dash group, and the natural
projection $p \colon \GG\to\GG'$ is a central $k$\dash isogeny. By Lemma~\ref{lem:M(G)-isog} $\mouf(\GG)\cong\mouf(\GG')$.
Also by Lemma~\ref{lem:M(G)-isog} there is a finite separable field extension $l/k$ and an adjoint simple
algebraic $l$-group $\GH$ such that $\mouf(\GG')\cong\mouf(\GH)$. Thus, we can assume that
$\GG$ is an adjoint simple group over $k$ from the start.

By Theorem~\ref{thm:AG1} there is a central simple structurable division algebra $\A$ over~$k$ (unique up to isotopy)
such that $\GG\cong\Aut(K(\A))^\circ$. Let $\G$ be the $5$\dash graded Lie algebra of Definition~\ref{def:KacA}\eqref{KacA:frakg},
and let $G=\left<U_+,U_-\right>$ be the elementary group of Definition~\ref{def:G}.
By Lemma~\ref{lem:GG-G} there is an isomorphism of $k$\dash groups $\phi \colon \GG\xrightarrow{\sim}\Aut(\G)^\circ$, and
there is a pair of opposite parabolic $k$\dash subgroups $\GP_\sigma$ of $\GG$ with unipotent
radicals $\GU_\sigma$ such that $\phi$ induces isomorphisms $\GU_\sigma(k)\cong U_\sigma$.
By definition of $\mouf(\GG)$, the subgroup $\left<\GU_+(k),\GU_-(k)\right>$ of $\GG(k)$ is an abstract
rank one group corresponding to $\mouf(\GG)$.
By Theorem~\ref{th:Gisrank1group} the group $G=\left<U_+,U_-\right>$ is an abstract rank one group.
By definition, $\mouf(\A)$ is the Moufang set corresponding to $G$.
Clearly, $\phi$ induces an isomorphism of the abstract rank one groups
$\left<\GU_+(k),\GU_-(k)\right>$ and $G=\left<U_+,U_-\right>$ that preserves their unipotent subgroups.
Therefore, by Lemma~\ref{le:MSisom} the Moufang sets $\mouf(\GG)$ and $\mouf(\A)$ are isomorphic.
\end{proof}

\begin{notation}\label{not:lambda}
Let $\lambda \colon k\to k'$ be a homomorphism of fields. For any $k$\dash algebra $R$, we denote by $\prescript{\lambda}{} R$
the $k'$\dash algebra $R\otimes_k k'$ which is the scalar extension of $R$ by means of $\lambda$.
Note that
\[
R\mapsto \prescript{\lambda}{} R
\] is a covariant functor from the category of $k$\dash algebras to the category of
$k'$\dash algebras. Restricting this functor to the category of commutative $k$\dash algebras, we obtain a
covariant functor from the category of affine $k$\dash schemes to the category of affine
$k'$\dash schemes that we denote by $\lambda^*$ (the base change functor).
Note that if $A$ is the dual $k$\dash Hopf algebra of an algebraic $k$\dash group $\GG$, i.e. $\GG=\Spec A$, then
\[
\lambda^*(\GG)=\Spec(\prescript{\lambda}{} A).
\]
We also denote by $\lambda^* \colon \GG(R)\to \lambda^*(\GG)(\prescript{\lambda}{} R)$ the canonical
homomorphism of the groups of points.
\end{notation}

\begin{remark}
Observe that in the setting of Notation~\ref{not:lambda}, if
$k=k'$ and $\lambda \colon k\xrightarrow{\sim} k$ is a field automorphism, then we can identify
$\prescript{\lambda}{} R$ and $R$ as rings (via the canonical ring isomorphism),
but the scalar multiplication in $\prescript{\lambda}{} R$ is given by the formula
$t\cdot x=\lambda(t)x$, $t\in k$, $x\in R$.
\end{remark}

\begin{remark}\label{rem:lambdaG}
Let $\GG$ be a linear algebraic $k$\dash group together with a $k$\dash representation $\phi \colon \GG\to\GL_{n,k}$. Then
we can identify the group $\lambda^*(\GG)$ with the subgroup of $\GL_{n,k'}$ given by equations obtained
by applying $\lambda$ to the equations determining $\phi(\GG)$. In this setting, the abstract group homomorphism
$\lambda^* \colon \GG(k)\to\lambda^*(\GG)(k')$ is the map that applies $\lambda$ to each entry
of matrices representing the elements of $\GG(k)$.
\end{remark}

\begin{theorem}\label{thm:kk'GM}
Let $k$ and $k'$ be two fields of characteristic $\neq 2,3$.
Let $\GG$ be an adjoint absolutely simple algebraic $k$\dash group of $k$\dash rank 1, and
let $\GG'$ be an adjoint absolutely simple algebraic $k'$\dash group of $k'$\dash rank 1.
Let $X$ and $X'$ be the sets of proper parabolic subgroups of $\GG$ and $\GG'$ respectively.
Let $\phi \colon X\to X'$ be a bijection that induces an isomorphism
$\phi \colon \mouf(\GG)\xrightarrow{\sim}\mouf(\GG')$ of Moufang sets. Then
there are a unique field isomorphism $\lambda \colon k\xrightarrow{\sim}k'$, and an isomorphism of $k'$\dash groups
$f \colon \lambda^*(\GG)\xrightarrow{\sim}\GG'$ such that $f\circ\lambda^*|_{X}=\phi$.
\end{theorem}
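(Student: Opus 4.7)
The plan is to reduce the statement to the algebra-theoretic framework developed in section~\ref{ss:SSA-SAG}, extract the field isomorphism $\lambda$ from the action of the grading torus on the root groups, and then recover $f$ through the Lie-algebra correspondence of Theorem~\ref{thm:AGadjoint}. First, by Lemma~\ref{lem:GG-G} together with Theorem~\ref{thm:AG1}, choose central simple structurable division algebras $\A$ over $k$ and $\A'$ over $k'$ together with identifications $\GG \cong \Aut(K(\A))^\circ$ and $\GG' \cong \Aut(K(\A'))^\circ$ that produce isomorphisms $\mouf(\GG) \cong \mouf(\A)$ and $\mouf(\GG') \cong \mouf(\A')$. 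Under these identifications, $\phi$ becomes an isomorphism $\psi \colon \mouf(\A) \to \mouf(\A')$. After choosing compatible base points $0$ and $\infty$ on either side, Lemma~\ref{le:MSisom}(i) reduces $\psi$ to a group isomorphism $\varphi \colon \U \to \U'$, where $\U = \A \times \Ss$, satisfying $\varphi^{-1} \tau \varphi = \tau' h'$ for some $h'$ in the Hua group $H'$ of $\mouf(\A')$.

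Next, the field isomorphism $\lambda$ is extracted from how $\varphi$ interacts with the Hua groups. By Lemma~\ref{lem:expad}, the grading torus $\GS \subseteq \GG$ gives a subgroup of $H$ isomorphic to $k^\times$ acting on $\U$ by the rule $(x, s) \mapsto (tx, t^2 s)$, and the analogous subgroup of $H'$ acts the analogous way on $\U'$. Conjugation by $\varphi$ carries the $k^\times$-image into $H'$; after adjusting by the Hua element $h'$, and using that inside the rank-one Levi there is only one grading torus compatible with the chosen flag, the image matches the $(k')^\times$-image. This comparison yields a group isomorphism $k^\times \to (k')^\times$. Combined with the fact that $\varphi$ is additive on the $\A$-component of $\U$ and must intertwine the $k^\times$-action on that component with the $(k')^\times$-action, this extends uniquely to a field isomorphism $\lambda \colon k \to k'$, and renders $\varphi$ into a $\lambda$-semilinear map.

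With $\lambda$ in hand, $\varphi$ is naturally reinterpreted as a $k'$-isomorphism of Moufang sets $\mouf(\prescript{\lambda}{}\A) \cong \mouf(\A')$, where $\prescript{\lambda}{}\A$ is the base change of $\A$ along $\lambda$. Theorem~\ref{thm:AG1} applied over $k'$ then implies that $\prescript{\lambda}{}\A$ and $\A'$ are isotopic as central simple structurable division algebras over $k'$, and Theorem~\ref{th:isotopic_isomor} delivers a graded Lie algebra isomorphism $K(\prescript{\lambda}{}\A) \cong K(\A')$. Applying the functor $\Aut(\cdot)^\circ$ and using the base change $\lambda^*(\GG) \cong \Aut(K(\prescript{\lambda}{}\A))^\circ$ furnished by Theorem~\ref{thm:AGadjoint} produces the required $k'$-group isomorphism $f \colon \lambda^*(\GG) \xrightarrow{\sim} \GG'$. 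The compatibility $f \circ \lambda^*|_X = \phi$ is verified by tracking the correspondence between proper parabolics of a rank-one group and the $5$-gradings of its Lie algebra (Lemma~\ref{lem:5-parab}(i)) through each stage of the construction, while the uniqueness of $\lambda$ is built into its canonical definition via the grading torus.

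The main obstacle will be the second step: rigorously confirming that $\varphi$ carries the grading-torus subgroup of $H$ onto the grading-torus subgroup of $H'$, up to a Hua adjustment, and that this matching is coherent enough to assemble a genuine field isomorphism $\lambda$ rather than merely an abstract isomorphism of units. This is, in essence, a rank-one incarnation of Borel--Tits rigidity, and the key input is that inside each Hua group one has a canonical split torus whose adjoint action on the root group exhibits a characteristic weight pattern $(t, t^2)$ that is preserved by any Moufang-set isomorphism originating from an algebraic rank-one group.
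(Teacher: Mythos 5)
Your overall strategy --- route everything through the structurable-algebra correspondence and manufacture $\lambda$ by hand from the grading torus --- is genuinely different from the paper's proof, which instead passes to the little projective groups $\GG(k)^+\cong\GG'(k')^+$ (via Lemma~\ref{le:MSisom}(iii) and Lemma~\ref{lem:M(G)-E}) and then invokes the Borel--Tits theorem on abstract homomorphisms of the groups of rational points in the infinite case, with a separate finite-field argument based on Lang's theorem and Steinberg's description of $\Aut(\PSL_2(q))$ and $\Aut(\PSU_3(q))$. Unfortunately your route has a genuine gap exactly where you flag "the main obstacle": you assert, but do not prove, that conjugation by $\varphi$ carries the subgroup $\GS(k)\le H$ onto $\GS'(k')\le H'$ up to a Hua adjustment. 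This is not a technical verification one can defer --- it \emph{is} the rigidity content of the theorem. A priori $\varphi$ is only an abstract bijection intertwining the root-group structures, so the image of $\GS(k)$ under $h\mapsto\varphi^{-1}h\varphi$ is just some abstract subgroup of $H'$; there is no reason it should be the group of $k'$-points of any torus, let alone the grading torus. Your appeal to the uniqueness of the grading torus inside the Levi is about the algebraic group structure on $\GG'$, which is precisely what you do not yet know $\varphi$ respects. The "characteristic weight pattern $(t,t^2)$" is not a Moufang-set-intrinsic property either: saying that an element of $H'$ "acts by the scalar $t$ on $U'/Z(U')$ and by $t^2$ on $Z(U')$" presupposes the $k'$-vector-space structure on the root group, i.e.\ presupposes the field you are trying to construct. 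Declaring this to be "a rank-one incarnation of Borel--Tits rigidity" makes the argument circular: the paper proves the statement by actually citing Borel--Tits (and, for finite fields, Steinberg), rather than by rederiving it.

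Two further, smaller problems. First, your argument makes no provision for finite base fields, where Borel--Tits-type rigidity fails in the form you need, the grading torus is a finite cyclic group, and one must rule out exceptional isomorphisms between the relevant finite simple groups; the paper devotes half its proof to this case. Second, even granting $\lambda$ and the isotopy $\prescript{\lambda}{}\A\sim\A'$, the isomorphism $f$ you extract from Theorem~\ref{th:isotopic_isomor} is only an isomorphism of algebraic groups; the required compatibility $f\circ\lambda^*|_X=\phi$ says that $f\circ\lambda^*$ and $\phi$ agree as maps on the \emph{entire} set of parabolics, and verifying this amounts to showing that an automorphism of $\mouf(\GG')$ fixing a pair of opposite parabolics and normalizing the root groups is induced by an element of $\GG'(k')$ --- again a rigidity statement that "tracking the correspondence through each stage" does not supply.
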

\begin{proof}
Let $\GP$ and $\GQ$ be two opposite proper parabolic $k$\dash subgroups of $\GG$. Let $\GP'=\phi(\GP)$
and $\GQ'=\phi(\GQ')$ be the corresponding parabolic $k'$\dash subgroups of $\GG'$.
By Lemma~\ref{lem:M(G)-E} the groups
$\GG(k)^+=\left<\radu(\GP)(k),\radu(\GQ)(k)\right>$ and $\GG'(k')^+=\left<\radu(\GP')(k'),\radu(\GQ')(k')\right>$
are the little projective groups of $\mouf(\GG)$ and $\mouf(\GG')$.
Then by Lemma~\ref{le:MSisom}(iii) $\phi$ induces a group isomorphism $\phi^+ \colon \GG(k)^+\xrightarrow{\sim}\GG'(k)^+$.
We will show the existence of $\lambda$ and $f$ as in the claim of the theorem satisfying
$f\circ\lambda^*|_{\GG(k)^+}=\phi^+$.

Note that the fields $k$ and $k'$ are both infinite or both finite, since the group $\radu(\GP)(k)$ is
infinite or finite depending on whether $k$ is infinite or finite.

Assume first that both $k$ and $k'$ are infinite.
By~\cite[Th\'eor\`eme 8.11(i)]{BoTi-hom} the existence of the injective homomorphism $\phi^+ \colon \GG(k)^+\to\GG'(k')$
implies that there is a field isomorphism $\lambda \colon k\xrightarrow{\sim}k'$ and
a $k'$\dash isogeny $f \colon \lambda^*(\GG)\to\GG'$ such that $f\circ\lambda^*|_{\GG(k)^+}=\phi^+$.
By~\cite[(3.3) and Th\'eor\`eme 8.11(iii)]{BoTi-hom}, $\lambda$~is a central $k'$\dash isogeny, since $\Char k'\neq 2,3$.
It remains to note that in our particular case, both groups $\lambda^*(\GG)$ and $\GG'$ are adjoint and absolutely simple,
and therefore the $k'$\dash isogeny $f$ is an isomorphism.

From now on, we assume that both $k$ and $k'$ are finite.
In this case by Lang's theorem $\GG$ and $\GG'$ are quasi-split. Since $\GG$ and $\GG'$ are absolutely simple and have rank 1 over the
base field, this implies that $\GG$ and $\GG'$ have type $A_1$ or $\prescript{2}{}A_2$. Then, since the characteristic
of the base fields is different from $2,3$, the groups
$\GG(k)^+$ and $\GG'(k')^+$ are finite simple groups of Lie type of the form $\PSL_2(q)$ or
$\PSU_3(q^2)$, where $q=p^n$, $n\ge 1$, is a power of the characteristic of the respective base field. The assumption
on the characteristic also implies that there are no exceptional automorphisms between such finite simple
groups~\cite{Wilson},
therefore,  $\GG(k)^+\cong\GG'(k')^+\cong E$, where $E$ is one of the groups $\PSL_2(q)$ or $\PSU_3(q)$
for a fixed $q$. In particular, $k\cong k'\cong \FF_q$,
and both $\GG$ and $\GG'$ are $\FF_q$-isomorphic to the algebraic $\FF_q$-group $\GH$, where $\GH=\PGL_{2,\FF_q}$ or
$\GH=\PGU_{3,\FF_q}$, depending on the type of $E$.

We identify $k$ with $\FF_q$, $\GG$ with $\GH$ and $\GG(k)^+$ with $E$.
Note that any two pairs of opposite parabolic subgroups in $\GG$ are conjugate, therefore, we can assume
that $\GP$ and $\GQ$ are the two opposite Borel subgroups of upper triangular and lower triangular
matrices in the standard presentation of $\GH$ by equivalence classes of $2\times 2$ matrices with entries in $\FF_q$,
or, respectively, of $3\times 3$ matrices with entries in $\FF_{q^2}$.
Fix any isomorphism of $\FF_q$-groups $\alpha \colon \GH\xrightarrow{\sim}\GG'$ that takes the
pair $(\GP,\GQ)$ to the pair $(\GP',\GQ')$, and denote $\alpha^+=\alpha|_{E}$. Then $(\alpha^+)^{-1}\circ\phi^+\in\Aut(E)$. The group
$\Aut(E)$ is known by~\cite[3.2]{Ste-aut}; we consider two cases.

If  $\GH=\PGL_{2,\FF_q}$ and $E=\PSL_2(q)$, then
by~\cite[3.2]{Ste-aut} there is a unique field automorphism $\lambda\in\Aut(\FF_q)$ and
an element $g\in \GH(\FF_q)$ such that $(\alpha^+)^{-1}\circ\phi^+=g\circ\lambda^+$,
where $g$ acts on $E\le \GH(\FF_q)$ by conjugation and $\lambda^+$ acts on the $2\times 2$ matrices
representing $E=\PSL_2(q)$  by applying $\lambda$ to each entry.
Note that by Remark~\ref{rem:lambdaG} the $\FF_q$-group $\lambda^*(\GH)$ naturally identifies with
$\GH$, so that
$\lambda^+$ is the restriction of the canonical abstract group homomorphism
$\lambda^* \colon \GH(\FF_q)\to\lambda^*(\GH)(\FF_q)$. Consider the matrix class $g$ as an element of
$\lambda^*(\GH)(\FF_q)$ instead of $\GH(\FF_q)$; then the conjugation by $g$ induces an $\FF_q$-automorphism
of $\lambda^*(\GH)$. Summing up, we have $\phi^+=f\circ\lambda^*|_{E}$, where $f=\alpha\circ g \colon \lambda^*(\GH)\to\GG'$
is an isomorphism of $\FF_q$-groups.

Now assume that $\GH=\PGU_{3,\FF_q}$ and $E=\PSU_3(q)$.
Let $F \colon \FF_{q^2}\to\FF_{q^2}$ be the Frobenius automorphism. Then $F^n$, where $q=p^n$, is an involutory
automorphism of $\FF_q$.
Consider the $\FF_{q^2}$-automorphism $\tau$ of $\PGL_{3,\FF_{q^2}}$ induced by the automorphism
of $\GL_{3,\FF_{q^2}}$ that is the composition of the transpose, the inverse and conjugation by
the $3\times 3$ matrix $e_{13}-e_{22}+e_{31}$. We identify the group $\GH=\PGU_{3,\FF_q}$ with the subgroup of
$\PGL_{3,\FF_{q^2}}$ stabilized by the automorphism $\tau\circ F^n$.
Clearly, we have $\tau^2=\id$, $\tau\circ F=F\circ\tau$ and
$\tau(\PGU_{3,\FF_q})=F^n(\PGU_{3,\FF_q})=\PGU_{3,\FF_q}$.
Consequently, $F^n|_\GH=\tau|_\GH$ is an $\FF_q$-automorphism
of $\GH$. By~\cite[3.2]{Ste-aut} there is a unique field automorphism $\lambda\in\Aut(\FF_{q^2})$ and
an element $g\in \GH(\FF_q)$ such that $(\alpha^+)^{-1}\circ\phi^+=g\circ\lambda^+$,
where $g$ acts on $E\le \GH(\FF_q)$ by conjugation and $\lambda^+$ acts on the $3\times 3$ matrices
representing $E=\PSU_3(q)\le\PSL_3(q^2)$  by applying $\lambda$ to each entry.
One has $\lambda=F^l$, where $0 \le l< 2n$. Set $\mu=\lambda|_{\FF_q}=F^{\,l\!\mod n}|_{\FF_q}$.
  As in the previous case, the $\FF_{q^2}$-group $\lambda^*(\PGL_{3,\FF_{q^2}})$
naturally identifies with $\PGL_{3,\FF_{q^2}}$, so that the corresponding abstract group homomorphism
$\lambda^*$ restricts to $\lambda^+$. Since the natural group scheme homomorphism $\PGU_{3,\FF_q}\to\PGL_{2,\FF_{q^2}}$
is compatible with the inclusion of the respective base fields $\FF_q\to\FF_{q^2}$, the above
 identification restricts to an identification of the $\FF_q$-groups $\mu^*(\GH)$ and $\GH$.
If $l < n$, then $\lambda^+$ coincides with the restriction of
$\mu^* \colon \GH(\FF_q)\to \mu^*(\GH)(\FF_q)$ and we conclude that $\phi^+=(\alpha\circ g)\circ\mu^*|_E$
as in the previous case. If $n\le l<2n$, then $\lambda^+=F^n\circ \mu^*|_E$, and
hence $\phi^+=(\alpha\circ g\circ\tau|_\GH)\circ\mu^*|_E$, where $\alpha\circ g\circ\tau|_\GH$
is an $\FF_q$-isomorphism between $\mu^*(\GH)\cong\GH$ and $\GG'$.
\end{proof}

\begin{corollary}\label{co:iso}
Let $k$ and $k'$ be two fields of characteristic $\neq 2,3$. Let $\A$ and $\A'$ be two central simple
structurable division algebras over $k$ and $k'$ respectively. Then $\mouf(\A)\cong\mouf(\A')$ if and only
if there is an isomorphism of fields $\lambda \colon k\xrightarrow{\sim} k'$ such that $\prescript{\lambda}{}\A$ and $\A'$
are isotopic as structurable algebras over $k'$.
\end{corollary}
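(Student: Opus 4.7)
The plan is to reduce both implications to results already available in the excerpt, using the correspondence of Theorem~\ref{thm:AG1} to turn algebra-theoretic statements into group-theoretic ones and vice versa, and then invoking the recognition Theorem~\ref{thm:kk'GM} for the hard direction.

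For the ``if'' direction, suppose $\lambda \colon k \xrightarrow{\sim} k'$ is a field isomorphism and $\prescript{\lambda}{}\A$ and $\A'$ are isotopic over $k'$. By Remark~\ref{rem:isotopic} applied over $k'$, we obtain $\mouf(\prescript{\lambda}{}\A) \cong \mouf(\A')$. On the other hand, the explicit description of $\mouf(\A)$ in Theorem~\ref{mainth:moufset} is built from the group $\U = \A \times \Ss$ and the permutation $\tau$ given by the formulas in that theorem; since $\lambda$ is a ring isomorphism, it induces a bijection $\U \to \U^{\prescript{\lambda}{}\A}$ that transports the group law and the map $\tau$ verbatim. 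Consequently $\mouf(\A) \cong \mouf(\prescript{\lambda}{}\A)$, and composing the two isomorphisms gives $\mouf(\A) \cong \mouf(\A')$.

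For the ``only if'' direction, assume $\mouf(\A) \cong \mouf(\A')$. Set $\GG = \Aut(K(\A))^\circ$ and $\GG' = \Aut(K(\A'))^\circ$. By Theorem~\ref{thm:div-alg} both $\A$ and $\A'$ are algebraic, so Theorem~\ref{thm:AG1} applies and tells us that $\GG$ is an adjoint absolutely simple algebraic $k$-group of $k$-rank $1$, and analogously for~$\GG'$ over~$k'$. Moreover, Lemma~\ref{lem:GG-G} identifies the elementary group of $\A$ with $\langle \GU_+(k), \GU_-(k)\rangle$ for a pair of opposite parabolic $k$-subgroups $\GU_\sigma$ of $\GG$; together with Lemma~\ref{le:MSisom}(iii) and Lemma~\ref{lem:M(G)-E}, this yields canonical isomorphisms $\mouf(\A) \cong \mouf(\GG)$ and $\mouf(\A') \cong \mouf(\GG')$, so that $\mouf(\GG) \cong \mouf(\GG')$. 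We now apply Theorem~\ref{thm:kk'GM} to this last isomorphism: it produces a unique field isomorphism $\lambda \colon k \xrightarrow{\sim} k'$ and an isomorphism of $k'$-groups $f \colon \lambda^*(\GG) \xrightarrow{\sim} \GG'$.

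It remains to descend the isomorphism $f$ from algebraic groups to structurable algebras. For this, one verifies the natural identification $\lambda^*(\GG) \cong \Aut(K(\prescript{\lambda}{}\A))^\circ$; this is a compatibility of scalar extension (via $\lambda$) with the constructions $\A \mapsto K(\A)$ and $\LL \mapsto \Aut(\LL)^\circ$, both of which are given in a functorial way that commutes with base change. Granting this identification, the isomorphism $f$ becomes an isomorphism of adjoint simple $k'$-groups $\Aut(K(\prescript{\lambda}{}\A))^\circ \cong \Aut(K(\A'))^\circ$, and the uniqueness/correspondence part of Theorem~\ref{thm:AG1} (applied over $k'$) forces $\prescript{\lambda}{}\A$ and $\A'$ to lie in the same isotopy class, i.e., to be isotopic as structurable $k'$-algebras.

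The main obstacle is the last compatibility step: checking that the Tits--Kantor--Koecher construction and the passage to the automorphism group of the associated $5$-graded Lie algebra commute with scalar extension by a field isomorphism, so that $\lambda^*(\Aut(K(\A))^\circ)$ is naturally identified with $\Aut(K(\prescript{\lambda}{}\A))^\circ$. Once this is in place, Theorem~\ref{thm:AG1} does the rest, so no further Moufang-set specific arguments are needed beyond Theorem~\ref{thm:kk'GM}.
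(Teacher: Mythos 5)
Your proposal is correct and follows essentially the same route as the paper's proof: both directions are reduced to Theorem~\ref{thm:AG1} and Lemma~\ref{lem:GG-G} to pass between $\mouf(\A)$ and $\mouf(\GG)$, the hard direction is settled by Theorem~\ref{thm:kk'GM}, and the key compatibility $\lambda^*\bigl(\Aut(K(\A))^\circ\bigr)\cong\Aut(K(\prescript{\lambda}{}\A))^\circ$ is exactly the observation the paper records (via $\Aut(\prescript{\lambda}{}K(\A))^\circ$) before invoking Remark~\ref{rem:isotopic}. The only cosmetic difference is that you handle the ``if'' direction by transporting the explicit formulas of Theorem~\ref{mainth:moufset} along $\lambda$ rather than passing through the group, which is equally valid.
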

\begin{proof}
By Theorem~\ref{thm:AG1}, $\GG=\Aut(K(\A))^\circ$
and $\GG'=\Aut(K(\A'))^\circ$ are adjoint absolutely simple algebraic groups of relative rank 1 over $k$ and $k'$,
respectively. By Lemma~\ref{lem:GG-G}, the Moufang sets corresponding to $\GG$ and $\GG'$ are isomorphic
to the Moufang sets corresponding to $\A$ and $\A'$, respectively, since the corresponding
rank one groups coincide. Also, observe that for any field isomorphism $\lambda \colon k\xrightarrow{\sim}k'$,
there are isomorphisms of $k'$\dash groups
\[
\Aut(K(\prescript{\lambda}{}\A))^\circ\cong\Aut(\prescript{\lambda}{} K(\A))^\circ\cong\lambda^*\bigl(\Aut(K(\A))^\circ\bigr).
\]
Therefore, $\prescript{\lambda}{}\A$ belongs to the isotopy class of structurable algebras
corresponding to the $k'$\dash group $\lambda^*(\GG)$ in the sense of Theorem~\ref{thm:AG1}. Now the claim
of the corollary readily follows from Theorem~\ref{thm:kk'GM};
see also Remark~\ref{rem:isotopic}.
\end{proof}

%
%
\chapter{Examples}\label{se:examples}

We will now describe the Moufang sets for each of the classes of structurable algebras as described in section~\ref{se:struct-ex}.
In each case, we will also point out what the corresponding linear algebraic groups are.
We assume throughout this section that $(\A, \barop)$ is a structurable division algebra over a field $k$ with $\Char(k) \neq 2,3$.

\section{Associative algebras with involution}\label{ss:ex-assoc}

Notice that any associative algebra with involution is, in fact, a hermitian structurable algebra for which the hermitian space $W$ is trivial.
We will therefore include this case in section~\ref{ss:ex-herm} below; see Remark~\ref{rem:W=0}.

\section{Jordan algebras}

Recall that a structurable algebra $(\A, \barop)$ is a Jordan algebra if and only if the involution $\barop$ is trivial,
and that in this case $\A$ is a Jordan division algebra, with $\hx=x^{-1}$, where $x^{-1}$ is the Jordan inverse in $J$.
Then $\Ss=0$, and by Theorem~\ref{mainth:moufset}, $U$ is the additive group of $J$ and $x.\tau=-\hx=-x^{-1}$ for $x\in U^*$.
We thus recover the Moufang sets described in Theorem~\ref{th:jordan}.

The corresponding linear algebraic groups of $k$\dash rank one are precisely those for which the $k$\dash root groups are abelian;
recall in particular that $\mouf(J)$ is the Moufang set of an exceptional linear algebraic group of type $E_{7,1}^{78}$
if $J$ is an exceptional Jordan division algebra (see Example~\ref{ex:abmoufsets}\eqref{it:Albert}),
and that it arises from a classical linear algebraic group in all other cases.

\section{Hermitian structurable algebras}\label{ss:ex-herm}

Let $\A=E\oplus W$ be a structurable division algebra of hermitian type; see section \ref{hermitian}.
In particular, $E$ is an associative division algebra with involution $\barop$, and $W$ is a left $E$-module equipped with a hermitian form $h$,
such that $h(x,x)\neq 0,1$ for all $0\neq x\in W$.
Notice that we allow the case $W=0$, in which case $\A = E$, i.e.\@ $\A$ is itself an associative algebra with involution; see section~\ref{ss:ex-assoc}
and Remark~\ref{rem:W=0} below.

For the sake of readability, we introduce the notation
\[ \pi(e+w) = e\overline{e}-h(w,w)\in\mathcal{H}(E),\]
for all $e\in E$ and $w\in W$;
notice that this makes $\pi$ into a (hermitian) pseudo-quadratic form on $\A$.
By \eqref{inverse hermitian}, we have $\widehat{e+w} = \pi(e+w)^{-1} (e-w)$ and
\[ \pi(\widehat{x}) = \pi(x)^{-1} \]
for all $0\neq x=e+w\in \A$.
Using formulas \eqref{psi hermitian} and \eqref{V hermitian}, it is a straightforward calculation to verify that
\[q_{x}(s)=\pi(x)s\pi(x)\in\Ss\]
for all $x\in \A$ and all $s\in \Ss(E)$, where $q_x$ is as in Definition \ref{def:qx}.

Now we apply Theorem \ref{mainth:moufset}.
It follows that $U=\A\times \Ss(E)$ with the group operation (denoted by $+$) given by
\begin{multline}\label{summoufherm}
     (e_1+w_1,s_1)+(e_2+w_2,s_2)\\=\bigl(e_1+e_2+w_1+w_2,s_1+s_2+e_1\overline{e_2}-e_2\overline{e_1}+h(w_2,w_1)-h(w_1,w_2)\bigr).
\end{multline}
Next, for all $0\neq x = e+w \in \A$ and all $0\neq s \in \Ss$, we have $(x,s).\tau=(-u+tx,-t)$, where $(u,t)$ is as in Theorem \ref{th:formula-oneinverse}.
We simplify the expressions for $u$ and $t$ using the associativity of $E$ and the identity $(a+b)^{-1}a(a-b)^{-1}=(a-ba^{-1}b)^{-1}$:
\begin{multline*}
        t = (s+q_{x}(\hat{s}))^\wedge
        = \overline{(s+\pi(x)\overline{s}^{-1}\pi(x))}^{-1} \\
        = (-s+\pi(x)s^{-1}\pi(x))^{-1}
        = (\pi(x)+s)^{-1} s (\pi(x)-s)^{-1}.
\end{multline*}
The expression for $u$ simplifies to
\begin{align*}
        u &= -\hat{s}\bigl((\hat{s}+q_{\widehat{x}}(s))^\wedge \hat{x}\bigr) \\
        &= s^{-1}\bigl((s^{-1} - \pi(x)^{-1}s\pi(x)^{-1})^{-1}\pi(x)^{-1}(e-w)\bigr) \\
        &= \bigl( \pi(x)^{-1} - s \pi(x)^{-1}s \bigr)^{-1} (e-w) \\
        &= (\pi(x)+s)^{-1}\pi(x)(\pi(x)-s)^{-1} (e-w).
\end{align*}
Therefore
\begin{multline}\label{taustructherm}
        (x,s).\tau
        = \Bigl( -(\pi(x) + s)^{-1} e + (\pi(x) - s)^{-1} w , \\*
            - (\pi(x) + s)^{-1} s (\pi(x) - s)^{-1} \Bigr) .
\end{multline}
Notice that this formula remains valid if either $x=0$ or $s=0$.
Indeed, if $x=0$ then we get $(0,s).\tau = (0, s^{-1})$, and if $s=0$, we get
$(x,0).\tau = (-\pi(x)^{-1} e + \pi(x)^{-1} w, 0) = (-\widehat{e+w}, 0)$, which is consistent with the formulas for these cases in Theorem \ref{mainth:moufset}.

\begin{remark}\label{rem:W=0}
    If $W=0$, then $\A = E$, so $U = E \times \Ss(E)$ with group operation given by
    \[ (e_1, s_1) + (e_2, s_2) = (e_1 + e_2, s_1 + s_2 + e_1\overline{e_2}-e_2\overline{e_1}) , \]
    and also the formula for $\tau$ simplifies slightly, to
    \[ (e, s).\tau = \Bigl( -(e \overline{e} + s)^{-1} e , \ - (e \overline{e} + s)^{-1} s (e \overline{e} - s)^{-1} \Bigr) . \]
\end{remark}

\begin{remark}\label{rem:herm-isom}
    The Moufang sets arising from hermitian structurable algebras with non-trivial involution are precisely the Moufang sets of skew-hermitian type as introduced
    in Definition~\ref{def:mouf-skewherm} (up to isomorphism).
    The description given there looks much simpler than the description we just obtained.
    It requires some effort to produce an explicit isomorphism between the two descriptions, so we sketch the procedure.

    Let $D$ be a skew field with involution $\sigma$, let $V$ be a non-trivial right $D$\dash module,
    and let $h \colon V \times V \to D$ be an anisotropic skew-hermitian form on~$V$.
    Let $D_\sigma$ be the subspace of elements of $D$ fixed by $\sigma$, and
    let $\Ss$ be the subspace of skew elements of $D$ (i.e.\@ the elements negated by $\sigma$).
    Fix an arbitrary element $\xi \in V \setminus \{ 0 \}$, and let $s_0 := -2 h(\xi, \xi)^{-1} \in \Ss$.
    We define a new involution $\rho$ on $D$ given by $a^\rho := s_0 a^\sigma s_0^{-1}$ for all $a \in D$.
    Notice that left multiplication by $s_0$ induces a bijection from $D_\sigma$ to $\Ss_\rho(D) := \{ a \in D \mid a^\rho = -a \}$.

    We now make $V$ into a {\em left} $D$-module by defining $a * v := va^\rho$ for all $v \in V$ and all $a \in D$.
    The form
    \[ H := \tfrac{1}{2} s_0 h \colon V \times V \to D \]
    is now a hermitian form (for this left scalar multiplication) with respect to the involution $\rho$.

    We now identify $D$ with the subspace $D * \xi$ of $V$, so we get $V = D \oplus D^\perp$.
    We can now make $V$ into a structurable division algebra $\A$ of hermitian type, with the choice $E = D$ and $W = D^\perp$
    and with hermitian form $H$ restricted to $W \times W$.
    When we write elements $v,w \in V$ as $v = e + v' \in D \oplus W$ and $w = f + w' \in D \oplus W$, then we get
    \[ h(v,w) = -2 s_0^{-1} \bigl( ef^\rho - H(v',w') \bigr) . \]
    The isomorphism between the root group $U_h = \{ (v,a) \in V \times D \mid h(v,v) = a - a^\sigma \}$ and the root group $U_\A = \A \times \Ss_\rho(D)$
    is given by
    \[ \varphi \colon U_h \to U_\A \colon (v,a) \mapsto \bigl( v, s_0(a - \tfrac{1}{2}h(v,v)) \bigr) . \]
    It is now a matter of computation to verify that $\varphi$ is a group isomorphism,
    and that $\varphi$ transforms the map $\tau$ from Definition~\ref{def:mouf-skewherm} into the map $\tau$ from equation~\eqref{taustructherm}.
\end{remark}

\begin{corollary}\label{cor:classical}
    Every Moufang set arising from a hermitian structurable algebra with non-trivial involution is isomorphic to a Moufang set
    arising from a classical linear algebraic group of relative type $BC_1$ (i.e.\@ with non-abelian root groups).

    Conversely, every Moufang set arising from a classical linear algebraic group of relative type $BC_1$
    is isomorphic to a Moufang set arising from a hermitian structurable algebra with non-trivial involution.
\end{corollary}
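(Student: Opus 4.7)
The plan is to deduce the corollary as a formal consequence of three earlier results: Theorem~\ref{th:mouf-herm} (skew-hermitian Moufang sets come from classical algebraic groups of $k$\dash rank one), Corollary~\ref{cor:mouf-classical} (classical linear algebraic groups with non-abelian root groups give skew-hermitian Moufang sets), and Remark~\ref{rem:herm-isom} (the explicit bijection, up to isomorphism of Moufang sets, between the hermitian-structurable-algebra description and the skew-hermitian description). The only new issue is to match the qualitative condition ``non-trivial involution on $E$'' on the structurable side with the condition ``relative type $BC_1$'' on the algebraic group side; this is where I would spend most of the effort.

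For the forward direction, start from a hermitian structurable division algebra $\A=E\oplus W$ with $\barop$ non-trivial on $E$. Since $\cha(k)\neq 2$, this forces $\Ss(E)\neq 0$, and the group operation~\eqref{summoufherm} then has a non-trivial commutator term coming from $\psi$, so the root group $U=\A\times \Ss(E)$ of $\mouf(\A)$ is non-abelian. I would invoke Remark~\ref{rem:herm-isom} to produce a skew field $D$ with involution $\sigma$, a right $D$\dash module $V$, and an anisotropic skew-hermitian form $h$ on $V$ such that $\mouf(\A)\cong\mouf(D,\sigma,V,h)$; the finite-dimensionality needed in Theorem~\ref{th:mouf-herm} is inherited from the finite-dimensionality built into the definition of a structurable algebra (taking $D:=E$ over its center). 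Theorem~\ref{th:mouf-herm} then realises $\mouf(\A)$ as the Moufang set of the classical semisimple $k$\dash group $U(\hat h)$ of $k$\dash rank one. Since $\mouf(\A)$ has non-abelian root groups, the relative root system is not $A_1$, and being of $k$\dash rank one it must be $BC_1$.

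For the converse, let $\mouf=\mouf(\GG)$ for some classical linear algebraic group $\GG$ of relative type $BC_1$, so that its root groups are non-abelian. Corollary~\ref{cor:mouf-classical} applies and yields a skew field $D$ with involution $\sigma$, a right $D$\dash module $V$, and an anisotropic skew-hermitian form $h$ with $\mouf\cong\mouf(D,\sigma,V,h)$. I would then apply Remark~\ref{rem:herm-isom} to transfer this to $\mouf(\A)$ for a hermitian structurable algebra $\A=E\oplus W$; the involution on $E$ produced by the construction is the twist $\rho$ of $\sigma$ by an inner automorphism, hence it is non-trivial precisely when $\sigma$ is, which is automatic here since a symmetric bilinear form ($\sigma=\id$) would give a Moufang set of an orthogonal group with abelian root groups, contradicting our $BC_1$ hypothesis.

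The main obstacle, and the only place requiring genuine work, is verifying that the isomorphism of Moufang sets sketched in Remark~\ref{rem:herm-isom} really is an isomorphism in the sense of Definition~\ref{def:misom}: the explicit map $\varphi\colon(v,a)\mapsto(v,s_0(a-\tfrac12 h(v,v)))$ must be checked to be a group isomorphism between the skew-hermitian root group and the hermitian-structurable root group, and to intertwine the respective $\tau$-maps described in Definition~\ref{def:mouf-skewherm} and in formula~\eqref{taustructherm}. Once this computation is done once and for all in Remark~\ref{rem:herm-isom}, both halves of the corollary follow immediately from the three cited results, and the matching of the conditions ``non-trivial involution'' and ``relative type $BC_1$'' reduces to the observation that abelianness of the root groups is equivalent to vanishing of $\psi$, which is equivalent to trivial involution on~$E$.
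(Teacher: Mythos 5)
Your proposal is correct and follows essentially the same route as the paper: the paper's proof also deduces the corollary from Remark~\ref{rem:herm-isom} together with Corollary~\ref{cor:mouf-classical} (and, implicitly, Theorem~\ref{th:mouf-herm}), noting that the root groups are abelian precisely when the involution is trivial. Your additional care in matching ``non-trivial involution'' with ``relative type $BC_1$'' and in flagging the verification burden of Remark~\ref{rem:herm-isom} is consistent with, and slightly more explicit than, the paper's argument.
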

\begin{proof}
    This now follows from Remark~\ref{rem:herm-isom} together with Corollary~\ref{cor:mouf-classical}.
    Notice that the root groups of a Moufang set arising from a structurable division algebra $(\A, \barop)$ are abelian
    if and only if the involution $\barop{}$ is trivial.
\end{proof}

\section{Structurable algebras of skew-dimension one}\label{ss:skewdim1}

We will now have a closer look at the exceptional groups for which the corresponding structurable division algebra has skew-dimension one.
From Table~\ref{ta:exc} on page~\pageref{ta:exc}, we know that these are the groups with Tits index
$\prescript{3,6}{}D_{4,1}^9$, $\prescript{2}{}E_{6,1}^{35}$, $E_{7,1}^{66}$ or $E_{8,1}^{133}$.
To the best of our knowledge, not all corresponding structurable division algebras have been explicitly constructed, so in most cases, we will have
to content ourselves with the description of the algebra after a suitable scalar extension.
We will only be able to give a complete answer for groups of type $\prescript{3,6}{}D_{4,1}^9$.

Some of these forms are given by the Cayley--Dickson process (see Definition~\ref{def:CD}), specifically of the form $\CD(J, \mu)$
where $J$ is a Jordan division algebra with a Jordan norm of degree $4$, and $\dim(J)=4, 10, 16, 28$.
(Notice that $\dim(J)-1=3, 9, 15, 27$.)
However, see Remark~\ref{rem:skewdim1}.

\subsection{Forms of type $^{3,6}D_{4,1}^9$}

As we will see, all forms of type $\prescript{3,6}{}D_{4,1}^9$ arise from the Cayley--Dickson process.
We first recall the definition of a {\em quartic Cayley algebra} from \cite[Section 9]{A90}.
\begin{definition}
	A {\em quartic Cayley algebra} is a simple structurable algebra of skew-dimension $1$, degree $4$, and dimension $8$.
	Equivalently, a structurable algebra is a quartic Cayley algebra if and only if it is isotopic
	to $\CD(\B, \mu)$ for some separable commutative associative algebra $\B$ of dimension~$4$ and some $\mu \in k^\times$;
	see Definition~\ref{def:CD}.
\end{definition}
Observe that a structurable algebra arising from a hermitian form over an associative algebra $E$ with involution $\sigma$ as in Definition~\ref{def:herm}
has skew-dimension $1$, degree $4$, and dimension $8$ if and only if $E$ is a quaternion algebra
and $\sigma$ is a non-standard involution (i.e.\@ $\mathcal{H}(E)$ is $3$-dimensional).
The following proposition determines when a quartic Cayley algebra is also of hermitian type.
\begin{proposition}\label{pr:QCA}
	Let $(\A, \barop)$ be a quartic Cayley algebra $\A = \CD(\B, \mu)$.
	The following are equivalent:
	\begin{compactenum}[\rm (a)]
	    \item
		$\A$ is isotopic to a structurable algebra of hermitian type;
	    \item
		$\A$ is isomorphic to a structurable algebra of hermitian type;
	    \item
		$\B$ contains a $2$-dimensional unital subalgebra.
	\end{compactenum}
\end{proposition}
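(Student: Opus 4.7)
The implication (b)$\Rightarrow$(a) is immediate, so the real content is (c)$\Rightarrow$(b) and (a)$\Rightarrow$(c). My plan is to first unpack what a hermitian-type model of a quartic Cayley algebra has to look like, use this to carry out (c)$\Rightarrow$(b) by an explicit matching of multiplication tables, and then handle (a)$\Rightarrow$(c) by transporting the natural quadratic \'etale subalgebra of the hermitian model back to $\B$ through the isotopy.

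By the observation just before the proposition, any hermitian-type $\A' = E \oplus W$ of skew-dimension one, degree $4$ and dimension $8$ must have $E$ a quaternion $k$-algebra with a non-standard involution $\sigma$, and $W$ a rank-one left $E$-module, hence $W \cong E$ as $E$-modules. A non-standard involution on a quaternion algebra always fixes pointwise a $2$-dimensional quadratic \'etale $k$-subalgebra $K \subseteq E$ (and swaps the orthogonal ``$j$''-line); this is the crucial structural feature I will exploit in both directions.

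For (c)$\Rightarrow$(b), given a $2$-dimensional unital subalgebra $K \subseteq \B$, separability of $\B$ implies that $\B$ is free of rank $2$ over $K$, so $\B = K \oplus K\alpha$ with $\alpha$ satisfying a quadratic relation over $K$. I would build $E$ as the quaternion $k$-algebra containing $K$ whose ``$j$'' satisfies $j^2 = \mu$ (or a suitable scalar multiple dictated by matching $\theta$ and the multiplication in $\CD(\B,\mu)$), equipped with the non-standard involution that fixes $K$ pointwise and fixes $j$. Then identify $W \cong E$ and choose the hermitian form $h$ on $W$ so that $h(1,1)$ accounts for the remaining Cayley--Dickson constant, and verify that the resulting multiplication and involution from Definition~\ref{def:herm} on $E \oplus W$ coincide, under the identification $\B \oplus s_0\B \leftrightarrow E \oplus W$, with the Cayley--Dickson multiplication and involution of Definition~\ref{def:CD}. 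This step is purely a comparison of two explicit formulas and should present no conceptual difficulty.

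For (a)$\Rightarrow$(c), I would argue as follows. Assume $\A = \CD(\B,\mu)$ is isotopic to a hermitian-type $\A' = E\oplus W$. Inside $\A'$ the $2$-dimensional $\sigma$-fixed quadratic \'etale subalgebra $K \subseteq E$ sits canonically as a subalgebra of $\mathcal{H}(\A')$ commuting with everything in a strong sense, and it determines (and is determined by) a $1$-dimensional split subtorus of the corresponding adjoint group $\Aut(K(\A'))^\circ$ acting trivially on $\Ss$; equivalently, a $k$-subalgebra of the centroid-like invariant of $\A'$ recorded by the action of $K$ on $K(\A')$ through the isomorphism of Corollary~\ref{cor:newl-K}. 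Since by Theorem~\ref{th:isotopic_isomor} the graded Lie algebras $K(\A)$ and $K(\A')$ are graded-isomorphic, this quadratic \'etale $k$-subalgebra is transferred to $\A$. Matching it against the explicit description of the centroid of $\CD(\B,\mu)$ and of its isotopies worked out in~\cite{A90}, together with the classification of Jordan norms of degree $4$ on separable degree-$4$ Jordan algebras referenced in section~\ref{ex:skew dim}, one concludes that the transferred subalgebra must come from a $2$-dimensional unital $k$-subalgebra of $\B$ itself. The main obstacle here is precisely this last matching: one must see that an isotopy, which a priori could mix $\B$ with $s_0\B$ and twist everything by the operators $P_u$ of Construction~\ref{constr:isot}, cannot destroy the property of containing a quadratic \'etale subalgebra; this is what the centroidal/$K$-action formulation is designed to make robust.
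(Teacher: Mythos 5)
Your logical skeleton --- (b)$\Rightarrow$(a) trivial, (c)$\Rightarrow$(b) by explicit construction, and (a)$\Rightarrow$(c) to close the cycle --- is fine in principle, and your plan for (c)$\Rightarrow$(b) is essentially the construction the paper carries out: inside $\A=\B\oplus s_0\B$ one sets $E=K\oplus s_0K$ and $W=b_0K\oplus s_0\cdot b_0K$ for a trace-zero generator $b_0$ of $\B$ over $K$, checks that $E$ is a quaternion algebra on which $\barop$ restricts to a non-standard involution, and matches the Cayley--Dickson multiplication against the hermitian-form multiplication of Definition~\ref{def:herm}. That direction, though only sketched, is on track.

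The genuine gap is your (a)$\Rightarrow$(c). What you offer there is a description of an argument you hope exists, and the ingredients you name do not do the work: the centroid of $\CD(\B,\mu)$ is just $k$ (the algebra is central simple), so it records nothing about $K$ or $\B$; a quadratic \'etale subalgebra $K\subseteq E$ need not be split, so it does not yield a split subtorus; and, as you yourself concede, nothing in the sketch prevents an isotopy --- which mixes $\B$ with $s_0\B$ through the operators $P_u$ of Construction~\ref{constr:isot} --- from moving the transferred subalgebra off of $\B$. The paper avoids all of this with two elementary steps. First, an isotope of a structurable algebra of hermitian type is again of hermitian type, so (a) and (b) are equivalent outright and one never has to push anything through an isotopy. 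Second, (b)$\Rightarrow$(c) is a direct computation: if $\varphi\colon\A\to E\oplus W$ is an isomorphism, then $\B'=\varphi(\B)=k[a]$ for a generator $a=e+w$; the relation $a^2a=aa^2$ forces $e$ and $h(w,w)$ to commute, so the unital subalgebra $K\subseteq E$ they generate has $\dim_k K\le 2$; and $k[a]\subseteq K\oplus Kw$ together with $\dim_k\B'=4$ forces $\dim_k K=2$ and $K=\B'\cap E$, whence $\varphi^{-1}(K)$ is the desired $2$-dimensional unital subalgebra of $\B$. To repair your proof you need either this computation (or some other proof that (a) or (b) implies (c)) or a proof of the isotopy-invariance of hermitian type; as written, the implication toward (c) is unproved.
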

\begin{proof}
	Since an isotope of a structurable algebra of hermitian type is again of hermitian type, (a) and (b) are equivalent.

	(b) $\Rightarrow$ (c). \quad
	By assumption, there is an isomorphism $\varphi \colon \A \to \A'$ for some hermitian structurable algebra $\A' = E \oplus W$
    arising from a hermitian form $h \colon W \times W \to E$; see Definition~\ref{def:herm}.
    As we observed, this implies that $E$ is a quaternion algebra with non-standard involution $\sigma$,
	and $W$ is a $1$\dash dimensional left $E$-module.
    In particular, $\A' = E \oplus W$ contains $\B' := \varphi(\B)$, which is a $4$\dash dimensional separable commutative associative unital subalgebra.
    Since $\B \subset \Herm(\A)$, we also have $\B' \subset \Herm(\A') = \Herm(E) \oplus W$.

    We will show that $\B' \cap E$ is a $2$-dimensional unital subalgebra of $\B'$.
    Let $a = e+w \in \B'$ be a generator for $\B'$ over $k$.
    In particular, $a$ is {\em associative}, i.e.\@ $a^2 a = a a^2$; see \cite[Section 7]{A90}.
    Explicitly expressing this equality in the algebra $\A'$ using the formulas in Definition~\ref{def:herm} gives
    $e h(w,w) = h(w,w) e$.
    (Notice that $e^\sigma = e$ because $a \in \Herm(\A')$.)
    Let $K$ be the unital subalgebra of $E$ generated over $k$ by $e$ and $h(w,w)$.
    Since $e$ and $h(w,w)$ commute, we have $\dim_k K \leq 2$.

    By \cite[Theorem 7.3]{A90}, $\B' = k[a] = k \oplus ka \oplus ka^2 \oplus ka^3$.
    Since
    \begin{align*}
        a^2 &= \bigl( e^2 + h(w,w) \bigr) + 2ew , \\
        a^3 &= \bigl( e^3 + 3e h(w,w) \bigr) + \bigl( 3e^2 + h(w,w)\bigr) w ,
    \end{align*}
    we see that $\B' = k[a] \subseteq K \oplus Kw \subset E \oplus W$.
    Since $\dim_k \B' = 4$, this now implies that $\dim_k K = 2$, so $\B' = K \oplus Kw$, and $K = \B' \cap E$ is a subalgebra of $\B'$.
    Hence $\varphi^{-1}(K)$ is a $2$-dimensional unital subalgebra of $\B$.

	(c) $\Rightarrow$ (b). \quad
    We now assume that $\B$ contains a $2$-dimensional unital subalgebra $K$.
    Let $b_0 \in \B$ be a generator for $\B$ over $K$ with $T_{\B/K}(b_0) = 0$; then $\delta := b_0^2 \in K$ and $\B = K \oplus b_0 K$.
    Moreover, $T_{\B/k}(b_0K) = 0$ since $T_{\B/k} = T_{K/k} \circ T_{\B/K}$.
    Also observe that $T_{\B/k}{\restriction_K} = 2 T_{K/k}$.

    Recall that $\A = \B \oplus s_0 \B$.
    Define
    \[ E := K \oplus s_0 K \subset \A \quad \text{and} \quad W := b_0 K \oplus s_0 \cdot b_0 K \subset \A , \]
    and notice that $\A = E \oplus W$.
    Let $\B_0$ be the subspace of trace zero elements of~$\B$, i.e.  $\B_0 := \{ b \in \B \mid T_{\B/k}(b) = 0 \}$.
    Also recall from Definition~\ref{def:CD} that the involution on $\A$ is given by $\overline{b_1 + s_0 b_2} = b_1 - s_0 b_2^\theta$
    where $b^\theta = -b + \tfrac{1}{2} T_{\B/k}(b)$ for all $b \in \B$.
    In particular, we see that $\Herm(\A) = \B \oplus s_0 \B_0$ and hence also $W \subset \Herm(\A)$.
    Moreover, if $t \in K$, then $t^\theta = -b + T_{K/k}(t) = t^\sigma$, where $\sigma$ is the unique non-trivial $k$\dash automorphism of $K$.
    In particular, the restriction of $\theta$ to $K$ is multiplicative.

    We claim that $E$ is a subalgebra of $\A$ which is a quaternion algebra, and that the involution of $\A$ restricts to a non-standard involution on $E$.
    Indeed, the first claim follows immediately from the multiplication formula
    \begin{equation}\label{eq:Emult}
        (t_1 + s_0t_2)(t_3 + s_0t_4) = (t_1t_3 + \mu t_2^\sigma t_4) + s_0( t_1^\sigma t_4 + t_2 t_3 )
    \end{equation}
    arising from the formula in Definition~\ref{def:CD} because $\theta$ is multiplicative.
    The second claim follows from the observation that the involution of $\A$ maps $t_1 + s_0 t_2$ to $t_1 - s_0 t_2^\sigma$,
    which is indeed a non-standard involution on $E$.

    We now want to make $W$ into a $1$-dimensional $E$-module. We define a left action $\bullet$ of $E$ on $W$ by the rule
    \[ (t_1 + s_0 t_2) \bullet (b_0 t_3 + s_0 \cdot b_0 t_4) := b_0 \bigl( t_1 t_3 - \mu t_2^\sigma t_4 \bigr)
        + s_0 \cdot b_0 \bigl( t_1^\sigma t_4 - t_2 t_3 \bigr) \]
    for all $t_1,t_2,t_3,t_4 \in K$.
    In particular, $(t_1 + s_0 t_2) \bullet b_0 = b_0 t_1 - s_0 \cdot b_0 t_2$.
    It is easily checked that $\bullet$ indeed makes $W$ into a left $E$-module w.r.t.\@ the multiplication~\eqref{eq:Emult}.

    Now let $h \colon W \times W \to E$ be the unique hermitian form on $W$ such that $h(b_0, b_0) = \delta$.
    By definition, this means that $h(e_1 \bullet b_0, e_2 \bullet b_0) = e_1 \delta \overline{e_2}$ for all $x_1,x_2 \in E$.
    It now only requires some calculations, using the formulas in Definition~\ref{def:CD}, to verify that
    \begin{align*}
        \overline{e+w} &= \overline{e}+w, \\
        (e_1+w_1)(e_2+w_2) &= (e_1e_2+h(w_2,w_1))+(e_2 \bullet w_1+\overline{e_1} \bullet w_2),
    \end{align*}
    for all $e,e_1,e_2 \in E $ and all $w,w_1,w_2\in W$.
    Comparing this with Definition~\ref{def:herm} shows that $\A$ is isomorphic to the hermitian structurable algebra corresponding to $h$.
\end{proof}

We are now ready to describe the Moufang sets arising from groups of trialitarian type $D_4$.
\begin{theorem}\label{th:mouf-D4}
	Let $\B/k$ be a $4$-dimensional separable quartic field extension whose splitting field has Galois group $\Alt_4$ or $\Sym_4$,
	and let $\mu \in k^\times \setminus N_{\B/k}(\B) (k^\times)^2$.
	Then $\A = \CD(\B, \mu)$ is a structurable division algebra,
	and $\mouf(\A)$ is a Moufang set arising from a linear algebraic group of type $\prescript{3}{}D_{4,1}^9$ or $\prescript{6}{}D_{4,1}^9$,
	depending on whether the Galois group is $\Alt_4$ or $\Sym_4$, respectively.
	Moreover, every Moufang set arising from a linear algebraic group of type $\prescript{3,6}{}D_{4,1}^9$ can be obtained in this fashion.
\end{theorem}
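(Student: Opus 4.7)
The proof divides into four tasks: showing (i) $\A = \CD(\B,\mu)$ is a structurable division algebra; (ii) its algebraic group has absolute type $D_4$, relative rank one, and is not classical; (iii) the Tits index is $\prescript{3}{}D_{4,1}^9$ or $\prescript{6}{}D_{4,1}^9$ according to the Galois group; and (iv) the converse. For~(i), Definition~\ref{def:CD} gives that $\A$ has skew-dimension one with distinguished skew element $s_0$ satisfying $s_0^2 = \mu \cdot 1$. Taking $b = 1$ in the hypothesis $\mu \notin N_{\B/k}(\B)(k^\times)^2$ shows in particular that $\mu$ is not a square, so by Proposition~\ref{prop:mat} $\A$ is not a structurable matrix algebra. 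By the criterion following~\eqref{inv skew dim 1}, divisionhood is equivalent to anisotropicity of the quartic form $\nu$ of~\eqref{eq:defmu}. My plan is to compute $\nu(j_1 + s_0 j_2)$ explicitly from~\eqref{eq:defmu} using the multiplication of Definition~\ref{def:CD}; the resulting expression is polynomial in $N_{\B/k}$, $T_{\B/k}$, and $\mu$, and its anisotropy reduces precisely to the condition $\mu \notin N_{\B/k}(\B)(k^\times)^2$, following the template of Lemma~\ref{div skew dim one} and~\cite[Theorem~7.1]{AF84}.

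For~(ii), by~\cite[Theorem~5.4]{AF84} the trace-zero subspace $\B_0 = \{b \in \B \mid T_{\B/k}(b) = 0\}$ carries a cubic Jordan algebra structure of dimension~$3$. Hence by the dimension tally in item~\eqref{KKstruct dim 1} of section~\ref{se:constr lie alg}, the Lie algebra $K(\A)$ is of absolute type $D_4$, and by Theorem~\ref{thm:AG1} the group $\GG = \Aut(K(\A))^\circ$ is adjoint absolutely simple of absolute type $D_4$ and $k$-rank one. If the Tits index were classical, then by Corollary~\ref{cor:classical} together with Corollary~\ref{co:iso} the algebra $\A$ would be isotopic to a hermitian structurable algebra, and Proposition~\ref{pr:QCA} would force $\B$ to contain a two-dimensional unital $k$-subalgebra. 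But by basic Galois theory, a separable quartic field extension admits an intermediate quadratic subfield exactly when the Galois group of its splitting field is one of $\Z/4$, $V_4$, $D_4$; the hypothesis that this Galois group is $\Alt_4$ or $\Sym_4$ excludes precisely this possibility. Hence the Tits index is $\prescript{3}{}D_{4,1}^9$ or $\prescript{6}{}D_{4,1}^9$.

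For~(iii), which I expect to be the main obstacle, the two Tits indices are distinguished by the image of the $*$-action $\Gal(k^s/k) \to \Sym_3$ on the $D_4$ Dynkin diagram automorphism group: cyclic of order three for $\prescript{3}{}D_{4,1}^9$ and the whole $\Sym_3$ for $\prescript{6}{}D_{4,1}^9$. Let $L$ be the splitting field of $\B/k$, with Galois group $G \in \{\Alt_4, \Sym_4\}$ and normal Klein four subgroup $V_4 \trianglelefteq G$. The plan is to verify that this $*$-action factors through the quotient $G \to G/V_4$ and coincides with the canonical $G/V_4$-action on the three outer Dynkin nodes, which are identified after scalar extension to $L$ with the three non-trivial characters of $V_4$. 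Since $\Alt_4/V_4 \cong \Z/3$ and $\Sym_4/V_4 \cong \Sym_3$, the two hypotheses on $G$ yield respectively the Tits indices $\prescript{3}{}D_{4,1}^9$ and $\prescript{6}{}D_{4,1}^9$. The technical core is to trace this identification through the grading of $K(\A) \otimes_k L$ induced by the splitting $\B \otimes_k L \cong L^4$, using Construction~\ref{constr:newL} and the explicit formulas of Definition~\ref{def:CD}.

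For the converse~(iv), let $\mouf$ arise from a $k$-group $\GG'$ of Tits index $\prescript{3,6}{}D_{4,1}^9$. By Theorem~\ref{mainth:algmoufset} there is a central simple structurable division algebra $\A'$ with $\mouf \cong \mouf(\A')$. From Table~\ref{ta:exc} the dimensions of $\A'$ and its skew part are $8$ and $1$, respectively. Since $K(\A') \otimes_k \bar k$ is of absolute type $D_4$, the dimension tally~\eqref{KKstruct dim 1} together with Remark~\ref{rem:N=0} force $\A'$ to be a quartic Cayley algebra, so $\A' \cong \CD(\B', \mu')$ for some four-dimensional separable commutative $k$-algebra $\B'$ and some $\mu' \in k^\times$. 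Outer type of $\GG'$ excludes the classical case via Corollary~\ref{cor:classical}, so by Proposition~\ref{pr:QCA} the algebra $\B'$ has no two-dimensional unital subalgebra; by the Galois-theoretic observation in~(ii) this forces $\B'$ to be a quartic field whose splitting field has Galois group $\Alt_4$ or $\Sym_4$. The matching with the two Tits indices follows from~(iii) applied in reverse, and the condition $\mu' \notin N_{\B'/k}(\B')(k^\times)^2$ is the anisotropy criterion produced in~(i).
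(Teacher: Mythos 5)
Your skeleton matches the paper's: divisionhood of $\CD(\B,\mu)$, exclusion of the hermitian/classical case via Proposition~\ref{pr:QCA} together with the observation that the $\Alt_4$/$\Sym_4$ hypothesis rules out quadratic subfields of $\B$, identification of the Tits index by comparing $(\dim\A,\dim\Ss)=(8,1)$ with Table~\ref{ta:exc}, and a converse that runs the same reductions backwards through quartic Cayley algebras. The converse in particular is essentially the paper's argument (the paper pins down that the degree is $4$ via \cite[Proposition~4.4 and Theorem~4.11(c)]{A90} rather than via Remark~\ref{rem:N=0}, but the content is the same, and your inclusion of the Klein four group among the excluded Galois groups is if anything more careful than the paper's list, which names only $\Cyc_4$ and $\Dih_8$).

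However, two of your steps are plans rather than proofs, and they carry the real content. First, for divisionhood you propose to compute the quartic form $\nu$ of~\eqref{eq:defmu} explicitly and assert that its anisotropy ``reduces precisely to'' $\mu\notin N_{\B/k}(\B)(k^\times)^2$; this computation is never performed, and the asserted unconditional equivalence is suspect, because you do not invoke the absence of quadratic subfields at this point. The paper's source, \cite[Theorem~6.3]{A86}, is applied only \emph{after} establishing that $\B/k$ admits no quadratic subfield, and Proposition~\ref{pr:QCA} shows that when such a subfield exists $\CD(\B,\mu)$ becomes a hermitian structurable algebra, whose divisionhood is governed by a different criterion. Second, the refinement distinguishing $\prescript{3}{}D_{4,1}^9$ from $\prescript{6}{}D_{4,1}^9$ --- which you correctly flag as the main obstacle, and which you propose to settle by showing that the $*$-action factors through $\Gal(L/k)/V$ (with $V$ the Klein four subgroup) acting on the three outer nodes --- is left with its ``technical core'' explicitly undone. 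The mechanism you describe (the cubic resolvent of $\B$ governing the image of the $*$-action in $\Sym_3$) is the right one, but tracing it through the grading of $K(\A)\otimes_k L$ is precisely the nontrivial part; the paper does not carry this out inline either, delegating it to \cite{A90-D4} and \cite{Gar-D4}. Until these two steps are filled in (or replaced by the citations the paper uses), the proof is incomplete.
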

\begin{proof}
	For fields of characteristic zero, this follows from~\cite[Theorem~9.4]{A90-D4} in the context of Lie algebras.
	An approach for arbitrary fields of characteristic $\neq 2$ in the context of algebraic groups was obtained in \cite{Gar-D4},
	but without the explicit description of the corresponding structurable algebras.

	Assume that $\B/k$ is a $4$-dimensional separable quartic field extension whose splitting field has Galois group $\Alt_4$ or $\Sym_4$,
	let $\mu \in k^\times \setminus N_{\B/k}(\B) (k^\times)^2$, and let $\A = \CD(\B, \mu)$.
	Observe that by the fundamental theorem of Galois theory, the condition on the Galois group implies that $\B/k$ does not admit quadratic subfields.
	Therefore, \cite[Theorem~6.3]{A86} tells us that the condition on $\mu$ implies that $\A$ is a division algebra.
	By~\cite[Theorem~9.1]{A90}, $\A$ is a quartic Cayley division algebra.

    By Proposition~\ref{pr:QCA}, $\A$ is not isotopic to a hermitian structurable algebra.
    Therefore, by Corollary~\ref{cor:classical}, the Moufang set $\mouf(\A)$ is
	a Moufang set arising from an exceptional algebraic group.
    Comparing dimensions using Table~\ref{ta:exc} on page~\pageref{ta:exc},
	we conclude that the algebraic group is necessarily of type $\prescript{3,6}{}D_{4,1}^9$.

	Conversely, assume that $\mouf(\A)$ is the Moufang set arising from an exceptional algebraic group of type $\prescript{3,6}{}D_{4,1}^9$.
	Then $\A$ is a structurable division algebra of dimension $8$ and skew-dimension $1$.
	By \cite[Proposition~4.4]{A90}, the degree of $\A$ is either $2$ or $4$.
	By \cite[Theorem~4.11(c)]{A90} however, the degree of $\A$ cannot be equal to $2$.
	By \cite[Theorem~9.1]{A90}, this implies that $\A$ is a quartic Cayley algebra.
	Since isotopic structurable algebras give rise to isomorphic Moufang sets,
	we may assume (by \cite[Theorem~9.1]{A90} again) that $\A \cong \CD(\B, \mu)$ for some separable commutative associative
	$4$\dash dimensional $k$\dash algebra $\B$ and some $\mu \in k^\times$.
	Since $\A$ is a division algebra, also $\B$ is a division algebra, i.e.\@ $\B/k$ is a separable quartic field extension,
	and by \cite[Theorem 6.3]{A86}, $\mu \not\in N_{\B/k}(\B) (k^\times)^2$.
    Moreover, Proposition~\ref{pr:QCA} implies that $\B$ does not admit quadratic subfields.

	It remains to show that the splitting field of $\B/k$ has Galois group $\Alt_4$ or $\Sym_4$.
    The only other possibilities for the Galois group are $\Cyc_4$ or $\Dih_8$.
    In both cases, however, $\B/k$ would admit quadratic subfields (by the fundamental theorem of Galois theory),
    which we had excluded.
\end{proof}

\subsection{Forms of type $^{2}E_{6,1}^{35}$, $E_{7,1}^{66}$ and $E_{8,1}^{133}$}

The structurable division algebras corresponding to forms of type $\prescript{2}{}E_{6,1}^{35}$, $E_{7,1}^{66}$ or $E_{8,1}^{133}$
are algebras of skew-dimension one, and of dimension $20$, $32$ and $56$, respectively.
Since not all corresponding structurable division algebras have been explicitly constructed,
we can currently do no better than the following result.
\begin{theorem}\label{th:mouf-E}
	Let $\A$ be a structurable division algebra over $k$, and assume that there is a quadratic field extension $E/k$
    such that $\A \otimes_k E$ is isomorphic to a matrix structurable algebra $M(J,\eta)$ as introduced in Definition~\ref{def:structmatalg},
    where $J$ is a Jordan algebra associated with a non-degenerate cubic norm structure of dimension $9$, $15$ or $27$, respectively.
    Then $\mouf(\A)$ is a Moufang set arising from a linear algebraic group of type $\prescript{2}{}E_{6,1}^{35}$, $E_{7,1}^{66}$ or $E_{8,1}^{133}$, respectively.

	Moreover, every Moufang set arising from a linear algebraic group of type $\prescript{2}{}E_{6,1}^{35}$, $E_{7,1}^{66}$ or $E_{8,1}^{133}$ can be obtained in this fashion.
\end{theorem}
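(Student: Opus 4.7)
The plan is to use Theorem~\ref{thm:AG1}, which gives a bijection between isotopy classes of central simple structurable division algebras over $k$ and isomorphism classes of adjoint simple algebraic $k$-groups of $k$-rank $1$ via $\A \mapsto \GG = \Aut(K(\A))^\circ$. Combined with a dimension count of the root group $\U$ of $\mouf(\A)$, this pins down the Tits index of $\GG$ and reduces the theorem to algebraic bookkeeping.

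For the forward direction, note that $K(\A) \otimes_k E \cong K(\A \otimes_k E) \cong K(M(J,\eta))$, and the classification summary at the end of section~\ref{se:constr lie alg} shows that this Lie algebra has absolute type $E_6$, $E_7$, $E_8$ for $\dim_E J = 9, 15, 27$ respectively. Hence $\GG$ is absolutely simple of the corresponding exceptional type and has $k$-rank $1$ by Theorem~\ref{thm:AG1}. The root group $\U = \A \times \Ss$ of $\mouf(\A)$ satisfies $Z(\U) = \{0\} \times \Ss$ (using non-degeneracy of $\psi$ on a simple structurable algebra, \cite[Lemma 2.2]{AF84}), so $\dim Z(\U) = \dim \Ss = 1$ (the skew-dimension of $M(J,\eta)$), while $\dim \U/Z(\U) = \dim_k \A = 2 + 2\dim_E J \in \{20, 32, 56\}$. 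Inspecting Table~\ref{ta:exc}, the only Tits indices of absolute type $E_6$, $E_7$, $E_8$ with $\dim Z(U) = 1$ and matching quotient dimension are exactly $\prescript{2}{}E_{6,1}^{35}$, $E_{7,1}^{66}$, $E_{8,1}^{133}$.

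For the converse, Lemma~\ref{lem:M(G)-isog} lets us reduce to the case that $\GG$ is adjoint, and then Theorem~\ref{thm:AG1} produces a central simple structurable division algebra $\A$ over $k$ with $\mouf(\GG) \cong \mouf(\A)$; reading Table~\ref{ta:exc} and the same centre-dimension identity gives $\dim \Ss = 1$ and $\dim_k \A \in \{20, 32, 56\}$. Since $\A$ is simple of skew-dimension one, Corollary~\ref{cor:mat} furnishes a field extension $E/k$ of degree at most $2$ with $\A \otimes_k E \cong M(J,\eta)$; if this degree is $1$, we replace $E$ by any quadratic extension of $k$, since the matrix form is preserved under further scalar extension. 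A dimension count then gives $\dim_E J \in \{9, 15, 27\}$.

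The main obstacle is to exclude case~(ii) of Definition~\ref{def:structmatalg}, i.e.\ the case $N = 0$. If it held, Remark~\ref{rem:N=0} would show that $\A \otimes_k E$ is isomorphic to a hermitian structurable algebra over a $2$-dimensional composition algebra, forcing $K(\A) \otimes_k E = K(\A \otimes_k E)$ to be of classical type (by the summary at the end of section~\ref{se:constr lie alg}) and hence $\GG$ classical, contradicting its exceptional type. Therefore $J$ must be a cubic Jordan algebra associated with a non-degenerate cubic norm structure of dimension $9$, $15$, or $27$, as required.
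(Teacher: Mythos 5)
Your proposal is correct and reaches the conclusion by essentially the same bookkeeping as the paper (Theorem~\ref{thm:AG1}, Corollary~\ref{cor:mat}, and the dimension count $\dim Z(\U)=\dim\Ss=1$, $\dim\U/Z(\U)=\dim_k\A\in\{20,32,56\}$ matched against Table~\ref{ta:exc}), but it replaces the paper's key intermediate step by a different one. The paper shows that $\A$ is not of hermitian type — hence that $\GG$ is exceptional — by proving that $\A$ has conjugate degree $4$ via \cite[Lemmas~4.1, 4.2 and Theorem~4.11]{A90}, and reuses the same degree argument in the converse to rule out the case $N=0$ of Definition~\ref{def:structmatalg}. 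You instead read off the absolute type of $K(\A)\otimes_k E\cong K(M(J,\eta))$ from the classification summary at the end of section~\ref{se:constr lie alg}, and in the converse you exclude $N=0$ via Remark~\ref{rem:N=0} (a form with $N=0$ is of hermitian type, so $\GG$ would be classical by Corollary~\ref{cor:classical}); your converse is clean and stays entirely within the paper's toolkit, and your explicit handling of the case where Corollary~\ref{cor:mat} produces a trivial extension is a detail the paper glosses over. The one soft spot is in your forward direction: the table of types of $K(\A)$ that you invoke is qualified in the paper by ``at least in characteristic $0$'', so concluding from it that $\GG$ has absolute type $E_6$, $E_7$ or $E_8$ over an arbitrary field of characteristic $\neq 2,3$ is not fully licensed by the paper's own statements — this is precisely what the authors' detour through the degree of $\A$ avoids. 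The fact you need is true in characteristic $\neq 2,3$, but you should either justify it or substitute the degree argument (so that $\A$ is not of hermitian type and Corollary~\ref{cor:classical} forces $\GG$ to be exceptional), after which your dimension count finishes the proof exactly as written.
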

\begin{proof}
	First assume that $\A$ is a structurable division algebra over $k$, and assume that there is a quadratic field extension $E/k$
    such that $\A \otimes_k E$ is isomorphic to a matrix structurable algebra $M(J,\eta)$
    where $J$ is a Jordan algebra associated with a non-degenerate cubic norm structure of dimension $9$, $15$ or $27$, respectively.
    By \cite[Lemmas 4.1 and 4.2]{A90}, $\A$ has degree $4$, and by \cite[Theorem 4.11]{A90}, this implies that $\A$ is not of hermitian type.
    Therefore, $\mouf(\A)$ is the Moufang set of a linear algebraic group of exceptional type.
    Since $\dim(\A)$ is equal to $20$, $32$ or $56$, respectively, an inspection of Table~\ref{ta:exc} reveals that
    we are in type $\prescript{2}{}E_{6,1}^{35}$, $E_{7,1}^{66}$ or $E_{8,1}^{133}$.

    Conversely, assume that $\mouf(\A)$ is the Moufang set of a linear algebraic group
    of type $\prescript{2}{}E_{6,1}^{35}$, $E_{7,1}^{66}$ or $E_{8,1}^{133}$.
    Then it follows again from Table~\ref{ta:exc} that $\A$ has skew-dimension $1$,
    and moreover $\A$ is not of hermitian type.
    It now follows from \cite[Theorem 4.11]{A90} again that $\A$ does not have degree $2$.
    By Corollary~\ref{cor:mat}, there exists a quadratic field extension $E/k$ such that $\A \otimes_k E$ is isomorphic to a matrix structurable algebra $M(J,\eta)$.
    Since $\deg(\A) \neq 2$, \cite[Lemma 4.2]{A90} now implies that $J$ has a non-zero norm, and by Definition~\ref{def:structmatalg},
    this means that $J$ is a Jordan algebra associated with a non-degenerate cubic norm structure.
    Comparing dimensions, we finally obtain that $J$ has dimension $9$, $15$ or $27$, respectively.
\end{proof}

\section{Forms of the tensor product of two composition algebras}

Let $(\A, \barop)$ be a structurable division algebra which is a form of a tensor product of two composition algebras; see section \ref{ex:compalg}.
If we assume moreover that $\A$ is not an associative algebra with involution, then by Proposition~\ref{pr:formtensor}, we know that either
$\A$ is an $(8,m)$-product algebra with $m \in \{ 1,2,4,8 \}$, or $\A$ is a twisted $(8,8)$-product algebra, and in this case there is a quadratic extension
$E/k$ such that $\A \otimes_k E$ is isomorphic to an $(8,8)$-product algebra.

These algebras correspond precisely to the rank one forms of the exceptional groups which have skew-dimension greater than $1$:
\begin{proposition}\label{prop:(8,m)}
    Let $X_0 = F_{4,1}^{21}$, $X_1 = \prescript{2}{}E_{6,1}^{29}$, $X_2 = E_{7,1}^{49}$ and $X_3 = E_{8,1}^{91}$.
    Let $\A$ be a structurable division algebra over $k$, and let $\GG = \Aut(K(\A))^\circ$ be the corresponding linear algebraic group of $k$\dash rank $1$,
    as in Theorem~\ref{thm:AGadjoint}.
    Then for each $i \in \{ 0,1,2,3 \}$, the following are equivalent.
    \begin{compactenum}[\rm (a)]
        \item
            $\GG$ has type $X_i$;
        \item
            $\A$ is an $(8,m)$-product algebra with $m = 2^i$, or a twisted $(8,8)$-product algebra (when $i=3$).
    \end{compactenum}
\end{proposition}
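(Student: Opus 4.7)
The plan is to combine the catalog of Lie algebras arising from (twisted) product algebras at the end of section~\ref{se:constr lie alg} with the classification of central simple structurable division algebras recalled in section~\ref{se:struct-ex}, and then to pin down the exact Tits index within Table~\ref{ta:exc} by computing the dimensions of the root groups.

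For the direction (b)~$\Rightarrow$~(a), I would first invoke Theorem~\ref{thm:AGadjoint} (which gives that $\GG$ is adjoint absolutely simple with $K(\A)=[\Lie(\GG),\Lie(\GG)]$) together with Theorem~\ref{thm:AG1} (which gives $k$-rank exactly one, since $\A$ is division). The catalog at the end of section~\ref{se:constr lie alg} immediately shows that when $\A$ is an $(8,2^i)$-product algebra, or a twisted $(8,8)$-product algebra (in the case $i=3$), the absolute type of $K(\A)$, hence of $\GG$, is $F_4$, $E_6$, $E_7$, $E_8$ for $i=0,1,2,3$ respectively. To identify the precise Tits index within Table~\ref{ta:exc}, I would use Construction~\ref{constr:moufstructalg}, which identifies the unipotent radical of a minimal $k$-parabolic of $\GG$ with $\U=\A\times\Ss$, so that $\dim(U/Z(U))=\dim_k\A$ and $\dim Z(U)=\dim_k\Ss$. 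For a (twisted or untwisted) $(8,m)$-product algebra these dimensions are $8m$ and $m+6$; comparing with Table~\ref{ta:exc} singles out precisely the index $X_i$.

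For the direction (a)~$\Rightarrow$~(b), I would systematically eliminate the classes of the classification in section~\ref{se:struct-ex}. Since $\GG$ is of exceptional type, Corollary~\ref{cor:classical} rules out class~(3) and its $W=0$ specialization class~(1) (both of which yield classical groups). Reading $\dim_k\Ss\in\{7,8,10,14\}$ off Table~\ref{ta:exc} rules out class~(2) (Jordan algebras, skew-dimension $0$) and class~(4) (skew-dimension $1$); and since $\dim_k\A\in\{8,16,32,64\}\neq 35$, class~(6) is excluded as well. Thus $\A$ must be a form of a tensor product of two composition algebras, and by Proposition~\ref{pr:formtensor} is either an $(m_1,m_2)$-product algebra over $k$, or a twisted $(m,m)$-product algebra with $m>1$.

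The remaining step, where the only genuine care is required, is matching dimensions. Using $\dim_k\A=m_1m_2$ and $\dim_k\Ss=m_1+m_2-2$ (and the same $k$-dimensions for twisted $(m,m)$-products, obtained by scalar extension to the splitting quadratic field), the equations $\dim_k\A=8\cdot 2^i$ and $\dim_k\Ss=2^i+6$ have the unique unordered integer solution $\{m_1,m_2\}=\{8,2^i\}$, while $2m-2=2^i+6$ with $m\in\{1,2,4,8\}$ forces $m=8$ and $i=3$. A potential source of spurious $(m_1,m_2)$-product solutions would be those with $m_1,m_2\leq 4$; but by the observation in section~\ref{ex:compalg} any such product algebra is associative, hence already excluded by Corollary~\ref{cor:classical}. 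This completes the matching and proves the proposition.
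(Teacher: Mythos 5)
Your proof is correct, and it is noticeably more explicit than the paper's own argument, which consists essentially of a single dimension count: the paper observes that both (a) and (b) force $(\dim\A,\dim\Ss)$ to be $(8,7)$, $(16,8)$, $(32,10)$ or $(64,14)$ according to $i$, notes that $\dim\Ss>1$, and then appeals to the correspondence between classical groups with non-abelian root groups and hermitian structurable algebras. You differ in two places. First, for (b)$\Rightarrow$(a) you fix the absolute type of $K(\A)$ via the Freudenthal magic square catalogue at the end of section~\ref{se:constr lie alg} before matching root-group dimensions against Table~\ref{ta:exc}; this is a genuine strengthening, because for the pair $(32,10)$ the dimension count alone does not exclude the hermitian (hence classical) alternative --- a point the paper's proof passes over and which its subsequent remark explicitly flags --- whereas knowing that $K(\A)$ has absolute type $E_7$ settles it at once. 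Second, for (a)$\Rightarrow$(b) you spell out the class-by-class elimination from section~\ref{se:struct-ex} and solve the system $m_1m_2=8\cdot 2^i$, $m_1+m_2=2^i+8$ explicitly, which the paper leaves implicit; your extra remark that products with $m_1,m_2\le 4$ are associative is harmless but not actually needed, since the system already has the unique solution $\{m_1,m_2\}=\{8,2^i\}$. Both your argument and the paper's rest on the Allison--Smirnov classification for the direction (a)$\Rightarrow$(b) (with the attendant delicacy in characteristic $5$, which is inherited from the paper and not something you need to repair), so neither route is more elementary there; but your treatment of the forward direction is the more watertight of the two.
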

\begin{proof}
    Notice that both (a) and (b) imply that
    \begin{equation}\label{eq:dimCC}
        (\dim \A, \dim \Ss) \in \{ (8,7), (16,8), (32,10), (64,14) \} ,
    \end{equation}
    according to whether $i = 0,1,2,3$ respectively.
    In particular, $\dim \Ss > 1$.
    The result now follows since we already know that the classical linear algebraic groups with non-abelian root groups
    correspond precisely to the case where $\A$ is of hermitian type (including the associative algebras with involution).
\end{proof}
\begin{remark}
    Notice that the only case in which the dimensions~\eqref{eq:dimCC} do not uniquely determine the type of $G$ is the
    case $(\dim \A, \dim \Ss) = (32,10)$.
    Indeed, if $\A$ is of hermitian type, then $\A = E \oplus W$ and $\Ss = \Ss_E$, where $E$ is a central simple division algebra of degree $n$, and $W$ is an $E$-module,
    so $\dim A$ is divisible by $n^2$ and $\dim \Ss = n(n \pm 1)/2$ (where the sign depends on the type of the involution).
    Comparing this with~\eqref{eq:dimCC}, this only leaves the possibility $(\dim \A, \dim \Ss) = (32,10)$ and $n=4$.
\end{remark}

\begin{corollary}\label{co:tensor}
\begin{compactenum}[\rm (i)]
    \item
        Let $\A$ be an octonion division algebra over $k$ equipped with the standard involution.
        Then $\mouf(\A)$ is a Moufang set arising from a linear algebraic group of type $F_{4,1}^{21}$.
        Moreover, every Moufang set arising from a linear algebraic group of type $F_{4,1}^{21}$ can be obtained in this fashion.
    \item
        Let $E/k$ be a separable quadratic field extension with Galois involution~$\sigma$,
        let $\A$ be an octonion division algebra over $E$ with standard involution $\rho$,
        and equip $\A$ with the involution $\sigma \otimes \rho$ of the second kind over $E$.
        Then $\mouf(\A)$ is a Moufang set arising from a linear algebraic group of type $\prescript{2}{}E_{6,1}^{29}$.
        Moreover, every Moufang set arising from a linear algebraic group of type $\prescript{2}{}E_{6,1}^{29}$ can be obtained in this fashion.
    \item
        Let $\A$ be a division algebra which is the tensor product of an octonion division algebra and a quaternion division algebra.
        Then $\mouf(\A)$ is a Moufang set arising from a linear algebraic group of type $E_{7,1}^{49}$.
        Moreover, every Moufang set arising from a linear algebraic group of type $E_{7,1}^{49}$ can be obtained in this fashion.
    \item
        Let $\A$ be a division algebra which is the tensor product of two octonion division algebras,
        or which is isomorphic to such an algebra after extending scalars to some separable quadratic extension field $E/k$.
        Then $\mouf(\A)$ is a Moufang set arising from a linear algebraic group of type $E_{8,1}^{91}$.
        Moreover, every Moufang set arising from a linear algebraic group of type $E_{8,1}^{91}$ can be obtained in this fashion.
\end{compactenum}
\end{corollary}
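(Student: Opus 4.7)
The plan is to apply Proposition \ref{prop:(8,m)} to each of the four parts, which immediately translates the geometric condition ``$\GG$ has type $X_i$'' into the algebraic condition ``$\A$ is an $(8,2^i)$-product algebra'' (or a twisted $(8,8)$-product algebra in case $i=3$). Combined with Theorems \ref{mainth:moufset} and \ref{mainth:algmoufset} (linking $\mouf(\A)$ to the Moufang sets coming from $k$\dash rank one algebraic groups) and the isotopy-classes correspondence of Theorem \ref{thm:AG1}, the four parts of the corollary then reduce to a direct translation from the abstract $(m_1,m_2)$-product language of section~\ref{ex:compalg} into the concrete language of tensor products of specific composition algebras.

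For part (i), an $(8,1)$-product algebra is tautologically $C \otimes_k k \cong C$ for an octonion algebra $C$ equipped with its standard involution; $\A$ is a structurable division algebra if and only if $C$ is an octonion division algebra (either direction is easy; the Albert form is, up to sign, the pure part of the norm of $C$, so anisotropy is treated by Remark \ref{rem:product-div}). For part (iii), an $(8,4)$-product algebra is $C \otimes_k Q$ where $C$ is octonion and $Q$ quaternion with their standard involutions, and division is controlled by anisotropy of the Albert form via Remark \ref{rem:product-div}. For part (iv), an $(8,8)$-product algebra is $C \otimes_k C'$ for two octonion algebras $C,C'$, and by Proposition \ref{pr:formtensor} a twisted $(8,8)$-product algebra is exactly a $k$\dash form becoming such a tensor product after a quadratic extension; division is again handled by Remark \ref{rem:product-div}.

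Part (ii) is the most delicate. An $(8,2)$-product algebra is of the form $C \otimes_k D$ with $C$ octonion and $D$ a 2-dimensional composition algebra, with involution $\sigma_C \otimes \sigma_D$. Since a division algebra cannot contain zero divisors, $D$ must itself be a field, so $D = E$ is a separable quadratic field extension of $k$ with Galois involution $\sigma$. Viewing $\A = C \otimes_k E$ as an $E$-algebra through the second factor, it becomes the scalar extension of $C$ to $E$, i.e.\@ an octonion $E$-algebra whose standard involution $\rho$ is the $E$-linear extension of $\sigma_C$; the structurable involution then reads literally as $\sigma \otimes \rho$ as in the corollary. The main obstacle here is handling octonion division algebras $\A$ over $E$ that a priori are not scalar extensions of octonions over $k$: one must verify (e.g.\@ by extending scalars to a splitting field of $E/k$ and invoking Galois descent for octonions) that the natural $k$-linear involution $\sigma \otimes \rho$ still makes the $k$\dash algebra $\A$ into an $(8,2)$-product algebra in the sense of section~\ref{ex:compalg}. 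Alternatively, one can argue indirectly: by Theorem \ref{thm:AG1} the isotopy class of the structurable algebra associated to a given group of type $\prescript{2}{}E_{6,1}^{29}$ is determined, and always admits a representative of the form $C \otimes_k E$ for a genuine octonion $C$ defined over $k$, so both the forward and converse implications follow once this representative is produced.
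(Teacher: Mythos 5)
Your proposal is correct and follows essentially the same route as the paper, which derives the corollary directly from Proposition~\ref{prop:(8,m)} (combined with Theorems~\ref{mainth:moufset}, \ref{mainth:algmoufset} and~\ref{thm:AG1}) and leaves the translation into concrete tensor products of composition algebras, and the division criteria of Remark~\ref{rem:product-div}, implicit. The extra care you take in part~(ii) is sensible but ultimately a non-issue: a $k$\dash linear involution of the second kind $\sigma\otimes\rho$ on an octonion $E$\dash algebra exists precisely when that algebra descends to $k$ (Galois descent), so the case of octonion division algebras over $E$ that are not scalar extensions from $k$ is excluded by the very hypothesis of the statement, exactly as your parenthetical descent argument shows.
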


\begin{remark}
    In cases (i) and (ii) from Corollary~\ref{co:tensor}, we recover the known results from \cite{DV2} and \cite{CD}, respectively.
    These results hold over fields of any characteristic, whereas our results only hold over fields of characteristic different from $2$ and $3$.
    It is very likely that also Corollary~\ref{co:tensor}(iii) and (iv) hold over fields of any characteristic.
\end{remark}

\section{Classification theorem for structurable division algebras}\label{se:classchar5}

\begin{theorem}
Let $\A$ be a central simple structurable division algebra over a field $k$ of characteristic $\neq 2,3$.
Then $\A$ belongs to one of the classes $(1)$--$(5)$ described in section~\ref{se:struct-ex}.
Namely, $\A$ is isomorphic to one of the following:
\begin{enumerate}[\rm (1)]
\item a central simple associative division algebra with involution;
\item a central simple Jordan division algebra;
\item a structurable algebra constructed from a non-degenerate anisotropic hermitian form
over a central simple associative division algebra with involution;
\item a central simple structurable division algebra of skew-dimension $1$;
\item a structurable division algebra that is a form of a tensor product of two composition algebras.
\end{enumerate}
\end{theorem}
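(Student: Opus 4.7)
The proof naturally splits according to the characteristic of $k$. When $\cha(k) \neq 2,3,5$, the statement reduces directly to the Allison--Smirnov classification of central simple structurable algebras into six classes~\cite{A78, S}, together with the observation (established in~\cite{AF_35dim}) that the $35$\dash dimensional class~(6) contains no division algebras. The real work lies in the case $\cha(k) = 5$, which is the new contribution of this paper.

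For the characteristic $5$ case, the plan is to transfer the classification across the correspondence of Theorem~\ref{thm:AG1}. Since structurable division algebras over $k$ are automatically algebraic (Theorem~\ref{thm:div-alg}), that theorem assigns to $\A$ an adjoint absolutely simple algebraic $k$\dash group $\GG = \Aut(K(\A))^\circ$ of $k$\dash rank one, and the map is a bijection on isotopy classes. The classification of Tits indices of such groups is insensitive to whether $\cha(k) = 5$: it consists of the classical indices of $k$\dash rank one together with the nine exceptional indices listed in Table~\ref{ta:exc}. For each Tits index on this list, Chapter~\ref{se:examples} exhibits a structurable division algebra in one of the classes (1)--(5) whose associated adjoint group realizes it, namely: Jordan division algebras handle all abelian root-group cases including the exceptional type $E_{7,1}^{78}$ (section~\ref{MS:Jordan}); hermitian structurable algebras handle the remaining classical cases (Corollary~\ref{cor:classical}); algebras of skew-dimension one handle types $\prescript{3,6}{}D_{4,1}^9$, $\prescript{2}{}E_{6,1}^{35}$, $E_{7,1}^{66}$, $E_{8,1}^{133}$ (Theorems~\ref{th:mouf-D4} and~\ref{th:mouf-E}); and forms of tensor products of composition algebras handle $F_{4,1}^{21}$, $\prescript{2}{}E_{6,1}^{29}$, $E_{7,1}^{48}$, $E_{8,1}^{91}$ (Corollary~\ref{co:tensor}). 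By the uniqueness clause of Theorem~\ref{thm:AG1}, $\A$ must then be isotopic to one of the algebras thus constructed. A short verification that each of the classes (1)--(5) is stable under isotopy---immediate for (1), (2), (4), a consequence of~\cite[Theorem~5.4]{A88} for class~(5), and for class~(3) a consequence of the characterization in Corollary~\ref{cor:classical} together with the invariance of the associated Moufang set under isotopy (Remark~\ref{rem:isotopic})---will then complete the argument.

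The main obstacle will be to ensure that the constructions and structural results collected in Chapter~\ref{se:examples} really extend to characteristic~$5$. Several of them ultimately rely on work of Allison and Faulkner (for instance \cite{A86, A88, A90, AF92}) whose results are stated in characteristic zero or under the blanket assumption $\cha(k) \neq 2,3,5$. To circumvent this, my plan is to argue by Galois descent from a separable closure, exploiting that by Theorem~\ref{thm:div-alg} together with Lemma~\ref{lem:Ch-alg} the base change $K(\A) \otimes_k \bar{k}$ is a simple Chevalley\dash type Lie algebra whose possible types over $\bar k$ are listed exactly as in characteristic zero; this forces $\A \otimes_k \bar{k}$ into a known list of split algebras lying in the five classes, after which Galois descent recovers $\A$ itself within the same class. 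Note that this strategy does not require settling the delicate existence questions for individual families (for example those flagged in Remark~\ref{rem:skewdim1} and Remark~\ref{rem:product-div}), since the theorem only classifies those $\A$ that actually exist.
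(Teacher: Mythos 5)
Your high-level strategy---pass to the adjoint group $\GG=\Aut(K(\A))^\circ$ of $k$\dash rank one via Theorem~\ref{thm:AG1} and then invoke the classification of rank-one Tits indices---is indeed the route the paper takes, and your observation that Theorem~\ref{thm:div-alg} makes this available in characteristic $5$ is the right key. But the way you propose to finish contains a genuine gap. You want to exhibit, for each Tits index in the list, an explicit structurable division algebra in classes (1)--(5) realizing it, and then conclude by the uniqueness clause of Theorem~\ref{thm:AG1} that $\A$ is isotopic to one of these. The results of Chapter~\ref{se:examples} do not supply this realization step in the direction you need: their ``converse'' clauses (e.g.\ in Theorem~\ref{th:mouf-D4}, Theorem~\ref{th:mouf-E}, Corollary~\ref{co:tensor}) are statements of the form ``if $\mouf(\A)$ arises from a group of type $X$, then $\A$ has such-and-such a shape,'' i.e.\ they already presuppose the classification you are trying to prove for that type; while their ``forward'' clauses do not show that \emph{every} $k$\dash group of type $X$ is realized by an algebra in the relevant class (indeed, for skew-dimension one this is tied to the open constructibility problems of Remark~\ref{rem:skewdim1}, which your argument would therefore not avoid). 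Your Galois-descent fallback does not repair this: the type of the Chevalley Lie algebra $K(\A)\otimes_k\bar k$ does not determine the class of $\A$ (split $E_7$, for instance, arises from Albert algebras, from $32$\dash dimensional algebras of skew-dimension one, from $(8,4)$\dash product algebras, and from the $35$\dash dimensional exceptional algebras), and the classes themselves are not transparently stable under descent from the split form.

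The correct finish, and the one the paper uses, reads the class of $\A$ directly off invariants of the \emph{relative} Tits index of $\GG$ over $k$, with no realization or existence input at all. If the relative type is $A_1$ then $K(\A)_{2\sigma}=\Ss=0$ and $\A$ is a Jordan division algebra. If the relative type is $BC_1$ and $\GG$ is classical, then Corollary~\ref{cor:mouf-classical} together with Remark~\ref{rem:herm-isom} identifies $\A$ as a hermitian structurable algebra (classes (1) and (3)). If $\GG$ is exceptional, Table~\ref{ta:exc} lists the nine possible indices; since $\dim Z(U)=\dim K(\A)_{2\sigma}=\dim\Ss$, the indices with $\dim Z(U)=1$ force $\A$ to have skew-dimension one (class (4)), and the remaining four indices are handled by Proposition~\ref{prop:(8,m)}, which pins $\A$ down as a form of a tensor product of two composition algebras (class (5)). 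This argument is uniform in the characteristic, so the case split you propose between $\cha(k)\neq 2,3,5$ and $\cha(k)=5$, and the appeal to the Allison--Smirnov classification, are unnecessary.
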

\begin{proof}
By Theorem~\ref{thm:AG1} the algebraic $k$-group $\GG=\Aut(K(\A))^\circ$ is an adjoint simple algebraic group
of $k$-rank $1$.
In particular,
the relative type of $\GG$ is either $A_1$ or $BC_1$.

If $\GG$ has relative type $A_1$, then
the unipotent radicals of the parabolic subgroups of $\GG$ are abelian, and $K(\A)_{2\si}=0$.
Hence $\A$ is a Jordan division algebra.

Assume that $\GG$ has type $BC_1$.
By the classification of  Tits indices~\cite{Boulder,PS-tind} of simple algebraic groups,
either $\GG$ is of classical type, or its index is in Table~\ref{ta:exc}.
In the first case, $\A$ is a structurable algebra constructed from a non-degenerate anisotropic hermitian form
over a central simple associative division algebra with involution by Remark~\ref{rem:herm-isom}
and Corollary~\ref{cor:mouf-classical}. In the second case, we see from Table~\ref{ta:exc}
that either $\dim(Z(U))=1$, where $U$ is the root group of the corresponding Moufang set,
or $\GG$ is of type
$F_{4,2}^{21}$, $\prescript{2}{}E_{6,1}^{29}$, $E_{7,1}^{48}$ or $E_{8,1}^{91}$. Since
\[
\dim(Z(U))=\dim K(\A)_{2\si}=\dim\Ss
\]
by e.g.
Theorem~\ref{mainth:moufset}, the cases $\dim(Z(U))=1$ are exactly the ones where $\A$ is a structurable
algebra of skew-dimension $1$.
In the remaining four cases, $\A$ is a form of a tensor product of two composition algebras by Proposition~\ref{prop:(8,m)}.
\end{proof}

\backmatter

\bibliographystyle{amsalpha}
\bibliography{bibl3}

\end{document}